\newtheorem{theorem}{Theorem}[section]
\newtheorem{prop}[theorem]{Proposition}
\newtheorem{question}{Question}[section]
\newtheorem{conjecture}[question]{Conjecture}
\newtheorem{lemma}[theorem]{Lemma}
\newtheorem{cor}[theorem]{Corollary}
\newtheorem{defn}[theorem]{Definition}
\theoremstyle{definition}
\newtheorem{remark}{Remark}
\newcounter{tenumerate}
\def\P{\mathbb{P}}
\newcommand{\one}{\1}
\newcommand{\deq}{\stackrel{\scriptscriptstyle\triangle}{=}}
\newcommand{\tr}{\mathrm{tr}}
\newcommand{\reff}{R_{\mathrm{eff}}}
\newcommand{\ceff}{C_{\mathrm{eff}}}
\newcommand{\spn}{\mathrm{span}}
\newcommand{\proj}{\mathrm{proj}}
\newcommand{\mc}{\mathcal}
\newcommand{\ddeg}{\deg_{\downarrow}}
\renewcommand{\epsilon}{\varepsilon}
\newcommand{\e}{\varepsilon}
\newcommand{\1}{\mathbf{1}}
\newcommand{\rc}{\circlearrowright}
\newcommand{\covreturn}{{t_{\mathrm{cov}}^{\circlearrowright}}}
\DeclareMathOperator{\var}{Var}
\newcommand{\dist}{\mathsf{dist}}
\newcommand{\mcT}{\mathcal T}
\newcommand{\E}{{\mathbb E}}
\newcommand{\remove}[1]{}
\renewcommand{\le}{\leqslant}
\renewcommand{\leq}{\leqslant}
\renewcommand{\geq}{\geqslant}
\newcommand{\diam}{\mathsf{diam}}
\newcommand{\f}{\varphi}
\newcommand{\CH}{\mathsf{aff}}
\newcommand{\val}{\mathsf{val}}
\newcommand{\C}{\mathcal{C}}
\def\XXint#1#2#3{{\setbox0=\hbox{$#1{#2#3}{\int}$}
\vcenter{\hbox{$#2#3$}}\kern-.5\wd0}}
\begin{document}

\title{{\bf Cover times, blanket times, and majorizing measures}}

\author{Jian Ding\footnote{A substantial portion of this work
was completed during visits of the author to Microsoft Research.} \\ U. C. Berkeley \and James R. Lee\thanks{Partially
supported by NSF grant CCF-0915251 and a Sloan Research Fellowship.} \footnotemark[1] \\ University of Washington \and Yuval Peres \\ Microsoft Research}
\date{}

\maketitle

\begin{abstract}
We exhibit a strong connection between cover times of graphs,
Gaussian processes, and Talagrand's theory of majorizing measures.
In particular, we show that the cover time of any graph $G$
is equivalent, up to universal constants,
to the square of the expected maximum of the Gaussian free field on $G$,
scaled by the number of edges in $G$.

This allows us
to resolve a number of open questions.
We give a deterministic polynomial-time algorithm
that computes the cover time to within an $O(1)$ factor
for any graph, answering a question of Aldous and Fill (1994).
We also positively resolve the blanket time conjectures
of Winkler and Zuckerman (1996), showing that for any graph,
the blanket and cover times are within an $O(1)$ factor.
The best previous approximation factor for
both these problems
was $O((\log \log n)^2)$ for $n$-vertex graphs,
due to Kahn, Kim, Lov\'asz, and Vu (2000).
\end{abstract}

\setcounter{tocdepth}{2} \tableofcontents
\section{Introduction}
\label{sec:intro}

Let $G=(V,E)$ be a finite, connected graph,
and consider the simple random walk on $G$.
Writing $\tau_{\mathrm{cov}}$ for the first time
at which every vertex of $G$ has been visited,
let $\E_v \tau_{\mathrm{cov}}$ denote
the expectation of this quantity when
the random walk is started at some vertex $v \in V$.
The following fundamental parameter is known as the
{\em cover time of $G$,}
\begin{equation}\label{eq:covtimedef}
t_{\mathrm{cov}}(G) = \max_{v \in V} \E_v \tau_{\mathrm{cov}}\,.
\end{equation}
We refer to the books \cite{AF, LPW09} and the survey
\cite{Lov96} for relevant background material.

We also recall the discrete {\em Gaussian free field} (GFF) on the graph $G$.
This is a centered Gaussian process $\{\eta_v\}_{v \in V}$ with $\eta_{v_0}=0$ for
some fixed $v_0 \in V$.  The process is characterized by the relation
$\mathbb E\, (\eta_u-\eta_v)^2 = \reff(u,v)$ for all $u,v \in V$,
where $\reff$ denotes the effective resistance on $G$.
Equivalently, the covariances $\E(\eta_u \eta_v)$ are given by the Green kernel
of the random walk killed at $v_0$.
(We refer to Sections \ref{sec:prelims} and \ref{sec:outline} for background
on electrical networks and Gaussian processes.)

The next theorem
represents one of the primary connections put forward in this work.
We use the notation $\asymp$ to denote equivalence up to a universal constant factor.

\begin{theorem}\label{thm:GFF}
For any finite, connected graph $G=(V,E)$, we have
$$
t_{\mathrm{cov}}(G) \asymp |E| \left(\E \max_{v \in V} \eta_v\right)^2,
$$
where $\{\eta_v\}_{v \in V}$ is the Gaussian free field on $G$.
\end{theorem}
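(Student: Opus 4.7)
The plan is to bridge the two sides of Theorem~\ref{thm:GFF} through Talagrand's majorizing-measure functional $\gamma_2$ on the effective-resistance metric. Since the Gaussian free field has canonical distance $\sqrt{\reff}$, the Fernique--Talagrand majorizing-measures theorem yields
$$
\E \max_{v \in V} \eta_v \;\asymp\; \gamma_2\bigl(V,\sqrt{\reff}\bigr).
$$
Combined with the identity $t_{\mathrm{commute}}(u,v)=2|E|\,\reff(u,v)$, the theorem reduces to the purely geometric statement
$$
t_{\mathrm{cov}}(G) \;\asymp\; \gamma_2\bigl(V,\sqrt{t_{\mathrm{commute}}}\bigr)^2,
$$
which I would establish as a matching pair of inequalities.

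For the upper bound I would run a generic-chaining refinement of the classical Matthews covering scheme. Given an admissible sequence of partitions $\{\mathcal{A}_k\}_{k\ge 0}$ witnessing $\gamma_2$, one successively covers the net of scale-$k$ representatives and then extends to the refinement at scale $k+1$, exploiting the fact that within a $\sqrt{\reff}$-ball of radius $2^{-k/2}$ the expected time to hit a designated target is $O(|E|\cdot 2^{-k})$ by the resistance formula for commute times. Summing the geometric contributions over all scales reproduces $\gamma_2$; the $\log n$ loss in the naive Matthews bound is avoided because each vertex is charged only at its own scale rather than uniformly over $V$.

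For the lower bound I would invoke the second Ray--Knight isomorphism, which couples the local-time field of continuous-time simple random walk, stopped at the inverse local time $\tau(t)$ at the root, to an independent GFF:
$$
\left\{L^{\tau(t)}_v + \tfrac{1}{2}\eta^2_v\right\}_{v\in V} \;\stackrel{d}{=}\; \left\{\tfrac{1}{2}(\eta_v+\sqrt{2t})^2\right\}_{v\in V}.
$$
A direct pointwise manipulation of this identity shows that the event ``the walk has covered $V$ by time $\tau(t)$'' has the same probability as $\{\max_v(-\eta_v)<\sqrt{t/2}\}$, so covering requires $\sqrt{t/2}$ to reach the scale of $\max_v(-\eta_v)$. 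Gaussian concentration and the symmetry $\E\max_v(-\eta_v)=\E\max_v\eta_v$ then force $t\gtrsim(\E\max_v\eta_v)^2$, and $\E\tau(t)\asymp t|E|$ translates this to the cover-time lower bound.

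The main obstacle is the upper bound: the chaining must be arranged so that after the walk reaches a coarse-scale net, the additional time to hit a finer-scale representative is governed by local resistance rather than by a global hitting-time maximum. This demands careful control of return times and concentration of hitting times within small commute-time balls, together with bookkeeping that matches the additive structure of $\gamma_2$ against the multiplicative probabilistic structure of random-walk hitting times. The Ray--Knight lower bound, by contrast, is essentially a one-shot distributional identity.
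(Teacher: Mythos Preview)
Your overall architecture---bridging through $\gamma_2(V,\sqrt{\reff})$ and invoking the majorizing-measures theorem---is exactly the paper's. But you have the two halves backwards in difficulty, and your lower-bound argument contains a real gap.

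\medskip
\textbf{The lower bound is not a one-shot identity.} You assert that ``direct pointwise manipulation'' of the Ray--Knight isomorphism shows
\[
\P\bigl(\text{covered by }\tau(t)\bigr) \;=\; \P\bigl(\max_v(-\eta_v)<\sqrt{t/2}\bigr).
\]
This is false. The event $\{L^v_{\tau(t)}>0\ \forall v\}$ is \emph{not} measurable with respect to the process $\{L^v_{\tau(t)}+\tfrac12\eta_v^2\}_v$, because the $\eta$ on the left side is an independent auxiliary field: knowing $L^v_{\tau(t)}+\tfrac12\eta_v^2$ does not tell you whether $L^v_{\tau(t)}=0$. The paper says this explicitly (Section~\ref{sec:outline}): ``the correspondence \eqref{eq:law} seems too coarse to provide lower bounds on the probability of this event directly.'' What is actually needed is to exhibit a set $S\subset\mathbb R^V$ with $\P(S_R)-\P(S_L\cap\{L>0\})\geq c$, and constructing such an $S$ occupies all of Section~\ref{sec:cover}: one extracts a tree-like subprocess of the GFF with strong independence along branches (using the resistance geometry of Sections~\ref{sec:resist}--\ref{sec:GFF}), sets up a percolation-type second-moment argument on the leaves, and shows that with constant probability some $\eta_v$ lands in a window around $-\sqrt{2t}$ far narrower than its own standard deviation. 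This is the technical heart of the paper.

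\medskip
\textbf{The upper bound is the easy direction, and the paper does it differently.} Rather than a chaining refinement of Matthews, the paper uses the isomorphism theorem directly (Theorem~\ref{thm:blanket}): Gaussian concentration on $\sup_v\eta_v$ plugged into \eqref{eq:law} bounds the blanket time, hence the cover time, by $\mathcal C(\E\sup_v\eta_v)^2$ in a page. Your proposed chaining-over-nets argument is in the spirit of the Barlow--Ding--Nachmias--Peres bound \eqref{eq:bdnpbound} and may well work, but it is more laborious than what is required here, and the paper does not take that route.
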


The utility of such a characterization
will become clear soon.
Despite being an intensively studied parameter of graphs,
a number of basic questions involving the cover time have remained open.
 We now highlight two of these,
whose resolution we discuss subsequently.

\medskip
\noindent {\bf The blanket time.} For a node $v \in V$, let $\pi(v)
= \frac{\deg(v)}{2|E|}$ denote the stationary measure of the random
walk, and let $N_v(t)$ be a random variable denoting the number of
times the random walk has visited $v$ up to time $t$. Now define
$\tau_{\mathrm{bl}}^{\circ}(\delta)$ to be the first time $t \geq 1$
at which
\begin{equation}\label{eq:visits}
N_v(t) \geq \delta t\, \pi(v)
\end{equation}
holds for all $v \in V$.
In other words, $\tau_{\mathrm{bl}}^{\circ}(\delta)$ is the first time
at which all nodes have been visited at least
a $\delta$ fraction
as much as we expect at stationarity.
Using the same notation
as in \eqref{eq:covtimedef}, define the {\em $\delta$-blanket time} as
\begin{equation}\label{eq:blanketdef}
t_{\mathrm{bl}}^{\circ}(G,\delta) = \max_{v \in V} \E_v \tau_{\mathrm{bl}}^{\circ}(\delta)\,.
\end{equation}
Clearly for $\delta \in (0,1)$, we have $t_{\mathrm{bl}}^{\circ}(G,\delta) \geq t_{\mathrm{cov}}(G)$.
Winkler and Zuckerman \cite{WZ96} made the following conjecture.

\begin{conjecture}\label{con:WZ}
For every $0 < \delta < 1$, there exists a $C$ such that
for every graph $G$, one has
$$t_{\mathrm{bl}}^{\circ}(G,\delta) \leq C \cdot t_{\mathrm{cov}}(G).$$
In other words, for every fixed $\delta \in (0,1)$, one has $t_{\mathrm{cov}}(G) \asymp t_{\mathrm{bl}}^{\circ}(G,\delta)$.
\end{conjecture}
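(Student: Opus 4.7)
The lower bound $t_{\mathrm{bl}}^{\circ}(G,\delta) \geq t_{\mathrm{cov}}(G)$ is immediate. My plan for the matching upper bound is to combine Theorem~\ref{thm:GFF} with the generalized second Ray--Knight isomorphism theorem, the natural bridge between random-walk local times and the Gaussian free field.

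Fix a base vertex $v_0$, write $\{L_v^s\}_{v \in V}$ for the continuous-time local times of the walk, and let $\tau(s)$ be the first time at which $L_{v_0}^{\tau(s)} = s$. The isomorphism states
\begin{equation*}
\left\{L_v^{\tau(s)} + \tfrac12 \eta_v^2\right\}_{v \in V} \stackrel{d}{=} \left\{\tfrac12 (\eta_v + \sqrt{2s})^2\right\}_{v \in V},
\end{equation*}
with the walk and the field independent on the left, and $\{\eta_v\}$ the GFF with $\eta_{v_0}=0$. Expanding the right-hand side, the process $\{L_v^{\tau(s)}\}_v$ is concentrated around its target value $s$ with fluctuations dominated by $\sqrt{2s}\,\max_v |\eta_v|$ plus quadratic corrections of order $\max_v \eta_v^2$. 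Hence if $s$ is large compared to $M^2 \df (\E \max_v \eta_v)^2$, these fluctuations are small relative to $s$, and every local time will exceed $\delta s$ with good probability.

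Concretely, I would choose $s_\delta = C_\delta M^2$ with $C_\delta$ of order $(1-\sqrt{\delta})^{-2}$. The isomorphism reduces the event ``$L_v^{\tau(s_\delta)} \geq \delta s_\delta$ for all $v$'' to a tail estimate for $\min_v\{\eta_v + \sqrt{2 s_\delta}\}$, and Borell--TIS applied both to this quantity and to $\max_v |\eta_v|$ (to absorb the quadratic correction terms) yields $\P[L_v^{\tau(s_\delta)} \geq \delta s_\delta \text{ for all } v] \geq \tfrac12$, say. Standard local-time calculus gives $\E_{v_0}\tau(s_\delta) \asymp |E|\cdot s_\delta$, and Theorem~\ref{thm:GFF} identifies $|E|\cdot M^2 \asymp t_{\mathrm{cov}}(G)$, so the stopping time $\tau(s_\delta)$ sits at order $t_{\mathrm{cov}}(G)$.

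The last step is to upgrade this ``with good probability'' statement to the expectation bound defining $t_{\mathrm{bl}}^{\circ}(G,\delta)$. A restart argument should suffice: if the $\delta$-blanket condition fails at time $\tau(s_\delta)$, rerun an independent block of length $O_\delta(t_{\mathrm{cov}}(G))$ and iterate; a geometric-trials bound then gives $\E_v \tau_{\mathrm{bl}}^\circ(\delta) = O_\delta(t_{\mathrm{cov}}(G))$. I expect the principal technical obstacles to be (i) a clean continuous-to-discrete-time conversion and normalization of local times so that $L_v^{\tau(s)} \ge \delta s$ really does translate into the discrete-time inequality $N_v(t) \ge \delta t\pi(v)$, (ii) quantitative and $v_0$-uniform control of the $\eta_v^2$ correction terms in the isomorphism (so that they fit inside the $(1{-}\delta)$ slack), and (iii) verifying that previous visit counts accumulated before a restart genuinely help, rather than hinder, the next block in meeting the cumulative blanket inequality.
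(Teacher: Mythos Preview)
Your approach is the paper's: the blanket-time upper bound via Ray--Knight plus Borell--TIS is exactly Theorem~\ref{thm:blanket} (with Theorem~\ref{thm:blanketdiscr} handling your concern (i), the continuous-to-discrete conversion), and the matching lower bound on $t_{\mathrm{cov}}$ is Theorem~\ref{thm:GFF}, which you take as given. One refinement worth noting: rather than a restart argument, the paper takes $t=\beta(\Lambda^2+\sigma^2)$ in the isomorphism and shows that $\P(\tau_{\mathrm{bl}}^{\star}(\delta)\ge \mathcal C t/\sqrt{\delta})$ decays exponentially in $\beta$, by controlling both $\min_v L^v_{\tau(t)}$ and $\max_v L^v_{\tau(t)}$ simultaneously---the upper bound on the max is what pins $\tau(t)\le \mathcal C t/\sqrt{\delta}$ and is needed to convert ``$L^v_{\tau(t)}\ge \sqrt{\delta}\,t$'' into the blanket inequality at the random time $\tau(t)$, and the exponential tail sidesteps your concern (iii) entirely.
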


Kahn, Kim, Lov\'asz, and Vu \cite{KKLV00} showed that for
every fixed $\delta \in (0,1)$,
one can take $C \asymp (\log \log n)^2$
for $n$-node graphs, but whether there is
a universal constant, independent of $n$, remained open
for every value of $\delta > 0$.

In order to bound $t_{\mathrm{bl}}^{\circ}(G,\delta)$, we introduce the following stronger notion.
Let $\tau_{\mathrm{bl}}(\delta)$ be the first time $t \geq 1$
such that for every $u,v \in V$, we have
$$
\frac{N_u(t)/\pi(u)}{N_v(t)/\pi(v)} \geq \delta,
$$
i.e. the first time at which all the values $\{N_u(t)/\pi(u)\}_{u
\in V}$ are within a factor of
 $\delta$.
As in \cite{KKLV00}, we define the {\em strong $\delta$-blanket time} as
$$
t_{\mathrm{bl}}(G,\delta) = \max_{v \in V} \E_v \tau_{\mathrm{bl}}(\delta).
$$
Clearly one has $t_{\mathrm{bl}}^{\circ}(G,\delta) \leq t_{\mathrm{bl}}(G,\delta)$
for every $\delta \in (0,1)$.

\medskip

The second question we highlight is computational in nature.

\begin{question}[\cite{AF,KKLV00}]
\label{ques:AF}
Is there a deterministic, polynomial-time algorithm that approximates
$t_{\mathrm{cov}}(G)$ within a constant factor?
\end{question}

In other words, is there a quantity $A(G)$ which can be computed
deterministically, in polynomial-time in $|V|$,
such that $A(G) \asymp t_{\mathrm{cov}}(G)$.  It is crucial that one asks for
a deterministic procedure, since a randomized algorithm
can simply simulate the chain, and output the empirical
mean of the observed times at which the graph is first covered.
This is guaranteed
to produce an accurate estimate with high-probability
in polynomial time, since
the mean and standard deviation of $\tau_{\mathrm{cov}}$
are $O(|V|^3)$
\cite{AKLLR79}.

A result of Matthews \cite{Matthews88} can be used to
produce a determinisically computable bound which is within
a $\log |V|$ factor of $t_{\mathrm{cov}}(G)$.
Subsequently, \cite{KKLV00}
showed how one could compute a bound which lies
within an $O((\log \log |V|)^2)$ factor
of the cover time.

\medskip

Before we state our main theorem and resolve the preceding questions,
we briefly review the $\gamma_2$ functional
from Talagrand's theory of majorizing measures \cite{Talagrand87,Talagrand96}.

\medskip
\noindent
{\bf Majorizing measures and Gaussian processes.}
Consider a compact
metric space $(X,d)$.  Let $M_0 = 1$ and $M_k = 2^{2^k}$ for $k \geq
1$. For a partition $P$ of $X$ and an element $x \in X$, we will
write $P(x)$ for the unique $S \in P$ containing $x$. An {\em
admissible sequence} $\{A_k\}_{k \geq 0}$ of partitions of $X$ is
such that $A_{k+1}$ is a refinement of $A_k$ for $k \geq 0$, and
$|A_k| \leq M_k$ for all $n \geq 0$.  Talagrand defines the
functional
\begin{equation}\label{eq:gamma2}
\gamma_2(X,d) = \inf \sup_{x \in X} \sum_{k \geq 0} 2^{k/2} \diam(A_k(x)),
\end{equation}
where the infimum is over all admissible sequences $\{A_k\}$.

Consider now a Gaussian process $\{\eta_i\}_{i \in I}$
over some index set $I$.
This is a stochastic process such that every finite
linear combination of random variables is normally distributed.
For the purposes of the present paper,
one may assume that $I$ is finite.
We will assume that all Gaussian processes
are centered,
i.e. $\E (\eta_{i}) = 0$ for all $i \in I$.
The
index set $I$ carries a natural metric which
assigns, for $i,j \in I$,
\begin{equation}\label{eq:gaussmetric}
d(i,j) = \sqrt{\E\,|\eta_i-\eta_j|^2}\,.
\end{equation}
The following result
constitutes a primary consequence of the
majorizing measures theory.

\medskip
\noindent
{\bf Theorem (MM)} (Majorizing measures theorem \cite{Talagrand87}).
For any centered Gaussian process $\{\eta_i\}_{i \in I}$,
$$
\gamma_2(I,d) \asymp \E \sup \left\{ \eta_i : i \in I \right\}.
$$

We remark that the upper bound of the preceding theorem,
i.e. $\E \sup \left\{ \eta_i : i \in I \right\} \leq C \gamma_2(I,d)$
for some constant $C$, goes back to work of Fernique \cite{F1,F2}.
Fernique formulated this result in the language of measures (from
whence the name ``majorizing measures'' arises), while
the formulation of $\gamma_2$ given in \eqref{eq:gamma2} is
due to Talagrand.  The fact that the two notions
are related is non-trivial;
we refer to \cite[\S 2]{Talagrand96}
for a thorough discussion of the connection
between them.

\medskip
\noindent {\bf Commute times, hitting times, and cover times.} In
order to relate the majorizing measure theory to cover times of
graphs, we recall the following natural metric. For any two nodes
$u,v \in V$, use $H(u,v)$ to denote the {\em expected hitting time}
from $u$ to $v$, i.e. the expected time for a random walk started at
$u$ to hit $v$. The {\em expected commute time} between two nodes
$u,v \in V$ is then defined by
\begin{equation}\label{eq:commutetime}
\kappa(u,v) = H(u,v) + H(v,u).
\end{equation}
It is immediate that $\kappa(u,v)$ is a metric on any finite,
connected graph. A well-known fact \cite{CRRST96} is that
$\kappa(u,v) = 2 |E| \,\reff(u,v)$, where $\reff(u,v)$ is the {\em
effective resistance} between $u$ and $v$, when $G$ is considered as
an electrical network with unit conductances on the edges. We now
restate our main result in terms of majorizing measures. For a
metric $d$, we write $\sqrt{d}$ for the distance $\sqrt{d}(u,v) =
\sqrt{d(u,v)}$.

\begin{theorem}[Cover times, blanket times, and majorizing measures]\label{thm:mainsimple}
For any graph $G=(V,E)$ and any $0 < \delta < 1$, we have
$$
t_{\mathrm{cov}}(G) \asymp \left[\gamma_2(V, \sqrt{\kappa})\right]^2 =
|E| \cdot \left[\gamma_2(V,\sqrt{\reff})\right]^2 \asymp_{\delta} t_{\mathrm{bl}}(G,\delta),
$$
where $\asymp_{\delta}$ denotes equivalence up to a constant
depending on $\delta$.
\end{theorem}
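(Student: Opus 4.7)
The theorem packages two logically distinct claims: (a)~the equivalence $t_{\mathrm{cov}}(G)\asymp [\gamma_2(V,\sqrt{\kappa})]^2=|E|\,[\gamma_2(V,\sqrt{\reff})]^2$, and (b)~the blanket-time equivalence $t_{\mathrm{cov}}(G)\asymp_\delta t_{\mathrm{bl}}(G,\delta)$. Part~(a) follows essentially from Theorem~\ref{thm:GFF} combined with Theorem~(MM); part~(b), which resolves Conjecture~\ref{con:WZ}, is where the real work lies.

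\textbf{Part (a).} By the defining property $\E(\eta_u-\eta_v)^2=\reff(u,v)$, the intrinsic Gaussian metric \eqref{eq:gaussmetric} of the GFF equals $\sqrt{\reff}$. Theorem~(MM) therefore yields $\E\max_{v\in V}\eta_v\asymp\gamma_2(V,\sqrt{\reff})$, which combined with Theorem~\ref{thm:GFF} gives $t_{\mathrm{cov}}(G)\asymp|E|\,[\gamma_2(V,\sqrt{\reff})]^2$. For the middle equality, $\kappa=2|E|\,\reff$ forces $\sqrt{\kappa}=\sqrt{2|E|}\,\sqrt{\reff}$; since each $\diam(A_k(x))$ in \eqref{eq:gamma2} is positively homogeneous in the metric, so is $\gamma_2$, and squaring recovers the equality up to the universal factor $2$.

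\textbf{Part (b).} The lower bound $t_{\mathrm{bl}}(G,\delta)\geq t_{\mathrm{cov}}(G)$ is immediate for $\delta\in(0,1)$. For the upper bound my plan is to invoke the generalized second Ray--Knight isomorphism, which (with the appropriate normalization) identifies the local-time field $\{L^v_{T_\ell}\}_{v\in V}$ at the inverse local time $T_\ell$ of a fixed root $v_0$ with $\{\tfrac12(\eta_v+\sqrt{2\ell})^2\}_{v\in V}$, where $\eta_{v_0}=0$. Choose $\ell=K(\delta)(\E\max_v\eta_v)^2$ with $K(\delta)$ sufficiently large; the Borell--Sudakov--Tsirelson concentration inequality applied to $\max_v|\eta_v|$ (whose expectation is $\asymp\E\max_v\eta_v$ by the symmetry $\eta\stackrel{d}{=}-\eta$) forces $\max_v|\eta_v|\ll\sqrt{2\ell}$ with high probability, so expanding the square yields $L^v_{T_\ell}/\E L^v_{T_\ell}=1\pm o(1)$ simultaneously in $v$. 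Normalizing by $\pi$ this produces $\tau_{\mathrm{bl}}(\delta)\leq T_\ell$, and $\E_{v_0}T_\ell\leq C\,\ell\,|E|\asymp_\delta t_{\mathrm{cov}}(G)$ by part~(a).

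\textbf{Main obstacle.} The hardest step is converting the random-time, high-probability bound $\tau_{\mathrm{bl}}(\delta)\leq T_\ell$ of part~(b) into a genuine bound on $\max_v\E_v\tau_{\mathrm{bl}}(\delta)$. This requires both concentration of $T_\ell$ around its mean (straightforward, since $T_\ell$ is a sum of $\ell$ i.i.d.\ returns to $v_0$) and a tail estimate on $\tau_{\mathrm{bl}}$, bootstrapped via the strong Markov property applied at multiples of $C(\delta)\,t_{\mathrm{cov}}(G)$: repeated failure of the blanket condition in successive disjoint windows of length $C(\delta)\,t_{\mathrm{cov}}(G)$ is geometrically rare by independence, yielding $\E_v\tau_{\mathrm{bl}}(\delta)=O(C(\delta)\,t_{\mathrm{cov}}(G))$ uniformly in the starting vertex $v$.
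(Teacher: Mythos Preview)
Your proposal has a circularity problem that undermines Part~(a). In this paper, Theorem~\ref{thm:GFF} is \emph{derived from} Theorem~\ref{thm:mainsimple} (see the sentence immediately following the statement of Theorem~\ref{thm:mainsimple}: ``by combining Theorem~\ref{thm:mainsimple} with Theorem~(MM), we obtain Theorem~\ref{thm:GFF}''). You cannot invoke Theorem~\ref{thm:GFF} to prove Theorem~\ref{thm:mainsimple}. More importantly, you have misidentified where the real work lies. The chain of inequalities needed is
\[
t_{\mathrm{cov}}(G)\;\leq\; t_{\mathrm{bl}}(G,\delta)\;\lesssim_\delta\; |E|\,[\gamma_2(V,\sqrt{\reff})]^2\;\lesssim\; t_{\mathrm{cov}}(G).
\]
The first inequality is trivial; the second is your Part~(b) and is indeed proved essentially as you sketch (Ray--Knight plus Gaussian concentration; see Theorems~\ref{thm:blanket} and~\ref{thm:blanketdiscr}). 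The genuinely hard step is the \emph{third} inequality, the lower bound $t_{\mathrm{cov}}(G)\gtrsim |E|\,[\gamma_2]^2$, and this is precisely what you are assuming when you cite Theorem~\ref{thm:GFF}. The paper devotes all of Section~\ref{sec:cover} to it: one must show that for $t\asymp\gamma_2^2$ there is, with uniformly positive probability, some vertex $v$ with $L^v_{\tau(t)}=0$. Via the isomorphism this amounts to showing that some $\eta_v$ falls into a window around $-\sqrt{2t}$ that is \emph{much smaller} than its standard deviation, which cannot be read off from concentration alone. The argument requires extracting a tree-like subprocess from the GFF (Section~\ref{sec:treelike}), a delicate filtration and second-moment/percolation analysis on that tree (Sections~\ref{sec:coupling}--\ref{sec:percolation}), and a first-moment bound ruling out small positive local times (Section~\ref{sec:localtimes}).

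Two smaller points on your Part~(b) sketch. First, the isomorphism does not identify $\{L^v_{\tau(\ell)}\}$ with $\{\tfrac12(\eta_v+\sqrt{2\ell})^2\}$; there is an independent $\tfrac12\eta_v^2$ added on the local-time side (see \eqref{eq:law}), and both copies of $\eta$ must be controlled. Second, $t_{\mathrm{bl}}(G,\delta)$ is defined for the \emph{discrete}-time walk via visit counts $N_v(t)$, whereas Ray--Knight controls continuous local times; bridging this requires an additional large-deviation step (Lemma~\ref{lem:dev} and Theorem~\ref{thm:blanketdiscr}), which your sketch omits. These are fixable, but the missing cover-time lower bound is the substantive gap.
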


Clearly this
yields a positive resolution to Conjecture \ref{con:WZ}.
Moreover, we prove the preceding theorem in the setting
of general finite-state reversible Markov chains.
See Theorem \ref{thm:mainthm} for a statement
of our most general theorem.

\medskip

We now address some additional consequences of the main theorem.
First, observe that by combining Theorem \ref{thm:mainsimple} with Theorem (MM),
we obtain Theorem \ref{thm:GFF}.

\begin{theorem}[Cover times and the Gaussian free field]
For any graph $G=(V,E)$ and any $0 < \delta < 1$, we have
$$
t_{\mathrm{cov}}(G) \asymp |E| \left(\E \max_{v \in V} \eta_v\right)^2 \asymp_{\delta} t_{\mathrm{bl}}(G,\delta),
$$
where $\{\eta_v\}$ is the Gaussian free field on $G$.
\end{theorem}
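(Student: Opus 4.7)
The plan is to obtain this result as an immediate corollary of Theorem \ref{thm:mainsimple} combined with the majorizing measures theorem (MM), exactly as the narrative preceding the statement suggests. The content is not in any new inequality but in identifying the correct metric structure.

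The first step is to pin down the natural Gaussian metric on $V$ determined by the GFF. By the defining property of $\{\eta_v\}_{v \in V}$ recalled earlier, for every $u, v \in V$,
$$d(u,v) = \sqrt{\E(\eta_u - \eta_v)^2} = \sqrt{\reff(u,v)}.$$
Thus the index set metric from \eqref{eq:gaussmetric} coincides precisely with $\sqrt{\reff}$ on $V$.

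The second step is to invoke Theorem (MM) for the centered Gaussian process $\{\eta_v\}_{v \in V}$, which yields
$$\E \max_{v \in V} \eta_v \asymp \gamma_2(V, \sqrt{\reff}).$$
Squaring and multiplying by $|E|$ gives
$$|E| \left(\E \max_{v \in V} \eta_v\right)^2 \asymp |E| \cdot [\gamma_2(V, \sqrt{\reff})]^2.$$

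The third step is to apply Theorem \ref{thm:mainsimple}, which identifies the right-hand side above with $t_{\mathrm{cov}}(G)$ up to universal constants, and further with $t_{\mathrm{bl}}(G,\delta)$ up to constants depending on $\delta$. Chaining these equivalences yields the claimed two-sided statement.

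There is no real obstacle here; the proof is a direct assembly. The substantive work has been done upstream: the MM theorem provides the bridge between Gaussian suprema and the $\gamma_2$ functional, while Theorem \ref{thm:mainsimple} provides the bridge between $\gamma_2(V,\sqrt{\reff})$, the cover time, and the blanket time. The only thing to verify carefully is the identification $d = \sqrt{\reff}$, which is immediate from the definition of the GFF.
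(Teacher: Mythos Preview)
Your proposal is correct and follows exactly the approach the paper takes: the theorem is stated as an immediate consequence of combining Theorem~\ref{thm:mainsimple} with Theorem~(MM), using the identification $d=\sqrt{\reff}$ for the GFF metric. There is nothing to add.
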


In fact, in Section \ref{sec:gffstrong}, we exhibit the following strong asymptotic upper bound.

\begin{theorem}
For every graph $G=(V,E)$, if $t_{\mathrm{hit}}(G)$ denotes the maximal hitting time in $G$, and $\{\eta_v\}_{v \in V}$
is the Gaussian free field on $G$, then
\[ t_{\mathrm{cov}}(G) \leq \left(1+ C\sqrt{\frac{t_{\mathrm{hit}}(G)}{t_{\mathrm{cov}}(G)}}\,\right) \cdot |E| \cdot \left(\E \sup_{v\in V} \eta_v\right)^2\,,\]
where $C > 0$ is a universal constant.
\end{theorem}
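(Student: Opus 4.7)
The plan is to sharpen the upper bound in Theorem~\ref{thm:mainsimple} by combining the generalized second Ray-Knight isomorphism (already the key ingredient in the proof of Theorem~\ref{thm:mainsimple}) with the Borell-TIS Gaussian concentration inequality. Fix a reference vertex $v_0 \in V$ and consider the continuous-time simple random walk from $v_0$, with local times $L_v^t$ and inverse local time $\tau(s) := \inf\{t : L_{v_0}^t = s\}$. In the normalization used by the paper, the Ray-Knight identity gives the exact equality
\[
P_{v_0}(\tau_{\mathrm{cov}} > \tau(s)) \;=\; P\!\left(\max_{v\in V} \eta_v \ge \sqrt{s}/2\right),
\]
where $\{\eta_v\}$ is the GFF on $G$ with $\eta_{v_0}=0$. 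Noting that $L_{v_0}^{\tau_{\mathrm{cov}}}$ is a stopping time for the subordinator $s\mapsto\tau(s)$ and that $\E_{v_0}\tau(s) = s/\pi(v_0)$, a Wald-type identity together with the substitution $u=\sqrt{s}/2$ gives, schematically,
\[
\E_{v_0}\tau_{\mathrm{cov}} \;=\; \frac{1}{\pi(v_0)}\int_0^\infty P(\max_v\eta_v \ge \sqrt{s}/2)\,ds \;=\; \frac{4}{\pi(v_0)}\,\E\!\left[(\max_v\eta_v)_+^2\right].
\]
Matching $1/\pi(v_0)$ against the normalization $|E|$ of Theorem~\ref{thm:mainsimple} reduces the task to controlling $\E[(\max\eta)_+^2]$.

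Sharpness then comes from Borell-TIS: the variance proxy of $\max_v\eta_v$ is $\sigma^2 := \max_v\E\eta_v^2 = \max_v\reff(v,v_0)$. Using $\kappa(u,v) = 2|E|\reff(u,v) \le 2\,t_{\mathrm{hit}}(G)$ we obtain $\sigma^2 \le t_{\mathrm{hit}}(G)/|E|$, while Theorem~\ref{thm:mainsimple} gives $\E\max\eta \asymp \sqrt{t_{\mathrm{cov}}(G)/|E|}$, so
\[
\frac{\sigma}{\E\max\eta} \;\lesssim\; \sqrt{\frac{t_{\mathrm{hit}}(G)}{t_{\mathrm{cov}}(G)}}.
\]
Combined with the elementary bound $\E[(\max\eta)_+^2] \le (\E\max\eta)^2 + \var(\max\eta) \le (\E\max\eta)^2\bigl(1 + O((\sigma/\E\max\eta)^2)\bigr)$, substituting back yields
\[
\E_{v_0}\tau_{\mathrm{cov}} \;\le\; |E|\,(\E\max\eta)^2\!\left(1 + O\bigl(\sqrt{t_{\mathrm{hit}}/t_{\mathrm{cov}}}\bigr)\right).
\]
Passing from $\E_{v_0}\tau_{\mathrm{cov}}$ to $t_{\mathrm{cov}} = \max_{v_0}\E_{v_0}\tau_{\mathrm{cov}}$ costs at most an additive $O(t_{\mathrm{hit}})$ term via the strong Markov property, which is absorbed in the claimed error.

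The main obstacle will be bookkeeping with constants: matching the Ray-Knight normalization so that the prefactor $4/\pi(v_0)$ aligns with $|E|$ up to lower-order terms, justifying the Wald-type identity for the subordinator $\tau(\cdot)$ stopped at the random local-time-at-cover $L_{v_0}^{\tau_{\mathrm{cov}}}$, and lining up leading constants so that the refinement $1 + C\sqrt{t_{\mathrm{hit}}/t_{\mathrm{cov}}}$ is genuinely visible rather than being swallowed by a multiplicative $O(1)$ factor. A secondary issue is the dependence on the choice of reference vertex $v_0$, but since we only need the estimate from some $v_0$ and may pay a $t_{\mathrm{hit}}$ correction to reach it, this does not affect the leading-order constant.
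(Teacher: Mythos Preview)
Your proposal hinges on the claimed exact identity
\[
P_{v_0}(\tau_{\mathrm{cov}} > \tau(s)) \;=\; P\!\left(\max_{v\in V} \eta_v \ge c\sqrt{s}\right),
\]
and this is the genuine gap. The generalized second Ray--Knight theorem (Theorem~\ref{thm:rayknight}) only gives equality in law between $\{L^v_{\tau(t)} + \tfrac12\eta_v^2\}$ and $\{\tfrac12(\eta_v+\sqrt{2t})^2\}$, with the Gaussian process on the left \emph{independent} of the local times. The event $\{\tau_{\mathrm{cov}} > \tau(t)\} = \{\exists v : L^v_{\tau(t)} = 0\}$ concerns only the local-time component, and there is no direct way to read it off from the coupled sum $L^v_{\tau(t)} + \tfrac12\eta_v^2$: when $L^v_{\tau(t)}=0$ the left side equals $\tfrac12\eta_v^2$, which is a.s.\ positive, so one cannot simply match it against the zero-set of the right side. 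Extracting $\{L^v=0\}$ information from the isomorphism is in fact the central difficulty of the whole paper --- the entirety of Section~\ref{sec:cover} is devoted to a delicate second-moment argument to get even the constant-factor lower bound. No such exact identity is proved (or provable from what is stated here), and without it your integral representation of $\E_{v_0}\tau_{\mathrm{cov}}$ collapses.

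The paper's proof proceeds quite differently: it fixes $t = \tfrac12(\E\max_v\eta_v + \beta\sigma)^2$ and separately controls two quantities via the isomorphism. First, writing $\tau(t) = \sum_v d_v L^v_{\tau(t)}$, the isomorphism gives a stochastic upper bound $\tau(t) \preceq 2|E|t + \sqrt{2t}\,|Q| + \tfrac12 S$ with $Q = \sum_v d_v\eta_v$ and $S = \sum_v d_v\eta_v^2$; tail bounds on $Q$ (Gaussian) and $S$ (third-moment Markov) then control $\tau(t)$. Second, $P(\tau_{\mathrm{cov}} > \tau(t))$ is bounded by inserting a threshold $\beta\sigma^2/2$ for $\eta_v^2$, passing through the isomorphism, and applying Borell--TIS. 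Combining the two tail bounds and integrating in $\beta$ gives the claimed inequality. The pieces of your argument that survive --- the role of $\sigma^2 \asymp t_{\mathrm{hit}}/|E|$ and the Borell--TIS control of fluctuations of $\max_v\eta_v$ --- are indeed the mechanism behind the $\sqrt{t_{\mathrm{hit}}/t_{\mathrm{cov}}}$ correction, but they enter through this tail-bound route rather than through a Wald identity.
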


In Section \ref{sec:mm}, we prove the following theorem
which, in conjunction with Theorem \ref{thm:mainsimple},
resolves Question \ref{ques:AF}.

\begin{theorem}
\label{thm:algorithm}
Let $(X,d)$ be a finite metric space, with $n=|X|$.
If, for any two points $x,y \in X$, one can deterministically compute
$d(x,y)$ in time polynomial in $n$, then one can deterministically compute
a number
$A(X,d)$ in polynomial time, for which
$$
A(X,d) \asymp \gamma_2(X,d).
$$
\end{theorem}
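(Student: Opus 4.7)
The plan is to output the value $A(X,d) = \sup_{x \in X} \sum_{k \geq 0} 2^{k/2} \diam(A_k(x))$ of a specific admissible sequence $\{A_k\}$ constructed by a deterministic greedy algorithm. Since $\gamma_2$ is defined as an infimum over admissible sequences, the bound $\gamma_2(X,d) \leq A(X,d)$ is immediate. The content of the theorem then reduces to (i) specifying a greedy construction computable in polynomial time, and (ii) proving $A(X,d) \leq C\, \gamma_2(X,d)$ for a universal constant $C$.

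The construction follows Talagrand's partitioning theorem. Start with $A_0 = \{X\}$. Given $A_k$, refine each cell $S \in A_k$ into at most $N_k = M_{k+1}/M_k$ sub-cells as follows: fix a scale $r_S$ proportional to $\diam(S)$ at an appropriate dyadic level; then iteratively pick the point $x^* \in S$ maximizing $|S \cap B(x^*, r_S)|$ restricted to currently uncovered points, declaring $S \cap B(x^*, r_S)$ a new sub-cell, and repeat $N_k - 1$ times; finally pool any leftovers into one last sub-cell. Since $|A_k| \cdot N_k = M_{k+1}$, the sequence is admissible. One refinement requires $O(n^2)$ evaluations of $d$, and we need only run up to $k \leq \log_2 \log_2 n$, after which $M_k \geq n$ forces every cell to be a singleton and the remaining sum contributes zero diameter. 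Hence the algorithm runs in polynomial time in $n$ under the stated access to $d$, and $A(X,d)$ can be evaluated by a single pass over the (polynomially many) cells.

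The main technical input is the lower bound $A(X,d) \leq C\, \gamma_2(X,d)$. This is proved using Talagrand's equivalent characterization of $\gamma_2$ via a \emph{growth-condition functional}: any functional $F$ on subsets of $X$ satisfying $F(S) \geq c\, r \sqrt{\log m} + \min_i F(S \cap B(x_i, r))$ whenever $x_1, \ldots, x_m \in S$ are pairwise $2r$-separated yields an admissible sequence with value $O(F(X))$, and conversely some such $F$ realizes $\gamma_2(X,d)$ up to universal constants. One then shows by induction on $k$ that the greedy ball-extraction at level $k$ pays no more than a constant multiple of what the optimal $F$ pays at the same scale: the volume-maximizing ball $S \cap B(x^*, r_S)$ captures a constant fraction of $S$, so the leftover sub-cell, even if it inherits the full diameter of $S$, is refined at scale $r_{S'} \sim r_S / 2$ at the next level, and the excess telescopes in a geometric series whose sum contributes at most a constant factor per level.

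The chief obstacle is exactly this telescoping comparison: the greedy rule is only locally optimal, and a leftover cell of large diameter could a priori accumulate cost across many levels. Bounding the accumulated loss requires a careful choice of $r_S$ (so that volume-extraction at scale $r_S$ is matched against separation at scale $\sim r_S$) together with an amortization argument in the style of Talagrand's functional calculus, applied uniformly across the $O(\log \log n)$ relevant levels. Once this comparison is in place, the constant-factor approximation follows, and combining it with Theorem \ref{thm:mainsimple} resolves Question \ref{ques:AF}.
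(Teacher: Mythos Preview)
Your approach has a genuine gap at its core. You propose a \emph{volume-maximizing} greedy refinement---pick the ball $B(x^*,r_S)$ containing the most uncovered points---and then claim the resulting admissible sequence has value within a constant of $\gamma_2(X,d)$. But Talagrand's greedy partitioning argument, which is what actually underlies the growth-condition characterization you cite, does \emph{not} select by volume: it selects by the values of the functional $F$ itself (roughly, one peels off the ball on which $F$ is smallest at the next finer scale). Without access to such a functional, a purely volume-based greedy can be repeatedly fooled. Concretely, your claim that ``the volume-maximizing ball captures a constant fraction of $S$'' is false in general for any fixed $r_S < \diam(S)$, and even when it holds, the leftover cell retains the full diameter of $S$, so the same scenario recurs at the next level with no guaranteed shrinkage. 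Your telescoping sketch (``the excess telescopes in a geometric series'') assumes exactly the diameter decay that the volume-greedy fails to provide; this is the step that would fail, not merely a detail to be filled in.

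The paper sidesteps this obstacle by \emph{not} constructing an admissible sequence directly. Instead it builds the growth-condition functional itself, bottom-up: it defines $\varphi_0,\varphi_1,\ldots,\varphi_M$ recursively over $O(n^2)$ distance scales, where the net at scale $r^j$ is chosen greedily in \emph{decreasing order of $\varphi_{j-2}$}---so the algorithm has access to approximate functional values at finer scales when it makes its choices at scale $r^j$. One then checks that $\{\varphi_j\}$ satisfies the hypotheses of Theorem~\ref{thm:T5}, yielding $\gamma_2(X,d)\lesssim \varphi_M(x_0)$, and separately unrolls the recursion into a $(1,r,8,\tfrac16)$-tree whose size witnesses $\varphi_M(x_0)\lesssim\gamma_2(X,d)$ via Lemma~\ref{lem:mmlower}. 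The output is $A(X,d)=\varphi_M(x_0)$. The key difference from your proposal is that the greedy rule is informed by the recursively computed functional, not by cardinality; this is precisely what makes the comparison to the optimal $F$ go through.
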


\remove{
Turning to Conjecture \ref{con:WZ}, we note
that our proof
of Theorem \ref{thm:mainsimple}
relies on the following theorem.

\begin{theorem}\label{thm:blanketsimple}
There exists a constant $C$
such that for any graph $G=(V,E)$,
$$
t_{\mathrm{bl}}(G) \leq C \cdot \left[\gamma_2(V,\sqrt{\kappa})\right]^2.
$$
\end{theorem}

Together, Theorem \ref{thm:mainsimple} and Theorem \ref{thm:blanketsimple}

\medskip}

A ``comparison theorem''
follows immediately from Theorem \ref{thm:mainsimple},
and the fact that $\gamma_2(X,d) \leq L \gamma_2(X,d')$ whenever $d \leq L d'$
(see \eqref{eq:gamma2}).

\begin{theorem}[Comparison theorem for cover times]
Suppose $G$ and $G'$ are two graphs on the same set of nodes $V$,
and $\kappa_G$ and $\kappa_{G'}$ are the distances induced by
respective commute times. If there exists a number $L \geq 1$ such
that $\kappa_G(u,v) \leq L \cdot \kappa_{G'}(u,v)$ for all $u,v \in
V$, then
$$
t_{\mathrm{cov}}(G) \leq O(L) \cdot t_{\mathrm{cov}}(G')\,.
$$
\end{theorem}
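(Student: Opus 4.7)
The plan is to apply Theorem \ref{thm:mainsimple} to both $G$ and $G'$ and then exploit the monotonicity of the $\gamma_2$ functional under pointwise scaling of the metric. The proof is essentially a one-line derivation, so the main task is to correctly track the square root that appears in the characterization of the cover time.

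First, I would invoke Theorem \ref{thm:mainsimple} to write
\[ t_{\mathrm{cov}}(G) \asymp \bigl[\gamma_2(V,\sqrt{\kappa_G})\bigr]^2 \qquad \text{and} \qquad t_{\mathrm{cov}}(G') \asymp \bigl[\gamma_2(V,\sqrt{\kappa_{G'}})\bigr]^2. \]
Next, the hypothesis $\kappa_G(u,v) \leq L \cdot \kappa_{G'}(u,v)$ yields the pointwise bound $\sqrt{\kappa_G(u,v)} \leq \sqrt{L}\cdot\sqrt{\kappa_{G'}(u,v)}$ on $V \times V$. Inspecting the definition \eqref{eq:gamma2}, it is immediate that $\gamma_2$ is monotone in the metric and homogeneous of degree one under scaling: any admissible sequence of partitions that is good for $(V,\sqrt{\kappa_{G'}})$ is also admissible for $(V,\sqrt{\kappa_G})$, and each of its diameters only shrinks by a factor of at most $\sqrt{L}$. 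Hence
\[ \gamma_2(V,\sqrt{\kappa_G}) \leq \sqrt{L}\cdot \gamma_2(V,\sqrt{\kappa_{G'}}). \]
Squaring and combining with the two equivalences above gives $t_{\mathrm{cov}}(G) \leq O(L) \cdot t_{\mathrm{cov}}(G')$, after absorbing the two universal constants from Theorem \ref{thm:mainsimple}.

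There is no real obstacle, since both ingredients are already in hand: Theorem \ref{thm:mainsimple} converts cover times to a Talagrand functional, and the $\gamma_2$ functional is transparently monotone in the metric. The only point worth mentioning is that the factor $L$ in the conclusion (as opposed to $\sqrt{L}$ or $L^2$) is the combined effect of (i) passing from $\kappa$ to $\sqrt{\kappa}$, which turns an $L$-comparison of commute times into a $\sqrt{L}$-comparison of the metrics $\sqrt{\kappa_G}, \sqrt{\kappa_{G'}}$, and (ii) squaring the $\gamma_2$ value to recover the cover time, which restores the factor $L$.
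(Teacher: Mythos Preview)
Your argument is correct and matches the paper's own justification essentially verbatim: the paper states that the comparison theorem ``follows immediately from Theorem \ref{thm:mainsimple}, and the fact that $\gamma_2(X,d) \leq L\,\gamma_2(X,d')$ whenever $d \leq L d'$.'' Your tracking of the $\sqrt{L}$ at the metric level and the recovery of $L$ upon squaring is exactly the right bookkeeping.
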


Finally, our work implies that there is an extremely
simple randomized algorithm for computing the cover time
of a graph, up to constant factors.
To this end, consider a graph $G=(V,E)$ whose vertex
set we take to be $V = \{1,2,\ldots,n\}$.  Let $D$
be the diagonal degree matrix, i.e. such that $D_{ii}=\deg(i)$
and $D_{ij}=0$ for $i \neq j$, and let $A$ be the adjacency
matrix of $G$.  We define the following normalized Laplacian,
$$
L_G = \frac{D-A}{\tr(D)}\,.
$$
Let $L_G^+$ denote the Moore-Penrose peudoinverse of $L_G$.
Note that both $L_G$ and $L_G^+$ are positive semi-definite.
We have the following characterization.

\begin{theorem}\label{thm:pseudo}
For any connected graph $G$, it holds that
$$
t_{\mathrm{cov}}(G) \asymp \E \,\left\|\sqrt{L_G^+}\, g\right\|^2_{\infty},
$$
where $g=(g_1, \ldots, g_n)$ is an $n$-dimensional Gaussian,
i.e. such that $\{g_i\}$ are i.i.d. N(0,1) random variables.
\end{theorem}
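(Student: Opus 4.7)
The plan is to recognize $\sqrt{L_G^+}\,g$ as, up to centering and a scalar rescaling, the Gaussian free field $\{\eta_v\}$ on $G$, and then to quote Theorem~\ref{thm:GFF}. First I set $\zeta = \sqrt{L_G^+}\,g$, a centered Gaussian vector in $\R^V$ with covariance matrix $L_G^+$. Writing $L = D-A$ for the combinatorial Laplacian, we have $L_G^+ = \tr(D)\cdot L^+ = 2|E|\cdot L^+$, so the standard electrical identity $\reff(u,v) = (e_u-e_v)^T L^+ (e_u-e_v)$ gives $\E[(\zeta_u-\zeta_v)^2] = 2|E|\cdot\reff(u,v)$. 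Fixing any $v_0 \in V$, the process $\{(\zeta_v-\zeta_{v_0})/\sqrt{2|E|}\}_{v \in V}$ is then a centered Gaussian process pinned at $v_0$ with increment variance $\reff(u,v)$, so it has the same law as the GFF $\{\eta_v\}$. Since $\E\,\zeta_{v_0}=0$, this gives $\E\max_v\zeta_v = \sqrt{2|E|}\cdot\E\max_v\eta_v$, and the symmetry $\zeta\stackrel{d}{=}-\zeta$ also yields $\E\max_v|\zeta_v|\asymp\E\max_v\zeta_v$.

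What remains is to pass from first to second moment, i.e.\ to show $\E\,\|\zeta\|_\infty^2 \asymp (\E\max_v\zeta_v)^2$. The lower bound is immediate from Jensen. For the upper bound, the Gaussian concentration inequality for suprema gives $\var(\max_v|\zeta_v|)\le\max_v\var(\zeta_v)$, so it suffices to check $\max_v\var(\zeta_v)\lesssim (\E\max_v\zeta_v)^2$. Here the annoyance is that $L_G^+\mathbf{1}=0$, so $\sum_v\zeta_v=0$ almost surely and $\zeta$ lives on a hyperplane rather than being pinned at a single vertex; writing $\zeta_{v_0} = -\tfrac{1}{n}\sum_v(\zeta_v-\zeta_{v_0})$ and applying Cauchy--Schwarz yields $\var(\zeta_{v_0})\le 2|E|\max_u\reff(u,v_0)$, and the $L^2$ triangle inequality then gives $\max_v\var(\zeta_v)\lesssim |E|\cdot\diam(V,\reff)$. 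On the other hand, Theorem~(MM) together with the trivial bound $\gamma_2\gtrsim\diam$ gives $(\E\max_v\eta_v)^2 \gtrsim \diam(V,\sqrt{\reff})^2 = \diam(V,\reff)$, hence $(\E\max_v\zeta_v)^2 = 2|E|(\E\max_v\eta_v)^2 \gtrsim |E|\,\diam(V,\reff)$, precisely the absorption we need.

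Combining everything with Theorem~\ref{thm:GFF} then gives
\[
\E\,\|\sqrt{L_G^+}\,g\|_\infty^2 \;\asymp\; (\E\max_v\zeta_v)^2 \;=\; 2|E|\,(\E\max_v\eta_v)^2 \;\asymp\; t_{\mathrm{cov}}(G).
\]
The hard part will be exactly the variance-absorption step above: unlike the GFF, the process $\sqrt{L_G^+}g$ is constrained to the zero-sum hyperplane rather than pinned at a single vertex, so $\var(\zeta_v)$ is not controlled by a single effective resistance, and one must combine the zero-sum identity with the majorizing-measures lower bound $\gamma_2 \gtrsim \diam$ to close the loop.
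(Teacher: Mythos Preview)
Your proof is correct and follows essentially the same route as the paper: identify the coordinates of $\sqrt{L_G^+}\,g$ as a centered Gaussian process with increment variance $\kappa(i,j)=2|E|\,\reff(i,j)$, invoke the main equivalence (the paper cites Theorem~(MM) and Theorem~\ref{thm:mainthm}, which together give your Theorem~\ref{thm:GFF}), and then pass from first to second moment via Gaussian concentration.

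One remark: you work harder than necessary on the variance-absorption step. The zero-sum constraint and the $\gamma_2\gtrsim\diam$ bound are not needed. For any fixed $v$ one has the trivial pointwise bound
\[
\E\,\|\zeta\|_\infty \;\ge\; \E\,|\zeta_v| \;=\; \sqrt{2/\pi}\,\sqrt{\var(\zeta_v)},
\]
so $\max_v \var(\zeta_v) \le \tfrac{\pi}{2}\,(\E\,\|\zeta\|_\infty)^2$ directly, and Lemma~\ref{lem-gaussian-concentration} then gives $\E\,\|\zeta\|_\infty^2 \asymp (\E\,\|\zeta\|_\infty)^2$. This is exactly the argument the paper uses earlier as~\eqref{eq-Lambda-sigma-relation}; the fact that $\zeta$ lives on the zero-sum hyperplane rather than being pinned at a vertex plays no role.
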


The preceding theorem yields an $O(n^{\omega})$-time randomized algorithm
for approximating $t_{\mathrm{cov}}(G)$, where $\omega \in [2,2.376)$
is the best-possible exponent for matrix multiplication \cite{CW90}.
Using the linear-system solvers of Spielman and Teng \cite{ST06}
(see also \cite{SpielmanICM}), along with ideas from
Spielman and Srivistava \cite{SS08},
we present an algorithm that runs in near-linear time
in the number of edges of $G$.

\begin{theorem}[Near-linear time randomized algorithm]
\label{thm:randalg}
There is a randomized algorithm which, given an $m$-edge connected graph $G=(V,E)$,
runs in time $O(m (\log m)^{O(1)})$ and outputs
a number $A(G)$ such that $t_{\mathrm{cov}}(G) \asymp \mathbb E\left[A(G)\right] \asymp (\E\left[A(G)^2\right])^{1/2}$.
\end{theorem}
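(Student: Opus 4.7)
The plan is to combine Theorem \ref{thm:pseudo} with the near-linear time Laplacian system solvers of Spielman and Teng \cite{ST06} and the sampling identity exploited by Spielman and Srivastava \cite{SS08}. By Theorem \ref{thm:pseudo}, it suffices to produce in near-linear time a random variable $A(G)$ that is approximately distributed as $\|\sqrt{L_G^+}\,g\|_\infty^2$ for $g \sim N(0, I_n)$. Let $L = D - A$ denote the unnormalized graph Laplacian, so that $L_G^+ = 2|E|\cdot L^+$, and let $B$ be the signed edge-vertex incidence matrix of $G$, so that $L = B^T B$. For $Z \sim N(0, I_{|E|})$, the vector $L^+ B^T Z$ is centered Gaussian with covariance $L^+ B^T B L^+ = L^+ L L^+ = L^+$; hence $y \df \sqrt{2|E|}\, L^+ B^T Z$ is distributed as $N(0, L_G^+)$, and $\|y\|_\infty^2$ matches in law the quantity appearing in Theorem \ref{thm:pseudo}.

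The algorithm first samples $Z \sim N(0, I_{|E|})$ and computes $w = B^T Z$, both in time $O(|E|)$. It then invokes a Spielman-Teng solver to produce $\tilde x$ satisfying $\|\tilde x - L^+ w\|_L \leq \eps \|L^+ w\|_L$ in time $O(|E|(\log |E|)^{O(1)}\log(1/\eps))$, and returns $A(G) \df 2|E| \cdot \|\tilde x\|_\infty^2$. Choosing $\eps$ to be a sufficiently small inverse polynomial in $|E|$ and combining the solver's energy-norm guarantee with the crude inequality $\|\cdot\|_\infty \leq \sqrt{R_{\max}}\,\|\cdot\|_L$ (where $R_{\max} \leq |E|$ is the largest effective resistance in $G$) ensures that $\|\tilde x\|_\infty^2 = (1 \pm o(1))\|L^+ w\|_\infty^2$ with overwhelming probability. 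This is the main technical point: the solver controls the error only in the $L$-energy norm, so a polynomial loss must be absorbed by shrinking $\eps$, but since the Spielman-Teng runtime depends only logarithmically on $1/\eps$, the total running time remains $O(|E|(\log|E|)^{O(1)})$.

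It remains to verify the moment condition $\E[A(G)] \asymp (\E[A(G)^2])^{1/2} \asymp t_{\mathrm{cov}}(G)$. Write $X \df \|y\|_\infty$, so that $A(G)$ equals $2|E|\cdot X^2$ up to the negligible solver error. Theorem \ref{thm:pseudo} immediately gives $\E[A(G)] \asymp \E X^2 \asymp t_{\mathrm{cov}}(G)$, so it suffices to show $\E X^4 \lesssim (\E X^2)^2$. Let $\sigma^2 = \max_{v \in V} (L_G^+)_{vv}$ and choose $v^*$ attaining this maximum. Since $X \geq |y_{v^*}|$ with $y_{v^*} \sim N(0, \sigma^2)$, we have $\E X \geq \sigma\sqrt{2/\pi}$ and therefore $\sigma \lesssim \E X$. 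The Borell-TIS inequality provides sub-Gaussian concentration of $X$ around $\E X$ with parameter $\sigma$, so $\E(X - \E X)^4 \lesssim \sigma^4$, and the expansion $(\E X + (X - \E X))^4 \leq 8(\E X)^4 + 8(X - \E X)^4$ yields $\E X^4 \leq 8(\E X)^4 + O(\sigma^4) = O((\E X)^4) \leq O((\E X^2)^2)$, as required.
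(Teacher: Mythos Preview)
Your proof is correct and takes a genuinely different (and in some ways more direct) route than the paper's.

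The paper invokes \cite[\S 4]{SS08} as a black box to obtain, in time $O(m(\log m)^{O(1)})$, a $k \times n$ matrix $Z$ with $k = O(\log n)$ such that $\|Z(e_i - e_j)\|^2$ approximates $\kappa(i,j)$ up to a factor of $2$ for all pairs $i,j$.  It then defines the Gaussian process $\eta_i = \langle g, Z e_i\rangle$ for a $k$-dimensional standard Gaussian $g$, observes that its natural metric is equivalent to $\sqrt{\kappa}$, and concludes via the monotonicity of $\gamma_2$ and Theorem (MM) that $\E\|Zg\|_\infty^2 \asymp \E\|\sqrt{L_G^+}\,g\|_\infty^2 \asymp t_{\mathrm{cov}}(G)$.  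The output is $A(G) = \|Zg\|_\infty^2$.  Note that the Spielman--Srivastava construction of $Z$ internally performs $O(\log n)$ Laplacian solves (one per row of a Johnson--Lindenstrauss projection), so the solver is used, but it is hidden inside the black box.

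Your approach instead exploits the exact distributional identity $L^+ B^T Z \sim N(0,L^+)$ for $Z \sim N(0,I_{|E|})$, so that a \emph{single} approximate Laplacian solve already yields a sample (up to solver error) from the correct Gaussian process.  This is simpler and avoids both the JL projection and the $\gamma_2$ comparison argument.  The price you pay is that you must explicitly control how the solver's energy-norm error propagates to the $\ell_\infty$-norm and to the first and second moments of $A(G)$; you handle this correctly via the inequality $\|v\|_\infty \le \sqrt{R_{\max}}\,\|v\|_L$ for $v \perp \mathbf{1}$ (which is where you should note that both $L^+ w$ and, after subtracting its mean, $\tilde x$ lie orthogonal to the constant vector) and by choosing $\eps$ polynomially small.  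The paper's approach sidesteps this error analysis entirely because the SS08 guarantee on $Z$ is a pairwise-distance guarantee, after which the output $\|Zg\|_\infty^2$ is \emph{exactly} the supremum of a Gaussian process with an equivalent metric.

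Your fourth-moment argument via Borell--TIS is also fine; the paper simply cites Lemma~\ref{lem-gaussian-concentration} for the same conclusion.
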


\subsection{Related work}

Cover times of finite graphs have been studied for over 30 years. We
refer to \cite{AF,Lov96,LPW09} for the basic theory. Works of Feige
showed that the cover time for any $n$-node graph is at least
$(1-o(1)) n \log n$ \cite{Feige95a}, and at most $4n^3/27$
\cite{Feige95b}. Both of these bounds are asymptotically tight, with
the tight example for the lower bound given by the complete graph on
$n$ nodes.  

The connection between cover times, commute times,
and the theory of electrical networks was
laid out in \cite{CRRST96}.
In general, the electrical viewpoint provides a powerful methodology
for analyzing random walks (see, for example, \cite{DS84,Tetali91,LP}).
Indeed, this point of view will be central
to the present work.

A fundamental bound of Matthews \cite{Matthews88} shows that
$$t_{\mathrm{cov}}(G) \leq \left(\max_{u,v \in V} H(u,v)\right) (1+\log n)\,,$$
where we recall that $H(u,v)$ is the expected hitting time from $u$ to $v$.
Using the straightforward lower bound $t_{\mathrm{cov}}(G) \geq \max_{u,v \in V} H(u,v)$,
this fact provides a deterministic $O(\log n)$-approximation to $t_{\mathrm{cov}}(G)$ in
$n$-node graphs.

Matthews also proved the lower bound,
\begin{equation}\label{eq:kklv2}
t_{\mathrm{cov}}(G) \geq \max_{S \subseteq V} \left(\min_{u \neq v \in S} H(u,v)\right) \log (|S| -1).
\end{equation}
In \cite{KKLV00}, it is shown that
taking the maximum of the lower bound in \eqref{eq:kklv2}
and the maximal hitting time $\max_{u,v \in V} H(u,v)$
is an $O((\log \log n)^2)$-approximation for $t_{\mathrm{cov}}$. Recently, Feige and Zeitouni \cite{FZ09} have
shown that on trees, one can obtain a very strong bound: For every
$\e > 0$, there is a $(1+\e)$-approximation obtainable by a
deterministic, polynomial-time algorithm.

The cover time has also been studied for many
 specific families of graphs.
 Kahn, Linial, Nisan, and Saks \cite{KLNS89} established an $O(n^2)$ upper bound for regular graphs.
 Broder and Karlin \cite{BK89} proved that the cover time of constant-degree expander graphs is $O(n\log n)$. For planar graphs
 of maximum degree $d$,  Jonasson and Schramm \cite{JS00} showed that the cover time is at least $c_d\, n (\log n)^2$ and at most $6n^2$.
 The order of the cover time on lattices was determined by Aldous \cite{Aldous89} and Zuckerman \cite{Zuckerman92}.
 The latter paper also calculated the order of the cover time
 on regular trees.

Furthermore, for a few families of specific examples, the asymptotics
of the cover time have been calculated more precisely.
These include the work of Aldous \cite{Aldous91} for regular trees,
Dembo, Peres, Rosen, and Zeitouni \cite{DPRZ04} for the 2-dimensional discrete torus, and
Cooper and Frieze \cite{CF08} for the giant component
of various random graphs.

Finally, we remark on an upper bound of Barlow, Ding, Nachmias, and Peres \cite{BDNP09}
which was part of the motivation for the present work.
Consider a connected graph $G=(V,E)$ and the metric space $(V,\kappa)$,
where we recall the commute distance from \eqref{eq:commutetime}.
For each $h \in \mathbb Z$, let $A_h \subseteq V$ be
a set of minimal size whose $2^{h}$-neighborhood (in the metric $\kappa$) covers $V$.
Then,
\begin{equation}\label{eq:bdnpbound}
t_{\mathrm{cov}}(G) \leq O(1) \cdot \left(\sum_{h \in \mathbb Z} 2^{h/2} \sqrt{\log |A_h|}\right)^2\,.
\end{equation}
It turns out that this upper bound is tight (up to a universal constant)
for a number of concrete examples with approximately
``homogeneous'' geometry (we refer to \cite{BDNP09}
for examples, mostly related to various
random graphs arising from percolation).
For instance,
the results of the present paper imply that the
right-hand side of \eqref{eq:bdnpbound}
is equivalent to $t_{\mathrm{cov}}(G)$
for any vertex-transitive graph $G$.
Furthermore, the formula \eqref{eq:bdnpbound} resembles the appearance of the Dudley integral \cite{Dudley67}, which gives a tight bound for Gaussian processes with stationary increments.
This suggests, in particular, a connection between the cover time of graphs
and majorizing measures.

\subsection{Preliminaries}
\label{sec:prelims}

To begin, we introduce some fundamental
notions from random walks and electrical networks.

\medskip
\noindent
{\bf Electrical networks and random walks.}
A {\em network} is a finite, undirected graph $G=(V,E)$, together
with a set of non-negative conductances $\{c_{xy} : x,y \in V\}$
supported exactly on the edges of $G$, i.e. $c_{xy} > 0 \iff xy \in E$.  The conductances are symmetric
so that $c_{xy} = c_{yx}$ for all $x,y \in V$.
We will write $c_x = \sum_{y \in V} c_{xy}$ and
$\mathcal C = \sum_{x \in V} c_x$ for the {\em total conductance.}
We will often use the notation $G(V)$ for a network
on the vertex set $V$.  In this case, the associated conductances
are implicit.  In the few cases when there are
multiple networks under consideration simultaneously,
we will use the notation $c_{xy}^G$ to refer
to the conductances in $G$.

Associated to such a network is the canonical {\em discrete time random walk on $G$},
whose transition probabilities are given by $p_{xy} = c_{xy}/c_x$ for all $x,y \in V$.
It is easy to see that this defines the transition matrix of a reversible
Markov chain on $V$, and that every finite-state reversible Markov chain arises in this way
(see \cite[\S 3.2]{AF}).  The stationary measure of a vertex is precisely $\pi(x) = c_x/\mathcal C$.

Associated to such an electrical network are the classical quantities $\ceff, \reff : V \times V \to \mathbb R_{\geq 0}$
which are referred to, respectively, as the {\em effective conductance} and {\em effective resistance}
between pairs of nodes.  We refer to \cite[Ch. 9]{LPW09} for a discussion of the connection
between electrical networks and the corresponding random walk.
For now, it is useful to keep in mind the following fact \cite{CRRST96}:  For any $x,y \in V$,
\begin{equation}\label{eq:reffkappa}
\reff(x,y) = \frac{\kappa(x,y)}{\mathcal C},
\end{equation}
where the commute time $\kappa$ is defined as before \eqref{eq:commutetime}.

For convenience, we will work exclusively with {\em continuous-time}
Markov chains, where the transition rates between nodes are given by
the probabilities $p_{xy}$ from the discrete chain. One way to
realize the continuous-time chain is by making jumps according to
the discrete-time chain, where the times spent between jumps are
i.i.d.\ exponential random variables with mean 1. We refer to these
random variables as the {\em holding times.} See \cite[Ch. 2]{AF}
for background and relevant definitions.

\medskip
\noindent
{\bf Cover times, local times, and blanket times.}
We will now define various stopping times for the continuous-time random walk.
First, we observe that if $\tau^{\star}_{\mathrm{cov}}$ is the first time at which
the continuous-time random walk has visited every node of $G$,
then for every vertex $v$,
$$
\E_v \tau^{\star}_{\mathrm{cov}} = \E_v \tau_{\mathrm{cov}}\,,
$$
where we recall that the latter quantity refers to the discrete-time chain.
Thus we may also define
the cover time with respect to the continuous-time chain,
i.e. $t_{\mathrm{cov}}(G) = \max_{v \in V} \E_v \tau^{\star}_{\mathrm{cov}}$.

In fact, it will be far more convenient to work with the
{\em cover and return time} defined as follows.
Let $\{X_t\}_{t \in [0,\infty)}$ be the continuous-time chain, and define
\begin{equation}\label{eq:covandreturn}
\tau_{\mathrm{cov}}^{\circlearrowright} = \inf \left\{ t > \tau^{\star}_{\mathrm{cov}} : X_t = X_0 \right\}.
\end{equation}
For concreteness, we define the {\em cover and return time of $G$}
as
$$t_{\mathrm{cov}}^{\circlearrowright}(G) = \max_{v \in V} \E_v \tau_{\mathrm{cov}}^{\circlearrowright}\,,$$
but the following fact shows that the choice of initial vertex is not of great importance for us
(see \cite[Ch. 5, Lem. 25]{AF}),
\begin{equation}\label{eq:covrelations}
\frac12 \covreturn(G) \leq t_{\mathrm{cov}}(G) \leq \covreturn (G) \leq 3\min_{v \in V}\E_v \tau_{\mathrm{cov}}^{\circlearrowright}.
\end{equation}

\medskip

For a vertex $v \in V$ and time $t$,
we define the {\em local time $L_t^v$} by
\begin{equation}\label{eq:localtimedef}
L_t^v = \frac{1}{c_v} \int_0^t \1_{\{X_s = v\}} ds\,,
\end{equation}
where we recall that
$c_v = \sum_{u \in V} c_{uv}$.
For $\delta \in (0,1)$, we define
$\tau_{\mathrm{bl}}^\star(\delta)$ as the first time $t > 0$
at which $$\min_{u,v \in V} \frac{L_t^u}{L_t^v} \geq \delta.$$
Furthermore,
the {\em continuous-time strong $\delta$-blanket time} is defined to be
\begin{equation}\label{eq:expblanketreturn}
t_{\mathrm{bl}}^{\star}(G,\delta) = \max_{v \in V} \E_v \tau_{\mathrm{bl}}^{\star}(\delta).
\end{equation}

\medskip
\noindent
{\bf Asymptotic notation.}
For expressions $A$ and $B$, we will use the notation $A \lesssim B$
to denote that $A \leq C \cdot B$ for some constant $C > 0$.
If we wish to stress that the constant $C$ depends on some parameter,
e.g. $C=C(p)$, we will use the notation $A \lesssim_p B$.
We use $A \asymp B$ to denote the conjunction
$A \lesssim B$ and $B \lesssim A$, and we use
the notation $A \asymp_p B$ similarly.

\remove{
The reader should take the next theorem as guidance;
its proof
\begin{theorem}
For any network $G$ and any $\delta > 0$,
$$t^{\mathrm{disc}}_{\mathrm{bl}}(G,\delta) \asymp t_{\mathrm{bl}}(G,\delta).$$
\end{theorem}
}

\subsection{Outline}
\label{sec:outline}

We first state our main theorem in full generality.
We use only the language of effective resistances,
since this is most natural in the context to follow.

\begin{theorem}
\label{thm:mainthm}
For any network $G=(V,E)$ and any $0 < \delta < 1$,
$$
t_{\mathrm{cov}}(G) \asymp \mathcal C\left[\gamma_2(V,\sqrt{\reff})\right]^2 \asymp_{\delta} t_{\mathrm{bl}}(G,\delta) \asymp_\delta t_{\mathrm{bl}}^\star(G, \delta),
$$
where $\mathcal C$ is the total conductance of $G$.
\end{theorem}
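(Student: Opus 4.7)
The plan is to establish Theorem \ref{thm:mainthm} via three separate comparisons: (a) $\mathcal{C}[\gamma_2(V, \sqrt{\reff})]^2 \lesssim t_{\mathrm{cov}}(G)$, (b) $t_{\mathrm{cov}}(G) \lesssim \mathcal{C}[\gamma_2(V, \sqrt{\reff})]^2$, and (c) $t_{\mathrm{bl}}(G, \delta), t_{\mathrm{bl}}^\star(G, \delta) \lesssim_\delta \mathcal{C}[\gamma_2(V, \sqrt{\reff})]^2$. The opposite inequalities for the blanket times are essentially trivial: if any vertex has not yet been visited, its local time (or visit count) is zero, so $\tau_{\mathrm{bl}}(\delta), \tau_{\mathrm{bl}}^\star(\delta) \geq \tau_{\mathrm{cov}}$. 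The main technical tool for both (a) and (c) is the generalized Second Ray--Knight isomorphism theorem.

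For (a), fix a root $v_0$ and let $\{\eta_v\}_{v \in V}$ be the GFF pinned so that $\eta_{v_0}=0$. Let $\tau(s) = \inf\{t : L^{v_0}_t > s\}$. The isomorphism asserts
$$\left(L^v_{\tau(s)} + \tfrac12 \eta_v^2\right)_{v \in V} \stackrel{d}{=} \left(\tfrac12 (\eta_v + \sqrt{2s})^2\right)_{v \in V},$$
with the $\eta$ on the left-hand side independent of the local times. Taking square roots, using $\sqrt{a+b} \leq \sqrt{a}+\sqrt{b}$, and maximizing over $v$ yields $|\eta_v + \sqrt{2s}| - \sqrt{2s} \leq \sqrt{2 L^v_{\tau(s)}}$ in distribution. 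Taking expectations, choosing $s$ comparable to $t_{\mathrm{cov}}(G)/\mathcal{C}$, and using $\tau(s) \asymp \mathcal{C} s$ with good probability, we obtain $\E \max_v \eta_v \lesssim \sqrt{t_{\mathrm{cov}}(G)/\mathcal{C}}$. Since the GFF induces exactly the metric $\sqrt{\reff}$ on $V$, Theorem (MM) converts this into $\gamma_2(V,\sqrt{\reff}) \lesssim \sqrt{t_{\mathrm{cov}}(G)/\mathcal{C}}$, which is (a).

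For (b), I would use a chaining argument. Take an admissible sequence $\{A_k\}$ nearly attaining $\gamma_2(V,\sqrt{\reff})$, with $|A_k| \leq 2^{2^k}$, and cover $V$ hierarchically. Within a piece $S \in A_k$ of $\sqrt{\reff}$-diameter $D_k$, any commute time is at most $\mathcal{C} D_k^2$, so Matthews' bound shows that, once the walk enters $S$, the additional expected time to visit all refinements of $S$ in $A_{k+1}$ is at most $\mathcal{C} D_k^2 \cdot 2^k$. Telescoping across scales and applying Cauchy--Schwarz against the weights $\{2^{k/2}\}$ produces the bound $\mathcal{C}\bigl(\sum_k 2^{k/2} D_k\bigr)^2 \asymp \mathcal{C}[\gamma_2(V,\sqrt{\reff})]^2$.

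The hardest step is (c). The chaining argument above only bounds the time to visit each vertex, not the time to visit each vertex in proportion to stationarity. I would return to the Ray--Knight isomorphism, but now choose $s$ to be a large constant multiple (depending on $1/\delta$) of $[\gamma_2(V,\sqrt{\reff})]^2$. Under this choice $\sqrt{2s}$ dominates $\E\max_v \eta_v$ by a factor $\asymp 1/\sqrt{\delta}$, so $|\eta_v + \sqrt{2s}| = \sqrt{2s}\bigl(1 + O(\sqrt{\delta})\bigr)$ uniformly in $v$ with good probability. The isomorphism then pins $\{L^v_{\tau(s)}\}$ into a narrow multiplicative interval around $s$, delivering the $\delta$-balance. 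Passing from the random time $\tau(s)$ (defined via the local time at $v_0$) to a deterministic time $t$ as required for $t_{\mathrm{bl}}^\star$, and then from continuous to discrete time as required for $t_{\mathrm{bl}}$, demands auxiliary concentration estimates for local times around their stationary values; I expect these bookkeeping steps to constitute the bulk of the technical work, though the conceptual engine remains the isomorphism.
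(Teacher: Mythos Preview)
Your plan for (c) via the Ray--Knight isomorphism plus Gaussian concentration is correct and matches the paper's approach (Theorems~\ref{thm:blanket} and~\ref{thm:blanketdiscr}). Since $t_{\mathrm{cov}} \leq t_{\mathrm{bl}}^\star$ trivially, (c) already gives the upper bound $t_{\mathrm{cov}}\lesssim \mathcal C\gamma_2^2$, so your separate chaining argument for (b) is unnecessary---and in any case flawed: the Matthews-per-scale sum you describe is $\mathcal C\sum_k 2^k D_k^2$, and Cauchy--Schwarz relates this to $\bigl(\sum_k 2^{k/2}D_k\bigr)^2$ in the wrong direction.

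The serious gap is (a), the lower bound $\mathcal C[\gamma_2]^2 \lesssim t_{\mathrm{cov}}$. This is the hard direction of the theorem; it occupies all of Section~\ref{sec:cover}, and your isomorphism sketch does not establish it. The inequality you write is not what \eqref{eq:law} delivers: the two sides involve \emph{independent} copies of $\eta$, and the correct consequence of $\sqrt{a+b}\leq\sqrt a+\sqrt b$ is $|\eta_v^R+\sqrt{2s}|\leq \sqrt{2L^v_{\tau(s)}}+|\eta_v^L|$, which after maximizing and taking expectations gives only a \emph{lower} bound on $\E\max_v\eta_v$ in terms of $\E\max_v\sqrt{L^v_{\tau(s)}}$---useless for (a), since you have no control on $\max_v L^v_{\tau(s)}$ in terms of $t_{\mathrm{cov}}$. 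The deeper obstruction is that to force $L^v_{\tau(t)}=0$ via the isomorphism one needs $|\eta_v+\sqrt{2t}|$ to be \emph{much smaller} than the standard deviation of $\eta_v$, whereas concentration alone only pins $\min_v(\eta_v+\sqrt{2t})$ to a window of that size. The paper overcomes this by exploiting structure specific to the GFF: the identity $\sqrt{\reff(v,S)}=\dist_{L^2}\bigl(\eta_v,\CH(\{\eta_u\}_{u\in S})\bigr)$ combined with the resistance-metric geometry of Section~\ref{sec:resist} is used to build a strongly separated subtree with genuine branch independence, and then a delicate second-moment percolation argument on this tree (Sections~\ref{sec:coupling}--\ref{sec:percolation}) shows that with constant probability some leaf has $|\eta_v+\sqrt{2t}|\ll\sigma_v$ for $t\asymp\gamma_2^2$; a first-moment bound on the local-time side then forces $L^v_{\tau(t)}=0$. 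You have misidentified which direction is difficult.
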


We now present an overview of our main arguments,
and layout the organization of the paper.

\medskip
\noindent {\bf Hints of a connection.} First, it may help the reader
to have some intuition about why cover times should be connected to
the Gaussian processes and particularly the theory of majorizing
measures.

A first hint goes back to work of Aldous
\cite{Aldous82}, where it is shown that the hitting times of
Markov chains are approximately distributed as exponential
random variables.  It is well-known that an exponential variable can be
represented as the sum of the squares of two Gaussians.
Observing that the cover time is just the maximum of all the hitting
times, one might hope that the cover time can be related to the
maximum of a family of Gaussians.

This point of view is strengthened by some quantitative
similarities. Let $\{\eta_i\}_{i \in I}$ be a centered Gaussian
process, and let $d(i,j)$ be the natural metric on $I$ from
\eqref{eq:gaussmetric}. The following two lemmas are central to the
proof of the majorizing measures theorem (Theorem (MM)). We refer to
\cite{LT91}
\cite{Talagrand96} for their utility in the majorizing measures
theory.
The next lemma follows directly from the definition of the
Gaussian density; see, for instance, \cite[Lem. 5.1.3, Eq. (5.18)]{MR06}.

\begin{lemma}[Gaussian concentration]
For every $i,j \in I$, and $\alpha > 0$,
$$
\P\left(\eta_i - \eta_j > \alpha\right) \leq \exp\left(\frac{-\alpha^2}{2\, d(i,j)^2}\right).
$$
\end{lemma}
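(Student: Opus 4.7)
The plan is extremely short because this is a classical one-variable tail bound disguised as a statement about a process. First I would observe that although $\{\eta_i\}_{i\in I}$ is a process, the quantity appearing in the lemma is a single linear combination $\eta_i - \eta_j$, and by the very definition of a Gaussian process every finite linear combination of the $\eta_i$'s is (centered) normal. Hence $\eta_i - \eta_j \sim N(0,\sigma^2)$ where, by the definition \eqref{eq:gaussmetric} of the natural metric, $\sigma^2 = \E|\eta_i-\eta_j|^2 = d(i,j)^2$. Nothing about the rest of the process is relevant.

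Once this reduction is in place, the inequality becomes the standard one-sided Gaussian tail bound $\P(Z > \alpha) \le \exp(-\alpha^2/(2\sigma^2))$ for $Z \sim N(0,\sigma^2)$. I would prove this by the Chernoff method: for any $\lambda > 0$,
$$\P(Z > \alpha) = \P\bigl(e^{\lambda Z} > e^{\lambda \alpha}\bigr) \le e^{-\lambda\alpha}\,\E\,e^{\lambda Z} = \exp\!\left(\tfrac12 \lambda^2 \sigma^2 - \lambda\alpha\right),$$
using the moment generating function $\E e^{\lambda Z} = e^{\lambda^2\sigma^2/2}$ of a centered normal (which itself follows from completing the square inside the Gaussian integral). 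Optimizing by choosing $\lambda = \alpha/\sigma^2$ makes the exponent equal to $-\alpha^2/(2\sigma^2)$, which is exactly the bound claimed, with $\sigma^2 = d(i,j)^2$.

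There is essentially no obstacle: the only subtlety worth flagging is to make sure that the Gaussian process assumption is invoked to guarantee that $\eta_i - \eta_j$ really is Gaussian (not just something with matching first and second moments), since the Chernoff step uses the exact MGF rather than a sub-Gaussian surrogate. For completeness I would also note the edge case $d(i,j)=0$, where $\eta_i = \eta_j$ almost surely and the right-hand side is interpreted as $0$, so the inequality holds trivially.
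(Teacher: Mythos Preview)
Your proof is correct. The paper itself does not give a proof of this lemma at all: it simply states that the inequality ``follows directly from the definition of the Gaussian density'' and cites \cite[Lem.~5.1.3, Eq.~(5.18)]{MR06}. Your reduction to a single centered normal variable $\eta_i-\eta_j\sim N(0,d(i,j)^2)$ followed by the Chernoff/MGF tail bound is exactly the standard argument behind that citation, so you have supplied a complete proof where the paper only gestures at one.
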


The next result can be found in \cite[Thm. 3.18]{LT91}.

\begin{lemma}[Sudakov minoration]
\label{lem:sudakov}
For every $\alpha > 0$,
If $I' \subseteq I$ is such that $i,j \in I'$ and $i \neq j$
implies $d(i,j) \geq \alpha$, then
$$
\E \sup_{i \in I'} \eta_i \gtrsim \alpha \sqrt{\log |I'|}.
$$
\end{lemma}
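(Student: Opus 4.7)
The plan is to deduce Sudakov minoration from a Gaussian comparison inequality by pairing $\{\eta_i\}_{i \in I'}$ against an independent Gaussian model whose pairwise squared-increments are uniformly $\alpha^2$. Since the expected maximum of $n$ i.i.d. centered Gaussians of variance $\sigma^2$ is known to be of order $\sigma \sqrt{\log n}$, the claim reduces to showing that $\{\eta_i\}_{i \in I'}$ dominates this independent model in the sense of the expected supremum.

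Concretely, I would introduce independent centered Gaussians $\{g_i\}_{i \in I'}$ of variance $\alpha^2/2$ and observe that the hypothesis on $I'$ gives
\[
\E(\eta_i - \eta_j)^2 = d(i,j)^2 \geq \alpha^2 = \E(g_i - g_j)^2 \quad \text{for all distinct } i,j \in I'.
\]
At this point I would invoke the Sudakov--Fernique comparison inequality, which strengthens Slepian's lemma by dropping any assumption on the individual variances: if two centered Gaussian processes satisfy $\E(X_i - X_j)^2 \geq \E(Y_i - Y_j)^2$ pointwise, then $\E \sup X_i \geq \E \sup Y_i$. This comparison yields
\[
\E \sup_{i \in I'} \eta_i \;\geq\; \E \sup_{i \in I'} g_i,
\]
and it remains only to bound the right-hand side below by a constant multiple of $\alpha \sqrt{\log |I'|}$.

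The closing step is standard: for $n$ i.i.d. $N(0,\alpha^2/2)$ variables, a truncation argument based on the Gaussian lower-tail estimate at the level $t \asymp \alpha \sqrt{\log n}$ gives $\E \max_i g_i \gtrsim \alpha \sqrt{\log n}$. The only real content of the argument is the Sudakov--Fernique inequality itself, and this is the step I would expect a reader to find least routine; however its short proof (via smooth interpolation between the two covariance structures, differentiating the expectation of a smooth approximation to the maximum, and using integration by parts for Gaussians) is classical, so I would simply cite it from \cite{LT91} rather than reproduce it.
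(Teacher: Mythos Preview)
Your argument is correct and is in fact the standard proof of Sudakov minoration. The paper itself does not give a proof of this lemma at all; it simply cites \cite[Thm.~3.18]{LT91}. Your sketch via the Sudakov--Fernique comparison against an i.i.d.\ Gaussian family, followed by the elementary lower bound $\E \max_{i \leq n} g_i \gtrsim \sigma \sqrt{\log n}$, is exactly the route taken in that reference, so there is nothing to compare.
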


Now, let $G=(V,E)$ be a network, and consider the associated
continuous-time random walk $\{X_t\}$ with local times $L_t^v$. We
define also the {\em inverse local times} $\tau^v(t) = \inf\{s:
L_s^{v} > t\}$. An analog of the following lemma was proved in
\cite{KKLV00} for the discrete-time chain; the continuous-time
version can be similarly proved, though we will not do so here, as
it will not be used in the arguments to come. In interpreting
the next lemma, it helps to recall that $L^u_{\tau^u(t)} = t$.

\begin{lemma}[Concentration for local times]
For all $u,v \in V$ and any $\alpha > 0$ and $t \geq 0$, we have
$$
\P_u\left(L_{\tau^u(t)}^u - L_{\tau^u(t)}^v \geq \alpha\right) \leq \exp\left(\frac{-\alpha^2}{4 t \reff(u,v)}\right),
$$
where $\P_u$ denotes the measure for the random
walk started at $u$.
\end{lemma}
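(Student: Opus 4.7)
The strategy is to obtain an explicit formula for the Laplace transform of $L_{\tau^u(t)}^v$ under $\P_u$ and then apply a Chernoff bound, with the formula coming from decomposing the walk into excursions from $u$. First, I would parametrize the continuous-time walk by the local time $L_s^u$. Since each holding time at $u$ has real-time duration $\mathrm{Exp}(1)$, it contributes an increment of distribution $\mathrm{Exp}(c_u)$ to $L^u$, so the starts of successive excursions from $u$ form a Poisson process of rate $c_u$ with respect to $L^u$-time. Hence the number of excursions $N$ completed by $L^u$-time $t$ is $\mathrm{Poisson}(c_u t)$, and by the strong Markov property at each return to $u$, the contributions $Y_1,Y_2,\ldots$ of these excursions to the local time at $v$ are i.i.d.\ and independent of $N$. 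Thus
\[L_{\tau^u(t)}^v=\sum_{i=1}^N Y_i\]
is a compound Poisson random variable, whose Laplace transform equals $\exp(-c_u t(1-\hat Y(\lambda)))$ where $\hat Y(\lambda)=\E_u e^{-\lambda Y_1}$.

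Second, to compute $\hat Y(\lambda)$, I would invoke the standard identity $\P_u(T_v<T_u^+)=1/(c_u\reff(u,v))$, which says that an excursion hits $v$ with probability $p=1/(c_u\reff(u,v))$. Conditional on a hit, the strong Markov property at $T_v$ identifies $Y_1$ with $L_{T_u^+}^v$ under $\P_v$, which is a geometric sum of i.i.d.\ $\mathrm{Exp}(c_v)$ holding times at $v$ and hence exponential with mean $\reff(u,v)$. Consequently $\hat Y(\lambda)=(1-p)+p/(1+\lambda\reff(u,v))$, and a short simplification yields
\[\E_u\exp(-\lambda L_{\tau^u(t)}^v)=\exp\!\left(-\frac{\lambda t}{1+\lambda\reff(u,v)}\right).\]

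Finally, the concentration bound follows by a routine Chernoff argument: for $\lambda>0$,
\[\P_u\!\left(t-L_{\tau^u(t)}^v\ge\alpha\right)\le e^{\lambda t-\lambda\alpha}\,\E_u e^{-\lambda L_{\tau^u(t)}^v}=\exp\!\left(\frac{\lambda^2 t\reff(u,v)}{1+\lambda\reff(u,v)}-\lambda\alpha\right)\le\exp\!\left(\lambda^2 t\reff(u,v)-\lambda\alpha\right),\]
and optimizing at $\lambda=\alpha/(2t\reff(u,v))$ yields the claimed $\exp(-\alpha^2/(4t\reff(u,v)))$. The main obstacle is setting up the excursion decomposition carefully in continuous time: one must justify via the strong Markov property that the $Y_i$ are i.i.d.\ and jointly independent of the Poisson count $N$, and correctly identify the conditional law of $Y_1$ given a hit. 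Once these structural ingredients are in place, the Laplace-transform manipulation and Chernoff optimization are entirely mechanical.
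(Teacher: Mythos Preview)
Your argument is correct. The excursion decomposition is set up properly: in $L^u$-time the departures from $u$ form a rate-$c_u$ Poisson process, the excursion contributions $Y_i$ are i.i.d.\ and independent of the count by the strong Markov property, and the conditional law of $Y_1$ given a hit is indeed exponential with mean $\reff(u,v)$ (a geometric-with-parameter-$1/(c_v\reff(u,v))$ sum of $\mathrm{Exp}(c_v)$ holding times). The Laplace transform and Chernoff step then go through exactly as you wrote.

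As for comparison with the paper: the paper does \emph{not} actually prove this lemma. It states it only as motivation in the outline, citing \cite{KKLV00} for the discrete-time analog and remarking that ``the continuous-time version can be similarly proved, though we will not do so here, as it will not be used in the arguments to come.'' So there is no proof in the paper to compare against. Your excursion-theoretic derivation of the exact Laplace transform $\E_u\exp(-\lambda L_{\tau^u(t)}^v)=\exp\bigl(-\lambda t/(1+\lambda\reff(u,v))\bigr)$ is a clean self-contained route; one could alternatively read this formula off the isomorphism theorem (Theorem~\ref{thm:rayknight} with $v_0=u$), but your direct approach avoids that machinery entirely.
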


Thus local times satisfy sub-gaussian concentration, where now the distance $d$
is replaced by $\sqrt{t \cdot \reff}$.
On the other side, the classical
bound of Matthews \cite{Matthews88} provides
an analog to Lemma~\ref{lem:sudakov}.

\begin{lemma}[Matthews bound]
For every $\alpha > 0$,
if $V' \subseteq V$ is such that $u,v \in V'$ and $u \neq v$ implies $H(u,v) \geq \alpha$, then
$$
t_{\mathrm{cov}}(G) \geq \alpha \log (|V'| - 1).
$$
\end{lemma}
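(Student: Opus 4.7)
The plan is to apply Matthews' random-permutation technique. Set $k := |V'|$ and draw a uniformly random permutation $\sigma$ of $V'$, independent of the continuous-time walk $\{X_t\}$. Writing $\tau_j := \inf\{t \geq 0 : X_t = \sigma(j)\}$ for the hitting time of the $j$-th vertex in the ordering, define the running maxima
\[ M_j := \max(\tau_1, \ldots, \tau_j), \qquad M_0 := 0. \]
Each $M_j$ is a stopping time (a maximum of finitely many first-hit stopping times), and $M_k \leq \tau_{\mathrm{cov}}$, so it suffices to lower-bound $\E[M_k] = \sum_{j=1}^{k} \E[M_j - M_{j-1}]$ term by term.

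For $j \geq 2$, let $A_j$ be the event that $\sigma(j)$ attains $\max_{i \leq j} \tau_{\sigma(i)}$; equivalently, $\sigma(j)$ is visited strictly after $M_{j-1}$. I would first condition on the entire trajectory of $\{X_t\}$, which freezes the values $\{\tau_v\}_{v \in V'}$ (almost surely distinct) while $\sigma$ remains uniform; then $(\sigma(1), \ldots, \sigma(j))$ is a uniformly random ordered $j$-tuple of distinct elements of $V'$, so $\sigma(j)$ is a uniform element of $\{\sigma(1), \ldots, \sigma(j)\}$, whence by symmetry $\P(A_j) = 1/j$. On $A_j$, the walk sits at some $u^\star \in V' \setminus \{\sigma(j)\}$ at the stopping time $M_{j-1}$, and the strong Markov property yields
\[ \E\bigl[M_j - M_{j-1} \,\big|\, A_j,\; X_{M_{j-1}} = u^\star,\; \sigma(j) = v\bigr] \;=\; H(u^\star, v) \;\geq\; \alpha, \]
by the hypothesis $H(u,v) \geq \alpha$ for distinct $u,v \in V'$; on $A_j^c$ one has $M_j = M_{j-1}$. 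Thus $\E[M_j - M_{j-1}] \geq \alpha/j$.

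Summing over $j = 2, \ldots, k$,
\[ t_{\mathrm{cov}}(G) \;\geq\; \E[M_k] \;\geq\; \alpha \sum_{j=2}^{k} \frac{1}{j}, \]
which exceeds $\alpha \log(|V'|-1)$ up to an absolute constant for every $k \geq 2$; a slightly sharper bookkeeping of the $j=1$ boundary term recovers Matthews' exact harmonic-sum bound $\alpha \sum_{j=1}^{k-1} 1/j \geq \alpha \log(|V'|-1)$.

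The only non-routine step is the joint conditioning needed to justify $\P(A_j) = 1/j$: one must cleanly disentangle the randomness of $\sigma$ (which, with the trajectory held fixed, turns the probability into a pure combinatorial symmetry statement about which position of a uniformly random $j$-tuple realizes a maximum) from the randomness of the walk (which is used only afterwards, via the strong Markov property at the random time $M_{j-1}$, to obtain $H(u^\star, v) \geq \alpha$). Evaluating the expectations in exactly the order above avoids any measurability subtlety and makes the estimate transparent.
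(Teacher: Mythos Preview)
The paper does not prove this lemma; it is stated as a classical result of Matthews \cite{Matthews88}, included only to draw the analogy with Sudakov minoration. So there is no ``paper's own proof'' to compare against.

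Your argument is the standard Matthews random-permutation proof and is essentially correct. Two small remarks. First, the sum $\sum_{j=2}^{k} 1/j$ that you obtain is not always $\geq \log(k-1)$ (for $k=4$ one gets $13/12 < \log 3$), so the ``up to an absolute constant'' caveat is needed as you wrote it. Your ``sharper bookkeeping'' claim is correct but deserves one more line: since $t_{\mathrm{cov}}(G) = \max_v \E_v \tau_{\mathrm{cov}}$, you may take the starting vertex to lie in $V'$; then $\E[M_1] = \E[\tau_{\sigma(1)}] \geq \alpha(k-1)/k$ because $\sigma(1)$ differs from the start with probability $(k-1)/k$, and adding this to the $j\geq 2$ terms gives
\[
\alpha\Bigl(\tfrac{k-1}{k} + \sum_{j=2}^{k}\tfrac{1}{j}\Bigr) = \alpha\Bigl(H_k - \tfrac{1}{k}\Bigr) = \alpha\, H_{k-1} \geq \alpha \log(k-1).
\]
Second, the conditioning you flag as ``non-routine'' is indeed clean in the order you describe: for each fixed $\sigma$, both $A_j$ and $\{X_{M_{j-1}}=u^\star\}$ are $\mathcal F_{M_{j-1}}$-measurable (the former is the event that $\sigma(j)$ has not been visited by time $M_{j-1}$), so the strong Markov property applies directly and yields $H(u^\star,\sigma(j)) \geq \alpha$ on $A_j$.
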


Of course the similar structure of these lemmas
offers no formal connection, but merely a hint
that something deeper may be happening.
We now discuss a far more concrete connection between local times
and Gaussian processes.

\medskip
\noindent
{\bf The isomorphism theorems.}
The distribution of the local times for a Borel right process can be
fully characterized by certain associated Gaussian processes;
results of this flavor go by the name of \emph{Isomorphism
Theorems}. Several versions have been developed by Ray \cite{Ray63} and Knight \cite{Knight63},
Dynkin \cite{Dynkin83, Dynkin84}, Marcus and Rosen \cite{MR92,
MR01}, Eisenbaum \cite{Eisenbaum95} and Eisenbaum, Kaspi, Marcus,
Rosen and Shi \cite{EKMRS00}. In what follows, we present the second
Ray-Knight theorem in the special case of a continuous-time random
walk. It first appeared in \cite{EKMRS00}; see also
Theorem 8.2.2 of the book by Marcus and Rosen \cite{MR06} (which
contains a wealth
of information on the connection between local times and Gaussian processes). It is easy
to verify that the continuous-time random walk on a connected graph
is indeed a recurrent strongly symmetric Borel right process.

\begin{theorem}[Generalized Second Ray-Knight Isomorphism Theorem]
\label{thm:rayknight} Fix $v_0 \in V$ and define
the inverse local time,
\begin{equation}\label{eq:inverselt}
\tau(t) = \inf\{s:
L_s^{v_0} > t\}.
\end{equation}
Let $T_{0}$ be the hitting time to $v_0$ and let
$\Gamma_{v_0}(x, y) = \E_x(L_{T_0}^y)$. Denote by $\eta = \{\eta_x:
x\in V\}$ a mean zero Gaussian process with covariance
$\Gamma_{v_0}(x, y)$. Let $P_{v_0}$ and $P_{\eta}$ be the measures
on the processes $\{L_{T_0}^x\}$ and $\{\eta_x\}$, respectively.
Then under the measure $P_{v_0} \times P_\eta$, for any $t > 0$
\begin{equation}\label{eq:law}
\left\{L_{\tau(t)}^x + \frac{1}{2} \eta_x^2: x\in V\right\}
\stackrel{law}{=} \left\{\frac{1}{2} (\eta_x + \sqrt{2t})^2 : x\in V
\right\}\,.
\end{equation}
\end{theorem}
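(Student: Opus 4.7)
My approach is to verify the distributional identity \eqref{eq:law} by computing the joint Laplace transform of both sides as functions of a vector $\lambda = (\lambda_x)_{x \in V}$ of nonnegative reals, and showing that the two resulting expressions coincide. Since both sides of \eqref{eq:law} take values in $\R_{\geq 0}^V$, this suffices by uniqueness of Laplace transforms for nonnegative random vectors. Throughout, let $\Lambda = \mathrm{diag}(\lambda)$ and let $\Gamma = \Gamma_{v_0}$ denote the covariance matrix of $\eta$.

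For the right-hand side of \eqref{eq:law}, the Laplace transform is a shifted Gaussian integral
\begin{equation*}
\E_\eta \exp\!\of{-\tfrac{1}{2}\sum_{x \in V} \lambda_x (\eta_x + \sqrt{2t})^2}.
\end{equation*}
Completing the square inside the Gaussian density with $A = \Gamma^{-1}+\Lambda$ and carrying out the integral, and using the identity $I - A^{-1}\Lambda = (I+\Gamma\Lambda)^{-1}$, yields the closed form
\begin{equation*}
\det(I + \Gamma\Lambda)^{-1/2}\,\exp\!\of{-t\, \mv{1}^{\top} \Lambda\, (I+\Gamma \Lambda)^{-1}\mv{1}}.
\end{equation*}

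For the left-hand side, I use that under $P_{v_0}\times P_\eta$ the local time vector $\{L_{\tau(t)}^x\}$ and the field $\{\eta_x\}$ are independent, so the Laplace transform factors. The Gaussian factor $\E_\eta \exp(-\tfrac{1}{2}\eta^{\top}\Lambda\eta)$ again evaluates to $\det(I+\Gamma\Lambda)^{-1/2}$, matching the prefactor above. The remaining task is to compute
\begin{equation*}
\E_{v_0} \exp\!\of{-\sum_{x \in V} \lambda_x L_{\tau(t)}^x}.
\end{equation*}
Since $\sum_x \lambda_x L_s^x = \int_0^s (\lambda_{X_r}/c_{X_r})\,dr$ is an additive functional of the chain, Feynman--Kac expresses this expectation through the semigroup with killing potential $x \mapsto \lambda_x/c_x$. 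Decomposing the walk up to $\tau(t)$ into excursions from $v_0$, regarded as a Poisson point process in the local-time parameter run for time $t$, produces an expression of the form $\exp(-t\,\Phi(\lambda))$, where $\Phi(\lambda)$ is the Laplace exponent of a single excursion and is writable in closed form through the resolvent of the chain killed at $v_0$ with extra killing $\lambda$.

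\textbf{Main obstacle.} The substantive step is the algebraic identity $\Phi(\lambda) = \mv{1}^{\top}\Lambda(I+\Gamma\Lambda)^{-1}\mv{1}$, which is what forces the two Laplace transforms to match. This reduces to a resolvent identity relating $\Gamma$ to its ``$\Lambda$-perturbation'' $\Gamma_\Lambda = (\Gamma^{-1}+\Lambda)^{-1}$, namely $\Gamma - \Gamma_\Lambda = \Gamma\Lambda\Gamma_\Lambda = \Gamma\,\Lambda(I+\Gamma\Lambda)^{-1}\Gamma$. The probabilistic content of this identity is precisely the Markov-chain analog of the Dynkin--Eisenbaum relation between the Green function of the killed chain and the $\lambda$-potential of the chain with additional killing; verifying it, together with a careful accounting of the excursion-theoretic decomposition of $\tau(t)$, is the one genuinely delicate point. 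Once it is in place, matching the two Laplace transforms term by term and inverting completes the proof.
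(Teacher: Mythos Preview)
The paper does not prove Theorem~\ref{thm:rayknight}; it is quoted as a known result, attributed to Eisenbaum, Kaspi, Marcus, Rosen, and Shi \cite{EKMRS00} (see also \cite[Thm.~8.2.2]{MR06}), and used as a black box throughout. So there is no ``paper's own proof'' to compare against.

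That said, your outline is essentially the standard argument one finds in those references: establish the identity in law by matching joint Laplace transforms in a vector parameter $\lambda \in \R_{\ge 0}^V$, handle the Gaussian side by completing the square, and handle the local-time side by the excursion decomposition at $v_0$ together with Kac's moment formula / Feynman--Kac, reducing everything to a resolvent identity for the Green kernel of the chain killed at $v_0$. One technical point to be careful about: as written you invoke $\Gamma^{-1}$, but $\Gamma = \Gamma_{v_0}$ is singular on $\R^V$ (the row and column indexed by $v_0$ vanish, since $\eta_{v_0}=0$). The computation should be carried out on $\R^{V\setminus\{v_0\}}$, where the restricted covariance is nonsingular and equals the inverse of the reduced Laplacian (cf.\ the proof of Lemma~\ref{lem:resistGFF}); the identities $I - A^{-1}\Lambda = (I+\Gamma\Lambda)^{-1}$ and $\Gamma - \Gamma_\Lambda = \Gamma\Lambda(I+\Gamma\Lambda)^{-1}\Gamma$ are then valid on that subspace. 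With that correction the sketch is sound, though the excursion-theoretic bookkeeping you flag as the ``main obstacle'' does require real work to make rigorous.
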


Thus to every continuous-time random walk, we can associate
a Gaussian process $\{\eta_v\}_{v \in V}$.
As discussed in Section \ref{sec:GFF},
we have the relationship $d(u,v) = \sqrt{\reff(u,v)}$,
where $d(u,v) = \sqrt{\E\,|\eta_u-\eta_v|^2}$.
In particular, the process $\{\eta_v\}_{v \in V}$ is
the Gaussian free field on the network $G$.

Using the Isomorphism Theorem in conjunction
with concentration bounds
for Gaussian processes, we already have enough
machinery to prove the following upper bound
in Section \ref{sec:blanket},
\begin{equation}\label{eq:firsthalf}
t_{\mathrm{cov}}(G) \leq t_{\mathrm{bl}}(G,\delta)
\lesssim_{\delta} \mathcal C \left[\gamma_2(V, d)\right]^2 = \mathcal C \left[\gamma_2(V,\sqrt{\reff})\right]^2.
\end{equation}

We also show how to prove a matching lower bound in terms
of $\gamma_2$, but for a slightly different notion of ``blanket time.''

\medskip

Thus \eqref{eq:firsthalf} proves the first half of Theorem \ref{thm:mainthm}.
The lower bound for cover times quite a bit more difficult to prove.
Of course, the cover and return
time relates to the event $\left\{ \exists v : L^v_{\tau(t)} = 0 \right\}$,
and unfortunately the correspondence \eqref{eq:law} seems
too coarse to provide lower bounds on the probability
of this event directly.

To this end, we need to show that for the right value of $t$
in Theorem \ref{thm:rayknight},
we often have $\eta_x \approx - \sqrt{2t}$ for some $x \in V$.
The main difficulty is that we will have to show that
there is often a vertex $x \in V$ with $|\eta_x + \sqrt{2t}|$
being {\em much smaller} than the standard
deviation of $\eta_x$.
In doing so, we will use the full power
of the majorizing measures theory,
as well as the special structure of the
Gaussian processes arising from the Isomorphism Theorem.

\medskip
\noindent {\bf The discrete Gaussian free field and a tree-like
subprocess.} In Section \ref{sec:GFF} (see \eqref{eq:density}), we
recall that the Gaussian processes arising from the Isomorphism
Theorem are not arbitrary, but correspond to the Gaussian free field
(GFF) associated with $G$. Special properties of such processes will
be essential to our proof of Theorem \ref{thm:mainthm}. In
particular, if we use $\reff(v,S)$ to denote the effective
resistance between a point $v$ and a set of vertices $S \subseteq
V$, then we have the relationship
\begin{equation}\label{eq:above}
\sqrt{\reff(v,S)} = \dist_{L^2}(\eta_v, \CH(\{\eta_w\}_{w \in S})),
\end{equation}
where $\CH(\cdot)$ denotes the affine hull, and $\dist_{L^2}$
is the $L^2$ distance in the Hilbert space
underlying the process $\{\eta_v\}_{v \in V}$.
In Section \ref{sec:resist}, we prove a number
of properties of the effective resistance metric
(e.g. Foster's network theorem); combined
with \eqref{eq:above}, this yields
some properties unique to processes
arising from a GFF.

\medskip

Next, in Section \ref{sec:mm}, we recall that one of the
primary components of the majorizing measures theory is that
every Gaussian process $\{\eta_i\}_{i \in I}$ contains a ``tree like'' subprocess
which controls $\E \sup_{i \in I} \eta_i$.
After a preprocessing step that ensures our trees
have a number of additional features,
we use the structure of the GFF to
select a representative
subtree with very strong independence properties
that will be essential to our analysis of cover times.

\medskip
\noindent
{\bf Restructuring the randomness and a percolation argument.}
The majorizing measures theory is designed to control the first
moment $\E \sup_{i \in I} \eta_i$ of the supremum
of Gaussian process.  In analyzing \eqref{eq:law}
to prove a lower bound on the cover times,
we actually need to employ a variant
of the second moment method.  The need for this,
and a detailed discussion of how it proceeds,
are presented at the beginning of
Section \ref{sec:cover}.

Towards this end, we want to associate events to the leaves of our
``tree like'' subprocess which can be thought of as ``open events''
in a percolation process on the tree. For general trees, it is known
that the second moment method gives accurate estimates for the
probability of having an open path to a leaf \cite{Lyons92}.  While
our trees are not regular, they are ``regularized'' by the
majorizing measure, and we do a somewhat standard analysis of such a
process in Section \ref{sec:percolation}.

The real difficulty involves setting up the right
filtration on the probability space corresponding
to our tree so that the percolation argument yields
the desired control on the cover times.
This requires a delicate definition of the
events associated to each edge,
and the ensuing analysis forms the
technical core of our argument in Section \ref{sec:cover}.

\medskip
\noindent
{\bf Algorithmic issues.}
In order to complete the proof of Theorem \ref{thm:algorithm}
and thus resolve Question \ref{ques:AF},
we present a deterministic algorithm
which computes an approximation to $\gamma_2(X,d)$
for any metric space $(X,d)$.  This is achieved
in Section \ref{sec:algorithm}.  While the algorithm is fairly
elementary to describe, its analysis requires
a number of tools from the majorizing measures theory.

We remark that, in combination with Theorem \ref{thm:mainthm},
this yields the following result.

\begin{theorem}
For any finite-state, reversible Markov chain
presented as a network $G=(V,E)$ with given conductances
$\{c_{xy}\}$, there is a deterministic, polynomial-time algorithm
which computes a value $A(G)$ such that
$$
A(G) \asymp t_{\mathrm{cov}}(G).
$$
\end{theorem}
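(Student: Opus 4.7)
The plan is to obtain the theorem as a direct composition of Theorem \ref{thm:mainthm} and Theorem \ref{thm:algorithm}, so the work consists only of verifying that the bridge between them, namely the metric space $(V,\sqrt{\reff})$, can be produced in deterministic polynomial time from the network data. By Theorem \ref{thm:mainthm} we have
$$
t_{\mathrm{cov}}(G) \;\asymp\; \mathcal{C}\cdot\bigl[\gamma_2(V,\sqrt{\reff})\bigr]^2,
$$
so it suffices to compute in $\mathrm{poly}(|V|)$ time the total conductance $\mathcal{C}$ and a constant-factor approximation to $\gamma_2(V,\sqrt{\reff})$. The total conductance $\mathcal{C}=\sum_x c_x$ is read directly from the input.

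First I would check that $\sqrt{\reff}$ really is a metric, so that Theorem \ref{thm:algorithm} applies. This follows from the Gaussian free field construction recalled in the introduction: the embedding $v\mapsto \eta_v$ into the Hilbert space generated by the GFF is an isometry from $(V,\sqrt{\reff})$ into $L^2$, because $\E\,|\eta_u-\eta_v|^2=\reff(u,v)$. Hence $\sqrt{\reff}$ is Hilbertian, and in particular a metric.

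Next I would build the oracle that Theorem \ref{thm:algorithm} requires. Form the weighted Laplacian $L$ on $V$ using the given conductances $c_{xy}$ as edge weights, and compute its Moore--Penrose pseudoinverse $L^+$ using standard polynomial-time linear algebra. Then effective resistance between any pair is given by the classical formula
$$
\reff(u,v) \;=\; (e_u-e_v)^{T}L^{+}(e_u-e_v),
$$
so a single $\mathrm{poly}(|V|)$ preprocessing step produces $\sqrt{\reff(u,v)}$ for all pairs $u,v$. Feeding this metric into the algorithm of Theorem \ref{thm:algorithm} yields a number $A_0$ with $A_0\asymp \gamma_2(V,\sqrt{\reff})$ in deterministic polynomial time. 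The final output is
$$
A(G) \;=\; \mathcal{C}\cdot A_0^{\,2},
$$
which satisfies $A(G)\asymp \mathcal{C}\,[\gamma_2(V,\sqrt{\reff})]^2\asymp t_{\mathrm{cov}}(G)$ by Theorem \ref{thm:mainthm}.

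There is no genuinely hard step at this stage, since all the substantive content has been packaged into Theorems \ref{thm:mainthm} and \ref{thm:algorithm}; the only mild point is to confirm that the Hilbertian property of $\sqrt{\reff}$ makes it a legitimate input to the $\gamma_2$ approximation algorithm, and that pseudoinverting the weighted Laplacian suffices to realize the resistance oracle on which that algorithm is invoked.
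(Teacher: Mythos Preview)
Your proposal is correct and is exactly the paper's intended argument: the theorem is stated there simply as the combination of Theorem \ref{thm:mainthm} with Theorem \ref{thm:algorithm}, and you have supplied the only missing routine details (that $\sqrt{\reff}$ is a metric and that effective resistances are computable in polynomial time via the Laplacian pseudoinverse).
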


Observe that for general reversible chains, the cover time
is not necessarily bounded a polynomial in $|V|$, and thus
even randomized simulation of the chain does not yield a
polynomial-time algorithm for approximating $t_{\mathrm{cov}}(G)$.
Finally, in Section \ref{sec:coverapps}, we prove Theorems \ref{thm:pseudo}
and \ref{thm:randalg}
in the setting of arbitrary reversible Markov chains,
leading to a near-linear time randomized algorithm
for computing cover times.

\section{Gaussian processes and local times}
\label{sec:iso}

We now discuss properties of the Gaussian
processes arising from the isomorphism theorem (Theorem \ref{thm:rayknight}).
In Section \ref{sec:blanket}, we show that the isomorphism theorem,
combined with concentration properties of Gaussian processes,
is already enough to get strong control
on blanket times and related quantities.

In Section \ref{sec:resist},
we prove some geometric properties of the resistance
metric on networks that will be crucial
to our work on the cover time in Sections \ref{sec:mm}
and \ref{sec:cover}.  Finally,
in Section \ref{sec:GFF}, we recall the
definition of the Gaussian free field
and show how the geometry
of such a process relates to the geometry
of the underlying resistance metric.

\subsection{The blanket time}
\label{sec:blanket}

\remove{
Let $(W_t)$ be a continuous-time random walk on a connected network
$G(V)$ and define the local time $L_t^v$ at vertex $v\in V$ by
\[L_t^v \deq \frac{1}{c_v} \int_0^t \1_{\{W_s = v\}} ds\,.\]

The distribution of the local times for a Borel right process can be
fully characterized by certain associated Gaussian processes;
results of this flavor go by the name of \emph{Isomorphism
Theorems}. Several versions have been developed by Ray-Knight,
Dynkin \cite{Dynkin83, Dynkin84}, Marcus and Rosen \cite{MR92,
MR01}, Eisenbaum \cite{Eisenbaum95} and Eisenbaum, Kaspi, Marcus,
Rosen and Shi \cite{EKMRS00}. In what follows, we present the second
Ray-Knight theorem in the special case of a continuous-time random
walk. It first appeared in \cite{EKMRS00} and one can also see
Theorem 8.2.2. of the book by Marcus and Rosen \cite{MR06} (which is
a great source for local times and Gaussian processes). It is easy
to verify that the continuous-time random walk on a connected graph
is indeed a recurrent strongly symmetric Borel right process.

\begin{theorem}[generalized second Ray-Knight Isomorphism Theorem]
\label{thm:rayknight} Fix $v_0 \in V$ and define $\tau(t) = \inf\{s:
L_s^{v_0} > t\}$. Let $T_{0}$ be the hitting time to $v_0$ and let
$\Gamma_{v_0}(x, y) = \E_x(L_{T_0}^y)$. Denote by $\eta = \{\eta_x:
x\in V\}$ a mean zero Gaussian process with covariance
$\Gamma_{v_0}(x, y)$. Let $P_{v_0}$ and $P_{\eta}$ be the measures
on the processes $\{L_{T_0}^x\}$ and $\{\eta_x\}$, respectively.
Then under the measure $P_{v_0} \times P_\eta$, for any $t > 0$
\begin{equation}\label{eq:law}
\left\{L_{\tau(t)}^x + \frac{1}{2} \eta_x^2: x\in V\right\}
\stackrel{law}{=} \left\{\frac{1}{2} (\eta_x + \sqrt{2t})^2 : x\in V
\right\}\,.
\end{equation}
\end{theorem}
}

We first remark that the covariance matrix
of the Gaussian process arising from
the isomorphism theorem can be calculated
explicitly in terms of the resistance metric on the
network $G(V)$.
Throughout this section, the process $\{\eta_x\}_{x \in V}$
refers to the one resulting from Theorem \ref{thm:rayknight}
with $v_0 \in V$ some fixed (but arbitrary) vertex,
$\tau(t)$ refers to the inverse
local time defined in \eqref{eq:inverselt},
and $T_0$ is the hitting time to $v_0$.

\begin{lemma}\label{lem-covarance-resistance}
For every $x, y \in V$,
\[\Gamma_{v_0}(x, y) = \E_x(L_{T_0}^y) = \tfrac{1}{2}(R_{\mathrm{eff}}(x, v_0) +
R_{\mathrm{eff}}(v_0, y) - \mathrm{R}_{\mathrm{eff}}(x, y)) \,.\] In
particular,
$$
\E\,(\eta_x-\eta_y)^2 = \reff(x,y).
$$
\end{lemma}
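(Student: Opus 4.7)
The plan is to prove the covariance formula by showing that both sides, viewed as functions of $x$ with $y$ and $v_0$ fixed, are harmonic on $V \setminus \{y,v_0\}$ and agree at the two special points; uniqueness of harmonic extension then identifies them.

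First I would observe that $\phi(x) := \E_x(L_{T_0}^y)$ is harmonic on $V \setminus \{y,v_0\}$. This follows from the Markov property together with the fact that for $x \neq y$ the local time at $y$ does not accumulate during the holding time at $x$, so $\phi(x) = \sum_z p_{xz}\phi(z)$. The boundary value at $v_0$ is $\phi(v_0)=0$ since $T_0=0$ under $\P_{v_0}$.

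Next I would check that the right-hand side $\psi(x) := \tfrac{1}{2}(\reff(x,v_0)+\reff(v_0,y)-\reff(x,y))$ satisfies the same properties. The key input is the classical identity
\[
\P_x(T_y < T_{v_0}) \;=\; \frac{\reff(x,v_0)+\reff(y,v_0)-\reff(x,y)}{2\reff(y,v_0)}\,,
\]
which identifies $\psi(x)$ with $\reff(y,v_0)\cdot \P_x(T_y<T_{v_0})$. Since $x \mapsto \P_x(T_y<T_{v_0})$ is the voltage with $v_0$ grounded and $y$ at potential $1$, it is harmonic off $\{y,v_0\}$ and vanishes at $v_0$, so $\psi$ is harmonic off $\{y,v_0\}$ and $\psi(v_0)=0$.

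It remains to match the values at $y$. On the one hand, $\psi(y) = \reff(y,v_0)$. On the other hand, under $\P_y$, the number of visits to $y$ before hitting $v_0$ is geometric with success probability $\P_y(T_{v_0} < T_y^+) = 1/(c_y \reff(y,v_0))$, and each visit contributes a mean-one exponential holding time to $\int_0^{T_0} \1_{\{X_s=y\}}\,ds$; dividing by $c_y$ as in the definition \eqref{eq:localtimedef} of local time gives $\phi(y) = \reff(y,v_0)$. The two harmonic functions agree on the boundary $\{y,v_0\}$, hence $\phi \equiv \psi$ on $V$. The second assertion follows by expanding $\E(\eta_x-\eta_y)^2 = \Gamma_{v_0}(x,x) + \Gamma_{v_0}(y,y) - 2\Gamma_{v_0}(x,y)$, applying the formula just proved (noting $\Gamma_{v_0}(x,x) = \reff(x,v_0)$), and simplifying.

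The only nonroutine step is the voltage-hitting probability identity displayed above; it can either be quoted from a standard reference (e.g., Chandra--Raghavan--Ruzzo--Smolensky--Tiwari) or derived in a few lines by injecting and extracting unit currents at the appropriate vertex pairs and superposing the resulting voltages. Everything else is bookkeeping with harmonicity and the geometric formula for the mean number of returns.
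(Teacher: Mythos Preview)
Your argument is correct and complete. The approach, however, differs from the paper's. The paper works entirely through hitting times: it invokes the cycle identity $H(x,v_0)+H(v_0,y)+H(y,x)=H(x,y)+H(y,v_0)+H(v_0,x)$, averages the two sides and uses $\kappa=\mathcal C\,\reff$ to rewrite the symmetric sum in terms of resistances, subtracts $\mathcal C\,\reff(x,y)$ to isolate $H(x,v_0)+H(v_0,y)-H(x,y)$, and then quotes the Aldous--Fill identity $\E_x(L_{T_0}^y)=\mathcal C^{-1}\bigl(H(x,v_0)+H(v_0,y)-H(x,y)\bigr)$ to finish.

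Your route is more potential-theoretic: you identify both sides directly as the harmonic function on $V\setminus\{y,v_0\}$ with boundary data $0$ at $v_0$ and $\reff(y,v_0)$ at $y$, via the voltage interpretation $\psi(x)=\reff(y,v_0)\,\P_x(T_y<T_{v_0})$ on one side and a first-step decomposition plus the geometric-returns computation on the other. This avoids citing the cycle identity and the Aldous--Fill local-time formula, trading them for the (equally standard) superposition derivation of the voltage--resistance identity and the escape-probability formula $\P_y(T_{v_0}<T_y^+)=1/(c_y\reff(y,v_0))$. The paper's proof is shorter if one is happy to quote those two references; yours is more self-contained and makes the role of $\Gamma_{v_0}$ as a Green kernel (hence a voltage) explicit, which is conceptually nice given how the paper later uses this in Section~\ref{sec:GFF}.
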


\begin{proof}
To prove
the lemma, we use the cycle identity for hitting times (see, e.g., \cite[Lem. 10.10]{LPW09}) which asserts that,
\begin{equation}\label{eq:cycle}
H(x,v_0) + H(v_0, y) + H(y,x) = H(x,y) + H(y,v_0) + H(v_0, x).
\end{equation}
Averaging both sides of \eqref{eq:cycle} and recalling \eqref{eq:reffkappa} yields
\[
H(x,v_0) + H(v_0, y) + H(y,x) = \frac{\mathcal C}{2} \left[\reff(x,v_0)+\reff(v_0,y)+\reff(x,y)\right].
\]
Now, we subtract $\mathcal C \reff(x,y) = H(x,y) + H(y,x)$ from both sides, giving
\[
H(x,v_0) + H(v_0, y) - H(x,y) = \frac{\mathcal C}{2} \left[\reff(x,v_0,)+\reff(v_0,y)-\reff(x,y)\right]
\]
Finally, we conclude using the identity (see, e.g. \cite[Ch 2., Lem. 9]{AF}),
\[
\E_x(L_{T_{0}}^y) = \frac{1}{\mathcal C} \left(H(x,v_0) + H(v_0, y) - H(x,y)\right).
\]
\end{proof}

We now relate the blanket time of the random walk to the expected
supremum of its associated Gaussian process. \remove{ The first
property established by Talagrand \cite{Talagrand}, has to do with
the boundedness of Gaussian process. For a mean zero Gaussian
process $\{\eta_x: x\in V\}$, define the generated metric $(V, d)$
by
\[d(x, y) = \big(\E (\eta_x - \eta_y)^2\big)^{1/2}\,, \mbox{ for } x, y\in V\,.\]
\begin{theorem}[\cite{Talagrad}]\label{thm-gaussian-sup}
Let $\mathcal{M}$ be the majorizing measure of the metric $(V, d)$
generated by the mean zero Gaussian process $\{\eta_x: x\in V\}$.
Then $\E \sup_{x \in V} \eta_x \asymp \mathcal{M}$.
\end{theorem}}
The following is a central facet of the theory of concentration of
measure; see, for example, \cite[Thm. 7.1, Eq. (7.4)]{Ledoux89}.

\begin{lemma}\label{lem-gaussian-concentration}
Consider a Gaussian process $\{\eta_x: x\in V\}$ and define $\sigma
= \sup_{x \in V} (\E(\eta_x^2))^{1/2}$. Then for $\alpha>0$,
\[\P\left(\left|\sup_{x\in V} \eta_x - \E \,\sup_{x \in V} \eta_x\right| > \alpha\right) \leq 2 \exp(-\alpha^2 / 2\sigma^2)\,.\]
\end{lemma}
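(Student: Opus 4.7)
The plan is to reduce the claim to the Gaussian concentration inequality for Lipschitz functions, which is the content of the cited Ledoux reference. Since $V$ is finite of size $n = |V|$, I can first write a joint Gaussian representation: letting $\Sigma = (\E\,\eta_x \eta_y)_{x,y \in V}$ and $A$ a (matrix) square root of $\Sigma$ with rows $\{a_x\}_{x \in V} \subseteq \R^n$, the process $\{\eta_x\}_{x \in V}$ has the same law as $\{\langle a_x, g\rangle\}_{x \in V}$, where $g = (g_1, \ldots, g_n)$ is a vector of i.i.d.\ standard Gaussians. By construction, $\|a_x\|_2^2 = \E\,\eta_x^2$, so $\max_{x \in V} \|a_x\|_2 = \sigma$.

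Next I would introduce the functional $F: \R^n \to \R$ defined by $F(v) = \sup_{x \in V} \langle a_x, v\rangle$, so that $\sup_{x \in V} \eta_x \stackrel{law}{=} F(g)$. The key observation is that $F$ is $\sigma$-Lipschitz with respect to the Euclidean norm: for any $v, w \in \R^n$, if $x^\star$ achieves the supremum defining $F(v)$, then
\[
F(v) - F(w) \leq \langle a_{x^\star}, v\rangle - \langle a_{x^\star}, w\rangle \leq \|a_{x^\star}\|_2 \cdot \|v - w\|_2 \leq \sigma \|v-w\|_2\,,
\]
by Cauchy--Schwarz, and the symmetric argument gives $|F(v) - F(w)| \leq \sigma \|v-w\|_2$.

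With this in hand, it remains to invoke the Gaussian concentration inequality for Lipschitz functions (Borell--Sudakov--Tsirelson): if $F: \R^n \to \R$ is $L$-Lipschitz and $g$ is standard Gaussian on $\R^n$, then for every $\alpha > 0$,
\[
\P\left(|F(g) - \E F(g)| > \alpha \right) \leq 2 \exp(-\alpha^2/2L^2)\,.
\]
Applying this with $L = \sigma$ yields the stated bound. The only genuinely substantive step is this last inequality, which is a cornerstone of the concentration of measure theory and can be proved either via the Gaussian isoperimetric inequality or via the Gaussian log-Sobolev inequality combined with Herbst's argument; I would take it as a black box from \cite{Ledoux89}. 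This is the place where the deep input enters and is the natural ``obstacle,'' but it is entirely off-the-shelf and independent of the graph-theoretic setting here.
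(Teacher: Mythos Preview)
Your proposal is correct and is precisely the standard reduction underlying the cited result. Note that the paper does not actually supply a proof of this lemma at all; it simply invokes \cite[Thm.~7.1, Eq.~(7.4)]{Ledoux89} as a black box, so your sketch in fact goes further than the paper by spelling out the Lipschitz representation and the appeal to Borell--Sudakov--Tsirelson.
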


We are now ready to establish the upper bound on the
strong blanket time
$t^\star_{\mathrm{bl}}(G,\delta)$, for any fixed $0< \delta < 1$. Note that this will naturally yield an upper bound on $t_{\mathrm{bl}}(\delta)$.

\begin{theorem}\label{thm:blanket}
Consider a network $G(V)$ and its total conductance $\C = \sum_{x
\in V} c_x$. For any fixed $0<\delta <1$, the blanket time
$t^\star_{\mathrm{bl}}(G,\delta)$ of the random walk on $G(V)$
satisfies
$$t^\star_{\mathrm{bl}} (G,\delta)\lesssim_\delta  \C  \cdot \left( \E\,\sup_{x \in V} \eta_x\right)^2,$$
where $\{\eta_x\}$ is the associated Gaussian process from Theorem
\ref{thm:rayknight}.
\end{theorem}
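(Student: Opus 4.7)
The strategy is to combine the Ray--Knight isomorphism (Theorem \ref{thm:rayknight}) with Gaussian concentration to show that at the inverse local time $\tau(t)$, for suitably large $t$, all local times $\{L^x_{\tau(t)}\}_{x \in V}$ are uniformly close to $t$, and then to convert this probabilistic statement into an expectation bound by dyadic iteration.

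Set $\Lambda := \E \sup_{x \in V} \eta_x$. First I observe that $\sigma^2 := \sup_x \E \eta_x^2 = \sup_x \reff(v_0,x)$ (by Lemma \ref{lem-covarance-resistance} and $\eta_{v_0}=0$), and moreover $\sigma \lesssim \Lambda$: indeed, $\reff(v_0,x) = \E(\eta_x - \eta_{v_0})^2$ equals the square of the $L^2$-diameter of the process, which is at most $\E \sup_x |\eta_x|$ up to a constant, and by symmetry ($\eta \stackrel{d}{=} -\eta$) the latter is at most $2\Lambda$. Lemma \ref{lem-gaussian-concentration} then yields
\[ \P\bigl(\sup_x |\eta_x| > \alpha \Lambda\bigr) \leq C e^{-c \alpha^2} \qquad \text{for all } \alpha \geq 2. \]

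Next, with $\eta'$ an independent copy of $\eta$, Theorem \ref{thm:rayknight} gives $L^x_{\tau(t)} + \tfrac{1}{2}\eta_x^2 \stackrel{d}{=} \tfrac{1}{2}\eta'^2_x + \sqrt{2t}\,\eta'_x + t$ jointly in $x \in V$, so rearranging and using the triangle inequality uniformly in $x$ yields
\[ \sup_x \Bigl|\tfrac{L^x_{\tau(t)}}{t} - 1\Bigr| \stackrel{d}{=} \sup_x \tfrac{1}{t}\Bigl|\sqrt{2t}\,\eta'_x + \tfrac{1}{2}(\eta'^2_x - \eta_x^2)\Bigr| \leq \sqrt{\tfrac{2}{t}}\,\sup_x |\eta'_x| + \tfrac{1}{t}\sup_x(\eta_x^2 \vee \eta'^2_x). \]
Choose $t_0 := K_\delta \Lambda^2$ with $K_\delta$ large (depending on $\delta$). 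On the event $\{\sup_x |\eta_x| \vee \sup_x |\eta'_x| \leq \alpha \Lambda\}$ the right side is at most $\alpha\sqrt{2/K_\delta} + \alpha^2/K_\delta$. Fixing $\varepsilon = \varepsilon(\delta)$ small enough that $\sup_x|L^x/t - 1| \leq \varepsilon$ forces $L^x_{\tau(t)}/L^y_{\tau(t)} \geq \delta$ for all $x,y$, and then taking $\alpha := \varepsilon \sqrt{K_\delta}/4$ with $K_\delta$ large, the right side is $\leq \varepsilon$; combining with the tail bound above gives
\[ \P\bigl(\tau^\star_{\mathrm{bl}}(\delta) > \tau(t_0)\bigr) \leq 2C e^{-c'' \varepsilon^2 K_\delta}. \]

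To upgrade to an expectation bound, set $t_j := 2^j t_0$; the same argument (with $t_j$ in place of $t$ and $\alpha_j := \varepsilon\sqrt{K_\delta 2^j}/4$) yields the doubly-exponential tail $\P(\tau^\star_{\mathrm{bl}}(\delta) > \tau(t_j)) \leq C e^{-c''' 2^j K_\delta \varepsilon^2}$. Using the standard identities $\E_{v_0}\tau(t) = t\C$ and $\E_{v_0}\tau(t)^2 \lesssim (t\C)^2$ for $t \geq t_0$ (both follow from Wald applied to return excursions from $v_0$, whose mean length is $\C/c_{v_0}$), Cauchy--Schwarz gives
\[ \E_{v_0}\tau^\star_{\mathrm{bl}}(\delta) \leq \sum_{j \geq 0} \E_{v_0}\!\bigl[\tau(t_{j+1})\,;\, \tau^\star_{\mathrm{bl}}(\delta) > \tau(t_j)\bigr] \lesssim \sum_{j \geq 0} 2^{j+1} t_0 \C \cdot e^{-c_4 \cdot 2^j K_\delta \varepsilon^2}, \]
which sums to $O_\delta(t_0 \C) = O_\delta(\C \Lambda^2)$. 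Starting from an arbitrary $v \in V$ in place of $v_0$ adds only $H(v,v_0) \leq \kappa(v,v_0) = \C \cdot \reff(v,v_0) \lesssim \C \sigma^2 \lesssim \C\Lambda^2$, which is absorbed into the bound. The main technical point is to couple the two independent Gaussian copies so both are tail-controlled simultaneously and to scale $\alpha_j$ with $\sqrt{K_\delta 2^j}$ so that the Gaussian exponent beats the dyadic growth of $\tau(t_j)$; once this calibration is in place, the rest is routine.
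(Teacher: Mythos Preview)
Your argument is essentially correct, with one caveat: the displayed identity $\sup_x|L^x_{\tau(t)}/t - 1| \stackrel{d}{=} \sup_x \tfrac1t\bigl|\sqrt{2t}\,\eta'_x + \tfrac12(\eta'^2_x - \eta_x^2)\bigr|$ does not hold with $\eta'$ an \emph{independent} copy of $\eta$. You cannot subtract $\tfrac12\eta^2$ from both sides of the isomorphism while keeping the product structure $L\perp\eta$ on the left; the representation $\sqrt{2t}\,\eta'_x+\tfrac12\eta'^2_x-\tfrac12\eta_x^2+t$ is visibly not independent of $\eta$. The fix is immediate: write $|L^x-t| \le |L^x+\tfrac12\eta_x^2-t| + \tfrac12\eta_x^2$, apply the isomorphism only to the first term, and union-bound. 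Since your subsequent tail estimate uses only the marginal laws of $\eta$ and $\eta'$, nothing downstream changes.

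The substantive difference from the paper is in how you pass from a tail bound to an expectation bound. The paper observes that on the good event $\{\sqrt\delta\,t \le L^x_{\tau(t)} \le t/\sqrt\delta \text{ for all }x\}$ one obtains, in addition to the blanket condition, the \emph{deterministic} inequality $\tau(t) = \sum_x c_x L^x_{\tau(t)} \le \C t/\sqrt\delta$. This gives directly $\P(\tau^\star_{\mathrm{bl}}(\delta) > \C t/\sqrt\delta) \le e^{-c_\delta\beta}$ for $t=\beta(\Lambda^2+\sigma^2)$, and one integrates in $\beta$. Your route---bounding $\P(\tau^\star_{\mathrm{bl}} > \tau(t_j))$ and then invoking $\E\tau(t)^2 \lesssim (\C t)^2$ together with Cauchy--Schwarz---also works, but the second-moment claim needs more than ``Wald'': one must control the variance of the return excursion to $v_0$, or (cleaner) compute $\var(\tau(t)) = 2t\,\var\bigl(\sum_x c_x\eta_x\bigr) \le 2t\,\C^2\sigma^2$ directly from the isomorphism and then use $\sigma^2 \lesssim \Lambda^2 \le t/K_\delta$. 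The paper's use of $\tau(t)=\sum_x c_x L^x_{\tau(t)}$ sidesteps this extra moment estimate entirely.
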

\begin{proof}
We first prove that for some $A_\delta>0$
\begin{equation}\label{eq-blanket-time}
t^\star_{\mathrm{bl}} (\delta)\leq A_\delta \C \left(\left( \E\,\sup_{x
\in V} \eta_x\right)^2 + \sup_{x \in V}
\E\,(\eta_x^2)\right).\end{equation} Fix a vertex $v_0 \in V$ and
consider the local times $\{L_{\tau(t)}^x: x\in V\}$, where for $t >
0$, we write $\tau(t) = \inf\{s: L_{s}^{v_0} > t\}$. Let $\sigma =
\sup_{x \in V} \sqrt{\E(\eta_x^2)}$ and $\Lambda = \E \sup_x
\eta_x$.

\medskip

Use $\{\eta_x^L\}$ to denote the copy of the Gaussian process
corresponding to the left-hand side of \eqref{eq:law}, and
$\{\eta_x^R\}$ to denote the i.i.d. process corresponding to the
right-hand side. Fix $\beta>0$, and set $t = t(\beta) = \beta
(\Lambda^2 + \sigma^2)$. By Theorem~\ref{thm:rayknight}, we get that
\begin{align*}\P\left(\min_{x} L^x_{\tau(t)} \leq \sqrt{\delta} t\right) \leq
\P\left(\inf_x \frac{1}{2}(\eta_x^R + \sqrt{2t})^2 \leq \frac{1 +
\sqrt{\delta}}{2} t\right) + \P\left(\sup_x \frac{1}{2}(\eta_x^L)^2
\geq \frac{1 - \sqrt{\delta}}{2} t\right)\,.
\end{align*}
Therefore,
\begin{align*}
\P\left(\min_{x} L^x_{\tau(t)} \leq \sqrt{\delta} t\right) \leq
\P\left(\inf_x \eta_x^R \leq - a_\delta \sqrt{t}\right) +
\P\left(\sup_x |\eta_x^L| \geq b_\delta \sqrt{t}\right)\,,
\end{align*}
where $a_\delta = \sqrt{2} - \sqrt{1+\sqrt{\delta}}$ and $b_\delta =
\sqrt{1 - \sqrt{\delta}}$. Applying
Lemma~\ref{lem-gaussian-concentration}, we obtain that if $\beta >
\beta_0(\delta)$ for some $\beta_0(\delta)>0$, then
\begin{equation}\label{eq-bound-L}\P\left(\min_{x} L^x_{\tau(t)} \leq \sqrt{\delta} t \right)
\leq 6 \exp(-\gamma_\delta \beta)\,,\end{equation} where
$\gamma_\delta = \frac{1}{2} (a_\delta^2 \wedge b_\delta^2)$.
On the other hand, we have
\begin{align*}
\P\left(\max_x
L^x_{\tau(t)} \geq t/\sqrt{\delta}\right) \leq \P\left(\max_x
\frac{1}{2}(\eta_x^R + \sqrt{2t})^2 \geq t/\sqrt{\delta}\right) =
\P\left(\max_x \eta_x \geq a'_\delta \sqrt{t}\right)\,,
\end{align*}
where $a'_\delta = \sqrt{1/\delta} - 1$. Applying
Lemma~\ref{lem-gaussian-concentration} again for $\beta >
\beta_0(\delta)$, we get that
\begin{equation}\label{eq-bound-tau}
\P\left(\max_x
L^x_{\tau(t)} \geq t/\sqrt{\delta}\right) \leq 2 \exp(-\gamma'_\delta
\beta)\,,
\end{equation}
where $\gamma'_\delta = (a'_\delta)^2 /2$.  Note that assuming
$\min_{x} L^x_{\tau(t)} \geq \sqrt{\delta } t$ and $\max_x
L^x_{\tau(t)} \leq t/\sqrt{\delta}$, we have $\tau(t) = \sum_x c_x
L^x_{\tau(t)} \leq \C t/\sqrt{\delta}$ as well as $\min_{x, y}
L^{x}_{\tau(t)}/L^y_{\tau(t)} \geq \delta$. It then follows that
$\tau^\star_{\mathrm{bl}} \leq \tau(t) \leq \C t/\sqrt{\delta}$.
Therefore, we can deduce that
\begin{equation*}\left\{\tau^\star_{\mathrm{bl}} \geq \C t
/\sqrt{\delta}\right\} \subset \left\{\min_{x} L^x_{\tau(t)} \leq
\sqrt{\delta } t\right\} \bigcup \left\{\max_x L^x_{\tau(t)} \geq
t/\sqrt{\delta}\right\}\,.\end{equation*} Combined with
\eqref{eq-bound-L} and \eqref{eq-bound-tau}, it yields that
\[\P(\tau^\star_{\mathrm{bl}} \geq \C t /\sqrt{\delta}) \leq 6 \exp(-\gamma_\delta \beta) + 2 \exp(-\gamma'_\delta
\beta)\,.\] It then follows that $t^\star_{\mathrm{bl}} \leq A_\delta \C
(\Lambda^2 + \sigma^2)$ for some $A_\delta>0$ which depends only on
$\delta$, establishing \eqref{eq-blanket-time}.

It remains to prove that $\sigma = O(\Lambda)$. To this end, let
$x^*$ be such that $\E \eta_{x^*}^2 = \sigma^2$. We have
\begin{equation}\label{eq-Lambda-sigma-relation}
\Lambda \geq \E \max(\eta_{v_0},\eta_{x^*}) = \E \max(0,\eta_{x^*})
= \frac{\sigma}{\sqrt{2\pi}}\,.\end{equation} This completes the
proof for the continuous-time case.
\end{proof}

\begin{remark}
An interesting question is the asymptotic behavior of $\delta$-blanket time as $\delta \to 1$, namely the dependence on $\delta$ of $A_\delta$ in \eqref{eq-blanket-time}. As implied in the proof, we can see that $$A_\delta \lesssim \frac{1}{\gamma_\delta} + \frac{1}{\gamma'_\delta} \lesssim \frac{1}{(1- \delta)^2}\,.$$
These asymptotics are tight for the complete graph; see e.g. \cite[Cor. 2]{WZ96}.
\end{remark}

We next extend the proof
of the preceding theorem to the case of the discrete-time random walk.
The next lemma contains the main estimate required for this extension.

\begin{lemma}\label{lem:dev}
Let $G(V)$ be a network and write $\gamma_2 = \gamma_2 (V, \sqrt{R_{\mathrm{eff}}})$. Then for all $u \geq 16$, we have
\[\sum_{v\in V}e^{- u \cdot c_v \gamma_2^2} \lesssim e^{-u/8}\,.\]
\end{lemma}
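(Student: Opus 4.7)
The plan is to sort the vertices by decreasing conductance, apply Sudakov minoration to obtain a logarithmic lower bound on $c_v\gamma_2^2$ for most $v$, and then sum carefully. Enumerate $V$ as $v_1,\ldots,v_n$ so that $c_{v_1}\geq\cdots\geq c_{v_n}$. The geometric input is $\reff(u,v)\geq 1/\min(c_u,c_v)$ for distinct $u,v\in V$: shorting $V\setminus\{v\}$ to a single node produces parallel conductances summing to $c_v$, so $\reff(v,V\setminus\{v\})=1/c_v$, and Rayleigh monotonicity forces $\reff(v,u)\geq 1/c_v$ for every $u\neq v$. Consequently $S_k=\{v_1,\ldots,v_k\}$ is pairwise $(1/\sqrt{c_{v_k}})$-separated in the $\sqrt{\reff}$-metric.

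Since the GFF satisfies $\E(\eta_u-\eta_v)^2=\reff(u,v)$ by Lemma~\ref{lem-covarance-resistance}, applying Sudakov minoration (Lemma~\ref{lem:sudakov}) to $\{\eta_{v_i}\}_{i\leq k}$ and invoking Theorem (MM) yields
$$\gamma_2\;\asymp\;\E\sup_{v\in V}\eta_v\;\geq\;\E\sup_{i\leq k}\eta_{v_i}\;\gtrsim\;\frac{\sqrt{\log k}}{\sqrt{c_{v_k}}}\,,$$
so $c_{v_k}\gamma_2^2\geq c_0^2\log k$ for $k\geq 2$, where $c_0$ is the universal Sudakov constant. A complementary bound $c_v\gamma_2^2\geq 1$ for every $v$ is immediate from the definition \eqref{eq:gamma2}: the $k=0$ term alone forces $\gamma_2\geq\diam(V,\sqrt{\reff})\geq\sqrt{\reff(v,u)}\geq 1/\sqrt{c_v}$ for any $u\neq v$. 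Hence $c_{v_k}\gamma_2^2\geq\max(1,c_0^2\log k)$.

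Splitting the sum at $k^\star=\lceil e^{1/c_0^2}\rceil$ and bounding its tail by an integral,
$$\sum_{v\in V}e^{-uc_v\gamma_2^2}\;\leq\;k^\star e^{-u}\;+\;\int_{k^\star}^\infty x^{-uc_0^2}\,dx\;=\;e^{-u}\Bigl(k^\star+\tfrac{e^{1/c_0^2}}{uc_0^2-1}\Bigr)\,.$$
For $u\geq 16$ calibrated so that $uc_0^2\geq 2$, the parenthesized factor is bounded by an absolute constant, so the sum is $\lesssim e^{-u}\leq e^{-u/8}$. The main delicacy is tracking Sudakov's universal constant $c_0$ well enough for the convergence threshold $uc_0^2\geq 2$ to align with the stated cutoff $u\geq 16$; modulo this constant-bookkeeping the argument reduces cleanly to Sudakov plus a geometric-series estimate.
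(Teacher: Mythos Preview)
Your separation claim is backwards. With the vertices sorted by \emph{decreasing} conductance $c_{v_1}\geq\cdots\geq c_{v_n}$, the set $S_k=\{v_1,\ldots,v_k\}$ is \emph{not} in general $(1/\sqrt{c_{v_k}})$-separated in $\sqrt{\reff}$. You correctly derive $\reff(v_i,v_j)\geq 1/\min(c_{v_i},c_{v_j})$, but for $i,j\leq k$ one has $\min(c_{v_i},c_{v_j})\geq c_{v_k}$, hence $1/\min(c_{v_i},c_{v_j})\leq 1/c_{v_k}$; the inequality points the wrong way. A concrete counterexample: on the path $a\text{--}b\text{--}c$ with edge conductances $c_{ab}=M\gg 1$ and $c_{bc}=1$, the decreasing order is $b,a,c$ with $c_{v_3}=1$, yet $\reff(a,b)=1/M\ll 1$, so $S_3$ is not $1$-separated.

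The fix is immediate: sort by \emph{increasing} conductance $c_{v_1}\leq\cdots\leq c_{v_n}$. Then for $i,j\leq k$ one has $\min(c_{v_i},c_{v_j})\leq c_{v_k}$, so $\reff(v_i,v_j)\geq 1/\min(c_{v_i},c_{v_j})\geq 1/c_{v_k}$, and $S_k$ really is $(1/\sqrt{c_{v_k}})$-separated. Sudakov then yields $c_{v_k}\gamma_2^2\geq c_0^2\log k$, and combined with the diameter bound $c_v\gamma_2^2\geq 1$ the rest of your computation goes through (modulo, as you note, tracking the Sudakov constant $c_0$ well enough to match the threshold $u\geq 16$).

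With this correction your argument is valid and genuinely different from the paper's. The paper works directly with a near-optimal admissible sequence $\{\mathcal A_k\}$ for $\gamma_2$: defining $k_v$ as the first level at which $\{v\}$ is a singleton, the non-singleton set $A_{k_v-1}(v)$ has $\sqrt{\reff}$-diameter at least $1/\sqrt{c_v}$, so the $(k_v{-}1)$-st term alone forces $c_v\gamma_2^2\gtrsim 2^{k_v}$. Since at most $2^{2^{k+1}}$ vertices have $k_v=k+1$, the sum collapses to $\sum_k 2^{2^{k+1}}e^{-u2^k/4}$. The paper's route thus exploits the doubly-exponential cardinality profile built into $\gamma_2$ and yields explicit constants (in particular the exponent $u/8$) without reference to Sudakov's constant; your route is shorter and uses only Sudakov plus the diameter bound, at the cost of an implicit constant in the exponent.
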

\begin{proof}
By definition of the $\gamma_2$ functional, we can choose a sequence of partitions $\mathcal{A}_k$ with $|\mathcal{A}_k| \leq 2^{2^k}$ such that
\[\gamma_2 \geq \frac{1}{2} \sup_{v\in V}\sum_{k\geq 0} 2^{k/2} \diam(A_k(v))\,.\]
For $v\in V$, let $k_v = \min\{k: \{v\} \in \mathcal{A}_k\}$. It is clear that $R_{\mathrm{eff}} (u,v) \geq 1/c_v$ for all $u\neq v$ and hence $(\diam (A_{k_v - 1}(v)))^2 \geq 1/c_v$. Therefore, we see that
\[\sum_{v\in V}e^{- u \cdot c_v \gamma_2^2} = \sum_{k=0}^\infty \sum_{v: k_v = k+1} e^{-u \cdot c_v \gamma_2^2} \leq \sum_{k=1}^\infty 2^{2^{k+1}} e^{-u 2^{k} /4} \lesssim e^{- u /8}\,,\]
completing the proof.
\end{proof}

\begin{theorem}\label{thm:blanketdiscr}
Consider a network $G(V)$ and its total conductance $\C = \sum_{x
\in V} c_x$. For any fixed $0<\delta <1$, the discrete blanket time
$t_{\mathrm{bl}}(G,\delta)$ of the random walk on on $G(V)$
satisfies
$$t_{\mathrm{bl}} (G,\delta)\lesssim_\delta  \C  \cdot \left( \E\,\sup_{x \in V} \eta_x\right)^2,$$
where $\{\eta_x\}$ is the associated Gaussian process from Theorem
\ref{thm:rayknight}.
\end{theorem}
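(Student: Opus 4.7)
The plan is to derive the discrete-time bound from the continuous-time bound (Theorem \ref{thm:blanket}) by showing that the number of discrete visits $N_v$ is tightly concentrated around $c_v L_t^v$, uniformly across all vertices. Write $\Lambda = \E\sup_v \eta_v$, fix a slack parameter $\delta_0 \in (\delta, 1)$ (say $\delta_0 = (1+\delta)/2$), and let $A = A_\delta$ be a large constant to be chosen.

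First I would set $t = A \cdot \C \cdot \Lambda^2$ and appeal to (the tail estimates inside the proof of) Theorem \ref{thm:blanket} to conclude that, with probability at least $1 - e^{-\Omega(A)}$, one has $L_t^v \in [\sqrt{\delta_0}\cdot t/\C,\; (1/\sqrt{\delta_0})\cdot t/\C]$ simultaneously for all $v \in V$. Next, I would transfer this to the discrete chain as follows. Let $K(t)$ be the (random) number of jumps made by continuous time $t$, and recall that the discrete chain observed at step $K(t)$ coincides with the continuous chain at time $t$. Writing $c_v L_t^v = \sum_{i=1}^{N_v(K(t))} \xi_i^{(v)}$ with $\xi_i^{(v)}$ i.i.d.\ $\mathrm{Exp}(1)$ holding times at $v$, a Chernoff bound for sums of exponentials yields
\[
\P\!\left(|N_v(K(t)) - c_v L_t^v| > \eps\, c_v L_t^v\right) \leq 2 e^{-c \eps^2 c_v L_t^v}
\]
for a universal $c > 0$, valid on the continuous-blanket event, where $c_v L_t^v \gtrsim_\delta c_v t/\C = A c_v \Lambda^2$.

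The main step is a uniform union bound. Using Theorem (MM) to replace $\Lambda^2$ by $\gamma_2^2 = \gamma_2(V, \sqrt{\reff})^2$ up to a universal constant, the per-vertex exponent becomes at least $\Omega(\eps^2 A \cdot c_v \gamma_2^2 / O_\delta(1))$. Invoking Lemma \ref{lem:dev} with parameter $u \asymp_\delta \eps^2 A$ (which we arrange to be $\geq 16$ by taking $A$ large), we get
\[
\sum_{v \in V} 2 e^{-c \eps^2 c_v L_t^v} \lesssim e^{-\Omega_\delta(\eps^2 A)}.
\]
Hence with probability $1 - e^{-\Omega_\delta(A)}$, simultaneously $N_v(K(t)) = (1\pm\eps)\, c_v L_t^v$ for every $v$. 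Choosing $\eps = \eps(\delta, \delta_0)$ small enough, we then have for all $u, v$:
\[
\frac{N_u(K(t))/\pi(u)}{N_v(K(t))/\pi(v)} \geq \frac{1-\eps}{1+\eps}\cdot \frac{L_t^u}{L_t^v} \geq \frac{1-\eps}{1+\eps}\cdot \delta_0 \geq \delta,
\]
so $\tau_{\mathrm{bl}}(\delta) \leq K(t)$ on this event.

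To finish and bound the expectation, I would use a standard geometric-iteration argument: since $K(t)$ has a Poisson-like distribution concentrated around $t$, one has $K(t) \leq 2t$ with probability $1 - e^{-\Omega(t)}$; combining with the Markov property at the end of each block of length $t$ and uniformity over the starting vertex, $\E_v \tau_{\mathrm{bl}}(\delta) \lesssim t \asymp_\delta \C \Lambda^2$. The main obstacle, and the reason the theorem is not a trivial corollary of Theorem \ref{thm:blanket}, is precisely the union bound over vertices with potentially very small conductance $c_v$: the individual Chernoff estimate alone is far too weak, and the sharp bookkeeping provided by Lemma \ref{lem:dev}—which uses the $\gamma_2$ geometry of $(V, \sqrt{\reff})$ to show that low-conductance vertices are few in exactly the right sense—is what closes the argument.
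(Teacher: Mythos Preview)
Your proposal is correct and follows essentially the same approach as the paper: both arguments reduce to the continuous-time blanket estimates of Theorem~\ref{thm:blanket}, then compare the discrete visit counts $N_v$ to the local times $c_v L_t^v$ via a Chernoff bound for sums of i.i.d.\ exponentials, and---crucially---close the union bound over vertices using Lemma~\ref{lem:dev}. The only organizational difference is that the paper carries out the comparison at the inverse local time $\tau(t)$ (so that the isomorphism-theorem estimates \eqref{eq-bound-L} and \eqref{eq-bound-tau} apply directly), whereas you work at a deterministic continuous time and invoke the conclusion of Theorem~\ref{thm:blanket}; both choices lead to the same computation, and your identification of Lemma~\ref{lem:dev} as the key ingredient that makes the union bound go through is exactly right.
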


\begin{proof}
We now consider the embedded discrete-time random walk of the
continuous-time counterpart (i.e. the corresponding jump chain; see
\cite[Ch. 2]{AF}). Let $N^v_t$ be such that $c_v \cdot N^v_t$ is the
number of visits to vertex $v$ up to continuous time $t$, i.e.
$N^v_t$ is a discrete-time analog of the local time $L^v_t$.

Fix a vertex $v_0 \in V$ and
consider the local times $\{L_{\tau(t)}^x: x\in V\}$.
Let $\sigma =
\sup_{x \in V} \sqrt{\E(\eta_x^2)}$ and $\Lambda = \E \sup_x
\eta_x$.
Again, set $t = \beta(\Lambda^2 + \sigma^2)$.

Let $\tau_{\mathrm{bl}}(\delta)$ denote the first time at which
$N^x_t \geq \frac{\delta t}{\mathcal C}$ for every $x \in V$.
Assuming that $\min_{x} N^x_{\tau(t)} \geq \delta^{1/4} t$ and
$\max_x N^x_{\tau(t)} \leq t/\delta^{3/4}$, we have $\tau(t) =
\sum_x c_x N^x_{\tau(t)} \leq \C t/\delta^{3/4}$ and thus $\min_x
N^x_{\tau(t)} \geq \delta \tau(t)/\mathcal{C}$. It then follows that
$\tau_{\mathrm{bl}}(\delta) \leq \tau(t) \leq \C t/\delta^{3/4}$.
Therefore, we deduce that
\[\left\{\tau_{\mathrm{bl}}(\delta) \geq \frac{\C t}{\delta^{3/4}}\right\} \subset
\left\{\min_{x} N^x_{\tau(t)} \leq \delta^{1/4} t\right\} \bigcup \left\{\max_x
N^x_{\tau(t)} \geq t/\delta^{3/4}\right\}\,.\]
Therefore we have,
\begin{align*}\P\left(\tau_{\mathrm{bl}}(\delta)
\geq \frac{\C t}{\delta^{3/4}}\right) &\leq \P\left(\min_{x} L^x_{\tau(t)} \leq \sqrt{\delta } t \textrm{ or } \max_x
L^x_{\tau(t)} \geq t/\sqrt{\delta}\right)\\
&+ \P\left(\forall x: \sqrt{\delta} t\leq L^x_{\tau(t)} \leq  t / \sqrt{\delta} \mid \min_{x} N^x_{\tau(t)} \leq \delta^{1/4} t \mbox{ or } \max_x
N^x_{\tau(t)} \geq t/\delta^{3/4}  \right)\,.
\end{align*}
Note that we have already bounded the first term
 in \eqref{eq-bound-L}
 and \eqref{eq-bound-tau}.

The second term can be bounded by a simple application of
a large deviation inequality
on the sum of i.i.d. exponential variables. Precisely,
\[\sum_{x\in V}\P\left(  \sqrt{\delta} t\leq L^x_{\tau(t)} \leq  t / \sqrt{\delta} \mid N^x_{\tau(t)} \leq \delta^{1/4} t \mbox{ or } N^x_{\tau(t)} \geq  t/\delta^{3/4} \right) \lesssim \sum_{x\in V}e^{-\tilde{a}_\delta \cdot c_x t}\]
for some constant $\tilde{a}_\delta>0$ depending only on $\delta$.
 Recall that Theorem (MM)
implies $\E\sup_x \eta_x \asymp \gamma_2(V,\sqrt{\reff})$. By
\eqref{eq-Lambda-sigma-relation}, we see that $\sigma \leq
\sqrt{2\pi} \Lambda$. Altogether, we get that $t \asymp \Lambda^2
\asymp_{\beta} \left[\gamma_2(V,\sqrt{\reff})\right]^2$. Applying
Lemma \ref{lem:dev},
  we conclude that there exists $\tilde{\beta}_0(\delta)>0$ depending only on $\delta$ such that for all $\beta \geq \tilde{\beta}_0(\delta)$, we have
\[\P(\tau_{\mathrm{bl}}(G,\delta) \geq \C t / \delta^{3/4}) \lesssim e^{-\tilde{b}_\delta \beta}\]
where $\tilde{b}_\delta$ is a constant depending only on $\delta$. This immediately yields the desired upper bound on the blanket time for the discrete-time random walk.
\end{proof}

We next exhibit a lower bound on a variation of blanket time
(considered in \cite{KKLV00}). It is apparent that the lower bound
on the cover time, which will be proved in Section \ref{sec:cover},
is an automatic lower bound on the blanket time.  In what follows,
though, we try to give a simple argument that can be regarded as a
warm up. For the convenience of analysis, we consider the following
notion. For $0<\varepsilon<1$, define
\begin{equation}\label{eq-def-new-blanket}
t_{\mathrm{bl}}^*(G,\varepsilon) = \max_{w\in V}\inf\{s : \P_w(\forall
u, v\in V: L^u_t \leq 2 L^v_t) > \varepsilon \mbox{ for all } t\geq
s\}\,.
\end{equation}
\begin{theorem}\label{thm-lower-blanket}
Consider a network $G(V)$ and its total conductance $\C = \sum_{x
\in V} c_x$. For any fixed $0<\varepsilon <1$, we have
\[t_{\mathrm{bl}}^*(G,\varepsilon) \gtrsim_\varepsilon  \C  \cdot \left( \E\,\sup_{x \in V} \eta_x\right)^2\,.\]
\end{theorem}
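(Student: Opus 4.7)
The plan is to find a deterministic time $T_0 \asymp_\varepsilon \C \Lambda^2$ (with $\Lambda = \E \sup_x \eta_x$ and $\sigma = \sup_x (\E\eta_x^2)^{1/2}$, so $\sigma \leq \sqrt{2\pi}\,\Lambda$ by \eqref{eq-Lambda-sigma-relation}) with $\P_{v_0}(\forall u, v: L^u_{T_0} \leq 2 L^v_{T_0}) \leq \varepsilon$; by definition \eqref{eq-def-new-blanket} this forces $t_{\mathrm{bl}}^*(G,\varepsilon) \geq T_0$. A preliminary reduction removes the ratio: on the blanket event, $\lambda := \min_u L^u_{T_0}$ satisfies $\lambda \leq L^u_{T_0} \leq 2\lambda$ for all $u$, and since $\sum_u c_u L^u_{T_0} = T_0$, we obtain $\lambda \in [T_0/(2\C), T_0/\C]$ and $\max_u L^u_{T_0} \leq 2T_0/\C$. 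Hence it suffices to show $\P_{v_0}(\max_x L^x_{T_0} > 2T_0/\C) \geq 1 - \varepsilon$.

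To produce such a $T_0$, I would pick $t = c_\varepsilon \Lambda^2$ for small $c_\varepsilon > 0$ and apply Theorem~\ref{thm:rayknight} at the random time $\tau(t)$: in a coupling realizing the isomorphism, $L^x_{\tau(t)} = \tfrac{1}{2}(\eta^R_x + \sqrt{2t})^2 - \tfrac{1}{2}(\eta^L_x)^2$ for every $x$, with $\eta^L, \eta^R$ independent copies of the GFF. Lemma~\ref{lem-gaussian-concentration} together with $\sigma \leq \sqrt{2\pi}\,\Lambda$ gives $\sup_x \eta^R_x \geq \Lambda - s(\varepsilon)\sigma$ with probability $\geq 1 - \varepsilon/3$. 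The main difficulty is then to locate a vertex $x^*$ at which this near-maximum is essentially attained \emph{and} $|\eta^L_{x^*}|$ is controlled; a one-point argument fails because $\mathrm{Var}(\eta^L_x) = R_{\mathrm{eff}}(x, v_0)$ may be of order $\sigma^2 \asymp \Lambda^2$. I would handle this by restricting to a Sudakov-style well-separated subset of near-maximizers of $\eta^R$ and using the independence $\eta^L \perp \eta^R$ to pick $x^*$ by a union bound with $|\eta^L_{x^*}| = O_\varepsilon(\sigma)$. For $c_\varepsilon$ small enough, this yields $L^{x^*}_{\tau(t)} \geq A_\varepsilon t$ with probability $\geq 1 - \varepsilon/2$ for a large $A_\varepsilon$. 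Finally, setting $T_0 := 2\C t$, Chebyshev on $\tau(t) = \sum_x c_x L^x_{\tau(t)}$ (whose variance is $O(\C^2(t\sigma^2 + \sigma^4))$ via the decomposition $\tau(t) - \C t = \sqrt{2t}\sum_x c_x \eta^R_x + \tfrac{1}{2}\sum_x c_x[(\eta^R_x)^2 - (\eta^L_x)^2]$ given by the isomorphism) yields $\P(\tau(t) \leq T_0) \geq 1 - \varepsilon/2$; by monotonicity of local times in time, $L^{x^*}_{T_0} \geq L^{x^*}_{\tau(t)} \geq A_\varepsilon t > 4t = 2T_0/\C$ (taking $A_\varepsilon \geq 4$), completing the reduction with probability $\geq 1 - \varepsilon$.

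The principal obstacle is the selection of $x^*$: since $\mathrm{Var}(\eta^L_{x^*})$ can be $\asymp \Lambda^2$ in the worst case, standard Gaussian concentration does not force $|\eta^L_{x^*}|$ to be small with high probability at any individual vertex. Exploiting the richness of the near-maximum level set of $\eta^R$ together with the independence $\eta^L \perp \eta^R$ to drive the failure probability down to $\varepsilon/3$ is the technical crux; it is precisely this step that forces the $\varepsilon$-dependent constant $c_\varepsilon$ in $t = c_\varepsilon \Lambda^2$, while the transfer to deterministic time and the final arithmetic are routine concentration bookkeeping.
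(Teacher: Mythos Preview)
Your argument has two genuine gaps. First, the coupling you invoke does not exist: Theorem~\ref{thm:rayknight} gives $\{L^x_{\tau(t)}\}\perp\eta^L$ together with an equality \emph{in law} between the two sides, but there is no pointwise coupling in which additionally $\eta^L\perp\eta^R$. Indeed, setting $L^x:=\tfrac12(\eta^R_x+\sqrt{2t})^2-\tfrac12(\eta^L_x)^2$ with $\eta^L,\eta^R$ independent produces $L^x<0$ with positive probability, which is impossible for local times. Thus your step ``pick $x^*$ as a near-maximizer of $\eta^R$ and then control $\eta^L_{x^*}$ by independence'' is not justified as written. Second, you never dispose of the regime $\sigma\asymp\Lambda$: when $\Lambda=O(\sigma)$ your Sudakov device yields only $\exp(O(\Lambda^2/\sigma^2))=O(1)$ well-separated near-maximizers, giving no leverage, and your Chebyshev bound $\mathrm{Var}(\tau(t))/(\C t)^2=O(\sigma^2/t+\sigma^4/t^2)$ is of order $1/c_\varepsilon$ for $t=c_\varepsilon\Lambda^2$, hence useless.

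The paper's route is both simpler and sidesteps both issues. It first removes the case $\sigma\asymp\Lambda$ via the elementary bound $t_{\mathrm{bl}}^*(G,\varepsilon)\gtrsim_\varepsilon\C\sigma^2$ coming from the maximal hitting time, and thereafter assumes $\Lambda^2\geq 100\log(4/\varepsilon)\varepsilon^{-2}\sigma^2$. Then, crucially, it bounds $\inf_x L^x$ from above rather than $\sup_x L^x$ from below: with $t_*=\tfrac12\Lambda^2$, Gaussian concentration gives $\inf_x\tfrac12(\eta_x+\sqrt{2t_*})^2\leq\log(4/\varepsilon)\sigma^2$ with probability $\geq 1-\varepsilon/2$, and since $(\eta^L_x)^2\geq 0$ the isomorphism immediately yields $\inf_x L^x_{\tau(t_*)}\leq\log(4/\varepsilon)\sigma^2$ with the same probability---no control of $\eta^L$ is needed at all. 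Lemma~\ref{lem-tau-t} then gives $\tau(t_*)\geq t_0:=\varepsilon\C t_*/6$ with probability $\geq 1-\varepsilon/2$; on this event $\inf_x L^x_{t_0}\leq\log(4/\varepsilon)\sigma^2$ by monotonicity, while trivially $\sup_x L^x_{t_0}\geq t_0/\C=\varepsilon\Lambda^2/12$, and the assumed gap $\Lambda\gg_\varepsilon\sigma$ forces the ratio to exceed $2$. If you want to salvage your direction, add the hitting-time reduction first and replace the faulty coupling by the inequality $\sup_x L^x_{\tau(t)}\geq\sup_x\bigl(L^x_{\tau(t)}+\tfrac12(\eta^L_x)^2\bigr)-\tfrac12\sup_x(\eta^L_x)^2$, bounding both terms on the right by the isomorphism and Lemma~\ref{lem-gaussian-concentration} respectively; but the paper's infimum argument is cleaner.
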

In order to prove Theorem~\ref{thm-lower-blanket}, we will use the
next simple lemma.  We will also require this
estimate in Section \ref{sec:cover}.

\begin{lemma}\label{lem-tau-t}
Let $\tau(t)$ be the inverse local time at vertex $v_0$, as defined
in \eqref{eq:inverselt}. Let $\C$ be the total conductance
and let $\mathfrak{D} = \max_{x, y\in
V}\sqrt{R_{\mathrm{eff}}(x,y)}$. Then, for all $\beta>0$ and $t \geq
\mathfrak{D^2}/\beta^2$,
\[\P_{v_0}\left(\tau(t) \leq  \beta \C t\right) \leq 3 \beta\,.\]
\end{lemma}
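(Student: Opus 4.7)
\medskip
\noindent\textbf{Proof proposal.}

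The plan is to apply Chebyshev's inequality to $\tau(t)$, after computing its mean and variance via the Ray--Knight isomorphism theorem (Theorem \ref{thm:rayknight}). Writing $\tau(t)=\sum_{x\in V} c_x L^x_{\tau(t)}$ (since the continuous-time walk spends a total of $c_x L^x_{\tau(t)}$ time units at $x$), the mean is easy: applying Theorem \ref{thm:rayknight} marginally at each $x$ together with the independence of $L$ and $\eta$ gives $\E_{v_0}[L^x_{\tau(t)}] = t$ for every $x\in V$, whence $\E_{v_0}[\tau(t)] = \C t$.

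For the variance, I would compute $\Cov(L^x_{\tau(t)}, L^y_{\tau(t)})$ by applying the isomorphism theorem jointly at $x$ and $y$:
\[
\Cov\!\left(L^x_{\tau(t)} + \tfrac12 \eta_x^2,\, L^y_{\tau(t)} + \tfrac12 \eta_y^2\right) = \Cov\!\left(\tfrac12(\tilde\eta_x+\sqrt{2t})^2,\, \tfrac12(\tilde\eta_y+\sqrt{2t})^2\right),
\]
where $\eta, \tilde\eta$ are GFFs and $\eta \perp L$. On the left-hand side, independence kills the cross-covariance so only $\Cov(L^x_{\tau(t)},L^y_{\tau(t)})+\Cov(\tfrac12\eta_x^2,\tfrac12\eta_y^2)$ remains. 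On the right-hand side, expanding $(\tilde\eta+\sqrt{2t})^2$ produces four cross-terms, two of which involve odd moments of the centered Gaussian $(\tilde\eta_x,\tilde\eta_y)$ and hence vanish; the surviving terms are $\Cov(\tfrac12\tilde\eta_x^2,\tfrac12\tilde\eta_y^2)$ (which cancels against its twin on the left because $\eta\stackrel{law}{=}\tilde\eta$) and $2t\,\Cov(\tilde\eta_x,\tilde\eta_y)=2t\,\Gamma_{v_0}(x,y)$. This yields the clean formula $\Cov(L^x_{\tau(t)}, L^y_{\tau(t)}) = 2t\,\Gamma_{v_0}(x,y)$, and therefore
\[
\var_{v_0}(\tau(t)) = 2t \sum_{x,y\in V} c_x c_y \, \Gamma_{v_0}(x,y).
\]

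To bound this sum, note that Lemma \ref{lem-covarance-resistance} gives $\Gamma_{v_0}(x,y) = \E(\eta_x \eta_y)$, so by Cauchy--Schwarz,
\[
|\Gamma_{v_0}(x,y)| \le \sqrt{\reff(x,v_0)\,\reff(v_0,y)} \le \mathfrak{D}^2.
\]
Since the sum equals $\var(\sum_x c_x \tilde\eta_x)\ge 0$, it is at most $\mathfrak{D}^2\C^2$, giving $\var_{v_0}(\tau(t)) \le 2t\,\mathfrak{D}^2\,\C^2$. Chebyshev's inequality then yields
\[
\P_{v_0}\!\left(\tau(t) \le \beta \C t\right) \le \frac{\var_{v_0}(\tau(t))}{(1-\beta)^2 \C^2 t^2} \le \frac{2\mathfrak{D}^2}{(1-\beta)^2\, t}\,,
\]
and under the hypothesis $t\ge \mathfrak{D}^2/\beta^2$ this is at most $2\beta^2/(1-\beta)^2$. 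The bound $3\beta$ is vacuous for $\beta\ge 1/3$, while for $\beta<1/3$ one checks $2\beta^2/(1-\beta)^2 \le \tfrac{9}{2}\beta^2 \le 3\beta$, completing the proof.

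The only subtlety is the variance computation; the Chebyshev step is then routine, and the main point is simply to extract, from the isomorphism theorem, the fact that the fluctuations of $\tau(t)$ are controlled by $\sqrt{t}\cdot\mathfrak{D}\cdot\C$, which is $o(\C t)$ once $t \gg \mathfrak{D}^2$.
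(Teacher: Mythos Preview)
Your proof is correct and takes a genuinely different route from the paper's. The paper argues via the strong Markov property: it sets $p_\delta=\min_v \P_v(\tau(t)\le\delta\C t)$, uses restarts at times $k\delta\C t$ to get $\E_v\tau(t)\le \delta\C t/p_\delta$, and since $\E_{v_0}\tau(t)=\C t$ (the only use of Theorem~\ref{thm:rayknight}), concludes $p_\delta\le\delta$; it then transfers from the minimizing $u$ back to $v_0$ using the hitting time bound $\E_u T_{v_0}\le \C\mathfrak{D}^2$ and the hypothesis $t\ge\mathfrak{D}^2/\beta^2$. So the paper uses only the \emph{first} moment of $\tau(t)$ together with a geometric-trials / Markov-property argument.

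You instead compute the \emph{second} moment directly from the isomorphism theorem, obtaining the clean identity $\Cov(L^x_{\tau(t)},L^y_{\tau(t)})=2t\,\Gamma_{v_0}(x,y)$ and hence $\var_{v_0}\tau(t)\le 2t\,\mathfrak{D}^2\C^2$, then apply Chebyshev. This is more direct and even yields the sharper bound $O(\beta^2)$ for small $\beta$. One cosmetic point: your sentence ``since the sum equals $\var(\sum_x c_x\tilde\eta_x)\ge0$, it is at most $\mathfrak{D}^2\C^2$'' is slightly garbled --- nonnegativity of the total doesn't bound it above --- but the conclusion is fine, either because each $\Gamma_{v_0}(x,y)=\E_x L^y_{T_0}\ge0$ so the termwise bound applies directly, or because $\var(\sum_x c_x\eta_x)\le(\sum_x c_x\|\eta_x\|_2)^2\le\mathfrak{D}^2\C^2$ by the triangle inequality in $L^2$.
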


\begin{proof}
We use $\P_v$ to denote the measure on random walks started at a
vertex $v \in V$, and we use $\E_v$ similarly. Let $p_\delta =
\min_v \{\P_{v}\left(\tau(t) \leq \delta \C t\right)\}$ for some
$\delta>0$. Using the strong Markov property, we get that for all $v
\in V$,
\[\P_v(\tau(t) \geq k \delta \C t) \leq (1 - p_\delta)^k\,.\]
In particular, $\E_v \tau(t) \leq \delta \C t/p_\delta$.

By Theorem~\ref{thm:rayknight}, it follows easily that $\E_{v_0}
\tau(t) = \C t$. Since $\E_v \tau(t) \geq \E_{v_0}(\tau(t))$, we
deduce that $p_\delta \leq \delta$. Let $u = u(\delta)$ be such that
$\P_{u}\left(\tau(t) \leq \delta \C t\right) = p_\delta$. Let $Y, Z$
be random variables with the law $\tau(t)$, when the random walk is
started at $u$ and $v_0$, respectively. Clearly,
\begin{equation}\label{eq-Y-Z}
Y \stackrel{law}{=} Z + T_{v_0}\,,\end{equation} where $T_{v_0}$ is
distributed as the hitting time to $v_0$, when then random walk is
started at $u$ and $T_{v_0}$ is independent of $Z$.

Since $\reff(u,v_0) \leq \mathfrak{D}^2$, we have $\E_u T_{v_0} \leq
\C \mathfrak{D}^2$ (by \eqref{eq:reffkappa}), and this yields
$\P_u(T_{v_0} \geq  \C \mathfrak{D}^2/\beta)\leq \beta$. Using the
assumption $t \geq \mathfrak{D}^2/\beta^2$ and \eqref{eq-Y-Z}, we
conclude that \[\P(Z \leq \beta \C t) \leq \P(Z \leq 2 \beta \C t -
\C \mathfrak{D}^2/\beta) \leq \P(Y \leq 2\beta \C t) + \P(T_{v_0}
\geq \C \mathfrak{D}^2/\beta)\leq p_{2\beta} + \beta \leq 3
\beta\,,\] as required.
\end{proof}

We are now ready to establish the lower bound on
$t_{\mathrm{bl}}^*(G,\varepsilon)$.
\begin{proof}[Proof of Theorem \ref{thm-lower-blanket}]
We consider the associated Gaussian process as in the proof of
Theorem~\ref{thm:blanket}. Let $\sigma = \sup_{x\in V}\sqrt{\E
\eta_x^2}$ and $\Lambda  = \E \sup_x \eta_x$. Observe that the
maximal hitting time is a simple lower bound on
$t_{\mathrm{bl}}^*(G,\varepsilon)$ up to a constant depending only on
$\varepsilon$. In light of Lemma~\ref{lem-covarance-resistance}, we
see $t_{\mathrm{bl}}^*(G,\varepsilon) \gtrsim_\varepsilon \C \cdot
\sigma^2$. Therefore, we can assume in what follows
\begin{equation}\label{eq-Lambda-sigma}\Lambda^2 \geq 100 \log(4/\varepsilon) \varepsilon^{-2} \,\sigma^2\,.\end{equation} Let $t_* =
\frac{1}{2} \Lambda^2$. By Lemma~\ref{lem-gaussian-concentration},
we get
\[\P\left(\inf_{x\in V} \frac{1}{2}(\eta^R_x + \sqrt{2t_*})^2 \leq  \log(4/\varepsilon) \sigma^2\right) \geq  \P\left(|\sup_{x\in V} \eta_x^R - \Lambda|\leq \sqrt{2 \log(4/\varepsilon)}\, \sigma\right)\geq 1 - \frac{\varepsilon}{2}\,.\]
Applying Theorem~\ref{thm:rayknight}, we obtain
\[\P\left(\inf_{x\in V} L^x_{\tau(t_*)} \leq \log(4/\varepsilon) \sigma^2\right)\geq 1- \frac{\varepsilon}{2}\,. \]
By triangle inequality, we have $\mathfrak{D} \leq 2 \sigma$.
Recalling the assumption \eqref{eq-Lambda-sigma}, we can apply
Lemma~\ref{lem-tau-t} and deduce that
\[\P(\tau(t_*) \leq \varepsilon \C t_* /6 ) \leq \varepsilon/2\,.\]
Writing $t_0 = \varepsilon \C t_* /6$, we can then obtain that
\[\P\left(\inf_{x\in V} L^x_{t_0} \leq \log(4/\varepsilon) \sigma^2, \tau(t_*) \geq t_0\right)\geq 1- \varepsilon\,.\]
Also, we see that $\sup_{x\in V} L^x_{t_0} \geq \varepsilon
\Lambda^2/12$ whenever $\tau(t_*) \geq t_0$. Using assumption
\eqref{eq-Lambda-sigma} again, we conclude
\[\P_{v_0}(\exists x, y\in V: L^x_{t_0} > 2 L^y_{t_0}) \geq 1-\varepsilon\,.\]
This implies that $t_{\mathrm{bl}}^*(G,\varepsilon) \geq t_0$,
completing the proof.
\end{proof}

\subsection{An asymptotically strong upper bound}
\label{sec:gffstrong}

Finally, we show a strong upper bound for the asymptotics of $t_{\mathrm{cov}}$ on a sequence of graphs $\{G_n\}$,
assuming $t_{\mathrm{hit}}(G_n) = o(t_{\mathrm{cov}}(G_n))$.

\begin{theorem}\label{thm:tight}
For any graph $G= (V, E)$ with $v_0\in V$, let $t_{\mathrm{hit}}(G)$
be the maximal hitting time in $G$ and let $\{\eta_v\}_{v\in V}$ be
the GFF on $G$ with $\eta_{v_0} = 0$. Then, for a universal constant
$C>0$,
\[ t_{\mathrm{cov}}(G) \leq \left(1+ C\sqrt{\frac{t_{\mathrm{hit}}(G)}{t_{\mathrm{cov}}(G)}}\,\right) \cdot |E| \cdot \left(\E \sup_{v\in V} \eta_v\right)^2\,.\]
\end{theorem}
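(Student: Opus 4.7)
My plan is to apply the Ray--Knight isomorphism (Theorem~\ref{thm:rayknight}) with $t$ chosen just slightly larger than $\Lambda^2/2$, where $\Lambda = \E\sup_v \eta_v$, and to control the residual cover time past $\tau(t)$ via a Matthews-type hitting-time argument. Write $\sigma = \sup_v \sqrt{\E \eta_v^2}$. By Lemma~\ref{lem-covarance-resistance} and the identity $\kappa(u,v) = 2|E|\reff(u,v) \leq 2 t_{\mathrm{hit}}(G)$, we have $\sigma^2 \leq t_{\mathrm{hit}}(G)/|E|$; combining this with $|E|\Lambda^2 \asymp t_{\mathrm{cov}}(G)$ (Theorem~\ref{thm:mainsimple}) yields the crucial relation $\sigma/\Lambda \lesssim \sqrt{t_{\mathrm{hit}}(G)/t_{\mathrm{cov}}(G)}$.

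Set $\theta = C\sqrt{t_{\mathrm{hit}}/t_{\mathrm{cov}}}$ and choose $t$ with $\sqrt{2t} = \Lambda(1+\theta)$. Then the main term is
\[
\mathcal{C}t = |E|(1+\theta)^2 \Lambda^2 \leq (1 + 3\theta)|E|\Lambda^2,
\]
which already matches the claimed upper bound up to an error term of the correct form. Note that $\sqrt{2t}$ is here on the order of $\Lambda$, not $2\Lambda$; it is this choice, rather than the choice natural in Theorem~\ref{thm:blanket}, that yields the sharp constant $1$ in the main term.

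To control the residual $\E[(\tau_{\mathrm{cov}} - \tau(t))_+]$, I would use the fact that $X_{\tau(t)} = v_0$. By the strong Markov property at $\tau(t)$ and Matthews' bound applied to the unvisited set $U_{\tau(t)} := \{x : L^x_{\tau(t)} = 0\}$, we have
\[
\E[(\tau_{\mathrm{cov}} - \tau(t))_+] \lesssim t_{\mathrm{hit}}(G)\cdot \E\bigl[(1 + \log|U_{\tau(t)}|)\one_{|U_{\tau(t)}|\geq 1}\bigr].
\]
A one-dimensional Laplace-transform computation using Theorem~\ref{thm:rayknight} (with the independent copy $\eta'$ appearing in the isomorphism) yields $\P(L^x_{\tau(t)} = 0) = \exp(-t/\reff(v_0,x))$, so $\E|U_{\tau(t)}| = \sum_x \exp(-t/\reff(v_0,x))$. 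If this quantity is bounded by an absolute constant, the residual is $O(t_{\mathrm{hit}}) \leq O(\theta^2 \cdot |E|\Lambda^2) \leq O(\theta \cdot |E|\Lambda^2)$, matching the error term.

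The main technical obstacle is bounding $\sum_x \exp(-t/\reff(v_0,x))$ by an absolute constant at our chosen $t$. The naive estimate using only $\reff(v_0,x) \leq \sigma^2$ gives $n\exp(-t/\sigma^2)$, which is typically far too weak, because most vertices $x$ have $\reff(v_0,x)$ much smaller than $\sigma^2$. The sharp bound should follow from a chaining argument in the metric $(V,\sqrt{\reff})$---whose $\gamma_2$-functional is controlled by $\Lambda$ via the majorizing measures theorem---in the spirit of Lemma~\ref{lem:dev}. Once this is established, combining the main and residual contributions yields $\E\tau_{\mathrm{cov}} \leq (1 + C\theta)|E|\Lambda^2$, as claimed.
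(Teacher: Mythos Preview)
Your setup is sound up to the residual estimate, but the ``main technical obstacle'' you flag is not merely technical---it is false. Take the complete graph $K_n$: then $\reff(v_0,x)=2/n$ for every $x\neq v_0$, while $\Lambda\sim\sqrt{2\log n/n}$, so at $t\sim\Lambda^2/2\sim(\log n)/n$ one has $t/\reff(v_0,x)\sim\tfrac12\log n$ and hence
\[
\E|U_{\tau(t)}|=\sum_{x}\exp\bigl(-t/\reff(v_0,x)\bigr)\sim n\cdot n^{-1/2}=n^{1/2}\to\infty.
\]
Your Matthews residual is then $\asymp t_{\mathrm{hit}}\cdot\log(n^{1/2})\asymp \tfrac12 n\log n\asymp\tfrac12 t_{\mathrm{cov}}$, which is the \emph{same order} as the main term, so the argument cannot deliver the leading constant $1$. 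No chaining argument in the spirit of Lemma~\ref{lem:dev} can save this: that lemma exploits the pointwise bound $\reff(u,v)\geq 1/c_v$, which has no analogue for $\reff(v_0,x)$.

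The paper sidesteps the size of $U_{\tau(t)}$ entirely. With $t=\tfrac12(\Lambda+\beta\sigma)^2$, it bounds $\P(\tau_{\mathrm{cov}}>\tau(t))$ directly: on this event there is a local-time-measurable vertex $v^*$ with $L^{v^*}_{\tau(t)}=0$, and since $\{\eta_v\}$ is \emph{independent} of the local times, $\P(\eta_{v^*}^2\geq\beta\sigma^2)\leq 2e^{-\beta/4}$ without any union bound. Combined with Gaussian concentration for $\inf_v(\eta_v+\sqrt{2t})^2$ via the isomorphism, this gives $\P(\tau_{\mathrm{cov}}>\tau(t))\leq 4e^{-\beta/8}$. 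Separately, the identity $\tau(t)=\sum_v d_v L^v_{\tau(t)}$ and the isomorphism yield a stochastic upper bound $\tau(t)\preceq 2|E|t+\sqrt{2t}\,Q/2+S/2$ with $Q=\sum_v d_v\eta_v$ Gaussian and $S=\sum_v d_v\eta_v^2$ having polynomial tails; integrating the resulting tail bound over $\beta$ gives $t_{\mathrm{cov}}\leq |E|\Lambda^2+O(|E|\Lambda\sigma)$, which is exactly the claim since $|E|\Lambda\sigma\asymp\sqrt{t_{\mathrm{cov}}\,t_{\mathrm{hit}}}$.
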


\begin{proof}
Theorem \ref{thm:blanketdiscr} asserts that
\begin{equation}\label{eq-variance-sup}
t_{\mathrm{cov}}(G)  \preceq (\E \max_v\eta_v)^2\,,
\end{equation}
where $\preceq$ denotes stochastic domination. Write $\sigma^2 =
\max_v \E \eta_v^2$. Note that $\sigma^2$ corresponds to the
diameter of $V$ in the effective resistance metric, thus
$t_{\mathrm{hit}}(G) \asymp |E| \sigma^2$. Denote by $S = \sum_v d_v
\eta_v^2$, where $d_v$ is the degree of vertex $v$. By a generalized
H\"older inequality and moment estimates for Gaussian variables
(here we use that $\E X^6 = 15$ for a standard Gaussian variable
$X$), we obtain that
$$\E S^3 \leq \sum_{u, v, w} d_u d_v d_w \E(\eta_u^2 \eta_v^2 \eta_w^2) \leq \sum_{u, v, w} d_u d_v d_w \E(\eta_u^6)^{1/3}\E(\eta_v^6)^{1/3} \E(\eta_w^6)^{1/3} \leq 15 |E|^3 \sigma^6\,.$$
An application of Markov's inequality then yields
\begin{equation}\label{eq-S}\P(S \geq \alpha |E| \sigma^2) \leq
\frac{15}{\alpha^3}\,.\end{equation} Write $Q = \sum_v d_v \eta_v$.
Clearly, $Q$ is a centered Gaussian with variance bounded by $4|E|^2
\sigma^2$ and therefore,
\begin{equation}\label{eq-Q}\P(|Q| \geq \alpha |E| \sigma) \leq
2\mathrm{e}^{-\alpha^2/8}\,.\end{equation}

For $\beta>0$, let $t = \frac{1}{2}(\E \max_v \eta_v + \beta
\sigma)^2$. Noting $\tau(t) = \sum_v d_v L^v_{\tau(t)}$ and
recalling the Isomorphism theorem (Theorem \ref{thm:rayknight}), we get that
$$\tau(t) \preceq 2|E|t + \frac{\sqrt{2t}}{2} |Q| + \frac{1}{2} S\,.$$
Combined with \eqref{eq-S} and \eqref{eq-Q}, we deduce that
\begin{equation}\label{eq-tau-t}
\P(\tau(t) \geq 2|E|t + \sqrt{2t} \beta |E| \sigma + \beta |E|
\sigma^2) \leq \frac{12}{(\beta-2)^2}+ 2\mathrm{e}^{-\beta^2/8}\,.
\end{equation}

We now turn to bound the probability for $\tau_{\mathrm{cov}} >
\tau(t)$. Observe that on the event $\{\tau_{\mathrm{cov}} >
\tau(t)\}$, there exists $v\in V$ such that $L^v_{\tau(t)} = 0$. It
is clear that for all $v\in V$, we have $\P(\eta_v^2 \geq \beta
\sigma^2/2) \leq 2\mathrm{e}^{-\beta/4}$. Since $\{\eta_v\}_{v\in
V}$ and $\{L^v_{\tau(t)}\}_{v\in V}$ are two independent processes,
we obtain
\begin{equation}\label{eq-decompose}\P\left(\left\{\tau_{\mathrm{cov}} > \tau(t)\right\} \setminus
\left\{\exists v\in V:L^v_{\tau(t)} + \tfrac{1}{2}\eta_v^2 < \beta
\sigma^2/2 \right\}\right) \leq
2\mathrm{e}^{-\beta/4}\,.\end{equation} On the other hand, we deduce
from the concentration of Gaussian processes (Lemma \ref{lem-gaussian-concentration}) that
\[\P\left(\inf_v (\sqrt{2t} + \eta_v)^2 \leq \beta \sigma/2\right) \leq 2\mathrm{e}^{-\beta/8}\,.\]
Applying Isomorphism theorem again and combined with
\eqref{eq-decompose}, we get that $$\P(\tau_{\mathrm{cov}}>\tau(t))
 \leq 4\mathrm{e}^{-\beta/8} \,.$$
Combined with \eqref{eq-tau-t}, it follows that
$$\P(\tau_{\mathrm{cov}} \geq 2|E|t + \sqrt{2t} \beta |E| \sigma +
\beta |E| \sigma^2)\leq \frac{15}{\beta^3}+ 2\mathrm{e}^{-\beta^2/8}
+ 4\mathrm{e}^{-\beta/8}\,.$$ Since $t = \frac{1}{2}(\E \max_v
\eta_v + \beta \sigma)^2$, we can deduce that for some universal
constant $C_1>0$,
$$t_{\mathrm{cov}}(G) \leq |E|(\E\sup_v \eta_v)^2 + C_1 |E| (\sigma^2 + \sigma \E\sup_v\eta_v)\,.$$
Recalling \eqref{eq-variance-sup}, we complete the proof.
\end{proof}

\subsection{Geometry of the resistance metric}
\label{sec:resist}

We now discuss some relevant properties of
the resistance metric on a network $G(V)$.

\medskip

\noindent {\bf Effective resistances and network reduction.}
For a subset $S \subseteq V$, define the
quotient network $G/S$ to have vertex set $(V \setminus S) \cup \{ v_S
\}$, where $v_S$ is a new vertex disjoint from $V$. The conductances
in $G/S$ are defined by $c^{G/S}_{xy} = c_{xy}$ if $x,y \notin S$
and $c_{v_S x} = \sum_{y \in S} c_{xy}$ for $x \notin S$.

Now, given $v \in V$ and $S \subseteq V$, we put
\begin{equation}\label{eq:reffsub}
\reff(v,S) \deq \reff^{G/S}(v, v_S),
\end{equation}
where the latter effective resistance is computed in $G/S$. For two
disjoint sets $S,T \subseteq V$, we define
$$
\reff(S,T) \deq \reff^{G/S}(v_S, T),
$$
and the resistance is defined to be $0$ if $S \cap T \neq
\emptyset$. It is straightforward to check that
$\reff(S,T)=\reff(T,S)$. The following network reduction lemma was
discovered by Campbell \cite{Campbell} under the name ``star-mesh
transformation'' (see also, e.g., \cite[Ex. 2.47(d)]{LP}). We give a
proof for completeness.

\begin{lemma}\label{lem-network-reduction}
For a network $G(V)$ and a subset $\widetilde{V} \subset V$, there
exists a network $\tilde{G}(\tilde{V})$ such that for all $u,v\in
\widetilde{V}$, we have
\[\tilde{c}_v = c_v  \mbox{ and } R^{\widetilde{G}}_{\mathrm{eff}}(u, v) = R_{\mathrm{eff}}(u, v)\,.\]
We call $\widetilde{G}(\widetilde{V})$ the \emph{reduced network}.
Furthermore, if $\widetilde{V} = V \setminus \{x\}$, we then have
the formula
\begin{equation}\label{eq-network-reduction} \tilde{c}_{yz} = c_{yz}
+ c^{*,x}_{yz}\,, \mbox{ where } c^{*,x}_{yz} =
\frac{c_{xy}c_{xz}}{\sum_{w\in V_x} c_{xw}}\,.\end{equation}
\end{lemma}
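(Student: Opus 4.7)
The plan is to proceed by induction on $|V \setminus \widetilde V|$. The base case $\widetilde V = V$ is trivial, and once the single-vertex case $\widetilde V = V \setminus \{x\}$ is established (together with the explicit formula \eqref{eq-network-reduction}), the general case follows by iteration: pick any $x \in V \setminus \widetilde V$, apply the one-vertex reduction to produce a network $G'$ on $V \setminus \{x\}$ with $c^{G'}_v = c_v$ and preserved effective resistances, then invoke the inductive hypothesis on $G'$ with subset $\widetilde V$. So the entire content of the lemma is the one-vertex case, which is what I would prove.

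For that case, I would define $\widetilde G$ on $\widetilde V = V \setminus \{x\}$ by setting $\tilde c_{yz} = c_{yz} + c_{xy} c_{xz}/c_x$ for distinct $y,z \in \widetilde V$, together with a self-loop $\tilde c_{yy} = c_{yy} + c_{xy}^2/c_x$ at each $y$ (self-loops are harmless both for effective resistances and for random-walk dynamics, and they are needed to make the conductance-preservation identity come out clean). Without loss of generality I may assume $x$ carries no self-loop, since deleting a self-loop at $x$ alters neither the resistance metric nor the total conductances. The identity $\tilde c_y = c_y$ then follows from a one-line computation: $\tilde c_y = \sum_{z \in \widetilde V} \tilde c_{yz} = (c_y - c_{xy}) + (c_{xy}/c_x)\sum_{z \neq x} c_{xz} = (c_y - c_{xy}) + c_{xy} = c_y$, using $\sum_{z \neq x} c_{xz} = c_x$.

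For the equality $R^{\widetilde G}_{\mathrm{eff}}(u,v) = R_{\mathrm{eff}}(u,v)$ with $u,v \in \widetilde V$, I would use the voltage-function characterization of effective resistance. Let $\phi : V \to \R$ be the voltage function on $G$ with unit current flowing from $u$ to $v$, so that $\phi$ is harmonic on $V \setminus \{u,v\}$ and $R_{\mathrm{eff}}(u,v) = \phi(u) - \phi(v)$. The claim is that $\phi|_{\widetilde V}$ is the voltage function for the same external currents in $\widetilde G$, which immediately yields $R^{\widetilde G}_{\mathrm{eff}}(u,v) = \phi(u) - \phi(v) = R_{\mathrm{eff}}(u,v)$. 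To verify the claim at an interior $y \in \widetilde V \setminus \{u,v\}$, harmonicity at $x$ in $G$ gives $c_x \phi(x) = \sum_z c_{xz} \phi(z)$; rearranged, this produces the identity $c_{yx}(\phi(y) - \phi(x)) = \sum_{z \in \widetilde V,\, z \neq y} (c_{xy} c_{xz}/c_x)(\phi(y) - \phi(z))$. Substituting this into the $G$-harmonicity equation $c_{yx}(\phi(y) - \phi(x)) + \sum_{z \in \widetilde V,\, z\neq y} c_{yz}(\phi(y) - \phi(z)) = 0$ collapses it exactly to $\sum_{z \in \widetilde V,\, z\neq y} \tilde c_{yz}(\phi(y) - \phi(z)) = 0$, which is $\widetilde G$-harmonicity at $y$. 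The source/sink conditions at $u$ and $v$ are preserved by the identical algebraic elimination.

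The main (indeed only) obstacle is the algebraic bookkeeping --- in particular, carrying the self-loops along carefully so that the convention $\tilde c_y = \sum_z \tilde c_{yz}$ is respected and the $z = x$ contributions in the $G$-equations are exactly what appear as the additive corrections $c_{xy} c_{xz}/c_x$ in the $\widetilde G$-equations. Once the self-loop convention is fixed and the $x$-self-loop is normalized away at the start, the elimination is a one-line computation and both assertions of the lemma drop out simultaneously.
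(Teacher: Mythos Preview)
Your proof is correct, but it takes a different route from the paper's. The paper argues probabilistically: it defines $\widetilde G$ abstractly as the network corresponding to the \emph{watched chain} $P^{\widetilde V}(u,v) = \P_u(T^+_{\widetilde V} = v)$, observes that this induced chain is reversible with the same stationary measure restricted to $\widetilde V$ (hence $\tilde c_v = c_v$), and then invokes the escape-probability identity $\P_v(T_v^+ > T_u) = 1/(c_v \reff(u,v))$ to conclude that effective resistances are preserved, since the watched chain has the same escape probabilities as the original. The explicit formula in the one-vertex case is then checked separately by computing the one-step transitions of the watched chain. Your approach is instead purely electrical: you prove the one-vertex case directly by the star--mesh elimination, showing via harmonicity at $x$ that the restriction of the $G$-voltage function solves the Kirchhoff equations on $\widetilde G$, and then obtain the general case by iteration. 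Your argument is somewhat more self-contained (it does not need the escape-probability formula), and it yields the explicit formula and the resistance preservation in a single stroke rather than as two separate verifications; the paper's approach, on the other hand, handles arbitrary $\widetilde V$ in one shot without induction and makes the connection to the induced random walk explicit, which is conceptually useful elsewhere in the paper.
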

\begin{proof}
Let $P$ be the transition kernel of the discrete-time random walk $\{S_t\}$
on the network $G$ and let $P^{\widetilde{V}}$ be the transition
kernel of the induced random walk on $\widetilde{V}$, namely for $u, v\in \widetilde{V}$
$$P^{\widetilde{V}} (u, v) = \P_u (T^+_{\widetilde{V}} = v)\,,$$
where $T_{A}^+ \deq \min\{t \geq 1: S_t \in A\}$ for all $A
\subseteq V$. In other words, $P^{\widetilde{V}}$ is the chain
watched in the subset $\widetilde{V}$. We observe that
$P^{\widetilde{V}}$ is a reversible Markov chain on $\widetilde{V}$
(see, e.g., \cite{AF, LPW09}). It is clear that the chain
$P^{\widetilde{V}}$ has the same invariant measure as that of $P$
restricted to $\widetilde{V}$, up to scaling by a constant. Therefore, there
exists a (unique) network $\widetilde{G}(\widetilde{V})$
corresponding to the Markov chain $P^{\widetilde{V}}$ such that
$\tilde{c}_u = c_u$ for all $u \in \widetilde{V}$.

We next show that the effective resistances are preserved in
$\widetilde{G}(\widetilde{V})$. To this end, we use the following
identity relating effective resistance and the random walk (see,
e.g., \cite[Eq. (2.5)]{LP}),
\begin{equation}\label{eq-walk-resistance}
\P_v(T_v^+ > T_u) = \frac{1}{c_v R_{\mathrm{eff}}(u,v)}\,,
\end{equation}
where $T_u = \min\{t\geq 0 : S_t = u\}$. Since
$P^{\widetilde{V}}$ is a watched chain on the subset
$\widetilde{V}$, we see that $\P_v^{\widetilde{V}}(T_v^+ >
T_u) = \P_v(T_v^+ > T_u)$ for all $u, v\in \widetilde{V}$.
This yields $R^{\widetilde{G}}_{\mathrm{eff}}(u, v) =
R_{\mathrm{eff}}(u, v)$.

To prove the second half of the lemma, we let
$\widetilde{G}(\widetilde{V})$ be the network defined by
\eqref{eq-network-reduction}. A straightforward calculation yields
that
\[\tilde{c}_v = c_v - c_{xv} + \sum_{y\in V_x} c^{*,x}_{vy} = c_v - c_{xv} + \sum_{y\in V_x}\frac{c_{xv}c_{xy}}{\sum_{z\in V_x} c_{xz}} = c_v\,.\]
Let $P^{\widetilde{G}}$ be the transition kernel for the random walk
on the network $\widetilde{G}(\widetilde{V})$.  Then,
\begin{equation*}P^{\widetilde{G}}(u, v) =
\frac{\tilde{c}_{uv}}{\tilde{c}_u} = \frac{c_{uv} +
\frac{c_{ux}c_{xv}}{\sum_{y\in V_x} c_{xy}}}{c_u}\,.\end{equation*}
On the other hand, the watched chain $P^{\widetilde{V}}$ satisfies
\[P^{\widetilde{V}} (u, v) = \frac{c_{uv}}{c_u} + \frac{c_{ux}}{c_u} \frac{c_{xv}}{\sum_{y\in V_x}c_{xy}}\,.\]
Altogether, we see that $P^{\widetilde{G}}(u, v) = P^{\widetilde{V}}
(u, v)$, completing the proof.
\end{proof}

\medskip
\noindent {\bf Well-separated sets.} The following result is an
important property of the resistance metric, crucial for our analysis.

\begin{prop}\label{prop:resistance}Consider a network $G(V)$ and its associated resistance metric $(V, R_{\mathrm{eff}})$. Suppose
that for some subset $S \subseteq V$, there is a partition $S = B_1
\cup B_2 \cup \cdots \cup B_m$ which satisfies the following
properties.
\begin{enumerate}
\item For all $i=1,2,\ldots, m$ and for all $x,y \in B_i$, we have $R_{\mathrm{eff}}(x,y) \leq \varepsilon/48$.
\item For all $i \neq j \in \{1,2,\ldots,m\}$, for all $x \in B_i$ and $y \in B_j$, we have $R_{\mathrm{eff}}(x,y) \geq \varepsilon.$
\end{enumerate}
Then there is a subset $I \subseteq \{1,2,\ldots,m\}$ with $|I| \geq
m/2$ such that for all $i \in I$,
$$
R_{\mathrm{eff}}(B_i, S \setminus B_i)  \geq \varepsilon/24.
$$
\end{prop}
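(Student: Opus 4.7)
The plan is to reduce to the contracted super-network, apply Foster's network theorem, and conclude via Markov's inequality. First, by Lemma~\ref{lem-network-reduction} with $\widetilde{V} = S$, I may assume $V = S$: this preserves pairwise effective resistances and node conductances on $S$. In this setting the Dirichlet principle forces the voltage function realizing $\reff(B_i, S \setminus B_i)$ to be the indicator $\mathbf{1}_{B_i}$, so
\[
\reff(B_i, S \setminus B_i) \;=\; 1/\kappa_i, \qquad \kappa_i \;\deq\; \sum_{u \in B_i,\, v \notin B_i} c_{uv},
\]
and it suffices to show $\kappa_i \leq 24/\varepsilon$ for at least $m/2$ indices.

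Next, form the super-network $H$ by contracting each cluster $B_k$ to a single super-vertex $v_k$; then $c_{ij}^H = \sum_{u \in B_i, v \in B_j} c_{uv}$ and $c_{v_i}^H = \kappa_i$. Foster's network theorem applied to $H$ yields $\sum_{\{i,j\} \in E(H)} c_{ij}^H \,\reff^H(v_i, v_j) = m - 1$. The key ingredient to establish is then
\begin{equation}\label{eq:keyresist}
\reff^H(v_i, v_j) \;\geq\; \varepsilon/2 \qquad \text{for all } i \neq j,
\end{equation}
from which $\sum c_{ij}^H \leq 2(m-1)/\varepsilon$, so $\sum_i \kappa_i = 2 \sum c_{ij}^H \leq 4m/\varepsilon$; Markov's inequality then bounds the number of $i$ with $\kappa_i > 24/\varepsilon$ by $m/6 < m/2$, which is exactly what is needed.

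To prove \eqref{eq:keyresist} I would use the GFF on $\widetilde{G}$. Let $\{\eta_v\}$ be this GFF and $\{\xi_k\}_{k=1}^m$ the GFF on $H$. The main claim is that $\xi_k$ can be realized inside the Hilbert space spanned by $\{\eta_v\}$ as a convex combination $\xi_k = \sum_{u \in B_k} a_k(u)\, \eta_u$ with $a_k(u) \geq 0$ and $\sum_u a_k(u) = 1$; in the two-cluster case one checks explicitly that the cross-conductance weights $a_k(u) = \kappa_u/\kappa_k$ (with $\kappa_u = \sum_{v \notin B_k} c_{uv}$) reproduce the correct covariances of the GFF on $H$. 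Once such a realization is in hand, the $L^2$ triangle inequality gives, for any $x \in B_k$,
\[
\|\xi_k - \eta_x\|_{L^2} \;\leq\; \max_{u \in B_k} \|\eta_u - \eta_x\|_{L^2} \;=\; \max_{u \in B_k} \sqrt{\reff^{\widetilde{G}}(u, x)} \;\leq\; \sqrt{\varepsilon/48},
\]
and combining with $\|\eta_x - \eta_y\|_{L^2} \geq \sqrt{\varepsilon}$ for $x \in B_i,\, y \in B_j$ yields $\sqrt{\reff^H(v_i, v_j)} = \|\xi_i - \xi_j\|_{L^2} \geq \sqrt{\varepsilon}(1 - 1/\sqrt{12})$, whose square exceeds $\varepsilon/2$.

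The main obstacle is verifying the convex-combination realization for general $m$. While it is straightforward to check by matching covariances in small cases, the general statement requires a proof via the Schur-complement description of Green's functions on $\widetilde{G}$ versus $H$, or via a probabilistic argument involving the continuous-time random walk watched at cluster-crossing times. If a clean convex realization is unavailable, one can fall back on a direct Dirichlet argument: take the voltage $\phi$ on $\widetilde{G}$ realizing $\reff^{\widetilde{G}}(x, y)$, modify it to be constant on each $B_k$ using the pointwise bound $|\phi(u) - \phi(z_k)| \leq \sqrt{\reff^{\widetilde{G}}(u, z_k)/\reff^{\widetilde{G}}(x, y)} \leq 1/\sqrt{48}$ for $u \in B_k$, and control the increase in Dirichlet energy, being careful to avoid circular dependencies on the unknown $\sum_i \kappa_i$ in the bookkeeping.
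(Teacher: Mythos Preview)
Your reduction to $V=S$, the identification $\reff(B_i,S\setminus B_i)=1/\kappa_i$, and the Foster--Markov endgame are all fine. The genuine gap is exactly where you flag it: the inequality $\reff^H(v_i,v_j)\ge\varepsilon/2$. Your GFF realization $\xi_k=\sum_{u\in B_k}a_k(u)\eta_u$ with convex weights is not established beyond $m=2$; already for $\xi_1$ you would need $\sum_{u\in B_1}a_1(u)\eta_u=0$, which forces $a_1$ to be a point mass at the root unless the $\eta_u$ are linearly dependent, so the ``convex combination in each block'' picture cannot literally hold. And your Dirichlet fallback is circular in precisely the way you note: rounding the voltage to be constant on each $B_k$ produces an energy error of order $\tfrac{1}{48}\sum_k\kappa_k$, the very quantity you are trying to bound.

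The paper avoids this trap by reversing the order of operations. Instead of contracting first and then needing a resistance lower bound on the super-network (which is hard, because contracting all clusters simultaneously can create many shortcuts), it picks a representative $v_i\in B_i$, applies Foster to the reduced network on $\{v_1,\dots,v_m\}$ where hypothesis~(2) gives $\reff(v_i,v_j)\ge\varepsilon$ \emph{for free}, and obtains $I$ with $\reff(v_i,\{v_j\}_{j\ne i})\ge\varepsilon/4$ for $i\in I$ (this is Lemma~\ref{lem:resist-sep}). Only then, for each fixed $i\in I$, does it glue the other representatives and invoke a Thomson-principle flow argument (Lemma~\ref{lem-resistance-set}) to pass from the single point $v_i$ to the full cluster $B_i$. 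The flow argument there is clean because it only needs to reroute a \emph{unit} flow through clusters of diameter $\varepsilon/48$, with no dependence on the unknown total conductance. If you want to salvage your organization, the missing piece is an analogue of Lemma~\ref{lem-resistance-set} that controls $\reff^H(v_i,v_j)$ directly; but the paper's ordering makes that unnecessary.
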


In order to prove Proposition~\ref{prop:resistance}, we need
the following two
ingredients.

\begin{lemma}\label{lem-resistance-set}
Suppose the network $H(W)$ can be partitioned into two disjoint
parts $A$ and $B$ such that for some $\epsilon >0$, and some vertices
$u \in A$ and $v \in B$, we have
\begin{enumerate}
\item $R_{\mathrm{eff}}^H (u, v)\geq \epsilon$, and \label{item-network-1}
\item $R_{\mathrm{eff}}^H(u, x) \leq \epsilon / 12$ for all $x \in A$, and
$R_{\mathrm{eff}}^H(v, x) \leq \epsilon / 12$ for all $x \in B$.
\label{item-network-2}
\end{enumerate}
Then, $R_{\mathrm{eff}}^H (A, B) \geq \epsilon/6$.
\end{lemma}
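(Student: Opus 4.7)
The plan is to apply the Dirichlet variational characterization of effective resistance. Since $A$ and $B$ partition $W$, we have
\[
\frac{1}{R_{\mathrm{eff}}^H(A, B)} = \min\bigl\{\mathcal{E}_H(f) : f|_A \equiv 1,\ f|_B \equiv 0\bigr\},
\]
where $\mathcal{E}_H(f) = \sum_{xy \in E(H)} c_{xy}(f(x) - f(y))^2$. It thus suffices to exhibit such an $f$ with $\mathcal{E}_H(f) \leq 6/\epsilon$.

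I would build the test function from the voltage $\psi$ on $H$ induced by sending unit current from $u$ to $v$, normalized so that $\psi(v) = 0$. Then $\psi(u) = R := R_{\mathrm{eff}}^H(u,v) \geq \epsilon$ and $\mathcal{E}_H(\psi) = R$. The standard probabilistic interpretation $\psi(x) = R\cdot\P_x(T_u < T_v)$, together with the hitting-probability identity
\[
\P_x(T_u < T_v) = \frac{R_{\mathrm{eff}}^H(x, v) + R - R_{\mathrm{eff}}^H(x, u)}{2R}
\]
and the triangle inequality for $R_{\mathrm{eff}}^H$ (which is a metric, being proportional to the commute-time metric), yield $\P_x(T_u < T_v) \geq 1 - R_{\mathrm{eff}}^H(u,x)/R \geq 11/12$ from the hypothesis $R_{\mathrm{eff}}^H(u,x) \leq \epsilon/12$, hence $\psi(x) \geq 11R/12$ for all $x \in A$. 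A symmetric calculation gives $\psi(y) \leq R/12$ for all $y \in B$.

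The test function is then $\phi(w) = T\bigl((\psi(w) - R/12)/(5R/6)\bigr)$, where $T\colon \mathbb{R} \to [0,1]$ is truncation. The bounds above force $\phi|_A \equiv 1$ and $\phi|_B \equiv 0$. Because $T$ is $1$-Lipschitz on $\mathbb{R}$, postcomposition with $T$ cannot increase the Dirichlet energy (verified edge-by-edge), so
\[
\mathcal{E}_H(\phi) \leq \frac{\mathcal{E}_H(\psi)}{(5R/6)^2} = \frac{36}{25 R} \leq \frac{36}{25 \epsilon},
\]
giving $R_{\mathrm{eff}}^H(A, B) \geq 25\epsilon/36 \geq \epsilon/6$, as required.

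The only mildly technical step is the pair of voltage bounds on $A$ and $B$, which each reduce to one line of the hitting-probability identity combined with the triangle inequality $R_{\mathrm{eff}}^H(x,v) \geq R - R_{\mathrm{eff}}^H(u,x)$. The constants are comfortable — the argument in fact yields $25/36$ rather than the stated $1/6$ — so there is no anticipated obstacle, and the entire proof fits naturally within the electrical-network formalism already set up in the paper.
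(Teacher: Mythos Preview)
Your argument is correct and in fact yields the sharper constant $25/36$. However, it takes the dual route to the paper's proof. The paper argues via Thomson's principle (flows): assuming $R_{\mathrm{eff}}^H(A,B) < \epsilon/6$, it takes a low-energy unit flow $f_{AB}$ from $A$ to $B$, patches on low-energy flows $f_{uw}$ (for $w\in A$) and $f_{zv}$ (for $z\in B$) guaranteed by assumption~(2), and uses Cauchy--Schwarz on the sum $f = f_{AB} + \sum_{w\in A} q_w f_{uw} + \sum_{z\in B} q_z f_{zv}$ to produce a unit $u$--$v$ flow of energy strictly less than $\epsilon$, contradicting assumption~(1). You instead use Dirichlet's principle (potentials): starting from the harmonic $u$--$v$ voltage $\psi$, you show via the hitting-probability identity (essentially Lemma~\ref{lem-covarance-resistance}) that $\psi$ is already nearly constant on $A$ and on $B$, then truncate and rescale to obtain an admissible test function for $C_{\mathrm{eff}}^H(A,B)$ with controlled energy. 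Each approach transfers the hypothesis in the natural direction for its variational principle; yours is slightly cleaner quantitatively and avoids the Cauchy--Schwarz loss of a factor of~$3$, while the paper's flow construction is perhaps more visually concrete.
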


\begin{proof}
Recall that by Thomson's Principle (see, e.g., \cite[Ch. 2.4]{LP}),
the effective resistance satisfies
\[R_{\mathrm{eff}}(x, y) = \min_{f} \mathcal{E}(f)\,, \mbox{ where }  \mathcal{E}(f) = \frac{1}{2} \sum_{x, y} f^2(x,y) r_{xy}\,,\]
and the minimum is over all unit flows from $x$ to $y$.
Here,
$r_{xy} = 1/c_{xy}$ is the edge resistance for $\{x,y\}$.

Suppose now
that $R_{\mathrm{eff}}^H (A, B) < \epsilon/6$. Then there exists a
unit flow $f_{AB}$ from set $A$ to set $B$ such that
$\mathcal{E}(f_{AB}) <\epsilon/6$. For $x \in A$, let $q_x$ be the
amount of flow sent out from vertex $x$ in $f_{AB}$ and for $x\in
B$, let $q_x$ be the amount of flow sent in to vertex $x$. Note that
$\sum_{x\in A} q_x = \sum_{x\in B} q_x = 1$.

Analogously, by
assumption \eqref{item-network-2}, there exist flows $\{f_{ux} : x \in A\}$
and $\{f_{xv} : x \in B \}$ such that $f_{xy}$ is a unit flow from $x$ to $y$ and
$\mathcal{E}(f_{xy}) \leq \epsilon/12$.
 We next build a flow $f$ such that
\[f = f_{AB} + \sum_{w\in A}q_w f_{uw} + \sum_{z \in B} q_z f_{zv}\,.\]
We see that $f$ is indeed a unit flow from $u$ to $v$. Furthermore,
by Cauchy-Schwartz,
\begin{align*}
\mathcal{E}(f) & = \frac{1}{2}\sum_{x,y} f^2(x, y)r_{xy} \\
&=
\frac{1}{2} \sum_{x, y} r_{xy}\left(f_{AB}(x, y) + \sum_{w\in
A}q_{w} f_{uw}(x, y)
+ \sum_{z \in B}q_{z} f_{zv}(x, y)\right)^2 \\
&\leq \frac{3}{2} \sum_{x, y}r_{xy} \left(f_{AB}^2(x, y) +
\sum_{w\in A} q_{w} f_{uw}^2(x,
y) + \sum_{z\in B }q_{z} f_{zv}^2(x, y)\right)\\
& = 3 \left(\mathcal{E}(f_{AB}) + \sum_{w\in
A}q_{w}\mathcal{E}(f_{uw}) + \sum_{z\in B}
q_{z}\mathcal{E}(f_{zv})\right) \\ &<
\epsilon \,.
\end{align*}
This contradicts assumption \eqref{item-network-1}, completing the
proof.
\end{proof}

\begin{lemma}\label{lem:resist-sep}
For any network $G(V)$, the following holds. If there is a subset $S
\subseteq V$ and a value $\varepsilon > 0$ such that $\reff(u,v)
\geq \varepsilon$ for all $u,v \in S$, then there is a subset $S'
\subseteq S$ with $|S'| \geq |S|/2$ such that for every $v \in S'$,
$$
\reff(v,S \setminus \{v\}) \geq \varepsilon/4.
$$
\end{lemma}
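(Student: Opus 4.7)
The plan is to pass to the reduced network on $S$ and then combine two ingredients: an identity expressing $\reff(v, S\setminus\{v\})$ as a reciprocal of a degree in the reduced network, and Foster's network theorem.

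By iterating Lemma~\ref{lem-network-reduction} I obtain a reduced network $\widetilde G(S)$ on vertex set $S$ with $\reff^{\widetilde G}(u,v) = \reff(u,v)$ for all $u,v \in S$; in particular $\reff^{\widetilde G}(u,v) \geq \varepsilon$ for distinct $u,v \in S$. Since shorting a set of vertices and the star--mesh reduction commute (both preserve effective resistances among the retained vertices), one also has $\reff(v, S\setminus\{v\}) = \reff^{\widetilde G}(v, S\setminus\{v\})$. Now set
\[
\hat c_v \;:=\; \sum_{u \in S \setminus \{v\}} \tilde c_{uv},
\]
the total conductance on \emph{edges} at $v$ in $\widetilde G(S)$ (any self-loops introduced by the reduction contribute to $\tilde c_v$ but do not affect effective resistance). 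Since every non-loop edge at $v$ in $\widetilde G(S)$ ends in $S\setminus\{v\}$, merging $S\setminus\{v\}$ to a single vertex produces a two-vertex network whose parallel edges have total conductance $\hat c_v$; hence
\[
\reff(v, S\setminus\{v\}) \;=\; 1/\hat c_v.
\]

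Next I apply Foster's network theorem in the connected network $\widetilde G(S)$:
\[
\sum_{\{u,v\}\subseteq S,\,u \neq v} \tilde c_{uv}\,\reff(u,v) \;=\; |S|-1.
\]
The hypothesis $\reff(u,v) \geq \varepsilon$ gives $\sum_{\{u,v\}} \tilde c_{uv} \leq (|S|-1)/\varepsilon$, and therefore
\[
\sum_{v \in S} \hat c_v \;=\; 2\sum_{\{u,v\}} \tilde c_{uv} \;\leq\; \frac{2(|S|-1)}{\varepsilon}.
\]
Let $T = \{v \in S : \reff(v, S\setminus\{v\}) < \varepsilon/4\} = \{v \in S : \hat c_v > 4/\varepsilon\}$. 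If $|T| > |S|/2$, then $\sum_v \hat c_v > (|S|/2)(4/\varepsilon) = 2|S|/\varepsilon$, contradicting the Foster bound. Hence $S' := S \setminus T$ has $|S'| \geq |S|/2$, as required.

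I do not anticipate any serious obstacle. The one point needing care is the interaction between network reduction and set-shorting, so that the identity $\reff(v, S\setminus\{v\}) = 1/\hat c_v$ is truly a computation in $\widetilde G(S)$; once this is granted, Foster's identity plus a one-line averaging argument finishes the proof.
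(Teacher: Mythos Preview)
Your proof is correct and follows essentially the same route as the paper: pass to the reduced network on $S$ via Lemma~\ref{lem-network-reduction}, apply Foster's theorem to bound $\sum_{v} \hat c_v \leq 2(|S|-1)/\varepsilon$, and then use a Markov-type averaging to extract $S'$ with $|S'|\geq |S|/2$. The paper states the same steps, including the identification $\reff(v,S\setminus\{v\}) = \reff^{\widetilde G}(v,S\setminus\{v\})$ and the conclusion $\ceff^{\widetilde G}(v,S\setminus\{v\}) \leq 4/\varepsilon$; your explicit remark on self-loops and on the commutativity of shorting with star--mesh reduction is a bit more careful than the paper's presentation but not a different idea.
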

\begin{proof}
Consider the reduced network $\widetilde{G}$ on the vertex set $S$,
as defined in Lemma~\ref{lem-network-reduction}. Let the new
conductances be denoted $\tilde{c}_{xy}$ for $x,y \in S$. By
Lemma~\ref{lem-network-reduction}, our initial assumption that
$\reff(u,v) \geq \varepsilon$ for all $u,v \in S$ implies that
$R_{\mathrm{eff}}^{\widetilde{G}}(u,v) \geq \varepsilon$ for all
$u,v \in S$.

Let $n = |S|$. Foster's Theorem \cite{Foster48} (see also
\cite{Tetali91}) states that
\[\frac{1}{2}\sum_{u\neq v\in S}R_{\mathrm{eff}}^{\widetilde{G}}(u, v)\tilde{c}_{u, v} = n-1\,.\]
Combined with the fact that $R^{\widetilde{G}}_{\mathrm{eff}}(u, v)
\geq \varepsilon$,  this yields
\[\frac{1}{2}\sum_{u \neq v \in S}
\tilde{c}_{uv} \leq \frac{n}{\varepsilon}\,.\] In particular, there
exists a subset $S' \subseteq S$ with $|S'| \geq n/2$ such that for
all $v \in S'$,
$$
\sum_{u \in S \setminus \{v\}} \tilde{c}_{uv} \leq
\frac{4}{\varepsilon}.
$$
It follows that for every $v \in S'$, we have
$\ceff^{\widetilde{G}}(v, S \setminus \{v\}) \leq 4/\varepsilon$,
hence $$\reff(v,S\setminus \{v\}) = \reff^{\widetilde{G}}(v,S
\setminus \{v\}) \geq \varepsilon/4.$$
\end{proof}

\begin{proof}[Proof of Proposition \ref{prop:resistance}]
For each $i \in \{1,2,\ldots, m\}$, choose some $v_i \in B_i$.
By assumption (2), $\reff(v_i, v_j) \geq \e$ for $i \neq j$.
Thus applying
Lemma \ref{lem:resist-sep}, we find a subset $I \subseteq \{1,2,\ldots,m\}$
with $|I| \geq m/2$ and such that for every $i \in I$, we have
\begin{equation}\label{eq:fix1}
\reff(v_i, \{v_1, \ldots, v_m\} \setminus \{v_i\}) \geq \e/4\,.
\end{equation}
We claim that this subset $I$ satisfies the conclusion of the proposition.

To this end, fix $i \in I$, and let $\tilde{G}$ be the quotient network formed
by gluing $\{v_1, \ldots, v_m\} \setminus \{v_i\}$ into a single vertex $\tilde{v}$.
By \eqref{eq:fix1}, we have $\reff^{\tilde{G}}(v_i, \tilde{v}) \geq \e/4$.
Now let, $$\tilde{B} = \left(\{ \tilde v \} \cup \bigcup_{j \neq i} B_j\right) \setminus \{v_i\}_{i \in I}\,.$$

Consider any $x \in \tilde{B}$ with $x \neq \tilde{v}$.  Then $x \in B_j$ for some $j \neq i$, hence
by assumption (1), we conclude that,
$$
\reff^{\tilde{G}}(x, \tilde{v}) \leq \reff(x, v_j) \leq \e/48\,.
$$
We may now apply Lemma~\ref{lem-resistance-set} to the sets $B_i$ and $\tilde B$ in $\tilde G$
(with respective vertices $v_i$ and $\tilde v$) to conclude that
$$
\reff^{\tilde G}(B_i, \tilde B) \geq \e/24\,.
$$
But the preceding line immediately yields,
$$
\reff(B_i, S \setminus B_i) \geq \e/24,
$$
finishing the proof.
\remove{Applying Lemma~\ref{lem-resistance-set} to the sets $B_i$ and $B_j$
for each pair $i\neq j$, we obtain that $R_{\mathrm{eff}} (B_i, B_j)
\geq \epsilon / 6$. Now we consider the quotient graph $\widetilde{G}$
of $G$, in which each ball $B_i$ of $G$ is glued together and viewed
as a single vertex $v_i$. Therefore,
$R_{\mathrm{eff}}^{\widetilde{G}}(v_i, v_j) \geq \epsilon /6$ for all $i
\neq j$. Now applying Lemma~\ref{lem:resist-sep} to the set
$\widetilde{S} = \{v_1, \ldots, v_m\}$ on the network $\widetilde{G}$, we
obtain a subset $I \subset \{1, \ldots, m\}$ with $|I| \geq
m/2$ such that for all $i \in I$ \[R_{\mathrm{eff}}^{\widetilde{G}}
(v_i, \tilde{S} \setminus \{v_i\}) \geq \epsilon /24\,.\] It implies
that $R_{\mathrm{eff}}(B_i, S \setminus B_i) \geq \epsilon /24$ for
all $i \in I$, as required.}
\end{proof}

We end this section with the following simple lemma.

\begin{lemma}\label{lem-three-sets}
For any network $G(V)$, if $A,B_1, B_2 \subseteq V$ are disjoint,
then
$$R_{\mathrm{eff}}(A, B_1 \cup B_2) \geq \frac{R_{\mathrm{eff}}(A, B_1)\cdot R_{\mathrm{eff}}(A, B_2)}{R_{\mathrm{eff}}(A, B_1) + R_{\mathrm{eff}}(A, B_2)}\,.$$
\end{lemma}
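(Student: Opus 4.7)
The plan is to rewrite the inequality in terms of effective conductances and then apply Dirichlet's principle. Setting $C_i = 1/\reff(A, B_i)$ and $C = 1/\reff(A, B_1 \cup B_2)$, the claimed inequality rearranges to the sub-additivity bound $C \leq C_1 + C_2$. Recall that Dirichlet's principle characterizes the effective conductance as
$$\ceff(A, B) = \min\left\{ \tfrac{1}{2}\sum_{x,y} c_{xy}\left(f(x) - f(y)\right)^2 : f \equiv 1 \text{ on } A,\ f \equiv 0 \text{ on } B\right\}.$$
Let $f_i$ be a minimizer for $C_i$, and set $g = f_1 \wedge f_2$ (pointwise minimum). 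Since $f_i$ equals $1$ on $A$ and $0$ on $B_i$, the function $g$ equals $1$ on $A$ and $0$ on $B_1 \cup B_2$, so applying Dirichlet's principle to $g$ gives $C \leq \mathcal{E}(g)$, where $\mathcal{E}$ denotes Dirichlet energy.

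The heart of the argument is the pointwise truncation inequality
$$|(f \wedge g)(x) - (f \wedge g)(y)|^2 + |(f \vee g)(x) - (f \vee g)(y)|^2 \leq (f(x) - f(y))^2 + (g(x) - g(y))^2,$$
valid for any real-valued $f,g$ and any $x,y \in V$. This reduces to a short case analysis on the relative orderings of $\{f, g\}$ at $x$ and at $y$: when the orderings agree, equality holds; when they flip, a direct expansion shows the right side exceeds the left by $2(f(x) - g(x))(g(y) - f(y)) \geq 0$. Multiplying by $c_{xy}/2$ and summing over edges yields
$$\mathcal{E}(f_1 \wedge f_2) + \mathcal{E}(f_1 \vee f_2) \leq \mathcal{E}(f_1) + \mathcal{E}(f_2) = C_1 + C_2,$$
which in particular gives $\mathcal{E}(g) \leq C_1 + C_2$. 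Combining with $C \leq \mathcal{E}(g)$ shows $C \leq C_1 + C_2$, which is equivalent to the stated resistance bound.

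The only substantive step is the pointwise truncation inequality, which is easily dispatched by case analysis on the four orderings of $f(x), g(x), f(y), g(y)$; every other part of the argument is a direct invocation of Dirichlet's principle and the reciprocal relationship between effective resistance and effective conductance.
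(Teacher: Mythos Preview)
Your proof is correct and takes a genuinely different route from the paper's. The paper first glues $A$ to a single vertex $u$, then uses the probabilistic identity $\P_u(T_u^+ > T_B) = 1/(c_u\,\reff(u,B))$ together with the trivial union bound $\P_u(T_u^+ > T_{B_1 \cup B_2}) \leq \P_u(T_u^+ > T_{B_1}) + \P_u(T_u^+ > T_{B_2})$ to obtain $\ceff(A,B_1\cup B_2)\le \ceff(A,B_1)+\ceff(A,B_2)$ directly. Your argument stays on the analytic side, invoking Dirichlet's principle and the sub-modularity of Dirichlet energy under $\wedge,\vee$. Both land on the same conductance sub-additivity; the paper's route is shorter once one accepts the escape-probability formula, while yours is purely variational and avoids any random-walk input. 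One small point worth making explicit: to conclude that $g=f_1\wedge f_2$ vanishes \emph{exactly} on $B_1\cup B_2$ (rather than merely satisfying $g\le 0$ there), you are implicitly using $0\le f_i\le 1$ everywhere; this holds because each $f_i$ is harmonic off $A\cup B_i$ and hence obeys the maximum principle, or alternatively one may further truncate $g$ at $0$, which only decreases $\mathcal E(g)$.
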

\begin{proof}
By considering the quotient graph, the lemma can be reduced to the
case when $A = \{u\}$. Let $\{S_t\}$ be the discrete-time random walk on
the network and define
\[T_B = \min\{t \geq 0: S_t \in B\} \mbox{ and } T_B^+ = \min\{t\geq 1: S_t \in B\} \mbox { for } B\subseteq V\,.\]
It is clear that for a random walk started at $u$, we have
\[\P_u (T_u^+ > T_{B_1 \cup B_2}) \leq \P_u (T_u^+ > T_{B_1}) +\P_u (T_u^+ > T_{B_2})\,.\]
Combined with \eqref{eq-walk-resistance}, this gives
$$\frac{1}{R_{\mathrm{eff}}(u, B_1 \cup B_2)} \leq \frac{1}{R_{\mathrm{eff}}(u, B_1)} + \frac{1}{R_{\mathrm{eff}}(u, B_2)}\,,$$
yielding the desired inequality.
\end{proof}



\subsection{The Gaussian free field}
\label{sec:GFF}

We
recall the graph Laplacian $\Delta : \ell^2(V) \to \ell^2(V)$
defined by
$$
\Delta f(x) = c_x f(x) - \sum_{y} c_{xy} f(y).
$$

Consider a connected network $G(V)$. Fix a vertex $v_0 \in V$, and
consider the random process $\mathcal X = \{\eta_v\}_{v \in V}$,
where $\eta_{v_0}=0$, and $\mathcal X$ has density proportional to
\begin{equation}\label{eq:density}
\exp\left(-\frac12 \langle \mathcal X, \Delta \mathcal X \rangle
\right) = \exp\left(-\frac14 \sum_{u,v} c_{uv} |\eta_u-\eta_v|^2 \right).
\end{equation}
The process $\mathcal X$ is called the Gaussian free field
(GFF) associated with $G$. The next lemma is known, see, e.g.,
Theorem 9.20 of \cite{Janson97}. We include the proof for
completeness.

\begin{lemma}\label{lem:resistGFF}
For any connected network $G(V)$, if $\mathcal X= \{\eta_v\}_{v \in V}$
is the associated GFF, then for all $u,v \in V$,
\begin{equation}\label{eq:resistGFF}
\E\, (\eta_u-\eta_v)^2 = \reff(u,v).
\end{equation}
\end{lemma}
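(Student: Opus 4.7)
The plan is to identify the covariance of the process $\mathcal X$ with the Green kernel $G_{v_0}(x,y) = \E_x L^y_{T_0}$ of the random walk killed at $v_0$, and then to apply Lemma~\ref{lem-covarance-resistance} to rewrite the resulting expression purely in terms of effective resistances.

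First I would observe that the quadratic form $\langle \mathcal X, \Delta \mathcal X\rangle$ is only positive semidefinite: its kernel is spanned by the constant vector $\mathbf 1$, reflecting the zero eigenvalue of the graph Laplacian. Imposing the constraint $\eta_{v_0} = 0$ removes this degeneracy, and the quadratic form in the free variables $\{\eta_v : v \neq v_0\}$ has matrix equal to the principal submatrix $\Delta_{v_0}$ of $\Delta$ obtained by deleting the row and column indexed by $v_0$. This submatrix is positive definite whenever $G$ is connected. Consequently, \eqref{eq:density} defines a centered Gaussian vector on $\R^{V\setminus\{v_0\}}$ with precision matrix $\Delta_{v_0}$, hence with covariance matrix $\Delta_{v_0}^{-1}$.

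Next I would identify $\Delta_{v_0}^{-1}$ with the Green kernel $G_{v_0}$. By definition $G_{v_0}(v_0, y) = 0$, and applying the strong Markov property at the first jump of the continuous-time walk yields, for every $x \neq v_0$,
$$c_x G_{v_0}(x,y) - \sum_z c_{xz}\, G_{v_0}(z,y) = \mathbf 1_{\{x=y\}},$$
i.e., $\Delta G_{v_0}(\cdot, y) = \delta_y$ on $V\setminus\{v_0\}$. This says exactly that $\Delta_{v_0} G_{v_0} = I$, so that $\Cov(\eta_u, \eta_v) = G_{v_0}(u,v)$ for all $u,v \in V$.

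Finally, by Lemma~\ref{lem-covarance-resistance} (setting $x=y$ there yields $\var(\eta_u) = G_{v_0}(u,u) = \reff(u, v_0)$), I would expand
$$\E(\eta_u - \eta_v)^2 = \var(\eta_u) + \var(\eta_v) - 2 \Cov(\eta_u, \eta_v)$$
and observe that the terms involving $v_0$ cancel algebraically, leaving $\reff(u,v)$. The only nontrivial step in this outline is the identification of $\Delta_{v_0}^{-1}$ with the Green kernel; the remainder is bookkeeping. Note also that the final answer is independent of $v_0$, as is consistent with the conclusion of the lemma.
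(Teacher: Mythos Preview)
Your proposal is correct and follows essentially the same route as the paper: both identify the covariance of $\mathcal X$ with the Green kernel $\Gamma_{v_0}(x,y)=\E_x L^y_{T_0}$ by verifying $\widetilde\Delta\,\widetilde\Gamma_{v_0}=I$ via a first-step decomposition of the walk, and then invoke Lemma~\ref{lem-covarance-resistance} to cancel the $v_0$-terms and obtain $\reff(u,v)$. Your added remark that $\Delta$ has a one-dimensional kernel (forcing positive definiteness of $\Delta_{v_0}$) is a welcome clarification that the paper leaves implicit.
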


\begin{proof}
From \eqref{eq:density}, and the fact that the Laplacian is positive
semi-definite, it is clear that $\mathcal X$ is a Gaussian process.
Let $\Gamma_{v_0}(u,v) = \E_u L_{T_0}^v$, where $T_0$ is the hitting
time for $v_0$ as in Theorem~\ref{thm:rayknight}. From Lemma
\ref{lem-covarance-resistance}, we have
\begin{equation}\label{eq:gammadef}
\Gamma_{v_0}(u,v) = \frac12\left(\reff(v_0,u) + \reff(v_0,v) -
\reff(u,v)\right).
\end{equation}

Let $\widetilde{\Delta}$ and $\widetilde{\Gamma}_{v_0}$,
respectively, be the matrices $\Delta$ and $\Gamma_{v_0}$ with the
row and column corresponding to $v_0$ removed. Appealing to
\eqref{eq:density}, if we can show that $\widetilde{\Delta}
\widetilde{\Gamma}_{v_0}=I$, it follows that $\Gamma_{v_0}$ is the
covariance matrix for $\mathcal X$.  In this case, comparing
\eqref{eq:gammadef} to
$$
\E(\eta_u \eta_v) = \frac12 \left(\E \eta_u^2 + \E \eta_v^2 - \E
(\eta_u-\eta_v)^2\right)
$$
and using $\eta_{v_0}=0$, we see that \eqref{eq:resistGFF} follows.

In order to demonstrate $\widetilde{\Delta}
\widetilde{\Gamma}_{v_0}=I$, we consider $u, v$ such that $v_0
\notin \{u,v\}$. Conditioning on the first step of the walk from $u$ gives,
\begin{eqnarray}
c_u \Gamma_{v_0}(u,v) = c_u \E_u L^v_{T_0} &=& \1_{\{u=v\}} +  \sum_{w} c_{uw} \E_{w} L^v_{T_0} \nonumber \\
&=& \1_{\{u=v\}} + \sum_{w} c_{uw} \Gamma_{v_0}(v,w)
\label{eq:laplace1}
\end{eqnarray}
On the other hand, by definition of the Laplacian,
$$
(\Delta \Gamma_{v_0})(u,v) = c_u \Gamma_{v_0}(u,v) - \sum_w c_{uw}
\Gamma_{v_0}(v,w) = \1_{\{u=v\}},
$$
where the latter equality is precisely \eqref{eq:laplace1}. Thus
$\widetilde{\Delta} \widetilde{\Gamma}_{v_0}=I$, completing the
proof.
\end{proof}

\medskip
\noindent{\bf A geometric identity.} In what follows, for a set of
points $Y$ lying in some Hilbert space, we use $\CH(Y)$ to denote
their affine hull, i.e. the closure of $\{\sum_{i=1}^n \alpha_i y_i:
n\geq 1, y_i \in Y, \sum_{i=1}^n \alpha_i = 1\}$. Of course, when
$Y$ contains the origin, $\CH(Y)$ is simply the linear span of $Y$.

\begin{lemma}\label{lem-identity}
For any network $G(V)$, if $\mathcal X = \{\eta_v\}_{v \in V}$ is the
GFF associated with $G$, then for any $w\in V$ and subset
$S\subseteq V$,
$$
\sqrt{\reff(w , S)} = \dist_{L^2}\left(\eta_w, \CH(\{\eta_u\}_{u \in
S})\right).
$$
\end{lemma}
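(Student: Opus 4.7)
The plan is to interpret the right-hand side as the $L^2$-norm of a regression residual (a conditional standard deviation of a Gaussian) and then identify that residual with the GFF on the quotient network $G/S$. First I would observe that both sides of the claimed identity are invariant under the choice of grounding vertex $v_0$: the right-hand side $\sqrt{\reff(w,S)}$ makes no reference to $v_0$, and on the left, a common shift $\eta_v \mapsto \eta_v + c$ leaves $\eta_w - \sum_u \alpha_u \eta_u$ unchanged whenever $\sum_u \alpha_u = 1$. So I may freely re-ground the GFF at a vertex $v_0 \in S$; since then $\eta_{v_0} = 0$ lies in the collection $\{\eta_u\}_{u\in S}$, the remark following the definition of $\CH$ in Section~\ref{sec:GFF} gives $\CH(\{\eta_u\}_{u\in S}) = \spn(\{\eta_u\}_{u\in S})$.

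Since $\{\eta_v\}_{v\in V}$ is jointly Gaussian, the orthogonal $L^2$-projection of $\eta_w$ onto this closed subspace coincides with the conditional expectation $\E[\eta_w \mid \{\eta_u\}_{u\in S}]$. Writing $\phi_w \df \eta_w - \E[\eta_w\mid\{\eta_u\}_{u\in S}]$, this means
$$\dist_{L^2}\bigl(\eta_w, \CH(\{\eta_u\}_{u\in S})\bigr)^2 = \E(\phi_w^2).$$
The core step is to show that $\{\phi_w\}_{w\in V\setminus S}$ has the law of a GFF on the quotient network $G/S$ grounded at $v_S$. To see this, I would start from the density \eqref{eq:density}, hold $\{\eta_u\}_{u\in S}$ fixed, and complete the square in the quadratic form $\frac14\sum_{u,v} c_{uv}(\eta_u-\eta_v)^2$. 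The resulting conditional density on $\R^{V\setminus S}$ factors as a deterministic shift (the discrete harmonic extension of the boundary data $\{\eta_u\}_{u\in S}$) plus a centered Gaussian whose quadratic form, by the star-mesh identity of Lemma~\ref{lem-network-reduction}, is exactly the Laplacian quadratic form of the quotient network $G/S$ with $v_S$ grounded. This exhibits $\phi$ as a GFF on $G/S$.

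Finally, applying Lemma~\ref{lem:resistGFF} to the GFF on $G/S$ gives $\E(\phi_w^2) = \reff^{G/S}(w,v_S)$, which equals $\reff(w,S)$ by the definition \eqref{eq:reffsub}. Taking square roots completes the proof. The principal technical obstacle is the verification that the conditional density obtained from \eqref{eq:density} really is the GFF of the reduced network: one must match the star-mesh conductances $c^{*,x}_{yz}$ of \eqref{eq-network-reduction} against the cross-terms produced by completing the square, ideally by integrating out the variables $\{\eta_u : u \in S \setminus \{v_0\}\}$ one vertex at a time and invoking Lemma~\ref{lem-network-reduction} inductively. A purely algebraic alternative is to solve the variational problem $\inf\{\alpha^T\Gamma_w \alpha : \mathbf{1}^T\alpha = 1\}$ via Lagrange multipliers and verify the matrix identity $\mathbf{1}^T(\Gamma_w|_S)^{-1}\mathbf{1} = \reff(w,S)^{-1}$, but the Markov-property route is more conceptual and dovetails with the network-reduction machinery from Section~\ref{sec:resist}.
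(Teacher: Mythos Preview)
Your overall approach is the same as the paper's: ground the GFF at some $v_0 \in S$ so that the affine hull becomes the linear span, identify the $L^2$ distance with the norm of the orthogonal residual $\phi_w = \eta_w - \proj_{\spn(\{\eta_u\}_{u\in S})}\eta_w$, recognize the law of that residual as the GFF on the quotient $G/S$, and apply Lemma~\ref{lem:resistGFF} there. The paper executes the key identification more directly than you propose: rather than completing the square for arbitrary boundary data, it simply conditions on the event $A_S = \{\eta_v = 0 : v \in S\}$ and reads the resulting density straight off \eqref{eq:density}. Setting $\eta_v = 0$ for $v \in S$ collapses every cross term $c_{uv}|\eta_u - \eta_v|^2$ with $v \in S$ to $c_{uv}\eta_u^2$, and summing over $v \in S$ gives exactly $c_{v_S u}\eta_u^2$ by the definition of the quotient conductances. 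That is the whole computation.

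Your invocation of Lemma~\ref{lem-network-reduction} is misplaced, and the step you flag as the ``principal technical obstacle'' would in fact lead you astray. The star-mesh transformation concerns \emph{marginalizing} (integrating out) vertices and produces the reduced network $\widetilde G(\widetilde V)$; the quotient $G/S$ instead arises from \emph{conditioning} (pinning the $S$-variables to a common value). Integrating out $\{\eta_u : u \in S \setminus \{v_0\}\}$ one vertex at a time would compute the marginal of $\{\eta_w\}_{w\notin S}$, not its conditional law, and would land you on the wrong network. No such machinery is needed: the precision matrix of the conditional distribution is simply the $(V\setminus S)\times(V\setminus S)$ block of the full Laplacian, and a one-line check shows this block coincides with the Laplacian of $G/S$ with the $v_S$ row and column removed.
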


\begin{proof}
Since the statement of the lemma is invariant under translation, we
may assume that the GFF is defined with respect to some $v_0 \in
S$.

In this case, by the definition in \eqref{eq:density}, the GFF for
$G/S$ has density proportional to
$$
\exp\left(-\frac14 \left( \sum_{u,v \notin S} c_{uv} |\eta_u-\eta_v|^2 +
\sum_{u \notin S} c_{v_S u} |\eta_u|^2\right) \right),
$$
i.e. the GFF on $G/S$ is precisely the initial Gaussian process
$\mathcal X$ conditioned on the linear subspace $A_S = \{\eta_v =
\eta_{v_0} = 0 : v \in S \}$.

Using \eqref{eq:reffsub} and Lemma \ref{lem:resistGFF}, we have
$$
\reff(w,S) = \reff^{G/S}(w, v_S) = \E\left[|\eta_w-\eta_{v_0}|^2 \,\big|\,
A_S\right] = \E\left[|\eta_w|^2 \,\big|\, A_S\right].
$$
To compute the latter expectation, write $\eta_w = Y + Y'$, where $Y'
\in \mathrm{span}(\{\eta_v\}_{v \in S})$ and $\E (Y Y') = 0$. It
follows immediately that
$$
\dist_{L^2}\left(\eta_w, \CH(\{\eta_u\}_{u \in S})\right) = \sqrt{\E[Y^2]}
= \sqrt{\E\left[|\eta_w|^2 \,\big|\, A_S\right]},
$$
completing the proof.
\end{proof}

\section{Majorizing measures}
\label{sec:mm}

We now review the relevant parts of the majorizing measure theory.
One is encouraged to consult the book \cite{Talagrand05} for further
information.
In Section \ref{sec:intro}, we saw Talagrand's $\gamma_2$ functional.
For our purposes, it will be more convenient to work with
a different value that is equivalent to the functional $\gamma_2$,
up to universal constants.
In Section \ref{sec:septrees}, we discuss
separated trees, and prove
a number of standard properties about such objects.
In Section \ref{sec:algorithm},
we present a deterministic algorithm for
computing $\gamma_2(X,d)$ for any finite
metric space $(X,d)$.
Finally, in Section \ref{sec:treelike},
we specialize the theory of
Gaussian processes and trees to the case
of GFFs.  There, we will use the geometric
properties proved in
Sections \ref{sec:resist} and \ref{sec:GFF}.

\medskip

Before we begin, we attempt to give some rough intuition
about the role of trees in the majorizing measures theory.
A good reference for this material is \cite{GZ03}.
 A {\em tree of subsets of $X$} is a finite collection $\mathcal F$ of
subsets with the property that for all $A,B \in \mc F$, either $A
\cap B = \emptyset$, or $A \subseteq B$, or $B \subseteq A$. A set
$B$ is a {\em child} of $A$ if $B \subseteq A$, $B \neq A$, and
$$
C \in \mathcal F, B \subseteq C \subseteq A \implies C=B \textrm{ or
} C=A.
$$
We assume that $X \in \mathcal F$, and $X$ is referred to as the
root of the tree $\mathcal F$. To each $A \in \mathcal F$, we use
$N(A)$ to denote the number of children of $A$. A {\em branch of
$\mathcal F$} is a sequence $A_1 \supset A_2 \supset \cdots$ such
that each $A_{k+1}$ is a child of $A_k$. A branch is {\em maximal}
if it is not contained in a longer branch.
We will assume additionally that every maximal branch
terminates in a singleton set $\{x\}$ for $x \in X$.

Let $\{\eta_x\}_{x \in X}$ be a centered Gaussian process with $X$ finite, and let $d(x,y) = \sqrt{\E \,(\eta_x - \eta_y)^2}$.
The basic premise of the tree interpretation of the majorizing measures
theory is that one can assign a measure of ``size'' to any tree
of subsets in $X$, and this size provides a lower bound on $\E \sup_{x \in X} \eta_x$.
The majorizing measures theorem then claims that the value of the optimal such tree
is within absolute constants of the expected supremum.
The size of the tree (see \eqref{eq:size}) can
be defined using only the metric structure of $(X,d)$, without reference
to the underlying Gaussian process.  Thus much of the theorems in this section
are stated for general metric spaces.

The tree of subsets is meant to capture the structure of $(X,d)$ at all
scales simultaneously.  In general, to obtain a multi-scale lower bound
on the expected supremum of the process, one arranges
so that the diameter of the subsets decreases exponentially as one goes down
the tree, and all subsets at one level of the tree are separated
by a constant fraction of their diameter (see Definitions \ref{def:qrtree} and \ref{def:septree} below).
This allows a certain level of independence between different
branches of the tree which is exploited in the lower bounds.
Much of this section
is devoted to proving that one can construct a near-optimal tree
with a number of regularity properties that will be crucial
to our approach in Section \ref{sec:cover}.

\subsection{Trees, measures, and functionals}

Let $(X,d)$ be an arbitrary metric space.

\begin{defn}\label{def:qrtree}
For values $q \in \mathbb N$ and $\alpha,\beta > 0$, and $r \geq 2$,
a tree of subsets $\mc F$ in $X$ is called
a {\em $(q,r,\alpha,\beta)$-tree} if to each $A \in \mc F$, one can
associate a number $n(A) \in \mathbb Z$ such that the following
three conditions are satisfied.
\begin{enumerate}
\item For all children $B$ of $A$, we have $n(B) \leq n(A) - q$.
\item If $B$ and $B'$ are two distinct children of $A$, then $d(B,B') \geq \beta \, r^{n(A) - 1}.$
\item $\diam(A) \leq  \alpha \, r^{n(A)}.$
\end{enumerate}
We will refer to a $(q,r,4,\frac12)$-tree as simply a {\em $(q,r)$-tree.}
\end{defn}

The {\em $r$-size of a tree of subsets $\mc F$,} written
$\mathsf{size}_r(\mc F)$, is defined as the infimum of
\begin{equation}\label{eq:size}
\sum_{k \geq 1} r^{n(A_k)} \sqrt{\log^{+} N(A_k)}
\end{equation}
over all possible maximal branches of $\mathcal F$, where we use the
notation $\log^{+} x = \log x$ for $x \neq 0$, and $\log^{+}(0)=0$.

\remove{
\begin{theorem}\label{thm:move}
For every $\alpha, \beta, \delta > 0$ and $q \in \mathbb N$,
there exists a value $r_0 = r_0(\alpha,\beta,q,\delta) \geq 2$
and a value $C = C(\alpha,\beta,q,\delta) > 0$
such that the following holds.
For every $r \geq r_0$, there is an $r' \geq r$ such that
for every metric space $(X,d)$, we have
$$
\sup \{ \mathsf{size}_r(\mathcal F) : \mathcal F \textrm{ is a $(q,r,\alpha,\beta)$-tree in $X$} \}
\geq C^{-1} \sup \{ \mathsf{size}_{r'}(\mathcal F) : \mathcal F
\textrm{ is a $(1,r',1,\delta)$-tree in $X$} \}\,.
$$
\end{theorem}
}

To connect trees of subsets with the $\gamma_2$ functional,
we recall the relationship with majorizing measures.
The next result is from \cite[Thm. 1.1]{Tala01}

\begin{theorem}
\label{thm:ameasure}
For every metric space $(X,d)$, we have
$$
\gamma_2(X,d) \asymp \inf \sup_{x \in X} \int_{0}^{\infty} \left(\log \frac{1}{\mu(B(x,\varepsilon))}\right)^{1/2}\,d\varepsilon,
$$
where $B(x,\varepsilon)$ is the closed
ball of radius $\varepsilon$ about $x$,
and the infimum is over all finitely supported probability measures on $X$.
\end{theorem}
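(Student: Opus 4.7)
\textbf{Direction (a): $\inf_\mu \sup_{x \in X} \int_0^\infty \sqrt{\log \tfrac{1}{\mu(B(x,\varepsilon))}}\,d\varepsilon \lesssim \gamma_2(X,d)$.} The plan is to construct a measure directly from a near-optimal admissible sequence. Take $\{A_k\}_{k \geq 0}$ near-optimal for $\gamma_2(X,d)$, and for each $S \in A_k$ choose a representative $x_S \in S$. Define the probability measure
\[
\mu = \sum_{k \geq 0} \frac{w_k}{|A_k|} \sum_{S \in A_k} \delta_{x_S},
\]
where $\{w_k\}$ is a positive sequence with $\sum_k w_k = 1$ and $\log(1/w_k) = O(k)$ (e.g., $w_k \propto (k+1)^{-2}$). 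For any $x \in X$ and any $k$, the ball $B(x, \diam(A_k(x)))$ contains $x_{A_k(x)}$, so $\mu(B(x, \diam(A_k(x)))) \geq w_k/|A_k| \geq w_k \cdot 2^{-2^k}$, giving $\sqrt{\log(1/\mu(B(x,\varepsilon)))} \lesssim 2^{k/2}$ whenever $\varepsilon \geq \diam(A_k(x))$. Breaking the integral on the intervals $[\diam(A_k(x)), \diam(A_{k-1}(x)))$ and summing telescopically yields
\[
\int_0^\infty \sqrt{\log \tfrac{1}{\mu(B(x,\varepsilon))}}\,d\varepsilon \lesssim \sum_{k \geq 0} 2^{k/2} \diam(A_k(x)) \lesssim \gamma_2(X,d),
\]
uniformly in $x$.

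\textbf{Direction (b): $\gamma_2(X,d) \lesssim \inf_\mu \sup_{x \in X}\int_0^\infty \sqrt{\log \tfrac{1}{\mu(B(x,\varepsilon))}}\,d\varepsilon$.} Fix $\mu$. The plan is to build an admissible sequence by a greedy recursive partitioning. Starting from $A_0 = \{X\}$, to split a piece $C \in A_k$: repeatedly choose $y \in C$ maximizing $\mu(B(y, \rho_k) \cap C)$ for a radius $\rho_k$ tuned to the local mass profile of $\mu$ on $C$, peel off $B(y, \rho_k) \cap C$ as a child, and recurse on the complement. The choice of $\rho_k$ and a stopping rule based on $\mu$-mass accounting ensure $|A_{k+1}| \leq 2^{2^{k+1}}$ and a usable diameter bound. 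To convert the integral hypothesis into the diameter sum, introduce the growth functional
\[
F_k(A) = \sup_{x \in A} \int_0^{\diam(A)} \sqrt{\log \tfrac{1}{\mu(B(x,\varepsilon) \cap A)}}\,d\varepsilon
\]
and verify a Talagrand-type growth inequality: whenever $A_1, \ldots, A_m \subseteq A$ are pairwise $\beta \Delta$-separated with $m \leq 2^{2^k}$,
\[
F_{k-1}(A) \gtrsim 2^{k/2}\Delta + \min_{1 \leq i \leq m} F_k(A_i).
\]
Iterating this inequality along the branches of the constructed tree produces $\sum_k 2^{k/2} \diam(A_k(x)) \lesssim F_0(X)$, which is exactly the supremum of the integral against $\mu$. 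Hence $\gamma_2(X,d) \lesssim F_0(X)$, and taking the infimum over $\mu$ yields the claim.

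\textbf{Main obstacle.} The delicate step is direction (b): calibrating the splitting radii $\rho_k$ so that the number of children per parent remains below $2^{2^{k+1} - 2^k}$ while the growth inequality for $F_k$ holds with the correct constants. This is the heart of Talagrand's functional equivalence theorem for majorizing measures, and the key combinatorial input is a pigeonholing of points of $C$ according to their local $\mu$-mass concentration, which allows one to charge the $2^{k/2}\Delta$ gain against the integral of $\sqrt{\log(1/\mu(\cdot))}$ on an annulus of thickness proportional to $\Delta$. Direction (a) by contrast is essentially a single telescoping estimate once the natural atomic measure is written down.
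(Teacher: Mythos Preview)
The paper does not prove this theorem; it is quoted as a known result from Talagrand (the reference \cite[Thm.~1.1]{Tala01}), so there is no proof in the paper to compare against.

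On its own merits: your direction (a) is correct and essentially complete. The atomic measure on partition representatives with summable weights $w_k$ satisfying $\log(1/w_k)=O(k)$ is the standard construction, and the telescoping bound on the integral is right. One minor point: you should note that for $\varepsilon \geq \diam(X)$ the integrand vanishes, and that the admissible sequence can be taken so that $\diam(A_k(x))\to 0$ (trivially true in the finite case relevant to this paper), so the integral is fully accounted for.

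Your direction (b) is a correct high-level outline of Talagrand's argument---greedy ball-peeling to build the partition, combined with a decreasing family of functionals $F_k$ satisfying a growth inequality---but it is not a proof. You do not specify the radii $\rho_k$, do not verify the cardinality constraint $|A_{k+1}|\le 2^{2^{k+1}}$, and do not prove the growth inequality you state; you explicitly flag this as the main obstacle. That is an honest assessment: the combinatorial pigeonholing that makes the growth inequality work is precisely the nontrivial content of Talagrand's equivalence theorem, and reproducing it would amount to reproving \cite[Thm.~1.1]{Tala01}. Since the paper itself treats this as a black box, your sketch is appropriate for the context, but you should be aware that as written direction (b) is a roadmap rather than an argument.
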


We will also need the following theorem due to Talagrand (see Proposition
4.3 of \cite{Talagrand96} and also Theorem T5 of \cite{GZ03}.)
We will employ it now and also in
Section \ref{sec:algorithm}.

\begin{theorem}\label{thm:T5}
There is a value $r_0 \geq 2$ such that the following holds.
Let $(X,d)$ be a finite metric space, and $r \geq r_0$.
Assume there
is a family of functions $\{\f_i : X \to \mathbb R_+ : i \in \mathbb Z\}$
such that the following conditions hold for some $\beta > 0$.
\begin{enumerate}
\item $\varphi_i(x) \geq \varphi_{i-1}(x)$ for all $i \in \mathbb Z$ and $x \in X$.
\item If $t_1, t_2, \ldots, t_N \in B(s,r^j)$ are such that $d(t_i, t_{i'}) \geq r^{j-1}$
for $i \neq i'$, then
$$
\f_j(s) \geq \beta r^{j} \sqrt{\log N} + \min \left\{ \f_{j-2}(t_i) : i = 1,2,\ldots,N\right\}.
$$
Under these conditions,
$$
\gamma_2(X,d) \lesssim_{r, \beta} \sup_{x \in X, i \in \mathbb Z} \f_i(x).
$$
\end{enumerate}
\end{theorem}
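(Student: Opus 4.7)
\medskip

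The plan is to prove Theorem~\ref{thm:T5} by constructing a finitely supported probability measure $\mu$ on $X$ and applying the measure-theoretic characterization of $\gamma_2$ from Theorem~\ref{thm:ameasure}. Let $M = \sup_{x \in X, i \in \mathbb{Z}} \varphi_i(x)$, which is finite since $X$ is finite. The goal is to produce $\mu$ for which
\[
\sup_{x \in X} \int_0^\infty \sqrt{\log\frac{1}{\mu(B(x,\varepsilon))}}\,d\varepsilon \lesssim_{r,\beta} M,
\]
which by Theorem~\ref{thm:ameasure} yields $\gamma_2(X,d) \lesssim_{r,\beta} M$.

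First, I would set up a top-down recursive construction of a tree of subsets $\mathcal{F}$ of $X$ together with weights on its nodes. Pick $j_{\max}$ large enough that $X \subseteq B(x_0, r^{j_{\max}})$ for some $x_0 \in X$, and take the root $A_0 = X$ with scale $j_{\max}$. Given a node $A \in \mathcal{F}$ at scale $j$ (so $\diam(A) \le r^j$ for some appropriate $\alpha$) and a chosen representative $s_A \in A$, choose a maximal family $t_1, \ldots, t_N \in A \subseteq B(s_A, r^j)$ with $d(t_i, t_{i'}) \geq r^{j-1}$ for $i \neq i'$. For each $t_i$ let $B_i = B(t_i, r^{j-1}) \cap A$; discarding points not covered by the $B_i$'s and using maximality of the packing, these balls cover $A$ (after an additional Voronoi-type partitioning step) and have diameter at most $2r^{j-1}$. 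Declare the $B_i$'s to be the children of $A$, at scale $j-2$, and recurse. Recursion terminates at singletons.

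Second, I would encode the measure $\mu$ by assigning to each child $B_i$ of $A$ a relative mass $1/N(A)$ and letting $\mu(\{x\}) = \prod_{A \ni x} 1/N(A)$ along the unique branch in $\mathcal{F}$ from $X$ down to $\{x\}$. For any $x \in X$ and $\varepsilon \approx r^j$, the ball $B(x,\varepsilon)$ contains the piece of the tree at scale $j$ containing $x$, so
\[
\log \frac{1}{\mu(B(x,\varepsilon))} \lesssim \sum_{k : n(A_k) \geq j} \log N(A_k),
\]
where $A_0 \supset A_1 \supset \cdots$ is the branch through $x$. Integrating and changing variables via a geometric sum over scales reduces the integral bound to controlling $\sum_k r^{n(A_k)} \sqrt{\log N(A_k)}$ along each branch — precisely the size of the tree (cf.~\eqref{eq:size}).

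Third, and this is the step where hypothesis~(2) of the theorem is applied, I would bound this branch sum telescopically. Applying condition~(2) at each node $A_k$ with $N = N(A_k)$ separated children and choosing $s = s_{A_k}$,
\[
\varphi_{n(A_k)}(s_{A_k}) \geq \beta\, r^{n(A_k)} \sqrt{\log N(A_k)} + \min_i \varphi_{n(A_k)-2}(t_i),
\]
and by picking the representative $s_{A_{k+1}}$ to be the $t_i$ realizing the minimum, I obtain $\varphi_{n(A_k)}(s_{A_k}) - \varphi_{n(A_{k+1})}(s_{A_{k+1}}) \geq \beta r^{n(A_k)} \sqrt{\log N(A_k)}$. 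Summing over $k$ and using monotonicity (condition~(1)) to ensure telescoping is valid gives $\sum_k r^{n(A_k)} \sqrt{\log N(A_k)} \leq \beta^{-1} M$.

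The main obstacle, I expect, will be the last step's bookkeeping: one must carefully choose the representatives $s_A$ so that the telescoping in $\varphi$ proceeds along the same branch used to bound $\mu$, and one must handle the bottom of the recursion (where children may consist of few points and the ``growth'' slack is small) without losing a $\log$ factor. A secondary issue is relating the geometric scales $r^j$ appearing in $\mathcal{F}$ to the continuous variable $\varepsilon$ in Theorem~\ref{thm:ameasure}; the standard fix is to inflate $r$ if necessary so that $r^{-2}$ is a small enough contraction factor to absorb the boundary terms, which is why the conclusion has an implicit dependence on $r$ and $\beta$.
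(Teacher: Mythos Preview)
The paper does not prove Theorem~\ref{thm:T5}; it is quoted as a known result of Talagrand (Proposition~4.3 of \cite{Talagrand96}; see also Theorem~T5 of \cite{GZ03}). There is thus no in-paper argument to compare against, and I comment only on the soundness of your outline.

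The architecture---build a measure by recursive decomposition and invoke Theorem~\ref{thm:ameasure}---is correct, but there is a real gap precisely at the step you flag as ``the main obstacle,'' and it is not mere bookkeeping. With the uniform weights $1/N(A)$ you propose, controlling $\int_0^\infty\sqrt{\log(1/\mu(B(x,\varepsilon)))}\,d\varepsilon$ for \emph{every} $x$ requires bounding $\sum_k r^{n(A_k)}\sqrt{\log N(A_k)}$ along \emph{every} root-to-leaf branch. But hypothesis~(2) only produces the telescoping inequality when at each step you descend into the child $t_i$ that \emph{minimizes} $\varphi_{j-2}(t_i)$; along any other branch the $\min$ in~(2) gives you nothing usable, and the branch sum is uncontrolled. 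No clever choice of representatives $s_A$ repairs this, since the representatives lie on a single path while $\mu$ must be spread over the whole tree.

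The standard resolution is to abandon uniform weights: one assigns to the child centered at $t_i$ a relative mass that \emph{increases} with $\varphi_{j-2}(t_i)$, so that along a branch through a child with large $\varphi$-value the correspondingly smaller contribution to $\log(1/\mu)$ compensates for the failure of the telescope. With that modification the argument does go through, though the details are somewhat more involved than your sketch indicates.
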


The preceding two theorems allow us to present the following connection
between trees and $\gamma_2$.  Such a connection is well-known
(see, e.g. \cite{Talagrand95a}), but we record the proofs
here for completeness, and for the precise
quantitative bounds we will use in future sections.

\begin{lemma}\label{lem:mmupper}
There is a value $r_0 \geq 2$ such that
for every finite metric space $(X,d)$, and every $r \geq r_0$, we have
\begin{equation}\label{eq:ourbound}
\gamma_2(X,d) \lesssim_r \sup \{ \mathsf{size}_r(\mathcal F) : \mathcal F \textrm{ is a
$(1,r,4,\tfrac1{2})$-tree in $X$}\}\,.
\end{equation}
\end{lemma}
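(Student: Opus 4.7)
The plan is to apply Theorem \ref{thm:T5} with a family of functions $\{\varphi_i\}$ built directly from the trees themselves. For each $x \in X$ and $i \in \mathbb Z$, I would set
$$\varphi_i(x) = \sup \left\{ \mathsf{size}_r(\mathcal F) : \mathcal F \text{ is a $(1,r,4,\tfrac12)$-tree whose root } A \ni x \text{ satisfies } n(A) \leq i \right\}.$$
The trivial one-set tree $\{\{x\}\}$ (with any labeling) has $N=0$ and contributes $0$, so $\varphi_i(x) \geq 0$ is well defined and finite (as $X$ is finite). Monotonicity $\varphi_i(x) \geq \varphi_{i-1}(x)$ is immediate, since any admissible tree for $\varphi_{i-1}(x)$ is also admissible for $\varphi_i(x)$. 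So condition (1) of Theorem \ref{thm:T5} holds for free.

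For condition (2), suppose $t_1,\dots,t_N \in B(s,r^j)$ with $d(t_i,t_{i'}) \geq r^{j-1}$ for $i \neq i'$. Fix $\epsilon > 0$ and for each $i$ pick a $(1,r,4,\tfrac12)$-tree $\mathcal F_i$ rooted at some $A_i \ni t_i$ with $n(A_i) \leq j-2$ and $\mathsf{size}_r(\mathcal F_i) \geq \varphi_{j-2}(t_i) - \epsilon$. Glue these together into a new tree $\mathcal F$ with root $A = \{s\} \cup A_1 \cup \cdots \cup A_N$ (labeled $n(A) = j$) and children $A_1,\ldots,A_N$. Three geometric checks are needed: (a) $n(A_i) \leq j-2 \leq n(A) - 1$, so the labeling is admissible; (b) for $i \neq i'$,
$$d(A_i, A_{i'}) \geq d(t_i, t_{i'}) - \diam(A_i) - \diam(A_{i'}) \geq r^{j-1} - 8 r^{j-2} \geq \tfrac12 r^{n(A) - 1},$$
provided $r \geq 16$; (c) for $x,y \in A$, triangle inequality and the radius bound $A_i \subseteq B(t_i, 4r^{j-2})$ give $d(x,y) \leq 2r^j + 8r^{j-2} \leq 4 r^{n(A)}$ whenever $r \geq 2$.

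Every maximal branch of $\mathcal F$ begins at $A$, passes through one of the $A_i$'s, and then coincides with a maximal branch of $\mathcal F_i$. Thus
$$\mathsf{size}_r(\mathcal F) = r^{j} \sqrt{\log N} + \min_{i} \mathsf{size}_r(\mathcal F_i) \geq r^{j}\sqrt{\log N} + \min_i \varphi_{j-2}(t_i) - \epsilon.$$
Since $A \ni s$ and $n(A) = j$, the left side is a lower bound for $\varphi_j(s)$. Letting $\epsilon \to 0$ verifies condition (2) of Theorem \ref{thm:T5} with $\beta = 1$. Taking $r_0 \df \max\{16, r_0^{\ref{thm:T5}}\}$, Theorem \ref{thm:T5} yields
$$\gamma_2(X,d) \lesssim_r \sup_{x \in X,\, i \in \mathbb Z} \varphi_i(x) \leq \sup\{\mathsf{size}_r(\mathcal F) : \mathcal F \text{ is a $(1,r,4,\tfrac12)$-tree in } X\},$$
as desired.

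The construction is essentially forced by the shape of Theorem \ref{thm:T5}, so I do not anticipate a conceptual obstacle; the only real thing to nail down is the arithmetic of the geometric constants, namely checking that the glued tree still satisfies the $(1,r,4,\tfrac12)$ conditions, which is where the requirement $r \geq 16$ enters.
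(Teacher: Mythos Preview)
Your proof is correct and follows essentially the same approach as the paper: define $\varphi_i$ as a supremum over suitable trees, verify the growth condition of Theorem~\ref{thm:T5} by gluing trees under a new root, and read off the bound. The only cosmetic difference is the parametrization of $\varphi_i(x)$---the paper takes $\varphi_i(x)=\theta(B(x,2r^i))$ with $\theta(S)$ the supremum over $(1,r,4,\tfrac12)$-trees whose root is contained in $S$, whereas you index by ``root contains $x$ and has label $\leq i$''; this leads to the paper needing only $r\geq 8$ while your diameter bookkeeping asks for $r\geq 16$, but the mechanism is identical.
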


\begin{proof}
First, for a subset $S \subseteq X$, let
$$
\theta(S) = \sup \{ \mathsf{size}_r(\mathcal F) : \mathcal F \textrm{ is a
$(1,r,4,\tfrac1{2})$-tree in $X$}\}\,.
$$
Then define, for every $i \in \mathbb Z$ and $x \in X$, define
$$
\varphi_i(x) = \theta(B(x, 2 r^i))\,.
$$
where $B(x,R)$ is the closed ball of radius $R$ about $x \in X$.
We now wish to verify that the conditions of Theorem \ref{thm:T5} hold for $\{\varphi_i\}$.
Condition (1) is immediate.

Assume that $r \geq 8$.
Given $t_1, t_2, \ldots, t_N$ as in condition (2) of Theorem \ref{thm:T5},
consider the set $A = B(s, 2 r^j)$ which has diameter bounded by $4 r^j$,
and the disjoint subset sets of $A$ given by
 $A_i = B(t_i, 2 r^{j-2})$ which each have diameter bounded by $4 r^{j-2}$,
 and which satisfy $d(A_i, A_j) \geq r^{j-1}/2$ for $i \neq j$.
We also have $A_i \subseteq A$ for each $i \in \{1,\ldots,N\}$.

Taking the tree of subsets with root $A$, $n(A)=j$, and children $\{A_i\}_{i=1}^N$, and in each $A_i$ a
tree which achieves value at least $\theta(A_i) = \theta(B(t_i, 2 r^{j-2})) = \varphi_{j-2}(i)$,
we see immediately that
$$
\varphi_j(s) = \theta(B(s,2 r^j)) \geq r^j \sqrt{\log N} + \min \{\varphi_{j-2}(t_i) : i=1,2,\ldots,N \},
$$
confirming condition (2) of Theorem \ref{thm:T5}.
Applying the theorem, it follows that $\gamma_2(X,d) \lesssim_r \theta(X)$,
proving \eqref{eq:ourbound}.
\end{proof}

We will need the upper bound \eqref{eq:ourbound} to hold for
$(2,r,4,\tfrac12)$-trees.  Toward this end, we state a version
of \cite[Thm 3.1]{Talagrand95a}.
The theorem there is only proved for $\alpha=1$ and $\beta=\frac12$,
but it is straightforward to see that it works for all values $\alpha,\beta > 0$
since the proof merely proceeds by choosing an appropriate subtree of the given tree;
the values $\alpha$ and $\beta$ are not used.

\begin{theorem}\label{thm-talagrand-95a}
For every metric space $(X,d)$, the following holds.
For every $\alpha, \beta, r>0$ and $q \in \mathbb N$, and for every
$(1, r, \alpha, \beta)$-tree $\mathcal F$ in $X$, there exists a
$(q, r, \alpha, \beta)$-tree $\mathcal{F}'$ in $X$ such that
$$\mathsf{size}_r(\mathcal{F}) \lesssim q \cdot \mathsf{size}_r (\mathcal{F}')\,.$$
\end{theorem}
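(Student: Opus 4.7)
My plan is to construct the $(q, r, \alpha, \beta)$-tree $\mathcal F'$ as a subtree of the given $\mathcal F$ by a level-contraction: retain only nodes whose $n$-labels lie in a single residue class modulo $q$. Concretely, for each phase $j \in \{0, 1, \ldots, q-1\}$, let $\mathcal F_j$ consist of the root $X$ together with every $A \in \mathcal F$ satisfying $n(A) \equiv j \pmod q$, keeping the same labels and using the subset order to define the parent-child relation. The gap condition $n(A) - n(B) \geq q$ is then automatic, and the diameter bound $\diam(A) \leq \alpha r^{n(A)}$ is inherited verbatim from $\mathcal F$.

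The delicate point is the separation condition $d(B, B') \geq \beta r^{n(A)-1}$ for distinct children $B, B'$ of $A$ in $\mathcal F_j$. This is immediate when $B$ and $B'$ descend from distinct $\mathcal F$-children of $A$, but may fail when they share a common intermediate $\mathcal F$-ancestor $C$ with $A \supsetneq C \supsetneq B, B'$, since only $d(B, B') \geq \beta r^{n(C)-1}$ is then guaranteed. I would resolve this by further pruning $\mathcal F_j$: at each retained $A$, retain at most one $\mathcal F_j$-descendant per immediate $\mathcal F$-child of $A$, the representative being chosen adaptively to maximize the size of the subtree constructed below it. This enforces $N_{\mathcal F_j}(A) \leq N_{\mathcal F}(A)$ while guaranteeing separation at scale $\beta r^{n(A)-1}$.

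For the size bound, decompose each branch sum as $S(\mathrm{br}) = \sum_{j=0}^{q-1} S_j(\mathrm{br})$, where $S_j$ retains the summands at levels $\equiv j \pmod q$. Pigeonhole then yields $\max_j S_j(\mathrm{br}) \geq S(\mathrm{br})/q$ for every branch. The main obstacle is that $\mathsf{size}_r(\mathcal F_j) = \inf_{\mathrm{br}} S_j(\mathrm{br})$, so reconciling a possibly branch-dependent optimal phase with a single globally chosen $j$ requires care: the direct $\max$-$\inf$ vs.\ $\inf$-$\max$ exchange points the wrong way. I expect the resolution to proceed by strengthening the conclusion and arguing inductively on the subtree: for each $A \in \mathcal F$ one builds a $(q, r, \alpha, \beta)$-subtree $\mathcal F'(A)$ rooted at $A$ with $\mathsf{size}_r(\mathcal F'(A)) \gtrsim \mathsf{size}_r(\mathcal F_A)/q$, where $\mathcal F_A$ is the $\mathcal F$-subtree rooted at $A$. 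The inductive step at each $A$ balances the local branching term $r^{n(A)}\sqrt{\log^+ N(A)}$, when available in the active phase, against the inductively constructed subtrees at the chosen per-child representatives, which are selected to preserve the infimum structure of the size rather than by a fixed phase rule.
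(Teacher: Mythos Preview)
The paper does not prove this theorem --- it quotes \cite[Thm.~3.1]{Talagrand95a} and remarks only that ``the proof merely proceeds by choosing an appropriate subtree of the given tree; the values $\alpha$ and $\beta$ are not used.'' Your high-level plan matches that description, and you correctly isolate the two real obstacles: the separation condition when collapsing levels, and the branch-dependence of the optimal phase.

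Your inductive formulation has a concrete gap, however. You require $\mathcal F'(A)$ to be \emph{rooted at $A$}, and this is too rigid. Take $q=2$ and $\mathcal F = \{X, A, B_1,\dots,B_N\}$ with $n(X)=0$, $n(A)=-1$, $n(B_i)=-2$, where $A$ is the unique child of $X$ and the $B_i$ are singleton children of $A$; then $\mathsf{size}_r(\mathcal F)=r^{-1}\sqrt{\log N}$. Any $(2,r,\alpha,\beta)$-tree rooted at $X$ with inherited labels can give $X$ at most one child: all candidates lie inside the single $\mathcal F$-child $A$, so your own pruning rule keeps only one, and indeed the pairwise separation $d(B_i,B_j)\geq\beta r^{-2}$ need not meet the required $\beta r^{n(X)-1}=\beta r^{-1}$. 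That one child is a singleton, so the size is $0$, and the induction cannot close at $X$. Both of your residue-class trees $\mathcal F_0,\mathcal F_1$ also collapse to size $0$ here after pruning. The good $(2,r)$-tree in this example is $\{A,B_1,\dots,B_N\}$, rooted at $A$ rather than $X$ (with the singleton leaves harmlessly relabelled down to $n'=-3$); it has full size $r^{-1}\sqrt{\log N}$. So the inductive hypothesis must allow the root of $\mathcal F'(A)$ to be any node of $\mathcal F_A$, not $A$ itself --- equivalently, the construction must be willing to discard the top few levels when their branching is weak. With that relaxation the induction can be closed, and the residue-class device becomes unnecessary.
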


Combining Theorem \ref{thm-talagrand-95a} with Lemma \ref{lem:mmupper} yields the following
upper bound using $(2,r)$-trees.

\begin{cor}\label{cor:mmupper}
There is a value $r_0 \geq 2$ such that
for every finite metric space $(X,d)$, and every $r \geq r_0$, we have
\begin{equation}
\gamma_2(X,d) \lesssim_r \sup \{ \mathsf{size}_r(\mathcal F) : \mathcal F \textrm{ is a
$(2,r,4,\tfrac1{2})$-tree in $X$}\}\,.
\end{equation}
\end{cor}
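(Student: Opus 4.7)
The plan is to derive the corollary as an immediate consequence of the two preceding results, Lemma \ref{lem:mmupper} and Theorem \ref{thm-talagrand-95a}. Introduce the shorthand
$$
S_q(X,d) \df \sup \left\{ \mathsf{size}_r(\mathcal F) : \mathcal F \text{ is a } (q,r,4,\tfrac12)\text{-tree in } X \right\}.
$$
Lemma \ref{lem:mmupper} already gives, for $r$ sufficiently large, $\gamma_2(X,d) \lesssim_r S_1(X,d)$. So the only thing missing is a comparison between $S_1$ and $S_2$, and here Theorem \ref{thm-talagrand-95a} is tailor-made.

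First I would rewrite Theorem \ref{thm-talagrand-95a} with the parameters $\alpha = 4$, $\beta = \tfrac12$, $q = 2$. This says that given any $(1,r,4,\tfrac12)$-tree $\mathcal F$, one can produce a $(2,r,4,\tfrac12)$-tree $\mathcal F'$ with $\mathsf{size}_r(\mathcal F) \lesssim 2 \cdot \mathsf{size}_r(\mathcal F') \lesssim \mathsf{size}_r(\mathcal F')$. Taking the supremum over all such $(1,r,4,\tfrac12)$-trees on the left, and recalling that the resulting $(2,r,4,\tfrac12)$-tree always satisfies $\mathsf{size}_r(\mathcal F') \leq S_2(X,d)$, yields $S_1(X,d) \lesssim S_2(X,d)$. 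Chaining this with Lemma \ref{lem:mmupper} gives
$$
\gamma_2(X,d) \lesssim_r S_1(X,d) \lesssim S_2(X,d),
$$
which is exactly the claim of the corollary. The value $r_0$ is the same as the one produced by Lemma \ref{lem:mmupper}.

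There is no real obstacle here; the entire content of the corollary is packaged into Theorem \ref{thm-talagrand-95a}, whose role is precisely to convert trees with slow scale-decrease into trees with faster scale-decrease at only a constant factor loss in size. If one wanted to verify Theorem \ref{thm-talagrand-95a} itself, the nontrivial step is the extraction argument: starting from a $(1,r)$-tree, one selects every $q$-th generation to produce a $(q,r)$-subtree, and then has to show that the optimal branch of the subtree still captures a constant fraction of the size of the original branch (after paying a factor of $q$). This is exactly the point at which the parameters $\alpha$ and $\beta$ play no role, so Theorem \ref{thm-talagrand-95a} really does apply with $\alpha = 4$, $\beta = \tfrac12$ as needed.
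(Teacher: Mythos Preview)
Your proposal is correct and follows exactly the same approach as the paper: the corollary is obtained by chaining Lemma~\ref{lem:mmupper} with Theorem~\ref{thm-talagrand-95a} applied with $q=2$, $\alpha=4$, $\beta=\tfrac12$. The paper states this in one line without spelling out the intermediate supremum comparison, but your expanded version is faithful to the intended argument.
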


Now we move onto a lower bound on $\gamma_2$.

\begin{lemma}\label{lem:mmlower}
There is a value $r_0 \geq 2$ such that for every finite metric space $(X,d)$, and
every $r \geq r_0$, we have
$$
\gamma_2(X,d) \gtrsim \sup \{ \mathsf{size}_r(\mathcal F) : \mathcal F \textrm{ is a $(1,r,8,\tfrac{1}{6})$-tree} \}\,.
$$
\end{lemma}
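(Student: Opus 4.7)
The plan is to invoke Theorem \ref{thm:ameasure}, which characterizes $\gamma_2(X,d)$ as $\inf_\mu \sup_x \int_0^\infty (\log 1/\mu(B(x,\varepsilon)))^{1/2}\,d\varepsilon$ over finitely supported probability measures $\mu$. It then suffices to show that, given any such $\mu$ and any $(1,r,8,\tfrac{1}{6})$-tree $\mathcal{F}$, one can exhibit a single leaf $x \in X$ for which the associated integral is $\gtrsim \mathsf{size}_r(\mathcal{F})$. We will produce $x$ by greedily descending a branch of $\mathcal{F}$, at each step choosing a child whose thickened neighborhood carries little $\mu$-mass.

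Concretely, starting from the root $A_0 = X$, suppose we have chosen $A_0 \supset A_1 \supset \cdots \supset A_k$. Let $B_1,\ldots,B_N$ be the children of $A_k$ in $\mathcal F$, where $N = N(A_k)$; by definition they are pairwise $\tfrac{1}{6}r^{n(A_k)-1}$-separated. Define the thickened sets
\[
B_i^{+} \df \left\{ y \in X : d(y, B_i) \leq \tfrac{1}{12}\, r^{n(A_k)-1}\right\}, \qquad i = 1,\ldots,N.
\]
The separation ensures the $B_i^{+}$ are pairwise disjoint, so $\sum_i \mu(B_i^+) \leq 1$, hence at most $N/2$ indices satisfy $\mu(B_i^+) > 2/N$. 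Choose any $i$ with $\mu(B_i^+) \leq 2/N$ and set $A_{k+1} \df B_i$. Iterating until a singleton $\{x\} = A_K$ is reached, we obtain a branch along which, for every $k < K$ and every $\varepsilon \leq \rho_k \df \tfrac{1}{12}r^{n(A_k)-1}$, the ball $B(x,\varepsilon)$ is contained in $B_{i_k}^+$ (where $A_{k+1} = B_{i_k}$), so
\[
\mu\!\left(B(x,\varepsilon)\right) \leq 2/N(A_k).
\]

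Next, split the integral along the levels of the chosen branch:
\[
\int_0^\infty \sqrt{\log 1/\mu(B(x,\varepsilon))}\,d\varepsilon
\;\geq\; \sum_k (\rho_k - \rho_{k+1}) \sqrt{\log^+ (N(A_k)/2)}.
\]
Because $n(A_{k+1}) \leq n(A_k) - 1$, we have $\rho_{k+1} \leq \rho_k / r$, so $\rho_k - \rho_{k+1} \geq \tfrac{1}{12}(1-1/r)\,r^{n(A_k)-1}$. For $r$ sufficiently large, $(1-1/r) \geq 1/2$, and moreover $\sqrt{\log^+(N/2)} \gtrsim \sqrt{\log^+ N}$ once $N \geq 4$; indices with $N(A_k) \leq 3$ contribute only a bounded factor to $\mathsf{size}_r(\mathcal F)$ and can be absorbed into the implicit constant. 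Consequently,
\[
\int_0^\infty \sqrt{\log 1/\mu(B(x,\varepsilon))}\,d\varepsilon
\;\gtrsim\; \frac{1}{r}\sum_k r^{n(A_k)} \sqrt{\log^+ N(A_k)}
\;\geq\; \frac{1}{r}\,\mathsf{size}_r(\mathcal{F}),
\]
where the final inequality uses that $\mathsf{size}_r(\mathcal F)$ is the infimum over all branches. Taking $r_0$ large enough and absorbing the fixed factor $1/r$ (together with $\beta = 1/6$) into the implicit constant yields the claimed lower bound.

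The main obstacle is really one of accounting: the scale at which the greedy step produces a quantitative measure bound is $r^{n(A_k)-1}$, whereas the $r$-size is calibrated at scale $r^{n(A_k)}$, which introduces a fixed-but-$r$-dependent gap. Since $r \geq r_0$ is fixed in the statement, this gap is harmless; one must simply verify that the two minor losses ($\log^+(N/2)$ versus $\log^+ N$, and $\rho_k - \rho_{k+1}$ versus $\rho_k$) are uniform. The remaining care is to ensure the greedy child selection terminates at a singleton, which is guaranteed by the assumption that every maximal branch of $\mathcal F$ ends in a singleton.
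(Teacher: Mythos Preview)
Your approach is essentially the paper's: invoke Theorem~\ref{thm:ameasure}, greedily descend the tree choosing a low-mass child at each step, and bound the Dudley-type integral from below on disjoint scale intervals. The paper uses thickening radius $\tfrac{1}{20}r^{n(A)-1}$ and integrates over $[r^{n(A)-2},\tfrac{1}{20}r^{n(A)-1}]$; you use $\tfrac{1}{12}r^{n(A_k)-1}$ and consecutive intervals $(\rho_{k+1},\rho_k]$. Both yield an $r$-dependent constant, matching the $\lesssim_r$ that appears in the paper's own proof.

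There is one small but genuine slip. From disjointness of the $B_i^+$ you only extract $\mu(B_i^+)\le 2/N$, which forces you to compare $\sqrt{\log^+(N/2)}$ with $\sqrt{\log^+ N}$ and then assert that ``indices with $N(A_k)\le 3$ contribute only a bounded factor to $\mathsf{size}_r(\mathcal F)$.'' That assertion is not true in general: a branch in which every node has exactly two children has $\mathsf{size}_r(\mathcal F)\asymp r^{n(X)}\sqrt{\log 2}$, while your integral lower bound along that branch is identically zero. The fix is immediate: since the $B_i^+$ are disjoint, the \emph{minimum} mass among them is at most $1/N$, not merely $2/N$; choosing that child gives $\log(1/\mu(B(x,\varepsilon)))\ge \log N(A_k)$ directly, and the $N=2$ problem disappears. (Alternatively one can bound the small-$N$ terms separately by the geometric series $\sum_k r^{n(A_k)}\lesssim r\cdot r^{n(A_0)}\lesssim r^2\diam(X)\lesssim r^2\gamma_2(X,d)$, using that two distinct children force $\diam(X)\ge \tfrac{1}{6}r^{n(A_0)-1}$; but the $1/N$ route is cleaner and is what the paper does.)
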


\begin{proof}
We will show for any probability measure $\mu$ on $X$
and any $(1,r,8,\frac{1}{6})$-tree $\mathcal F$ in $X$, we have
$$
\mathsf{size}_r(\mathcal F) \lesssim_r \sup_{x \in X} \int_{0}^{\infty} \left(\log \frac{1}{\mu(B(x,\varepsilon))}\right)^{1/2}\,d\varepsilon\,.
$$
The basic idea is that if $A_1, A_2, \ldots A_k$ are children of $A$,
in $\mathcal F$, then the sets $B(A_i, \frac1{20} r^{n(A)-1})$ are disjoint
by property (2) of Definition \ref{def:qrtree}, where we write
$B(S,R) = \{ x \in X : d(x,S) \leq R \}$.
Thus one of these sets $A_i$ has $\mu(B(A_i, \frac{1}{20} r^{n(A)-1})) \leq 1/N(A)$.

Thus we may find a finite sequence of sets, starting with $A^{(0)}=X$ such that
$A^{(i+1)}$ is a child $A^{(i)}$ and $$\mu(B(A^{(i+1)},\tfrac{1}{20} r^{n(A^{(i)})-1})) \leq 1/N(A^{(i)}).$$
Since every maximal branch in a tree of subsets terminates in a singleton, the
sequence ends with some set $A' = A^{(h)} = \{x\}$.
By construction, we have
$$
\mu(B(x, \tfrac{1}{20} r^{n(A')-1})) \leq \frac{1}{N(A')}\,.
$$
Thus, assuming $r \geq 40$,
\begin{equation}\label{eq:aprime}
r^{n(A')-2} \sqrt{\log^{+} N(A')} \leq \int_{r^{n(A')-2}}^{\frac1{20} r^{n(A')-1}} \sqrt{\frac{1}{\log \mu(B(x,\varepsilon))}}\,d\varepsilon\,.
\end{equation}

By property of Definition \ref{def:qrtree}, the intervals $(r^{n(A)-2}, \frac{1}{20} r^{n(A)-1})$ are disjoint
for different sets $A \in \mathcal F$ with $x \in A$, thus summing \eqref{eq:aprime} yields
$$
\mathsf{size}_r(\mathcal F) \lesssim_r \sum_{A \in \mathcal F : x \in A} r^{n(A)-2} \sqrt{\log^{+} N(A)} \leq \int_{0}^{\infty} \sqrt{\frac{1}{\log \mu(B(x,\varepsilon))}}\,d\varepsilon\,,
$$
completing the proof.
\end{proof}

\subsection{Separated trees}
\label{sec:septrees}

Let $(X,d)$ be an arbitrary metric space.
Consider a finite, connected, graph-theoretic tree $\mcT=(V,E)$ (i.e., a connected, acyclic graph)
such that $V \subseteq X$,
with a fixed root $z \in V$,
and a mapping $s : V \to \mathbb Z$.
Abusing notation, we will sometimes use $\mcT$ for the vertex set of $\mcT$.
For a vertex $x \in \mcT$, we use $\mcT_x$ to denote the subtree rooted at $x$, and
we use $\Gamma(x)$ to denote the set of children\footnote{Formally, these are precisely
the neighbors of $x$ in $\mathcal T$ whose unique path to the root $z$ passes through $x$.} of $x$ with respect to the root $z$.
Finally, we write $\Delta(x)=|\Gamma(x)|+1$ for all $x \in \mcT$.

Let $\mathcal L$ be the set of leaves of $\mcT$. For any $v \in
\mcT$, let $\mc P(v) = \{z, \ldots, v\}$ denote the set of nodes on
the unique path from the root to $v$. For a pair of nodes $u,v \in
\mcT$, we use $\mc P(u,v)$ to denote the sequence of nodes on the
unique path from $u$ to $v$. If $u$ is the parent of $v$, we write
$u = p(v)$ and in particular we write $z = p(z)$.
For any such pair $(\mcT,s)$ and $r \geq 2$, we define the {\em value of $(\mc T,s)$}
by
\begin{equation}\label{eq:treevalue}
\val_r(\mcT,s) = \inf_{\ell \in \mathcal L} \sum_{v \in \mc P(\ell)} r^{s(v)} \sqrt{\log \Delta(v)}.
\end{equation}
The following definition will be central.

\begin{defn}\label{def:septree}
For a value $r \geq 2$, we say that the pair $(\mcT,s)$ is an {\em
$r$-separated tree in $(X, d)$} if it satisfies the following
conditions for all $x \in \mcT$.
\begin{enumerate}
\item For all $y \in \Gamma(x)$, $s(y) \leq s(x) -2$.
\item For all $u,v \in \Gamma(x)$, we have $d(x, \mcT_u) \geq \frac{1}{2}\, r^{s(x) - 1}$ and $d(\mcT_u, \mcT_v) \geq \frac{1}{2}\, r^{s(x)-1}$.
\item $\diam(\mc T_x) \leq 4 r^{s(x)}$.
\end{enumerate}
\end{defn}

We remark that our separated tree is a slightly different version of
the $(2,r)$-tree introduced in the preceding section.
The main difference is
that the nodes of our separated tree are point in the metric space
$X$, whereas a node in a $(2,r)$-tree is a subset of $X$. Our definition
is tailored for the application in Section \ref{sec:cover}.

Not surprisingly, we have a similar version of the
above theorem for separated trees.

\begin{theorem}\label{thm:packingtree}
For some $r_0 \geq 2$ and every $r \geq r_0$, and any metric space $(X, d)$, we have
$$
\sup_{\mc T} \val_r(\mcT,s) \asymp_r \gamma_2(X,d),
$$
where the supremum is over all $r$-separated trees in $X$.
\end{theorem}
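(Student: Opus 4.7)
The plan is to prove both inequalities by reducing to the analogous results for trees of subsets, namely Corollary~\ref{cor:mmupper} and Lemma~\ref{lem:mmlower}.

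For the upper bound $\sup \val_r \lesssim_r \gamma_2(X,d)$, I would convert any $r$-separated tree $(\mathcal T, s)$ into a tree of subsets $\mathcal F = \{A_v := \mathcal T_v : v \in \mathcal T\}$ with $n(A_v) := s(v)$. The three conditions of Definition~\ref{def:septree} translate directly into the conditions of Definition~\ref{def:qrtree}, so $\mathcal F$ is a $(2,r,4,\tfrac{1}{2})$-tree, and in particular a $(1,r,8,\tfrac{1}{6})$-tree; Lemma~\ref{lem:mmlower} then gives $\mathsf{size}_r(\mathcal F) \lesssim \gamma_2(X,d)$. Maximal branches of $\mathcal F$ correspond to root-to-leaf paths in $\mathcal T$, and the only discrepancy between $\val_r(\mathcal T, s)$ and $\mathsf{size}_r(\mathcal F)$ is the use of $\sqrt{\log(|\Gamma(v)|+1)}$ rather than $\sqrt{\log^+ |\Gamma(v)|}$. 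Applying $\sqrt{\log(N+1)} \leq \sqrt{\log 2} + \sqrt{\log^+ N}$, the extra cost telescopes along any path into $\sqrt{\log 2}\sum_v r^{s(v)} \lesssim_r r^{s(z)}$ (a geometric series since scales drop by at least $2$). Since any non-leaf root $z$ has a child $y$ with $d(z,\mathcal T_y) \geq \tfrac{1}{2} r^{s(z)-1}$ by condition~(2), one has $r^{s(z)} \lesssim_r \diam(X,d) \lesssim \gamma_2(X,d)$, the final step following from Theorem~(MM) together with the elementary bound $\E \max(\eta_u,\eta_v) \gtrsim d(u,v)$.

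For the lower bound $\gamma_2(X,d) \lesssim_r \sup \val_r$, I plan to apply Theorem~\ref{thm:T5} to the functional
\[
\varphi_i(x) := \sup\{\val_r(\mathcal T, s) : (\mathcal T, s) \text{ is $r$-separated, rooted at } x, \ s(x) \leq i\}.
\]
Monotonicity in $i$ is immediate. To verify the inductive inequality, given $t_1,\ldots,t_N \in B(s, r^j)$ pairwise at distance $\geq r^{j-1}$ and near-optimizers $(\mathcal T_i, s_i)$ for each $\varphi_{j-2}(t_i)$, I would hang the $\mathcal T_i$ as subtrees of a new root at $s$ with scale $j$. By the triangle inequality, at most one $t_i$ can lie within $\tfrac{1}{2} r^{j-1}$ of $s$; discarding it costs only a constant factor since $\sqrt{\log(N-1)} \gtrsim \sqrt{\log N}$ for $N \geq 2$. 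The remaining children satisfy $d(s,t_i) \geq \tfrac{1}{2} r^{j-1}$ and, combined with $\diam(\mathcal T_i) \leq 4r^{j-2}$, this yields the required parent-child and pairwise subtree separations of order $r^{j-1}$, and the diameter bound $\diam(\mathcal T_{\mathrm{new}}) \leq 4r^j$.

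The main obstacle is that the separation from $s$ produced by this construction is $(\tfrac{1}{2} - O(1/r))\,r^{j-1}$, which falls just shy of the literal $\tfrac{1}{2} r^{j-1}$ demanded by Definition~\ref{def:septree}. I would resolve this by first establishing the lower bound for a relaxed $(c,r)$-separated notion (with $c$ slightly less than $\tfrac{1}{2}$), for which the construction goes through cleanly; then, by choosing $r_0$ large and carefully relabeling scales, every $(c,r)$-separated tree yields a bona fide $r$-separated tree of comparable value, all constants being absorbed into $\asymp_r$. With this bookkeeping in place, Theorem~\ref{thm:T5} gives $\gamma_2(X,d) \lesssim_r \sup_{x,i} \varphi_i(x) \leq \sup_{(\mathcal T, s)} \val_r(\mathcal T, s)$, completing the proof.
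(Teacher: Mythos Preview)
Your upper bound is fine and is essentially the paper's argument: convert the $r$-separated tree to a tree of subsets $\{\mathcal T_v\}$, observe it is a $(2,r,4,\tfrac12)$-tree (hence a $(1,r,8,\tfrac16)$-tree), and invoke Lemma~\ref{lem:mmlower}, absorbing the $\log^+$ versus $\log(\cdot+1)$ discrepancy into a geometric tail bounded by $r\,\diam(X)\lesssim_r\gamma_2(X,d)$.

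The lower bound has a genuine gap. You correctly identify the obstacle---rooting the new tree at $s$ only yields $d(s,\mathcal T_{t_i})\geq(\tfrac12-4/r)\,r^{j-1}$, short of the required $\tfrac12 r^{j-1}$---but your proposed fix does not work. ``Relabeling scales'' cannot repair this: any shift of $s(\cdot)$ that strengthens the separation bound in condition~(2) of Definition~\ref{def:septree} simultaneously tightens the diameter bound in condition~(3), which then fails. And proving the result for a relaxed $(c,r)$-separated class with $c<\tfrac12$ gives $\gamma_2\lesssim_r\sup$ over a \emph{larger} class of trees, which is the wrong inequality; you would still owe a conversion from $(c,r)$-separated to genuinely $r$-separated, and that is exactly the missing step.

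The paper avoids this by going through Corollary~\ref{cor:mmupper} first (so $\gamma_2\lesssim_r\sup\mathsf{size}_r$ over $(2,r)$-trees), and then converting any $(2,r)$-tree $\mathcal F$ into an $r$-separated tree $\mathcal T$ with $\val_r(\mathcal T)=\mathsf{size}_r(\mathcal F)$ via the following device: for each $A\in\mathcal F$ with $N(A)\geq 1$, \emph{sacrifice one child} $c(A)$ and choose the representative point $v_A\in c(A)$; the children of $v_A$ in $\mathcal T$ are the $v_B$ for $B$ a child of $A$ with $B\neq c(A)$. Placing $v_A$ inside a child automatically gives $d(v_A,\mathcal T_{v_B})\geq d(c(A),B)\geq\tfrac12 r^{n(A)-1}$, which is exactly the root-to-subtree separation you were missing. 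The sacrifice is free because $\Delta(v_A)=N(A)$, so the value matches the size exactly.

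If you want to salvage your direct Theorem~\ref{thm:T5} route, the clean fix is to change the functional to $\varphi_i(x)=\sup\{\val_r(\mathcal T,s):\text{root }z\in B(x,2r^i),\ s(z)\leq i\}$ (allowing the root to float in a ball, as in the proof of Lemma~\ref{lem:mmupper}). Then in the inductive step you may root the new tree at $t_1$ rather than at $s$; since $d(t_1,t_i)\geq r^{j-1}$ for $i\neq 1$, you get $d(t_1,\mathcal T_{t_i})\geq r^{j-1}-4r^{j-2}\geq\tfrac12 r^{j-1}$ for $r\geq 8$, and all conditions of Definition~\ref{def:septree} go through.
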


Theorem \ref{thm:packingtree} follows
from Corollary \ref{cor:mmupper} and the following lemma.

\begin{lemma}
Consider $r\geq 8$ and any metric space $(X, d)$. For any $(2,r)$-tree
$\mc F$, there is an $r$-separated tree $\mc T$ such that
$\mathsf{size}_r(\mc F) = \val_r(\mc T)$. Also, for any
$r$-separated tree $\mc T$, there is a $(2,r)$-tree $\mc F$ such that
$\mathsf{size}_r(\mc F) \geq \val_r(\mc T) - r \,\diam(X)$.
\end{lemma}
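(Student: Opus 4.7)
The plan is to build a direct correspondence between $(2,r)$-trees of subsets and $r$-separated trees that aligns their branch-sums term by term, up to small losses.

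\emph{From $\mathcal{F}$ to $\mathcal{T}$.} Given a $(2,r)$-tree $\mathcal{F}$, I would first prune every internal node $A \in \mathcal{F}$ that has a unique child by identifying $A$ with that child; since $\log^{+} 1 = 0$, this does not affect $\mathsf{size}_r(\mathcal{F})$. Next I would select a representative $x_A \in A$ for each surviving $A$, taking $x_A = x$ whenever $A = \{x\}$ is a singleton leaf, and let $\mathcal{T}$ be the graph-theoretic tree with vertex set $\{x_A\}$, root $x_X$, level function $s(x_A) = n(A)$, and an edge from $x_A$ to $x_B$ whenever $B$ is a child of $A$ in $\mathcal{F}$. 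Conditions (1) and (3) of Definition \ref{def:septree} will follow directly from conditions (1) and (3) of Definition \ref{def:qrtree}. For condition (2), the inclusion $\mathcal{T}_{x_B} \subseteq B$ combined with $d(B, B') \geq \tfrac{1}{2} r^{n(A)-1}$ immediately yields the separation between subtrees rooted at distinct children; the separation of $x_A$ from each $\mathcal{T}_{x_B}$ requires a careful choice of $x_A$, exploiting that every child $B$ has diameter at most $4 r^{n(A)-2}$, which is negligible compared to the target separation $\tfrac{1}{2} r^{n(A)-1}$. A branch-by-branch check, using that we have eliminated single-child nodes, will then match $\val_r(\mathcal{T})$ to $\mathsf{size}_r(\mathcal{F})$.

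\emph{From $\mathcal{T}$ to $\mathcal{F}$.} Given $(\mathcal{T}, s)$, I would set $A_x \subseteq X$ to be the vertex set of the subtree $\mathcal{T}_x$ for each $x \in \mathcal{T}$, obtaining a tree of subsets $\mathcal{F} = \{A_x : x \in \mathcal{T}\}$ with the parent-child structure inherited from $\mathcal{T}$ and $n(A_x) = s(x)$. The three conditions of Definition \ref{def:qrtree} transfer directly from the corresponding conditions of Definition \ref{def:septree}. Branches of $\mathcal{F}$ correspond to branches of $\mathcal{T}$, and since $N(A_x) = |\Gamma(x)|$, the two branch sums differ term by term by at most an absolute constant times $r^{s(v)}$, coming from $\sqrt{\log(|\Gamma(v)|+1)} - \sqrt{\log^{+}|\Gamma(v)|}$. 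Because the levels along a branch decrease by at least $2$ at each step, these discrepancies are geometrically dominated by the root contribution $r^{s(z)}$. After adjusting $s(z)$ down to the smallest value consistent with condition (3), one has $r^{s(z)} \lesssim \diam(X)$, which accounts for the loss $r \cdot \diam(X)$.

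The main obstacle I anticipate is the representative selection in the forward direction, specifically ensuring $d(x_A, \mathcal{T}_{x_B}) \geq \tfrac{1}{2} r^{s(x_A)-1}$ for every child $B$ of $A$. The two-step gap $n(B) \leq n(A) - 2$ together with the diameter control on children guarantees that the representatives of children cluster into small regions of $A$, leaving enough room inside $A$ to pick $x_A$ outside the $\tfrac{1}{2} r^{n(A)-1}$-neighborhoods of all children. In degenerate configurations where even this fails, one can trim the tree further at the cost of at most a constant factor in the branch sum, which is absorbed into the stated equality and inequality.
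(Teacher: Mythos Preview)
Your reverse direction is fine and essentially matches the paper. The forward direction, however, has a real gap, and your acknowledged ``main obstacle'' is not resolvable along the lines you sketch.

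First, the exact equality $\mathsf{size}_r(\mathcal F)=\val_r(\mathcal T)$ cannot come out of your construction: if every child of $A$ in $\mathcal F$ becomes a child of $x_A$ in $\mathcal T$, then $\Delta(x_A)=|\Gamma(x_A)|+1=N(A)+1$, not $N(A)$, so the branch sums do not match term by term. Second, and more seriously, there need not be any point of $A$ lying outside the $\tfrac12 r^{n(A)-1}$-neighborhood of every child. The set $A$ is just a finite subset of $X$; nothing in Definition~\ref{def:qrtree} prevents $A$ from equalling $\bigcup_i B_i$, in which case every candidate $x_A$ already sits inside some child $B_i$, at distance $0$ from $\mathcal T_{x_{B_i}}$. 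Your fallback of ``trimming further at the cost of a constant factor'' is not available here because the target is an exact equality.

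The paper's device handles both issues simultaneously: for each non-leaf $A$, designate one child $c(A)$ and choose the representative $v_A$ \emph{inside} $c(A)$; then declare the children of $v_A$ in $\mathcal T$ to be the $v_B$ for the \emph{remaining} children $B\neq c(A)$ of $A$. Since $v_A\in c(A)$ and $d(c(A),B)\geq \tfrac12 r^{n(A)-1}$ for every other child $B$, the separation $d(v_A,\mathcal T_{v_B})\geq \tfrac12 r^{s(v_A)-1}$ is automatic; and since exactly one child is discarded, $\Delta(v_A)=(N(A)-1)+1=N(A)$, giving $\val_r(\mathcal T)=\mathsf{size}_r(\mathcal F)$ on the nose. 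Your pruning of single-child nodes is unnecessary once this sacrificial-child trick is in place.
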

\begin{proof}
We only prove the first half of the statement, since the second half
can be obtained by reversing the construction. The additive factor
$-r \,\diam(X)$ is due to the slight difference in the definitions of
the value for a separated tree and the size for a $(2,r)$-tree (see
\eqref{eq:treevalue} and \eqref{eq:size}).

Let $\mathcal F$ be a $(2,r)$-tree on $(X, d)$. For each $A \in \mathcal
F$ with $N(A) \geq 1$, we select one child $c(A)$ and an arbitrary
point $v_A \in c(A)$. We now construct the separated tree $\mathcal
T$.  Its vertex set is a subset of $\{v_A : A \in \mathcal F\}$. The
root of $\mcT$ is $v_X$, and its children are $\{v_B : B \textrm{ is
a child of $X$ with $B \neq c(X)$}\}$. In general, if $v_A$ is a
node of $\mathcal T$, then its children are the points $\{v_B : B
\textrm{ is a child of $A$ with $B \neq c(A)$}\}$. Finally, for $v_A
\in \mcT$, we put $s(v_A) = n(A)$.

Let us first verify that $\mcT$ is an $r$-separated tree. Condition
(1) of Definition \ref{def:septree} holds because if $y$ is a child
of $v_A \in \mcT$, then $y=v_B$ for some child $B$ of $A$ (in $\mc
F$), which implies $s(y)=n(B) \leq  n(A) - 2 = s(v_A) -2$. Secondly,
If $v_A$ is a node with children $v_{B_1}, v_{B_2}, \ldots,
v_{B_k}$, then clearly by Definition \ref{def:qrtree},
\begin{align*}
d(v_A, \mcT_{v_{B_i}}) &\geq d(c(A), B_i) \geq \frac{1}{2}\,r^{s(v_A) - 1}, \\
d(\mcT_{v_{B_i}}, \mcT_{v_{B_j}}) &\geq d(B_i, B_j) \geq
\frac{1}{2}\, r^{s(v_A)-1},
\end{align*}
verifying condition (2) of Definition \ref{def:septree}.

Thirdly, if $x_A \in \mcT$, then for any child $x_B$ of $x_A$, we
know $B$ is a child of $A$, hence $$\diam(\mcT_{x_B}) \leq \diam(B)
\leq 4 r^{n(A)}=  4 r^{s(x_A)},$$ using property (3) of a $q$-tree.
This verifies condition (3) of Definition \ref{def:septree}.

Finally, observe that for every non-leaf node $v_A \in \mcT$, we
have $\Delta(v_A)=|\Gamma(v_A)|+1=N(A)$, and for leaves, we have
$\log \Delta(v_A) = \log^{+} N(A) = 0$. It follows that
$\val_r(\mcT,s)=\mathsf{size}_r(\mc F)$, completing the proof.
\end{proof}

\subsubsection{Additional structure}

We now observe that we can take our separated trees
to have some additional properties.
Say that an $r$-separated tree $(\mcT,s)$ is
{\em $C$-regular} for some $C \geq 1$, if
it satisfies, for every $v \in \mcT \setminus \mathcal L$,
\begin{equation}\label{eq:regular}
\Delta(v) \geq \exp\left(\vphantom{\bigoplus} C^2 r^2 4^{s(z)-s(v)}\right).
\end{equation}

\begin{lemma}\label{lem:regular}
For every $C \geq 1$ and $r \geq 4$, for every $r$-separated tree
$(\mcT, s)$ in $X$, if $$\val_r(\mcT,s) \geq 4 C r^{s(z)+1},$$ then
there is a $C$-regular $r$-separated tree $(\mcT',s')$ in $X$ with
$$\tfrac12 \val_r(\mcT,s) \leq \val_r(\mcT',s') \leq \val_r(\mcT,s).$$
\end{lemma}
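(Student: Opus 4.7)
The strategy is to prune $\mcT$ by skipping over \emph{deficient} internal nodes---those with $\Delta(v) < \exp(C^2 r^2 4^{s(z)-s(v)})$---and then, if necessary, re-rooting at a non-deficient descendant of $z$ to obtain the promised tree $\mcT'$. The key quantitative input is that a deficient node $v$ contributes
\[
r^{s(v)}\sqrt{\log \Delta(v)} < C r^{s(v)+1} \cdot 2^{s(z)-s(v)} = C r^{s(z)+1} (2/r)^{s(z)-s(v)}
\]
to any path through it. Along a root-to-leaf path, $s(z)-s(v)$ grows by at least $2$ per step (Definition \ref{def:septree}(1)), so the contributions of non-root deficient nodes form a geometric series bounded by $C r^{s(z)+1}\sum_{i\ge 1}(2/r)^{2i}\le \frac{1}{3} Cr^{s(z)+1}$ for $r\ge 4$. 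The hypothesis $\val_r(\mcT,s)\ge 4Cr^{s(z)+1}$ will make this loss (and an additional $<Cr^{s(z)+1}$ possibly incurred from discarding a deficient root) a small fraction of the value.

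First I will fix a leaf $\ell^*$ attaining the infimum in the definition \eqref{eq:treevalue} of $\val_r(\mcT,s)$, and then define a representative map $\rho:\mcT\to\mcT$ by $\rho(u)=u$ if $u$ is a leaf or a non-deficient internal node, and $\rho(u)=\rho(u^*)$ otherwise. The choice of $u^*$ is arbitrary in general, but at each deficient $u\in\mc P(\ell^*)$ I set $u^*$ to be the next node of that path. The tree $\mcT'$ is built recursively: $z$ is its root, and the children of each $v\in\mcT'$ are declared to be $\{\rho(u):u\in\Gamma(v)\}$, where $\Gamma(v)$ refers to children in $\mcT$. Representatives of distinct $\mcT$-children lie in disjoint subtrees of $\mcT$, hence are distinct points of $X$, so the number of children of each internal node is preserved; in particular $\ell^*\in\mathcal L(\mcT')$. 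Every non-root internal node of $\mcT'$ is a non-deficient internal node of $\mcT$, and the three $r$-separation conditions of Definition \ref{def:septree} carry over from $\mcT$ to $\mcT'$ because $\mcT'_w\subseteq\mcT_w$ as subsets of $X$ for every $w\in\mcT'$.

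If $z$ is non-deficient I set $(\mcT'',s'')=(\mcT',s')$; otherwise I re-root at $v':=\rho(u_1)$, where $u_1$ is the child of $z$ on $\mc P(\ell^*)$, and put $\mcT''=\mcT'_{v'}$. Under the hypothesis, $v'$ is forced to be a non-deficient internal node and not a leaf: were it a leaf, the path $\mc P(\ell^*)$ would consist only of deficient nodes ending at $v'=\ell^*$, and its total contribution would be bounded by $\frac{4}{3}Cr^{s(z)+1}<\val_r(\mcT,s)$, a contradiction. The regularity condition $\Delta(v)\ge\exp(C^2r^24^{s''(v')-s(v)})$ for every internal $v\in\mcT''$ is a relaxation of the corresponding $\mcT'$-inequality (with $s(z)$ in place of $s''(v')$), because $s(z)\ge s''(v')$; the edge case $v=v'$ itself uses the slack $s(z)-s''(v')\ge 2$. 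Hence $\mcT''$ is $C$-regular.

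For the value bounds I write $B_\ell=\sum_{v\in\mc P(\ell)}r^{s(v)}\sqrt{\log \Delta(v)}$ and let $A''_\ell$ denote the same sum restricted to $\mc P(\ell)\cap\mcT''$. The geometric-series estimate together with the possible loss of $z$'s contribution gives $A''_\ell\ge B_\ell-\frac{4}{3}Cr^{s(z)+1}$. Infimising over the leaves of $\mcT''$ (a subset of $\mathcal L$) yields
\[
\val_r(\mcT'',s'')\ge\val_r(\mcT,s)-\tfrac{4}{3}Cr^{s(z)+1}\ge\tfrac{2}{3}\val_r(\mcT,s)\ge\tfrac12\val_r(\mcT,s).
\]
The upper bound follows from $A''_{\ell^*}\le B_{\ell^*}=\val_r(\mcT,s)$ together with $\ell^*\in\mathcal L(\mcT'')$, which is guaranteed by the representative choices along $\mc P(\ell^*)$. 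The main obstacle is reconciling pruning with all three axioms of an $r$-separated tree---a naive bypass of deficient nodes could leave distinct subtree-roots too close together---but the inclusion $\mcT'_w\subseteq\mcT_w$ delivers all three conditions simultaneously, while the pre-commitment to $\ell^*$ prevents the infimum in the upper bound from jumping up when passing to the sublattice of leaves $\mathcal L(\mcT'')$.
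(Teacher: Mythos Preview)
Your proof is correct and follows essentially the same approach as the paper's: prune the deficient nodes and control the lost value via the geometric series $\sum_k (2/r)^{2k}$. The paper implements the pruning by iterating a local contraction operation $\Phi_x$ (at a deficient $x$, keep the child of minimum subtree value, delete the rest, contract the edge, and relabel $s$), whereas you do it in one pass via the representative map $\rho$ and then, if needed, re-root. Your choice of pre-committing to the minimizing leaf $\ell^*$ and steering $\rho$ along $\mc P(\ell^*)$ plays the same role as the paper's ``minimum-value child'' rule in securing the upper bound $\val_r(\mcT'',s'')\le \val_r(\mcT,s)$. One small advantage of your formulation is that the new nodes sit at genuine descendants, so the verification of Definition~\ref{def:septree}(3) via $\mcT'_w\subseteq \mcT_w$ is immediate; the paper's edge-contraction leaves this point slightly implicit.
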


\begin{proof}
Consider the following operation on an $r$-separated tree $(\mcT,s)$.
For $x \in \mcT \setminus \mc L$, consider a new $r$-separated tree
$(\mcT', s') = \Phi_x (\mcT, s)$, which is defined as follows. Let
$u$ be the child of $x$ and let $S$ contain the remaining children
such that
\begin{equation}\label{eq-pick-u}
\val_r(\mcT_u, s|_{\mcT_u}) \leq \val_r(\mcT_v, s|_{\mcT_v}) \mbox{
for all } v\in S\,,
\end{equation}
where $\mcT_u$ is the subtree of $\mcT$ rooted at $u$ and containing
all its descendants, and $s|_{\mcT_u}$ is the restriction of $s$ on
the subtree $\mcT_u$. Consider the tree $\mcT'$ that results from
deleting all the nodes in $S$, as well as the subtrees under them,
and then contracting the edge $(x,u)$. We also put $s'(x)=s(u)$ and
$s'(y)=s(y)$ for all $y \in \mcT'$.

As long as there is a node $x \in \mcT \setminus \mc L$ which
violates \eqref{eq:regular} (for the current $(\mcT', s')$), we
iterate this procedure (namely, we replace $(\mcT',s')$ by $\Phi_x
(\mcT',s')$). It is clear that we end with a $C$-regular tree
$(\mcT',s')$. Note that different choices of $x$ at each stage will
lead to different outcomes, but the following proof shows that all
of them satisfy the required condition.

It is also straightforward to verify that for any $\ell \in \mathcal
L'$, we have
\begin{eqnarray*}
\sum_{v \in \mc P_{\mc T'}(\ell)} r^{s'(v)} \sqrt{\log \Delta_{\mc T'}(v)}
&\geq &
\sum_{v \in \mc P_{\mc T}(\ell)} r^{s(v)} \sqrt{\log \Delta_{\mc T}(v)}
- C r \sum_{v \in P_{\mc T}(\ell)} r^{s(v)} 2^{s(z)-s(v)} \\
&\geq &
\sum_{v \in \mc P_{\mc T}(\ell)} r^{s(v)} \sqrt{\log \Delta_{\mc T}(v)}
- C r^{s(z)+1} \sum_{k=0}^{\infty} 2^{2k} r^{-2k} \\
&\geq &
\sum_{v \in \mc P_{\mc T}(\ell)} r^{s(v)} \sqrt{\log \Delta_{\mc T}(v)}
- 2C r^{s(z)+1} \\
&\geq &
\val_r(\mcT,s) - 2C r^{s(z)+1} \\
&\geq &
\tfrac12 \val_r(\mcT,s).
\end{eqnarray*}
where in the second line we have used property (1) of Definition \ref{def:septree},
 in the third line, we have used $r \geq 4$, and
 in the final line we have used our assumption that $\val_r(\mcT,s) \geq 4C r^{s(z)+1}$.

It remains to prove that $\val_r(\mcT, s) \geq \val_r(\mcT', s')$.
The issue here is that it is possible $\mathcal L' \subsetneq
\mathcal{L}$. However, by our choice of $u$ at each stage (as in
equation \eqref{eq-pick-u}), it is guaranteed that $\ell\in \mathcal
L'$ for a certain $\ell\in \mathcal L$ such that $\val_r(\mcT,s) =
\sum_{v \in \mc P(\ell)} r^{s(v)} \sqrt{\log \Delta(v)}.$ This
completes the proof.
\end{proof}

We next study the subtrees of separated trees. In what follows, we
continue denoting by $s|_{\mcT'}$ the restriction of $s$ on $\mcT'$
for $\mcT' \subseteq \mcT$, and we use a subscript $\mcT'$ to refer
to the subtree $\mcT'$.

\begin{lemma}\label{lem:subtree}
For every $r$-separated tree $(\mcT,s)$, there
is a subtree $\mcT' \subseteq \mcT$ such that
$(\mcT',s|_{\mcT'})$ is an $r$-separated tree satisfying
the following conditions.
\begin{enumerate}
\item $\val_r(\mcT,s) \asymp \val_r(\mcT', s|_{\mcT'}).$
\item For every $v \in \mcT' \setminus \mc L_{\mc T'}$, $\Delta_{\mc T'}(v)=\Delta(v)$.
\item For every $v \in \mcT' \setminus \mc L_{\mcT'}$ and  $w \in \mc L_{\mc T'} \cap \mcT_v$,
\begin{equation}\label{eq:downtree}
\sum_{u\in \mathcal{P}(v, w)} r^{s(u)} \sqrt{\log \Delta_{\mc
T'}(u)} \geq \frac12 r^{s(p(v))} \sqrt{\log \Delta_{\mc T'}(p(v))}.
\end{equation}
\end{enumerate}
\end{lemma}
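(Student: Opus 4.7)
The plan is to assign each node of $\mcT$ a ``pruned minimum descent'' value $\alpha(v)$ by a bottom-up recursion, then use $\alpha$ top-down to decide which nodes of $\mcT'$ remain internal. Abbreviate $M(v) \df r^{s(v)}\sqrt{\log \Delta(v)}$ and, for any subtree $\mathcal{S} \subseteq \mcT$, set
$$\mathrm{MD}_{\mathcal{S}}(v) \df \min_{w \in \mc L_{\mathcal{S}} \cap \mathcal{S}_v} \sum_{u \in \mathcal{P}(v, w)} r^{s(u)}\sqrt{\log \Delta_{\mathcal{S}}(u)}.$$
Define $\alpha : \mcT \to \R_{\geq 0}$ by $\alpha(v) = 0$ when $v$ is a leaf of $\mcT$ and otherwise
$$\alpha(v) = M(v) + \min_{u \in \Gamma(v)} \alpha^*_v(u), \qquad \alpha^*_v(u) = \alpha(u) \cdot \1\{\alpha(u) \geq \tfrac12 M(v)\}.$$
Then build $\mcT'$ top-down: put $z \in \mcT'$ and declare $z$ internal whenever $\mcT \neq \{z\}$; whenever $v$ is declared internal, insert every $u \in \Gamma(v)$ into $\mcT'$, declaring $u$ internal in $\mcT'$ if $\alpha(u) \geq \tfrac12 M(v)$ and otherwise marking $u$ as a leaf of $\mcT'$ (stopping the recursion into $\mcT_u$).

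That $(\mcT', s|_{\mcT'})$ is $r$-separated is immediate from the monotonicity of Definition~\ref{def:septree} under subtrees, and condition (2) is forced by the rule. For any internal $v \in \mcT' \setminus \{z\}$, $p(v)$ is also internal in $\mcT'$, so $\Delta_{\mcT'}(p(v)) = \Delta(p(v))$ and condition (3) reduces to $\mathrm{MD}_{\mcT'}(v) \geq \tfrac12 M(p(v))$. I would prove the identity $\mathrm{MD}_{\mcT'}(v) = \alpha(v)$ by induction from the $\mcT'$-leaves upward: the recursion $\mathrm{MD}_{\mcT'}(v) = M(v) + \min_{u \in \Gamma(v)} \mathrm{MD}_{\mcT'}(u)$ matches the definition of $\alpha(v)$ because $\alpha^*_v(u) = \mathrm{MD}_{\mcT'}(u)$ in both cases---both vanish when $u$ is a $\mcT'$-leaf (exactly the regime $\alpha(u) < \tfrac12 M(v)$), and both equal $\alpha(u)$ when $u$ is internal in $\mcT'$. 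Since the construction declares $v$ internal precisely when $\alpha(v) \geq \tfrac12 M(p(v))$, condition (3) then follows; the root case $v = z$ is trivial under the convention $p(z) = z$, as $\mathrm{MD}_{\mcT'}(z) \geq M(z)$.

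For condition (1), the inequality $\val_r(\mcT', s|_{\mcT'}) \leq \val_r(\mcT, s)$ is direct, since along any root-to-leaf path in $\mcT$ the first $\mcT'$-leaf encountered has $\mcT'$-value at most the full $\mcT$-value of the terminal $\mcT$-leaf. The reverse bound would reduce to proving $\alpha(v) \geq \tfrac12 \mathrm{MD}_{\mcT}(v)$ for every $v \in \mcT$, which I would establish by induction: at internal $v$, if some child $u^\flat$ satisfies $\alpha(u^\flat) < \tfrac12 M(v)$, the hypothesis gives $\mathrm{MD}_{\mcT}(u^\flat) \leq 2\alpha(u^\flat) < M(v)$, whence $\mathrm{MD}_{\mcT}(v) < 2M(v) \leq 2\alpha(v)$; otherwise $\alpha^*_v(u) = \alpha(u)$ for every child, and applying the hypothesis to the minimizer yields $\alpha(v) \geq M(v) + \tfrac12 (\mathrm{MD}_{\mcT}(v) - M(v)) \geq \tfrac12 \mathrm{MD}_{\mcT}(v)$. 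The main subtlety is designing the auxiliary cutoff $\alpha^*_v$ so that the structural decision in $\mcT'$ and the recursive arithmetic for $\alpha$ remain synchronized, which is precisely where the factor $\tfrac12$ in the threshold is forced.
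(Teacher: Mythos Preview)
Your proof is correct, and the approach is closely related to, though not identical with, the paper's. Both constructions prune $\mcT$ top-down by making a node $v$ a leaf of $\mcT'$ whenever a certain ``descent value'' below $v$ is small compared to $M(p(v))$. The paper uses the raw quantity $\mathrm{MD}_{\mcT}(v)$ with threshold $M(p(v))$: in a BFS pass, whenever some leaf $\ell \in \mc L \cap \mcT_v$ satisfies $\sum_{u \in \mathcal P(v,\ell)} M(u) \le M(p(v))$, all descendants of $v$ are deleted. You instead precompute the self-referential $\alpha$ bottom-up and prune when $\alpha(v) < \tfrac12 M(p(v))$.

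The payoff of your formulation is that $\alpha$ coincides with $\mathrm{MD}_{\mcT'}$ on internal nodes of $\mcT'$, so condition~(3) is literally the pruning rule; the paper must instead argue condition~(3) by a short telescoping comparison at each new leaf $w$ (using that $M(p(w))$ is dominated by the partial sum from $v$ down to $p(w)$). Conversely, the paper's criterion involves only $\mathrm{MD}_{\mcT}$, so no auxiliary recursion is needed to describe the construction; the extra work appears in the verification instead. Both routes deliver the same constant $\tfrac12$ in \eqref{eq:downtree} and in the comparison $\val_r(\mcT') \ge \tfrac12\,\val_r(\mcT)$, so neither is quantitatively sharper.
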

\begin{proof}
We construct the subtree $\mcT'$ in the following way. We examine
the vertices of $v\in \mcT$ in the breadth-first search order (that
is, we order the vertices such that their distances to the root are
non-decreasing). If $v$ is not deleted yet and for some $\ell \in
\mathcal{L} \cap \mcT_v$,
\begin{equation}\label{eq-delete}\sum_{u\in \mathcal{P}(v, \ell)}
r^{s(u)} \sqrt{\log \Delta_{\mc T}(u)} \leq r^{s(p(v))} \sqrt{\log
\Delta_{\mc T}(p(v))}\,,\end{equation} we delete all the descendants
of $v$. Let $\mcT'$ be the subtree obtained at the end of the
process. It is clear that $(\mcT', s|_{\mcT'})$ is a separated tree,
and it remains to verify the required properties.

By the construction of our subtree $\mc T'$, we see that whenever a
vertex is deleted, all its siblings are deleted. So for a
node $v\in \mc T' \setminus \mc L_{\mc T'} $, all the children in
$\mc T$ of $v$ are preserved in $\mc T'$, yielding property (2).

Note that if $v\in \mc L_{\mcT'} \setminus \mc L$, there exists
$\ell \in \mc L \cap \mc T_v$ such that \eqref{eq-delete} holds.
Therefore, we see
\[\sum_{u\in \mathcal{P}(z,
v) } r^{s(u)} \sqrt{\log \Delta_{\mc T'}(u)}  = \sum_{u\in
\mathcal{P}(z, v) \setminus \{v\} } r^{s(u)} \sqrt{\log \Delta_{\mc
T}(u)} \geq \frac{1}{2} \sum_{u \in \mathcal{P}(z, \ell) } r^{s(u)}
\sqrt{\log \Delta_{\mc T}(u)} \geq \frac{1}{2}\val_r(\mcT,s)\,.\] This
verifies property (1) (noting that the reverse inequality is
trivial).

Take $v\in \mc T' \setminus \mc L_{\mc T'}$ and $w \in \mc L_{\mc
T'} \cap \mcT_v$. If $w\in \mc L$, we see that \eqref{eq:downtree}
holds for $v$ and $w$ since \eqref{eq-delete} does not hold for $v$
and $\ell = w$ (otherwise all the descendants of $v$ have to be
deleted and $v$ will be a leaf node in $\mc T'$). If $w\not\in \mc
L$, there exists $\ell_0 \in \mc L \cap \mc T_{w}$ such that
$$\sum_{u\in \mathcal{P}(w,
\ell_0)} r^{s(u)} \sqrt{\log \Delta_{\mc T}(u)} \leq r^{s(p(w))}
\sqrt{\log \Delta_{\mc T}(p(w))}\,.$$ Recall that \eqref{eq-delete}
fails with $\ell = \ell_0$. Altogether, we conclude that
\begin{align*}\sum_{u\in \mathcal{P}(v, w)} r^{s(u)} \sqrt{\log \Delta_{\mc
T'}(u)}& = \sum_{u\in \mathcal{P}(v, \ell_0)} r^{s(u)} \sqrt{\log
\Delta_{\mc T}(u)}- \sum_{u\in \mathcal{P}(w, \ell_0)} r^{s(u)}
\sqrt{\log \Delta_{\mc T}(u)}\\
&\geq \frac{1}{2}  \sum_{u\in \mathcal{P}(v, \ell_0)}
r^{s(u)}\sqrt{\log \Delta_{\mc T}(u)} \\
&\geq \frac{1}{2} r^{s(p(v))}
\sqrt{\log \Delta_{\mc T}(p(v))}\,,
\end{align*}
establishing property (3) and completing the proof.
\end{proof}

Finally, we observe that separated trees are stable in the following
sense.

\begin{lemma}\label{lem:delete}
Fix $0 < \delta < 1$. Suppose that $(\mcT,s)$ is an $r$-separated
tree in $X$, and for every node $v \in V$, we delete all but $\lceil
\delta\cdot |\Gamma(v)| \rceil$ of its children. Denote by $\mcT'$
the induced tree on the connected component containing $z(\mcT)$.
Then $(\mcT', s|_{\mcT'})$ is an $r$-separated tree and
$$
\val_r(\mcT,s) \asymp_{\delta} \val_r(\mcT',s|_{\mcT'}).
$$
\end{lemma}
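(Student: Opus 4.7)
I will verify the two conclusions of the lemma in turn: first, that $(\mcT',s|_{\mcT'})$ is still an $r$-separated tree in the sense of Definition \ref{def:septree}, and second, that its value agrees with $\val_r(\mcT,s)$ up to a $\delta$-dependent constant. The $r$-separation is inherited automatically: since $\mcT'$ is a subtree of $\mcT$ carrying the same labeling $s$, condition (1) is preserved verbatim, while conditions (2) and (3) are preserved because each sibling subtree in $\mcT'$ is contained in the corresponding subtree in $\mcT$, so diameters only shrink and pairwise distances only grow.

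For the value comparison, the crucial observation is that the retention rule $|\Gamma_{\mcT'}(v)| = \lceil \delta|\Gamma_{\mcT}(v)| \rceil \geq 1$ ensures no internal $\mcT$-node becomes a leaf of $\mcT'$, so $\mc L_{\mcT'} \subseteq \mc L_{\mcT}$ and the path $\mc P(\ell)$ is identical in both trees for each $\ell \in \mc L_{\mcT'}$. The bound $\Delta_{\mcT'}(v) \geq \max(\delta \Delta_{\mcT}(v),\, 2)$ together with a short two-case analysis (separating $\Delta_{\mcT}(v) \geq 2/\delta$, handled by $\log\Delta_{\mcT'}(v) \geq \tfrac12 \log\Delta_{\mcT}(v)$, from the complementary case, handled by the uniform lower bound $\log 2$) produces a constant $c_\delta > 0$ with $\sqrt{\log \Delta_{\mcT'}(v)} \geq \sqrt{c_\delta}\,\sqrt{\log \Delta_{\mcT}(v)}$ uniformly in internal $v \in \mcT'$. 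Termwise summation along $\mc P(\ell)$ then yields
\[
\sum_{v \in \mc P(\ell)} r^{s(v)}\sqrt{\log \Delta_{\mcT'}(v)} \;\geq\; \sqrt{c_\delta}\sum_{v \in \mc P(\ell)} r^{s(v)}\sqrt{\log \Delta_{\mcT}(v)} \;\geq\; \sqrt{c_\delta}\,\val_r(\mcT,s)
\]
for every $\ell \in \mc L_{\mcT'}$, and taking the infimum delivers $\val_r(\mcT',s|_{\mcT'}) \gtrsim_\delta \val_r(\mcT,s)$.

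For the reverse direction, the trivial pointwise bound on paths in $\mcT'$ reduces the task to producing a surviving leaf whose original value is within a $\delta$-dependent factor of $\val_r(\mcT,s)$. I plan to construct such a leaf by a greedy top-down descent: starting at the root and at each step choosing a surviving child whose $\mcT$-subtree value is minimal among the surviving children, while using the $s$-gap of at least $2$ from condition (1) of Definition \ref{def:septree} to ensure that the contribution of ``suboptimal'' steps decays geometrically and can be absorbed into a $\delta$-dependent multiplicative factor. The main obstacle sits precisely in this upper-bound step: because $\val_r$ is defined as an infimum over maximal paths, a combinatorial deletion can in principle remove the unique minimizing branch, and the argument must leverage the quantitative separated-tree structure (rather than the underlying metric data alone) to argue that among the retained $\delta$-fraction of children at each level there is always one initiating a near-optimal continuation.
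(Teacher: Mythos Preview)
Your proof that $(\mcT',s|_{\mcT'})$ remains $r$-separated and that $\val_r(\mcT',s|_{\mcT'}) \gtrsim_\delta \val_r(\mcT,s)$ is correct and essentially identical to the paper's argument (the paper just writes $\log(1+\lceil\delta|\Gamma(v)|\rceil) \ge c(\delta)\log(1+|\Gamma(v)|)$ for internal $v$, which is your two-case bound in compressed form; the clean threshold for the split is $\Delta_{\mcT}(v)\ge 1/\delta^{2}$ rather than $2/\delta$, a harmless slip).

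Where you part ways with the paper is in the reverse inequality $\val_r(\mcT',s|_{\mcT'}) \lesssim_\delta \val_r(\mcT,s)$. The paper does not prove this: it ends with ``completing the proof since the reverse direction is obvious.'' You correctly sense that it is \emph{not} obvious, and in fact, with the deleted children unspecified, the inequality fails. Take a root $z$ with many children, one of which is a leaf $\ell_0$ while the rest carry subtrees whose every root-to-leaf sum exceeds some large $M$ (realizable in an ultrametric by giving each such child $N$ leaf grandchildren with $N$ huge). Then $\val_r(\mcT,s)=r^{s(z)}\sqrt{\log\Delta(z)}$ via the path to $\ell_0$, but an adversary who deletes $\ell_0$ forces $\val_r(\mcT',s')$ to grow with $M$. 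Your greedy descent cannot rescue this, because the minimizing branch is simply gone; the $s$-gap controls the scales $r^{s(v)}$ but places no constraint on the degrees $\Delta(v)$, which is where the blow-up lives.

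In the paper's applications (Lemma~\ref{lem-strong-tree-like}, Proposition~\ref{prop-strong-tree-like}) only the direction $\gtrsim_\delta$ is actually used; the matching upper bound on $\val_r(\mcT',s')$ is supplied by the fact that $\mcT'$ is itself an $r$-separated tree together with Theorem~\ref{thm:packingtree}, not by this lemma. So the right course is to drop your upper-bound attempt: prove $r$-separation and $\gtrsim_\delta$ as you have, and read the stated ``$\asymp_\delta$'' as the one-sided estimate that is all that is established and all that is needed downstream.
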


\begin{proof}
It is clear that Properties (1), (2) and (3) of separated trees are
preserved for the induced tree $\mcT'$ for $s|_{\mcT'}$. So $(\mcT',
s)$ is an $r$-separated tree. Furthermore, for every leaf $\ell$ of
$\mcT'$,
\begin{align*}\sum_{v\in \mathcal{P}(\ell)} r^{s(v)} \sqrt{\log
\Delta_{\mcT'}(v)} &\geq \sum_{v\in \mathcal{P}(\ell)} r^{s(v)}
\sqrt{\log (1 + \lceil \delta\cdot |\Gamma(v)| \rceil)}\\& \geq
c(\delta) \sum_{v\in \mathcal{P}(\ell)} r^{s(v)} \sqrt{\log (1 +
|\Gamma(v)|)} \geq c(\delta) \val_r(\mcT,s)\,,\end{align*} where
 $c(\delta)$
is a constant depending only on $\delta$. It follows that
$\val_r(\mcT',s|_{\mcT'}) \geq c(\delta) \val_r(\mcT,s)$, completing
the proof since the reverse direction is obvious.
\end{proof}

\subsection{Computing an approximation to $\gamma_2$ deterministically}
\label{sec:algorithm}

We now present a deterministic algorithm for computing an approximation to $\gamma_2$.

\begin{theorem}
Let $(X,d)$ be a finite metric space, with $n=|X|$.
If, for any two points $x,y \in X$, one can compute
$d(x,y)$ in time polynomial in $n$, then one can compute
a number
$A(X,d)$ in polynomial time, for which
$$
A(X,d) \asymp \gamma_2(X,d).
$$
\end{theorem}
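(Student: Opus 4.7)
\medskip

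The plan is to implement Talagrand's functional criterion (Theorem~\ref{thm:T5}) by dynamic programming on scales, then match the output from below using an explicit tree-of-subsets construction combined with Lemma~\ref{lem:mmlower}. Fix once and for all a constant $r$ that is admissible for Lemmas~\ref{lem:mmupper}, \ref{lem:mmlower}, and Theorem~\ref{thm:T5} simultaneously. Since only the $O(\log n)$ scales $j$ between $\log_r(\min_{x\ne y}d(x,y))$ and $\log_r(\diam X)$ carry nontrivial information, the algorithm processes them in order.

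The algorithm $\mathcal{A}$ computes $\varphi_j(x)$ for every $x \in X$ and every relevant $j$ by the following recursion. Set $\varphi_j(x)=0$ whenever $B(x,r^j)=\{x\}$. Otherwise, compute a greedy maximal $r^{j-1}$-separated subset $T_{j,x} \subseteq B(x,r^j)$ (this requires only pairwise distance queries and runs in $O(n^2)$ time), and put
\[
\varphi_j(x) \;=\; \max\!\Bigl(\varphi_{j-1}(x),\; r^j\sqrt{\log |T_{j,x}|} \,+\, \min_{t \in T_{j,x}} \varphi_{j-2}(t)\Bigr).
\]
Output $A(X,d) = \max_{x,j}\varphi_j(x)$. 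There are $O(n\log n)$ subproblems and each takes polynomial time, so the algorithm is polynomial.

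For the upper bound $\gamma_2(X,d) \lesssim A(X,d)$, I verify the hypotheses of Theorem~\ref{thm:T5}. Monotonicity (condition (1)) is forced by the outer $\max$ in the recursion. Condition (2) is the delicate point: given any $r^{j-1}$-separated $\{t_1,\dots,t_N\}\subseteq B(s,r^j)$, I need $\varphi_j(s) \geq \beta r^j\sqrt{\log N}+\min_i \varphi_{j-2}(t_i)$. Since $T_{j,s}$ is a \emph{maximal} $r^{j-1}$-separated set, it is an $r^{j-1}$-net of $B(s,r^j)$; by comparing the covering number at scale $r^{j-1}$ to the packing number at scale $2r^{j-1}$ and using that one may absorb a constant factor into $\beta$, the greedy choice suffices after adjusting $r$. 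Applying Theorem~\ref{thm:T5} then gives $\gamma_2(X,d) \lesssim_r \sup_{x,j}\varphi_j(x) = A(X,d)$.

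For the lower bound $A(X,d) \lesssim \gamma_2(X,d)$, I convert the recursion into a tree of subsets. Let $(x^\star,j^\star)$ attain the maximum in $A(X,d)$, and let the root be $B(x^\star, r^{j^\star})$. Recursively, assign to any subset $B(x,r^j)$ the children $\{B(t,r^{j-2}) : t \in T_{j,x}\}$, with label $n(B(x,r^j))=j$. The separation in Definition~\ref{def:qrtree} follows from the $r^{j-1}$-separation of $T_{j,x}$, and the diameter bound follows from the ball radii; after absorbing the constants, this is a $(1,r,8,\tfrac16)$-tree. By construction its $r$-size is exactly $\varphi_{j^\star}(x^\star) = A(X,d)$, so Lemma~\ref{lem:mmlower} yields $A(X,d) \lesssim \gamma_2(X,d)$.

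The principal obstacle is Step (2) above: in a general (non-doubling) metric space, a greedy maximal $r^{j-1}$-separated subset can be dramatically smaller than a maximum-cardinality packing, so one cannot naively equate the two. What rescues the argument is the two-step drop in the recursion (using $\varphi_{j-2}$ rather than $\varphi_{j-1}$) together with the freedom to enlarge $r$: these together let one relate maximal packings at scale $r^{j-1}$ to arbitrary packings at a slightly coarser scale, at the cost of a universal constant absorbed into $\beta$ in Theorem~\ref{thm:T5}. Once this comparison is in hand, the two bounds line up and the algorithm outputs a universal-constant approximation to $\gamma_2(X,d)$.
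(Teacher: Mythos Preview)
There is a genuine gap in your verification of condition~(2) of Theorem~\ref{thm:T5}. You identify the comparison of $|T_{j,s}|$ with the cardinality $N$ of an adversarial separated set as the principal obstacle, but this is the lesser problem (and is fixable by taking a finer, say $\tfrac13 r^{j-1}$-separated, net so that any $r^{j-1}$-separated family maps injectively into it). The real obstacle, which you do not address, is the \emph{minimum} term. Condition~(2) demands
\[
\varphi_j(s)\ \ge\ \beta r^j\sqrt{\log N}\;+\;\min_{1\le i\le N}\varphi_{j-2}(t_i)
\]
for an \emph{arbitrary} $r^{j-1}$-separated family $\{t_1,\dots,t_N\}\subseteq B(s,r^j)$, whereas your recursion only gives
\[
\varphi_j(s)\ \ge\ r^j\sqrt{\log|T_{j,s}|}\;+\;\min_{t\in T_{j,s}}\varphi_{j-2}(t).
\]
Nothing prevents the greedy net $T_{j,s}$ from containing a point $t_0$ with very small $\varphi_{j-2}(t_0)$, dragging $\min_{t\in T_{j,s}}\varphi_{j-2}(t)$ far below $\min_i\varphi_{j-2}(t_i)$ for an adversarial family that avoids such poor points. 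Neither enlarging $r$ nor the two-step drop to $\varphi_{j-2}$ helps: two points at distance $<r^{j-1}$ can have completely unrelated $\varphi_{j-2}$ values. This is not a constant-factor slack; if $B(s,r^j)$ contains one isolated point together with many rich clusters, your $\varphi_j(s)$ can undershoot the required bound by an arbitrary additive amount, and hence $A(X,d)$ can be far smaller than $\gamma_2(X,d)$.

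The paper closes this gap with two coupled modifications you are missing. First, the net $N_j$ is built greedily in \emph{decreasing order of $\varphi_{j-2}$ value}; this guarantees that the net point $g_j(t)$ covering any $t$ satisfies $\varphi_{j-2}(g_j(t))\ge\varphi_{j-2}(t)$. Second, $\varphi_j(x)$ is defined as a maximum over all prefix lengths~$k$,
\[
\varphi_j(x)\ =\ \max_{k\le h}\Bigl(r^j\sqrt{\log k}\;+\;\min_{i\le k}\varphi_{j-2}(y_{\ell_i})\Bigr),
\]
rather than committing to the full net. Together these ensure that any adversarial $\{t_1,\dots,t_N\}$ is dominated, term by term, by the first $N$ net points in the ordering. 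This prefix maximization over a $\varphi_{j-2}$-ordered net is the missing idea; once it is in place, both your upper and lower bound arguments go through essentially as you sketch.
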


\begin{proof}
Fix $r \geq 16$. First, let us
assume that $1 \leq d(x,y) \leq r^M$ for $x \neq y \in X$ and some
$M \in \mathbb N$. Fix $x_0 \in X$.

\medskip

Our algorithm constructs functions $\f_0, \f_1, \ldots, \f_M : X \to
\mathbb R_{+}$. We will return the value $A(X,d) = \f_M(x_0)$. First
put $\f_1(x)=\f_0(x)=0$ for all $x \in X$. Next, we show how to construct
$\f_j$ given $\f_0, \f_1, \ldots, \f_{j-1}$.

\medskip

For $x \in X$ and $r \geq 0$, we use $B(x,r) \deq \{ y \in X : d(x,y) \leq r \}$.
First, we construct a maximal $\frac13 r^{j-1}$ net $N_{j}$ in $X$ in the following way.
Supposing that $y_1, \ldots, y_k$ have already been chosen, let $y_{k+1}$
be a point satisfying
$$
\f_{j-2}(y_{k+1}) =
\max \left\{ \f_{j-2}(y) : y \in X \setminus \bigcup_{i=1}^{k} B\left(x, \frac13 r^{j-1}\right)\right\}\,,
$$
as long as there exists some point of $X \setminus \bigcup_{i=1}^{k} B(x, \frac13 r^{j-1})$ remaining.
For $x \in X$, set $$g_j(x) = y_{\min \{ k : d(x,y_k) \leq \frac13 r^{j-1} \}}.$$

Now we define $\f_j(x)$ for $x \in X$.
Suppose that $B(x, 2 r^j) \cap N_j = \{y_{\ell_1}, y_{\ell_2}, \ldots, y_{\ell_h}\},$
with $\ell_1 \leq \ell_2 \leq \cdots \leq \ell_h$, and define
\begin{enumerate}
\item[I.]
$\f_j(x) = \f_{j-1}(x)$ if $B(g_j(x), 4 r^{j}) \setminus B(g_j(x), \frac1{16} r^{j-2})$ is empty.
\item[II.]
Otherwise,
\begin{equation}\label{eq:phidef}
\f_j(x) = \max\left\{ \max_{k\leq h}\left( r^j \sqrt{\log k} + \min_{i\leq k}
\f_{j-2}(y_{\ell_i})\right), \max \{\f_{j-1}(z) : z \in B(x, \tfrac13 r^{j-1})\} \right\}.
\end{equation}
\end{enumerate}
Now, we verify that $\{\f_j\}_{j=0}^M$ satisfies the conditions of
Theorem \ref{thm:T5}. The monotonicity condition (1) is satisfied by
construction.
We will now verify condition (2), starting with the following lemma.

\begin{lemma}\label{lem:firstcase}
For any $j \geq 0$,
If $d(s,t) \leq r^j$ and
$B(g_j(s), 4 r^j) \setminus B(g_j(s), \frac{1}{16} r^{j-2})$ is empty,
then $\f_j(s)=\f_j(t)$.
\end{lemma}

\begin{proof}
We prove this by induction on $j$.
Clearly it holds vacuously for $j \leq 2$.
Assume that it holds for $\f_0, \f_1, \ldots, \f_{j-1}$ and $j \geq 2$.
By the condition of the lemma and the fact that $s \in B(g_j(s), \frac13 r^{j-1})$, we have
\begin{equation}\label{eq:distone}
d(s, g_j(s)) \leq \frac1{16} r^{j-2},
\end{equation}
which implies that $B(s, 2 r^j) \setminus B(s, \frac18 r^{j-2})$ is also empty.
Furthermore, we have $g_j(s)=g_{j}(t)$,
since otherwise
$d(g_j(s), g_j(t)) \geq \frac13 r^{j-1}$,
and we would conclude that
$$
2 r^j \geq d(g_j(t), s) \geq d(g_j(s), g_j(t)) - d(s, g_j(s)) \geq \frac13 r^{j-1} - \frac1{16} r^{j-2} \geq \frac{1}{8} r^{j-1},
$$
contradicting the fact that $B(s, 2 r^j) \setminus B(s, \frac18 r^{j-2})$ is empty.
It follows that
\begin{equation}\label{eq:empty}
B(s, 2 r^j) \setminus B(s, \tfrac18 r^{j-2}) = \emptyset \ \textrm{ and }\
B(t, 2 r^j) \setminus B(t, \tfrac18 r^{j-2}) = \emptyset\,.
\end{equation}

Since $g_j(s)=g_j(t)$, we conclude that both $\f_j(s)$ and $\f_j(t)$
are defined by case (I) above, hence
\begin{equation}\label{eq:equality}
\f_j(s)=\f_{j-1}(s) \ \textrm{ and } \ \f_j(t)=\f_{j-1}(t)\,.
\end{equation}

So we are done by induction unless $B(g_j(s), 4 r^{j-1}) \setminus B(g_j(s), \frac{1}{16} r^{j-3})$ is non-empty,
in which case $\f_{j-1}(s)$ and $\f_{j-1}(t)$ are defined by case (II).
But from \eqref{eq:empty} and $d(s,t) \leq r^j$, we see that
 $B(t, 2 r^{j-1}) = B(s, 2 r^{j-1})$ and
$B(s, \frac13 r^{j-2}) = B(t, \frac13 r^{j-2})$ as well.
This implies that $\f_{j-1}(s)$ and $\f_{j-1}(t)$ see the same maximization
in \eqref{eq:phidef}, hence $\f_{j-1}(s)=\f_{j-1}(t)$ and by \eqref{eq:equality} we are done.
\end{proof}

Now, let $s,t_1, \ldots, t_N \in X$ be as in
condition (2), and let $B(s, 2 r^j) \cap N_j = \left\{y_{\ell_1}, y_{\ell_2}, \ldots, y_{\ell_h}\right\}$
be such that
$\ell_1 \leq \ell_2 \leq \cdots \leq \ell_h$.
If $B(g_j(s), 4 r^j) \setminus B(g_j(s), \frac1{16} r^{j-1})$ is empty,
then $N=1$, and Lemma \ref{lem:firstcase}
implies that $\f_{j}(s) = \f_j(t_1) \geq \f_{j-2}(t_1)$, where
the latter inequality follows from monotonicity.
Thus we may assume that $\f_j(s)$ is defined by case (II).

\medskip

To every $t_i$, we can associate a distinct point $g_j(t_i) \in B(s, 2 r^j) \cap N_j$,
and by construction we have $\f_{j-2}(g_j(t_i)) \geq \f_{j-2}(t_i)$,
since $\f_{j-2}(y_k)$ is decreasing as $k$ increases.
Using this property again in conjunction
with the definition \eqref{eq:phidef},
we have
\begin{align*}
\f_j(s) &\geq r^j \sqrt{\log N} + \min \{ \f_{j-2}(y_{\ell_i}) :
i=1,\ldots,N \} \\
&\geq r^j \sqrt{\log N} + \min \{ \f_{j-2}(g_j(t_i)) : i=1,\ldots, N \} \\
&\geq r^j \sqrt{\log N} + \min \{ \f_{j-2}(t_i) : i=1,\ldots, N \},
\end{align*}
completing our verification of condition (2) of Theorem
\ref{thm:T5}. Applying Theorem \ref{thm:T5}, we see that
\begin{equation}\label{eq:upper}
\gamma_2(X,d) \lesssim \sup_{x \in X, i \in \mathbb Z} \f_i(x) =
\f_M(x_0) = A(X,d).
\end{equation}

To prove the matching lower bound, we first build a tree $\mcT$
whose vertex set is a subset of $X \times \mathbb Z$. The root of
$\mcT$ is $(x_0, M)$. In general, if $(x,j)$ is already a vertex of
$\mcT$ with $j \geq 1$, then we add children to $(x,j)$ according to
the maximizer of \eqref{eq:phidef}.  If $\f_j(x)=\f_{j-1}(z)$,
then we make $(z,j-1)$ the only child of
$(x,j)$. Otherwise, we put the nodes $(y_1, j-2), \ldots, (y_h,
j-2)$ as children of $(x,j)$, where $\{y_i\} \subseteq N_j$
are the nodes that achieve the maximum in \eqref{eq:phidef}.

Let the pair $(\mcT',s)$ be a constructed in the following way from $\mcT$.
We replace every maximal path of the form
$(x,j_0), (x,j_0-1), \ldots, (x,j_0-k)$
by the vertex $x$ and put $s(x)=j_0-k$.
It follows immediately by construction that
\begin{equation}\label{eq:Tprime}
\mathsf{val}_r(\mcT', s) \lesssim \f_M(x_0) + r\,\diam(X,d) \lesssim \f_M(x_0),
\end{equation}
where the latter inequality follows from \eqref{eq:upper},
since $\f_M(x_0) \gtrsim \gamma_2(X,d) \gtrsim \diam(X,d)$.
Note that the correction term of $\diam(X,d)$ in \eqref{eq:Tprime}
is simply because of the use of $\Delta(v)=|\Gamma(v)|+1$
in the definition \eqref{eq:treevalue}.

We next build a $(1,r,8,\frac1{16})$-tree $\mc F$, which essentially
captures the structure of the tree $\mc T$.  In general, the sets in
$\mc F$ will be balls in $X$, with the node $(x,j) \in \mcT$ being
associated with the set $B(x, 4 r^j)$ in $\mc F$, which will have
label $n(B(x, 4 r^j))=j$.

We construct the $(1,r,8,\frac1{16})$-tree $\mc F$ recursively.
The root of $\mc F$ is $B(x_0, 4 r^M)$ (which is equal to $X$), and we define $n(B(x,4 r^j))=M$.
In general, if $\mc F$ contains the set $B(x, 4 r^j)$ corresponding to the
node $(x, j) \in \mcT$, and if $(x, j)$ has children
$(y_1, j-2), (y_2, j-2), \ldots, (y_h, j-2) \in \mcT$,
we add the sets $B(y_i, 4 r^{j-2})$ as children of $B(x, 4 r^j)$ in $\mc F$,
with $n(B(y_i, 4 r^{j-2}))=j-2$.
Likewise, if $(z,j-1)$ is the child of $(x,j)$, then we add the set
$B(z, 4 r^{j-1})$ as the unique child of $B(x, 4 r^j)$ in $\mc F$
and put $n(B(z,4 r^{j-1}))=j-1$.
We continue in this manner until $\mc T$ is exhausted.

\medskip

We now verify that $\mc F$ is indeed a $(1,r,8,\frac1{6})$-tree.
First, note that if $(z, j-1)$ is a child of $(x, j)$ in $\mcT$, then
clearly $B(z, 4 r^{j-1}) \subseteq B(x, 4 r^j)$
since this can only happen if $d(x,z) \leq \frac13 r^{j-1}$.
Also, if $(y_1, j-2), \ldots, (y_h, j-2)$ are the
children of $(x, j)$, then
by the construction of the maps in \eqref{eq:phidef},
we have $d(y_i, x) \leq 2 r^j$, hence
$B(y_i, 4 r^{j-2}) \subseteq B(x, 4 r^j)$, recalling
that $r \geq 16$.
Furthermore, for $i \neq k$,
since $y_i, y_k \in N_j$,
we have $d(y_i, y_k) \geq \frac13 r^{j-1}$, so $B(y_i, 4 r^{j-2}) \cap B(y_k, 4 r^{j-2}) = \emptyset$,
verifying that $\mc F$ is indeed a tree of subsets.  In fact, we have the estimate
\begin{equation*}\label{eq:balldistance}
d\left(B(y_i, 4 r^{j-2}), B(y_k, 4 r^{j-2})\right) \geq \frac13 r^{j-1} - 8 r^{j-2} \geq \frac16 r^{j-1}
= \frac{1}{6} r^{n(B(x, 4 r^j))-1},
\end{equation*}
using $r \geq 16$.  This verifies that property (2) of a $(1,r,1,\frac1{6})$-tree is satisfied.
Furthermore,
property (1) of a $(1,r,8,\frac{1}{6})$-tree follows immediately by construction.
Finally, to verify property (3), note that
for any set in our tree of subsets $\mc F$, corresponding to a node of the form $(x,j) \in \mcT$,
we have $\diam(B(x, 4 r^j)) \leq 8 r^j$ and $n(B(x,4 r^j))=j$.

By construction, we have
$$
\mathsf{val}_r(\mcT',s) \lesssim \mathsf{size}_r(\mc F) + r \,\diam(X,d),
$$
and Lemma \ref{lem:mmlower} yields $\gamma_2(X,d) \gtrsim \mathsf{size}_r(\mc F) + \diam(X,d)$
(using $\gamma_2(X,d) \gtrsim \diam(X,d)$).
Combining this with \eqref{eq:Tprime}
shows that
$$
\gamma_2(X,d) \gtrsim \mathsf{val}_r(\mcT',s) \gtrsim \varphi_M(x_0) = A(X,d).
$$
Together with \eqref{eq:upper}, this shows that $\gamma_2(X,d) \asymp A(X,d)$.

\medskip

The only thing left is to remove the dependence of our running time
on $M$. But since there are at most $n^2$ distinct
distances in $(X,d)$, only $O(n^2)$ of the maps $\f_0,
\f_1, \ldots, \f_M$ are distinct.
More precisely, suppose that there is no pair $u,v \in X$
satisfying $d(u,v) \in [r^{j-3}, r^{j+1}]$ for some $j \in \mathbb Z$.
In that case, $\f_j(x)$ is defined by case (I) for all $x \in X$,
and thus $\f_j \equiv \f_{j-1}$.
Obviously, we may skip computation
of the intermediate non-distinct maps (and it is easy to see which
maps to skip by precomputing the values of $j$ such that there are
$u,v \in X$ with $d(u,v) \in [r^{j-3}, r^{j+1}]$.)
Since there are only $O(n^2)$ non-trivial values of $j$,
this completes the proof.
\end{proof}

\subsection{Tree-like properties of the Gaussian free field}
\label{sec:treelike}

Finally, we consider how the resistance metric (and hence the
Gaussian free field) allows us to obtain trees with special
properties. Consider a network $G(V)$, and the associated metric
space $(V,\sqrt{\reff})$. Let $(\mcT,s)$ be an $r$-separated tree in
$G$. We say that $(\mcT,s)$ is {\em strongly $r$-separated} if, for
every non-root node $v \in \mcT$, we have the inequality
\begin{equation}\label{eq-res-strong}
\sqrt{\reff(v, \mathcal{T} \setminus \mcT_v)} \geq \frac{1}{20}
r^{s(p(v)) -1},
\end{equation}
where $p(v)$ denotes the parent of $v$ in $\mcT$.

\begin{lemma}\label{lem-strong-tree-like}
For any network $G(V)$ and any $r \geq 96$, let $(\mcT_0, s)$ be an
arbitrary $r$-separated tree on the space $(V,
\sqrt{R_{\mathrm{eff}}})$. Then there is an induced strongly
$r$-separated tree $(\mcT,s)$ such that $|\Gamma_{\mc T}(v)| \geq
|\Gamma_{\mc T_0}(v)|/2$ for all $v\in \mc T \setminus \mc L_{\mc
T}$. Furthermore
\begin{equation}\label{eq:GFFass}
\val_r(\mcT,s) \asymp \val_r(\mcT_0,s).
\end{equation}
\end{lemma}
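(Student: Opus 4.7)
The plan is to construct $\mcT$ from $\mcT_0$ by a top-down pruning. At each internal node $x\in\mcT_0$ with children $v_1,\ldots,v_m$, apply Proposition~\ref{prop:resistance} to the blocks $B_i=(\mcT_0)_{v_i}$ with $\varepsilon=\tfrac14 r^{2(s(x)-1)}$. Property~(2) of Definition~\ref{def:septree} gives inter-block separation $\reff(u,w)\ge\varepsilon$ whenever $u\in B_i,\,w\in B_j,\,i\neq j$, while property~(3) combined with $s(v_i)\le s(x)-2$ yields $\reff(u,w)\le 16\,r^{2(s(x)-2)}$ for $u,w\in B_i$; the hypothesis $r\ge 96$ ensures $16\, r^{2(s(x)-2)}\le\varepsilon/48$. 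The proposition then produces $I_x\subseteq\{1,\ldots,m\}$ with $|I_x|\ge m/2$ satisfying
\[
\reff\!\left((\mcT_0)_{v_i},\ \bigcup_{j\neq i}(\mcT_0)_{v_j}\right)\ \ge\ \tfrac{1}{96}\,r^{2(s(x)-1)} \qquad\text{for each }i\in I_x.
\]
Retaining only the children in $I_x$ at every internal $x$ yields the induced tree $\mcT\subseteq\mcT_0$. Properties~(1)--(3) of Definition~\ref{def:septree} are inherited from $\mcT_0$, and by construction $|\Gamma_\mcT(v)|\ge |\Gamma_{\mcT_0}(v)|/2$ for every non-leaf $v\in\mcT$.

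To verify the strong separation bound \eqref{eq-res-strong}, fix a non-root $v\in\mcT$ and let $v=p_0,\,p_1=p(v),\,p_2,\ldots,\,p_D=z$ be its ancestral path in $\mcT$. Decompose
\[
\mcT\setminus\mcT_v\ =\ \bigsqcup_{k=1}^{D} A_k,\qquad A_k\ =\ \{p_k\}\,\cup\!\!\bigcup_{u\in\Gamma_\mcT(p_k)\setminus\{p_{k-1}\}}\!\!\mcT_u.
\]
Taking reciprocals in Lemma~\ref{lem-three-sets} and iterating gives the subadditivity of effective conductance,
\[
\ceff(v,\mcT\setminus\mcT_v)\ \le\ \sum_{k=1}^{D}\Bigl(\ceff(v,p_k)+\ceff\!\left(v,\textstyle\bigcup_{u}\mcT_u\right)\Bigr).
\]
Since $v\in\mcT_{p_{k-1}}$, property~(2) of Definition~\ref{def:septree} applied at $p_k$ gives $\sqrt{\reff(v,p_k)}\ge d(p_k,\mcT_{p_{k-1}})\ge\tfrac12 r^{s(p_k)-1}$, while the conclusion of Proposition~\ref{prop:resistance} at $p_k$, combined with the monotonicity of $\reff$ under inclusion of its arguments, yields $\reff(v,\bigcup_u\mcT_u)\ge\tfrac{1}{96}r^{2(s(p_k)-1)}$. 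Property~(1) forces $s(p_k)\ge s(p(v))+2(k-1)$, so the sum collapses to a geometric series of ratio $r^{-4}$, producing $\sqrt{\reff(v,\mcT\setminus\mcT_v)}\ge\tfrac{1}{15}\,r^{s(p(v))-1}$ for $r\ge 96$, which is stronger than required.

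Finally, $|\Gamma_\mcT(v)|\ge|\Gamma_{\mcT_0}(v)|/2$ implies $\log\Delta_\mcT(v)\gtrsim\log\Delta_{\mcT_0}(v)$; together with $\mc L_\mcT\subseteq\mc L_{\mcT_0}$ and the fact that root-to-leaf paths in $\mcT$ coincide with their counterparts in $\mcT_0$, the comparison \eqref{eq:GFFass} then follows exactly as in Lemma~\ref{lem:delete}. The main obstacle is that \eqref{eq-res-strong} constrains the resistance from $v$ to the \emph{entire} remaining pruned tree rather than only to the local siblings that Proposition~\ref{prop:resistance} directly controls; the ancestral telescoping above is what handles this, because the scale separation $s(p_k)\ge s(p(v))+2(k-1)$ inherent in an $r$-separated tree converts the a priori unbounded ancestor contributions into a convergent geometric tail.
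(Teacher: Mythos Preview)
Your proof is correct. The pruning step via Proposition~\ref{prop:resistance} is identical to the paper's, and the appeal to Lemma~\ref{lem:delete} for \eqref{eq:GFFass} matches as well. The difference lies in the verification of \eqref{eq-res-strong}. The paper first upgrades the output of Proposition~\ref{prop:resistance} (which excludes the parent $v$) to the bound $\reff(\mcT_c,\mcT_v\setminus\mcT_c)\ge\tfrac{1}{100}r^{2(s(v)-1)}$ via one application of Lemma~\ref{lem-three-sets}, and then runs an induction on the height of $\mcT$: at each step it combines the inductive bound for $\reff(u,\mcT_v\setminus\mcT_u)$ with the just-mentioned bound for $\reff(u,\mcT\setminus\mcT_v)$, tracking the cumulative loss through an auxiliary sequence $D_h=D_{h-1}(1-D_{h-1}^2 r^{-4h})$. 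Your argument instead decomposes $\mcT\setminus\mcT_v$ along the full ancestral path at once, bounds each block $A_k$ by the sum of $\ceff(v,p_k)$ (from property~(2)) and $\ceff(v,\text{siblings at }p_k)$ (from Proposition~\ref{prop:resistance} plus monotonicity of $\reff$), and sums the resulting geometric series in $r^{-4}$. Both routes exploit the same two ingredients and the same scale gap $s(p_k)\ge s(p(v))+2(k-1)$; yours avoids the recursive sequence $D_h$ and is somewhat more direct, while the paper's induction keeps the two-set parallel law at each step rather than an iterated subadditivity.
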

\begin{proof}
Consider any non-leaf node $v \in \mcT_0$ with children $c_1,
\ldots, c_k$, where $k \geq 1$. If $k=1$, let $S_v = \{c_1\}$.
Otherwise, we wish to apply Proposition~\ref{prop:resistance} to the
sets $\{\mcT_{c_i}\}_{i=1}^k$. By property (2) of separated trees, we get that for all
$x \in \mcT_{c_i}, y\in \mcT_{c_j}$ with $i\neq j$
\[R_{\mathrm{eff}}(x, y) \geq \left(\frac{1}{2} r^{s(v) -1}\right)^2 = \frac{1}{4} r^{2(s(v)-1)}\,.\]
Combined with property (3) of separated trees,
Proposition~\ref{prop:resistance} yields that there exists a subset
$S_v \subseteq \{c_1,\ldots,c_k\}$ with $|S_v| \geq k/2$ such that
for $c \in S_v$, we have
\begin{equation*}
\reff\left(\mcT_c, \mcT_{v} \setminus (\mcT_c \cup \{v\})\right)
\geq \frac{1}{4} r^{2 (s(v)-1)} \cdot \frac{1}{24} \geq \frac{1}{96}
r^{2 (s(v)-1)}\,.
\end{equation*}
Applying Lemma~\ref{lem-three-sets} with $A = \mcT_c, B_1 = \mcT_{v}
\setminus (\mcT_c \cup \{v\})$ and $B_2 = \{v\}$, we get that
\begin{equation}\label{eq:preprocess}
\reff\left(\mcT_c, \mcT_{v} \setminus \mcT_c\right) \geq
\frac{1}{100}\, r^{2(s(v)-1)}\,.
\end{equation}
Next, consider the induced $r$-separated tree $(\mcT, s)$ that
arises from deleting, for every non-leaf node $v \in \mcT_0$, all
the children not in $S_v$ as well as all their descendants. It is
clear that for all $v\in \mc T \setminus \mc L_{\mc T}$, we have
$|\Gamma_{\mc T}(v)| \geq |\Gamma_{\mc T_0}(v)|/2$. Lemma
\ref{lem:delete} then yields that $$\val_r(\mcT,s) \asymp
\val_r(\mcT_0,s).$$

It remains to verify that $(\mcT,s)$ is strongly $r$-separated.
Define $D_0 = 1$ and for $h \geq 1$,
$$
D_{h} = D_{h-1} \left(1 - D_{h-1}^2 r^{-4h}\right).
$$
It is straightforward to verify that $D_h \geq 1/2$ for all $h \geq
0$, since $r\geq 2$.

We now prove, by induction on the height of $\mcT$, that for every
node $u$ at depth $h \geq 1$ in $\mcT$,
\begin{equation}\label{eq:IH}
\sqrt{\reff\left(u, \mcT \setminus \mcT_u\right)} \geq \frac{1}{10}
r^{s(p(u)) - 1} D_{h-1}.
\end{equation}
By the preceding remarks,  this verifies \eqref{eq-res-strong},
completing the proof of the lemma.

Let $z=z(\mathcal T)$ be the root, and let $v$ be some child of $z$.
Let $u \in \mcT_v$ be a node at depth $h$ in $\mcT_v$ (and hence at
depth $h+1$ in $\mcT$). By \eqref{eq:preprocess}, we have
\begin{equation}\label{eq:con2}
\sqrt{\reff\left(u, \mcT \setminus \mcT_v\right)} \geq
\sqrt{\reff\left(\mcT_v, \mcT \setminus \mcT_v\right)} \geq
\frac{1}{10} r^{s(p(v)) -1}.
\end{equation}
If $u=v$, then the preceding inequality yields \eqref{eq:IH}.
Otherwise, $u \neq v$, and $h \geq 1$.

By the induction hypothesis \eqref{eq:IH} applied to $u$ and
$\mcT_v$, we have
\begin{equation}\label{eq:con1}
\sqrt{\reff\left(u, \mcT_v \setminus \mcT_u\right)} \geq
\frac{1}{10}r^{s(p(u)) -1 } D_{h-1}.
\end{equation}
Since $u \in \mathcal{T}_v$ is a node at depth $h$, we get from
property (1) of a separated tree that $s(p(v)) \geq s(p(u)) + 2h$
and therefore \begin{equation}\label{eq-s-u-v}\frac{1}{10}
r^{s(p(u))-1} D_{h-1} \leq r^{-2h} \cdot \frac{1}{10}
r^{s(p(v))-1}D_{h-1}\,.\end{equation} Now, using \eqref{eq:con2} and
\eqref{eq:con1}, we apply Lemma ~\ref{lem-three-sets} with $A=
\{u\}$, $B_1=\mcT_v \setminus \mcT_u$ and $B_2=\mcT \setminus
\mcT_v$, yielding
\begin{align*}
\sqrt{\reff\left(u, \mcT \setminus \mcT_u\right)} &\geq
\frac{\frac{1}{10} r^{s(p(u)) -1} D_{h-1} \cdot \frac{1}{10}
r^{s(p(v))-1}}{\sqrt{(\frac{1}{10} r^{s(p(u)) -1 } D_{h-1})^2 +
(\frac{1}{10} r^{s(p(v)) -1})^2}}\\
&\geq \frac{1}{10} r^{s(p(u)) -1} D_{h-1} \frac{1}{\sqrt{1+
(D_{h-1}r^{-2h})^2}} \\
& \geq \frac{1}{10} r^{s(p(u)) -1} D_{h-1} (1 -
D_{h-1}^2 r^{-4h}),
\end{align*}
where the second transition follows from \eqref{eq-s-u-v} and the
third transition follows from the fact that $(1 + x^2)^{-1/2} \geq 1
- x^2$. This completes the proof.
\end{proof}

\medskip
\noindent{\bf Good trees inside the GFF.} Consider a
Gaussian free field $\{\eta_x\}_{x \in V}$ corresponding to network
$G(V)$ with the associated metric space $(V,d)$, where $d(x, y) =
(\E (\eta_x -\eta_y)^2)^{1/2}$.
\begin{prop}\label{prop-strong-tree-like}
For some $r_0 \geq 2$ and any $r \geq r_0$ and $C \geq 1$, there
exists a constant $K=K(C,r)$ depending only on $C$ and $r$ such that
the following holds. For an arbitrary  Gaussian free field
$\{\eta_x\}_{x \in V}$ with $\gamma_2(V, d) \geq K\, \diam(V)$,
there exists an $r$-separated tree $(\mathcal{T}, s)$ with set of
leaves $\mathcal{L}$, such that the following properties hold.
\begin{enumerate}[(a)]
\item $\val_r(\mcT,s) \asymp_{r, C} \gamma_2(X,d)$.
\item For every $v \in V$, $\dist_{L^2}\left(\eta_v, \CH(\{\eta_u\}_{u \notin \mcT_v})\right)
\geq \frac{1}{20} r^{s(p(v))-1}$.
\item For every $v \in V$, $\Delta(v) \geq \exp\left(\vphantom{\bigoplus} C^2 r^2
4^{s(z)-s(v)}\right)$ for all $v \in \mathcal{T} \setminus
\mathcal{L}$.
\item For every $v \in \mcT \setminus \mc L$ and  $w \in \mc L \cap \mcT_v$,
\begin{equation*}
\sum_{u\in \mathcal{P}(v, w)} r^{s(u)} \sqrt{\log \Delta(u)} \geq
\frac12 r^{s(p(v))} \sqrt{\log \Delta(p(v))}.
\end{equation*}
\end{enumerate}
We call such a tree $\mathcal{T}$ a $C$-good $r$-separated tree.
\end{prop}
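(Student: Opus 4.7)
The plan is to build $(\mathcal T, s)$ by starting from a value-optimal $r$-separated tree (Theorem~\ref{thm:packingtree}) and then successively applying the three tree-manipulation lemmas of this section, in the order: regularize, then strongly separate, then pass to a subtree. At the end, the identity $\sqrt{\reff(v, S)} = \dist_{L^2}(\eta_v, \CH(\{\eta_u\}_{u \in S}))$ from Lemma~\ref{lem-identity} converts the strong-separation estimate into the $L^2$-distance estimate~(b). The ordering of operations is the delicate point.

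In the first step, Theorem~\ref{thm:packingtree} applied to $(V,\sqrt{\reff})$ produces an $r$-separated tree $(\mathcal T_0, s_0)$ with $\val_r(\mathcal T_0, s_0) \asymp_r \gamma_2(V, d)$, and property~(3) of Definition~\ref{def:septree} guarantees $r^{s_0(z)} \lesssim_r \diam(V)$. In the second step, fix a slightly enlarged constant $C' = \sqrt{C^2 + 1}$; provided $K = K(C, r)$ is sufficiently large, the assumption $\gamma_2(V,d) \geq K\, \diam(V)$ forces $\val_r(\mathcal T_0, s_0) \geq 4 C' r^{s_0(z)+1}$, so Lemma~\ref{lem:regular} yields a $C'$-regular $r$-separated tree $(\mathcal T_1, s_1)$ with $\val_r(\mathcal T_1, s_1) \geq \tfrac{1}{2}\val_r(\mathcal T_0, s_0)$. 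In the third step, Lemma~\ref{lem-strong-tree-like} (which requires $r \geq 96$) produces a strongly $r$-separated tree $(\mathcal T_2, s_2) \subseteq (\mathcal T_1, s_1)$ with $|\Gamma_{\mathcal T_2}(v)| \geq |\Gamma_{\mathcal T_1}(v)|/2$ at every non-leaf, and since $\tfrac{1}{2} \exp(C'^2 r^2 4^{s(z)-s(v)}) \geq \exp(C^2 r^2 4^{s(z)-s(v)})$ whenever $(C'^2 - C^2) r^2 \geq \log 2$, $C$-regularity survives. In the fourth step, Lemma~\ref{lem:subtree} extracts $(\mathcal T_3, s_3)$ satisfying property~(d); clause~(2) of that lemma preserves $\Delta$ at non-leaves (so $C$-regularity is maintained), and strong $r$-separation is preserved because $\mathcal T_3 \setminus (\mathcal T_3)_v \subseteq \mathcal T_2 \setminus (\mathcal T_2)_v$ and shrinking the target set only increases effective resistance.

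Property~(a) is the cumulative fact that each step loses only an $r,C$-dependent constant factor in $\val_r$. Property~(b) follows by invoking Lemma~\ref{lem-identity} with $S = \mathcal T_3 \setminus (\mathcal T_3)_v$. Properties~(c) and~(d) are exactly the conclusions of steps three and four.

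The main obstacle, and the reason for this particular ordering, is that Lemma~\ref{lem:regular} contracts an edge $(x,u)$ and resets $s'(x) = s(u)$, lowering the label by at least two. If one had arranged strong $r$-separation \emph{before} regularizing, the strong-separation inequality $\sqrt{\reff(x, \mathcal T \setminus \mathcal T_x)} \geq \tfrac{1}{20}r^{s(p(x))-1}$ for the contracted vertex would need to survive this label change, but the resistance bound inherited from the old tree only gives $\tfrac{1}{20}\,r^{s(x)-1}$, falling short by at least a factor of $r^2$. Regularizing first, at the cost of replacing $C$ by the slightly larger $C'$, avoids this problem since the halving in Lemma~\ref{lem-strong-tree-like} only shrinks $\Delta$ by a factor of two, trivially absorbed into the exponential regularity threshold.
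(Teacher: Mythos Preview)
Your proposal is correct and follows essentially the same route as the paper: start from a value-optimal $r$-separated tree via Theorem~\ref{thm:packingtree}, regularize (Lemma~\ref{lem:regular}) with an inflated constant, strongly separate (Lemma~\ref{lem-strong-tree-like}), pass to the subtree of Lemma~\ref{lem:subtree}, and finally translate strong separation into the $L^2$-distance bound via Lemma~\ref{lem-identity}. The only cosmetic difference is the choice of inflated constant---the paper uses $2C$ where you use $C'=\sqrt{C^2+1}$---and your final paragraph explaining \emph{why} regularization must precede strong separation is a nice addition not spelled out in the paper.
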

\begin{proof}
By definition of the GFF, we have $d = \sqrt{R_{\mathrm{eff}}}$ for
some network $G(V)$. Applying Theorem~\ref{thm:packingtree}, there
exists an $r$-separated tree $(\mc T_0, s_0)$ such that
$\val_r(\mcT_0,s_0) \asymp_{r} \gamma_2(V,d)$.

Recalling property (3) of Definition~\ref{def:septree} and the
assumption that $\gamma_2(V, d) \geq K\, \diam(V)$, we can then
select $K$ large enough such that the condition of
Lemma~\ref{lem:regular} is satisfied for the separated tree
$(\mcT_0, s_0)$. Then applying Lemma~\ref{lem:regular}, we can get a
$2C$-regular separated tree $(\mathcal{T}_1, s_1)$ with
$\val_r(\mcT_1,s_1) \asymp_{r, C}\val_r(\mcT_0,s_0)$.

At this point, using Lemma~\ref{lem-strong-tree-like}, we obtain a
$C$-regular strongly $r$-separated tree $(\mathcal{T}_2, s_2)$ such
that $\val_r(\mcT_2,s_2) \asymp_{r} \gamma_2(V,d)$. That is to say,
the tree $(\mc T_2, s_2)$ satisfies properties (a) and (c).
Furthermore, by Lemma~\ref{lem-identity}, we see that property (b)
holds for $(\mathcal{T}_2, s_2)$ because it is equivalent to the
strongly $r$-separated property \eqref{eq-res-strong}.

Finally, Lemma~\ref{lem:subtree} implies that there exists a subtree
$\mathcal{T} \subseteq \mathcal{T}_2$ with $\val_r(\mcT,s_2|_{\mc
T}) \asymp_{r, C}\val_r(\mcT_2,s_2)$ such that property (d) holds
for $\mathcal{T}$ and properties (a) and (c) are preserved (note
that by property (2) of Lemma~\ref{lem:subtree}, the degrees of
non-leaf nodes are preserved). Observe that property (b) is
preserved by taking subtrees. Writing $s = s_2|_{\mc T}$, we
conclude that the separated tree $(\mc T, s)$ satisfies all the
required properties, completing the proof.
\end{proof}

\section{The cover time}
\label{sec:cover}

We now turn to our main theorem.

\begin{theorem}\label{thm:covtime}
For any network $G(V)$ with total conductance $\mathcal C = \sum_{x \in V} c_x$, we have
$$
t_{\mathrm{cov}}^{\rc}(G) \asymp \mathcal C
\left[\gamma_2(V,\sqrt{\reff})\right]^2.
$$
\end{theorem}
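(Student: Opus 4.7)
\medskip
\noindent\textbf{Proof proposal for Theorem \ref{thm:covtime}.}

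The plan is to establish the two directions separately, with the upper bound being a quick consequence of already-proved facts, and the lower bound requiring the full majorizing-measures machinery plus the isomorphism theorem. For the upper bound, observe that $t^{\rc}_{\mathrm{cov}}(G) \leq 3 t_{\mathrm{cov}}(G) \leq 3 t_{\mathrm{bl}}(G,1/2)$ by \eqref{eq:covrelations} and the trivial inequality between cover and blanket times. Theorem \ref{thm:blanketdiscr} then yields $t^{\rc}_{\mathrm{cov}}(G) \lesssim \mathcal C \,(\E \sup_v \eta_v)^2$, and by the majorizing measures theorem together with Lemma \ref{lem-covarance-resistance} (which identifies the Gaussian metric with $\sqrt{\reff}$), we conclude $t^{\rc}_{\mathrm{cov}}(G) \lesssim \mathcal C \,[\gamma_2(V,\sqrt{\reff})]^2$.

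For the lower bound, I propose the following roadmap. We may assume $\gamma_2(V,\sqrt{\reff}) \geq K \cdot \diam(V,\sqrt{\reff})$ for a large constant $K$ (otherwise the bound follows from the trivial inequality $t^{\rc}_{\mathrm{cov}}(G) \gtrsim \mathcal C \cdot \diam(V,\sqrt{\reff})^2$, which is the maximal commute time). Fix large constants $r,C$ and use Proposition \ref{prop-strong-tree-like} to extract a $C$-good $r$-separated tree $(\mcT,s)$ with leaves $\mathcal L$ and root $z$ satisfying $\val_r(\mcT,s) \asymp \gamma_2(V,\sqrt{\reff})$. Choose $t = \frac{1}{2}\theta^2 \cdot r^{2s(z)}$ for an appropriately small constant $\theta$ (calibrated against $\val_r(\mcT,s)/r^{s(z)}$), pick $v_0 = z$, and invoke Theorem \ref{thm:rayknight}. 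By \eqref{eq:law}, the event that the random walk has not yet covered $V$ by time $\tau(t)$ corresponds (on the right-hand side) to the existence of a vertex $x$ at which $\eta_x$ is close to $-\sqrt{2t}$; equivalently, $\eta_x + \sqrt{2t}$ must be an order of magnitude smaller than the natural standard deviation $r^{s(z)}$ of the process.

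The core of the argument is a second-moment estimate on the tree $\mcT$. For each non-root vertex $v \in \mcT$, define an ``open edge'' event $\mathcal O_v$ capturing the statement that the GFF value at $v$ drops by roughly $r^{s(p(v))} \sqrt{\log \Delta(p(v))}$ relative to its parent (the scaling demanded by the natural Gaussian exponential tilt); the $C$-good property guarantees that $\P(\mathcal O_v)$ is roughly $1/\Delta(p(v))$, so the expected number of open root-to-leaf paths is bounded below by a constant. The key is to filter the probability space in depth-first fashion so that each $\mathcal O_v$ depends only on the fluctuation of $\eta_v$ conditioned on the values of $\{\eta_u : u \notin \mcT_v\}$; this is where property (b) of Proposition \ref{prop-strong-tree-like}, i.e. $\dist_{L^2}(\eta_v, \CH(\{\eta_u\}_{u\notin \mcT_v})) \geq \tfrac{1}{20} r^{s(p(v))-1}$, is indispensable, because it ensures the conditional variance of $\eta_v$ is of the correct order and makes the open events approximately independent across disjoint subtrees. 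A standard percolation second moment calculation (as in Section \ref{sec:percolation}, to be proved), leveraging property (d) to show that any partial path extends with good probability to a leaf, will yield that with probability bounded below by a constant, there is an open root-to-leaf path, hence $\inf_x \eta_x \leq -\sqrt{2t} + O(r^{s(z)}\theta^{1/2})$. Via Theorem \ref{thm:rayknight} this forces $L^x_{\tau(t)} = 0$ for some $x$ with constant probability. Combined with Lemma \ref{lem-tau-t} to guarantee $\tau(t) \gtrsim \mathcal C t$ with constant probability, this gives $t^{\rc}_{\mathrm{cov}}(G) \gtrsim \mathcal C t \asymp \mathcal C [\gamma_2(V,\sqrt{\reff})]^2$.

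The main obstacle will be the delicate choice of filtration and of the open-edge events, so that (i) each $\mathcal O_v$ is measurable with respect to information revealed at depth $v$, (ii) conditional on an open path down to $p(v)$, the event $\mathcal O_v$ has probability $\gtrsim 1/\Delta(p(v))$, and (iii) the event ``there exists an open root-to-leaf path'' implies the desired pointwise lower bound on $-\inf_x \eta_x$. The subtlety is that conditioning on the ancestral $\eta$-values shifts the conditional mean of $\eta_v$, and one must show this shift is small compared to $r^{s(p(v))}$; this is precisely what property (b) buys, via Lemma \ref{lem-identity}. Once this is set up cleanly, the second moment calculation is standard tree percolation (Section \ref{sec:percolation}).
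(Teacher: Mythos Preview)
Your upper bound is correct and matches the paper. The lower-bound architecture (good $r$-separated tree, percolation on it, isomorphism theorem, Lemma~\ref{lem-tau-t}) is also the right one, but there is a genuine gap at the step ``Via Theorem~\ref{thm:rayknight} this forces $L^x_{\tau(t)}=0$ for some $x$ with constant probability.'' The isomorphism \eqref{eq:law} is only an equality in law between $\{L^x_{\tau(t)}+\tfrac12\eta_x^2\}$ and $\{\tfrac12(\eta_x+\sqrt{2t})^2\}$; it does not let you extract the event $\{L^x_{\tau(t)}=0\}$ directly. Your open-edge events $\mathcal O_v$ have windows at the natural scale $r^{s(p(v))}$ (that is what a Gaussian tail of order $1/\Delta(p(v))$ buys), and these windows sum along any root-to-leaf path to $O(\mathfrak D)$. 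So your argument yields at best a leaf $v$ with $(\eta_v+\sqrt{2t})^2=O(\mathfrak D^2)$; on the left-hand side this only says $L^v_{\tau(t)}=O(\mathfrak D^2)$, which is perfectly consistent with $L^v_{\tau(t)}>0$.

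The missing idea is that the leaf window must be \emph{far smaller} than the standard deviation. The paper's leaf events are $\{|\eta_v+\sqrt{2t}|\le 50\,r^{s(p(v))}m_v^{-3/4}\}$ with $m_v=\prod_k\ddeg(f_v(k))$, and the percolation is engineered so that these still occur with probability $\gtrsim m_v^{-7/8}$; this is the delicate window-contracting analysis of Section~\ref{sec:coupling}, where the randomness is filtered so that at the final step one can shrink the window by an extra factor of $m_v^{-3/4}$ while keeping exact control of the conditional probabilities. One then argues separately (Lemma~\ref{lem:EL0}) that, conditional on $L^v_{\tau(t)}>0$, the local time stochastically dominates an exponential of mean $\reff(v,v_0)\gtrsim r^{2(s(p(v))-1)}$, so $\P\bigl(0<L^v_{\tau(t)}\le 50^2 r^{2s(p(v))}m_v^{-3/2}\bigr)\lesssim m_v^{-1}$, and a union bound over leaves gives at most $1/16$ since $\sum_{v\in\mathcal L}m_v^{-1}=1$. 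The gap between the right-hand probability ($\ge 1/8$) and this left-hand probability ($\le 1/16$) is what forces some $L^v_{\tau(t)}=0$. Without the tiny-window refinement of the leaf events and the accompanying local-time estimate, the implication you claim does not hold.
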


Combined with Theorem \ref{thm:blanket}, this also yields a positive
answer to the strong conjecture of Winkler and Zuckerman
\cite{WZ96}.

\begin{cor}
For every $\delta \in (0,1)$, for any network $G(V)$ with total
conductance $\mathcal C = \sum_{x \in V} c_x$,
$$
t_{\mathrm{cov}}^{\rc}(G) \asymp \mathcal C
\left[\gamma_2(V,\sqrt{\reff})\right]^2 \asymp_{\delta}
t_{\mathrm{bl}}(G,\delta).
$$
\end{cor}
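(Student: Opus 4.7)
The upper bound $t_{\mathrm{cov}}^{\rc}(G) \lesssim \mathcal{C}\left[\gamma_2(V,\sqrt{\reff})\right]^2$ is essentially already in hand. By \eqref{eq:covrelations}, $t_{\mathrm{cov}}^{\rc}(G) \leq 2\, t_{\mathrm{cov}}(G) \leq 2\, t^\star_{\mathrm{bl}}(G, \tfrac12)$; Theorem~\ref{thm:blanket} bounds the latter by $\lesssim \mathcal{C}(\E \sup_v \eta_v)^2$, and Theorem~(MM) combined with Lemma~\ref{lem:resistGFF} identifies $\E\sup_v \eta_v \asymp \gamma_2(V,\sqrt{\reff})$. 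The real content is therefore the matching lower bound, and my plan is to fix $t = c\, [\gamma_2(V,\sqrt{\reff})]^2$ with $c>0$ a small absolute constant, and prove
\[
\P\bigl(\exists v \in V : L^v_{\tau(t)} = 0\bigr) \gtrsim 1.
\]
This event forces $\tau_{\mathrm{cov}}^{\rc} > \tau(t)$; combined with Lemma~\ref{lem-tau-t} (which gives $\tau(t) \gtrsim \mathcal{C} t$ with constant probability) and a truncated-expectation step, it yields $t_{\mathrm{cov}}^{\rc}(G) \gtrsim \mathcal{C} t$.

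By Theorem~\ref{thm:rayknight}, the event $\{\exists v: L^v_{\tau(t)} = 0\}$ on the left side of the isomorphism is equivalent, under the product measure, to the existence of some $v$ with $\tfrac12 (\eta_v^R + \sqrt{2t})^2 = \tfrac12 (\eta_v^L)^2$ on the right side. On the high-probability event $\max_v|\eta_v^L| \lesssim \sigma := \max_v \sqrt{\E \eta_v^2}$, this reduces to showing that with constant probability there is a vertex $v$ with $|\eta_v^R + \sqrt{2t}| \lesssim \sigma$. Since in the interesting regime $\sqrt{2t} \asymp \gamma_2(V, \sqrt{\reff})$ is much larger than $\sigma$, the required statement is substantially stronger than any standard Gaussian concentration bound and must exploit the specific structure of the GFF.

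The approach is to invoke Proposition~\ref{prop-strong-tree-like} to extract a $C$-good $r$-separated tree $(\mcT, s) \subseteq V$ with $\val_r(\mcT,s) \asymp \gamma_2(V,\sqrt{\reff})$, and to run a tree-percolation / second moment argument on it. Property~(b) of the good tree is crucial: if $Z_v$ denotes the component of $\eta_v$ orthogonal to $\CH(\{\eta_u : u \notin \mcT_v\})$, then $\|Z_v\|_{L^2} \gtrsim r^{s(p(v))-1}$, and increments $Z_v, Z_{v'}$ living in disjoint subtrees are independent after conditioning on the GFF outside those subtrees. To each edge $(p(v),v)$ of $\mcT$ I would attach an event $A_v$ saying that the new increment drives $\eta_v$ by the ``right amount'' toward $-\sqrt{2t}$, calibrated so that if all of $\{A_u\}_{u \in \mathcal P(\ell)}$ occur then $|\eta_\ell + \sqrt{2t}| \lesssim \sigma$. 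Letting $X$ count the leaves $\ell$ with this property, the goal is $\P(X \geq 1) \geq (\E X)^2 / \E X^2 \gtrsim 1$.

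The first-moment estimate is a product of Gaussian-density factors along root-to-leaf paths; property~(c) of the good tree (doubly-exponentially many children per level) furnishes exactly enough leaves to balance the small per-step probabilities so that $\E X \gtrsim 1$ for suitable $c$. The second moment is where the GFF structure becomes indispensable: for leaves $\ell,\ell'$ with least common ancestor $u$, property~(b) makes the covariance of the corresponding path-events split cleanly at $u$, so $\E X^2$ decomposes as a sum over LCAs that telescopes along the tree; property~(d) prevents any single level from dominating. The main obstacle, and the technical heart of the argument, will be choosing the events $\{A_v\}$ and the associated filtration on the probability space so that (i) conditional probabilities multiply cleanly along paths, and (ii) the coupling supplied by the Isomorphism Theorem allows the conclusion about $\{\eta_v^R\}$ to be transferred back into the desired statement about local times $L^v_{\tau(t)}$.
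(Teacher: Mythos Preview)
Your overall architecture---upper bound via the blanket-time estimate, lower bound by extracting a $C$-good $r$-separated tree from Proposition~\ref{prop-strong-tree-like} and running a second-moment/percolation argument on its leaves---is exactly the paper's. But there is a real gap in your reduction step, and it is the crux of the whole argument.

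You write that on the high-probability event $\max_v |\eta_v^L| \lesssim \sigma$, it suffices to find a vertex with $|\eta_v^R + \sqrt{2t}| \lesssim \sigma$, and you calibrate your edge-events so that $|\eta_\ell + \sqrt{2t}| \lesssim \sigma$ at a leaf. This target is too coarse to conclude $L^v_{\tau(t)}=0$. The isomorphism only tells you that $L^v_{\tau(t)} + \tfrac12(\eta_v^L)^2$ lands in $[0,C\sigma^2]$ for some $v$; to deduce $L^v_{\tau(t)}=0$ you must rule out the alternative $0 < L^v_{\tau(t)} \leq C\sigma^2$. But conditioned on positivity, $L^v_{\tau(t)}$ is exponential with mean $\reff(v,v_0)\leq \sigma^2$, so for each individual leaf this alternative has probability bounded away from zero, and a union bound over the (possibly enormous) leaf set is hopeless at scale $\sigma^2$.

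The paper's resolution---and the point your sketch misses---is that the window must be \emph{leaf-dependent and much smaller than $\sigma$}. Concretely, one shows that with constant probability some leaf $v$ satisfies
\[
|\eta_v + \sqrt{2t}| \leq 50\, r^{s(p(v))}\, m_v^{-3/4},
\]
where $m_v$ is the product of the down-degrees along the root-to-$v$ path (so that $\sum_{v\in\mathcal L} m_v^{-1}=1$). The extra factor $m_v^{-3/4}$ is precisely what rescues the local-time side: since the conditional mean of $L^v_{\tau(t)}$ is $\gtrsim r^{2s(p(v))}$, one gets
\[
\P\bigl(0 < L^v_{\tau(t)} \leq 50^2 r^{2s(p(v))} m_v^{-3/2}\bigr) \lesssim m_v^{-3/2} \ll m_v^{-1},
\]
and now the union bound over $\mathcal L$ sums to something small (this is Lemma~\ref{lem:EL0}). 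Achieving this much finer precision on the Gaussian side is exactly why the filtration and the edge-events have to be set up so delicately (the paper's Lemma~\ref{lem-coupling}); aiming only for an $O(\sigma)$ window would make the percolation step easy but the transfer step in your item~(ii) impossible.
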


\medskip

For the remainder of this section, we denote
\begin{equation}\label{eq:thesup}
\mathfrak{S}  = \gamma_2(V,\sqrt{\reff}).
\end{equation}
It is clear that for all $0<\delta<1$, we have
$t_{\mathrm{cov}}^{\rc}(G) \leq t_{\mathrm{bl}}(G,\delta)$, and
$t_{\mathrm{bl}}(G,\delta) \lesssim_\delta \mathcal C
\mathfrak{S}^2$ by Theorem \ref{thm:blanket}. Thus, in order to
prove the preceding corollary and Theorem~\ref{thm:covtime}, we need
only show that
\begin{equation}\label{eq:covLB}
t_{\mathrm{cov}}^{\rc}(G) \gtrsim \mathcal C \mathfrak{S}^2.
\end{equation}

Let $\{W_t\}$ be the continuous-time random walk on $G(V)$,
and let $\{L_t^v\}_{v \in V}$ be the local times,
as defined in Section \ref{sec:iso}.
Applying the isomorphism theorem (Theorem \ref{thm:rayknight}) with some fixed $v_0 \in V$, we have
\begin{equation}\label{eq:isomorph}
\left\{L_{\tau(t)}^x + \frac{1}{2} \eta_x^2: x\in V\right\}
\stackrel{law}{=} \left\{\frac{1}{2} (\eta_x + \sqrt{2t})^2 : x\in V
\right\}\,,
\end{equation}
for some associated Gaussian process $\{\eta_x\}_{x \in V}$.
By Lemma \ref{lem:resistGFF}, this process is a  Gaussian free field,
and we have for every $x,y \in V$,
\begin{equation}\label{eq:gdist}
d(x,y) \deq \sqrt{\E\, |\eta_x - \eta_y|^2} = \sqrt{\reff(x,y)}.
\end{equation}
Let $\mathfrak{D} = \max_{x,y \in V} d(x,y)$ be
the diameter of the Gaussian process.

\bigskip
\noindent
{\bf Proof outline.}
Let $\{\mathfrak L > 0\}$ be the event $\{L_{\tau(t)}^x > 0 : x
\in V \}$.  Consider a set $S \subseteq \mathbb R^V$,
and let $S_L$ and $S_R$ be the events corresponding
to the left and right-hand sides of
\eqref{eq:isomorph} falling into $S$.
Our goal is to find such a set $S$ so that
for some $t \asymp \mathfrak{S}^2,$ we have
\begin{equation}\label{eq:outline}
\P(S_R) - \P(S_L \cap \{\mathfrak L > 0\}) \geq c,
\end{equation}
for some universal constant $c > 0$. In this case, with probability at least $c$,
the set of uncovered vertices
$\{ v : L_{\tau(t)}^v = 0\}$ is non-empty.
Using the fact
that the inverse local time $\tau(t)$ is $\gtrsim \mathcal C t$ with
probability at least $1-c/2$, we will conclude that
$t_{\mathrm{cov}}^{\rc}(G) \gtrsim \mathcal C \mathfrak{S}^2.$

\medskip

Thus we are left to give a lower bound on $\P(S_R)$ and an upper bound
on $\P(S_L \cap \{\mathfrak L > 0\})$.  Since the structure of the local times
process $\{L_t^x\}$ conditioned on $\{\mathfrak L > 0\}$ can be quite unwieldy,
we will only use first moment bounds for the latter task.  Calculating
a lower bound on $\P(S_R)$ will require a significantly more
delicate application of the second-moment method,
but here we will be able to exploit the full power of Gaussian processes
and the majorizing measures theory.

\medskip

Before defining the set $S \subseteq \mathbb R^V$, we describe it in broad terms. By
\eqref{eq:gdist} and Theorem (MM), we know that for
some $t_0 \asymp \mathfrak{S}^2$, we should have $\E \inf_{x \in V}
\eta_x = - \E \sup_{x \in V} \eta_x$ close to $-\sqrt{2t_0}$. By
Lemma \ref{lem-gaussian-concentration}, we know that the standard
deviation of $\inf_{x \in V} \eta_x$ is $O(\mathfrak{D})$. Thus we
can expect that with probability bounded away from 0, for the right
choice of $t_0 \asymp \mathfrak{S}^2$, some value on the right-hand
side of \eqref{eq:isomorph} is $O(\mathfrak{D})$ for $t=t_0$.

Now, when $\E \sup_{x \in V} \eta_x \gg \mathfrak{D}$, it is
intuitively true that for $t = \e t_0$ and $\e > 0$ small, there
should be {\em many} points $x \in V$ with $\eta_x \approx
-\sqrt{2t}$. If these points have some level of independence, then
we should expect that with probability bounded away from 0, there is
some $x \in V$ with $|\eta_x - \sqrt{2t}|$ very small (much smaller
than $O(\mathfrak{D})$). Our set $S$ will represent the
existence of such a point. On the other hand, we will argue that if
all the local times $\{L_{\tau(t)}^x\}$ are positive, then the
probability for the left-hand side to have such a low value is
small.

\subsection{A tree-like sub-process}

First, observe that by the commute time identity,
$t_{\mathrm{cov}}^{\rc}(G) \geq \mc C \max_{x,y \in V} \reff(x,y) = \mc
C \mathfrak{D}^2$. Thus in proving Theorem \ref{thm:covtime}, we may
assume that
\begin{equation}\label{eq:diameter}
\mathfrak{S} \geq K \mathfrak{D}\,,
\end{equation}
for any universal constant $K  \geq 1$. In particular, by an
application of Proposition \ref{prop-strong-tree-like}, we can
assume the existence of an $r$-separated tree $(\mcT,s)$ in $(V,d)$,
for some fixed $r \geq 128$, with root $z = v_0$, and such that for
some constant $C \geq 1$ and $\theta = \theta(C)$, properties
\eqref{eq:MM}, \eqref{eq:MMbelow}, \eqref{eq-deg}, and
\eqref{eq:treelike} below are satisfied. We will choose $C$
sufficiently large later, independent of any other parameters.

\medskip

For each $u \in \mcT$, let $h_u$ denote the height of $u$, where we
order the tree so that $h_z=0$, where $z$ is the root. Recalling
that $\mathcal L$ is the set of leaves of $\mc T$, for each $v \in
\mc L$, let $$\mc P(v) = \{f_v(0), f_v(1), \ldots, f_v(h_v)\}$$ be
the set of nodes on the path from $z = f_v(0)$ to $v = f_v(h_v)$,
where $f_v(i)$ is the parent of $f_v(i+1)$, for $0 \leq i < h$.
First, we can require that for every $v \in \mathcal L$,
\begin{equation}\label{eq:MM}
\sigma_v \geq \frac{1}{\theta} \mathfrak{S},
\end{equation}
where
\begin{eqnarray}
\chi_v(k) &\deq & r^{s(f_v(k))} \sqrt{\log \Delta(f_v(k))}\,, \\
\sigma_v &\deq& \sum_{k=0}^{h_v-1} \chi_v(k). \label{eq:sigmav}
\end{eqnarray}
Furthermore, we can require that the tree $\mathcal{T}$ satisfies,
for every $v \in V$,
\begin{equation}\label{eq:MMbelow}
\sum_{i=j+1}^{h_v-1} \chi_v(i) \geq C \cdot 2^j \cdot
r^{s(f_v(j))},
\end{equation}
as well as
\begin{equation}\label{eq-deg}
\Delta(f_v(k)) \geq \exp(C^2 r^2 4^k)\,.
\end{equation}
Finally, we require that for every $v \in \mcT$,
\begin{equation}\label{eq:treelike}
\dist_{L^2}\left(\eta_v, \CH(\{\eta_u\}_{u \notin \mcT_v})\right)
\geq \frac{1}{20} r^{s(p(v))-1}.
\end{equation}

All these requirements are justified by
Proposition~\ref{prop-strong-tree-like}.

\medskip
\noindent {\bf The distinguishing event.} For $u, v\in \mathcal{L}$,
we let $h_{uv}$ be the height of the least common ancestor of $u$
and $v$.  We will use $\ddeg(v) = |\Gamma(v)|$ to denote the number
of children of $v$. Define
\begin{equation}\label{eq-def-m-u-v}
m_u = \prod_{k=0}^{h_u-1} \ddeg(f_u(k))\,, \mbox{ and } m_{uv} =
\prod_{k=0}^{h_{uv}-1} \ddeg(f_u(k))\,.
\end{equation}
First, we fix
\begin{equation}\label{eq:epsilon}
\e = \frac{1}{2^{10}\, r \theta}\,.
\end{equation}

For every $v \in \mc L$, consider the events
\begin{equation}\label{eq:theevent}
\mc E_v(\e) = \left\{|\eta_v - \e \mathfrak{S}| \leq 50
\,r^{s(p(v))} m_v^{-3/4} \right\}.
\end{equation}

Instead of arguing directly
about the events $\mc E_v(\e)$, we will couple them to leaf events
of a ``percolation'' process on $\mcT$.  In particular, in Section
\ref{sec:coupling}, we will prove the following lemma.

\begin{lemma}\label{lem-coupling}
For all $v\in \mathcal{L}$, there exist events $\mathcal{E}_v$ such
that
the following properties hold.
\begin{enumerate}
\item $
\mathcal{E}_v  \subseteq \mc E_v(\e) = \left\{|\eta_v -  \epsilon
\mathfrak{S}| \leq 50\, r^{s(p(v))}
m_v^{-3/4}\right\}$.\label{item-coupling-1}
\item $\P(\mathcal{E}_v) \geq \frac{1}{2} m_v^{-7/8}$.
\item $\P(\mathcal{E}_u \cap \mathcal{E}_v) \leq m_{uv}^{1/8} (m_u
m_v)^{-7/8}$.
\end{enumerate}
\end{lemma}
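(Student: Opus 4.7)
The plan is to construct each event $\mathcal{E}_v$ as an intersection of carefully designed ``increment events'' along the path from the root $z$ to $v$ in $\mathcal{T}$, exploiting the orthogonal decomposition of the Gaussian free field encoded in \eqref{eq:treelike} and the regularity properties \eqref{eq:MM}, \eqref{eq:MMbelow}, \eqref{eq-deg} of the $C$-good tree. For each non-root tree node $w \in \mathcal{T}$, define the innovation
\[ \xi_w := \eta_w - \E\bigl[\eta_w \,\big|\, \{\eta_u : u \notin \mcT_w\}\bigr]\,, \]
which by \eqref{eq:treelike} is a centered Gaussian of variance at least $\tfrac{1}{400}\,r^{2(s(p(w))-1)}$ that is independent of $\{\eta_u : u \notin \mcT_w\}$. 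A direct Hilbert-space argument using nested projections shows that $\{\xi_w\}_{w \in \mathcal{T}}$ is an orthogonal family in $L^2$: for any two distinct tree nodes, either the subtrees $\mcT_{w_1}$ and $\mcT_{w_2}$ are disjoint (in which case the claim is immediate), or one of them, say $w_2$, is a strict descendant of the other, and then $\xi_{w_2} \perp \eta_u$ for every $u \notin \mcT_{w_2}$, in particular for $u=w_1$ as well as for each $\eta_u$ defining the projection $P_{w_1}(\eta_{w_1})$.

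For each leaf $v \in \mathcal{L}$ and each $k \in \{0,\ldots,h_v-1\}$, I would define $\mathcal{F}_v^{(k)}$ to be the event that $\xi_{f_v(k+1)}$ lies in a chosen interval $I_{v,k}$ of width $|I_{v,k}| \asymp r^{s(f_v(k))-1}\Delta(f_v(k))^{-7/8}$ and appropriate center, and set $\mathcal{E}_v \df \bigcap_{k=0}^{h_v-1} \mathcal{F}_v^{(k)}$. The centers are chosen so that on $\mathcal{E}_v$, the telescoping identity $\eta_v=\sum_k(\eta_{f_v(k+1)}-\eta_{f_v(k)})$ confines $\eta_v$ to within $\sum_k|I_{v,k}|$ of $\e \mathfrak{S}$; the super-exponential growth \eqref{eq-deg} of $\Delta(f_v(k))$ in $k$ makes this sum dominated by a handful of terms, all comfortably smaller than $50\, r^{s(p(v))} m_v^{-3/4}$, giving $\mathcal{E}_v \subseteq \mathcal{E}_v(\e)$. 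Conditions \eqref{eq:MM} and \eqref{eq:MMbelow} (using the definition $\e=1/(2^{10} r\theta)$) ensure we have enough ``drift budget'' along the path to aim the centers $\e\mathfrak{S}$ while keeping each center within a few standard deviations of $\xi_{f_v(k+1)}$. Using the uniform lower bound on the conditional standard deviation of $\xi_{f_v(k+1)}$, the Gaussian density estimate yields $\P(\mathcal{F}_v^{(k)} \mid \mathcal{H}_k(v)) \geq c\,\Delta(f_v(k))^{-7/8}$, where $\mathcal{H}_k(v) := \sigma(\{\eta_u : u \notin \mcT_{f_v(k+1)}\})$, and multiplying along the path gives the first-moment bound $\P(\mathcal{E}_v) \geq \tfrac{1}{2} m_v^{-7/8}$.

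For the second-moment bound, let $a$ be the least common ancestor of $u$ and $v$, so that $f_u(k)=f_v(k)$ for $k\le h_{uv}$ while $f_u(h_{uv}+1)\neq f_v(h_{uv}+1)$. By construction, $\mathcal{F}_u^{(k)}=\mathcal{F}_v^{(k)}$ for $k<h_{uv}$, while the $u$-suffix involves only $\{\xi_w : w \in \mcT_{f_u(h_{uv}+1)}\}$ and the $v$-suffix only $\{\xi_w : w \in \mcT_{f_v(h_{uv}+1)}\}$. Since these two subtrees are disjoint, the orthogonality of innovations together with joint Gaussianity implies that the $u$-suffix and $v$-suffix are conditionally independent given the Gaussians outside $\mcT_{f_u(h_{uv}+1)} \cup \mcT_{f_v(h_{uv}+1)}$. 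Combining this conditional independence with matching upper bounds of order $\Delta(f(\cdot))^{-7/8}$ on each conditional suffix event produces $\P(\mathcal{E}_u \cap \mathcal{E}_v) \leq m_{uv}^{1/8}(m_u m_v)^{-7/8}$. The main obstacle is the simultaneous balancing in the second paragraph: the windows $I_{v,k}$ must be narrow enough that $\sum_k|I_{v,k}| \le 50\, r^{s(p(v))}m_v^{-3/4}$, yet wide enough for each conditional probability to reach $c\,\Delta(f_v(k))^{-7/8}$; moreover, the upper and lower bounds used in the two moments must agree tightly enough to yield the explicit constants $\tfrac{1}{2}$ and $m_{uv}^{1/8}$. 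Choosing $C$ sufficiently large in \eqref{eq-deg} and using \eqref{eq:MMbelow} provides the needed slack.
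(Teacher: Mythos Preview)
Your construction has a genuine gap at its very foundation: the innovations $\xi_w = \eta_w - \E[\eta_w \mid \{\eta_u : u \notin \mcT_w\}]$ are \emph{not} orthogonal when $\mcT_{w_1}$ and $\mcT_{w_2}$ are disjoint siblings. The projection defining $\xi_{w_2}$ lives in $\spn(\{\eta_u : u \notin \mcT_{w_2}\})$, which contains vectors $\eta_u$ with $u \in \mcT_{w_1}$; these are precisely the directions \emph{not} killed by $\xi_{w_1}$. Concretely, take a root $z=v_0$ with two leaf children $a,b$: then $\xi_a = \eta_a - \E[\eta_a \mid \eta_b]$ and $\xi_b = \eta_b - \E[\eta_b \mid \eta_a]$, and a two–line computation shows $\langle \xi_a,\xi_b\rangle = -\tfrac{\Cov(\eta_a,\eta_b)}{\var(\eta_a)}\,\|\xi_a\|_2^2$, which is nonzero whenever $\eta_a,\eta_b$ are correlated (generically true for a GFF). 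Once orthogonality fails, so does the claim that $\eta_v$ is a linear combination of the path innovations $\xi_{f_v(1)},\dots,\xi_{f_v(h_v)}$, and hence so does the inclusion $\mathcal E_v\subseteq \mathcal E_v(\e)$: pinning each $\xi_{f_v(k+1)}$ into an interval does not pin $\eta_v$. The conditional–independence step in your second–moment argument collapses for the same reason. This is exactly why the paper does \emph{not} condition on the complement of a subtree, but instead runs Gram--Schmidt along a depth-first ordering of $\mcT$; the DFS increments are orthogonal by construction, and the strong separation \eqref{eq:treelike} is used only to lower-bound their variances.

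Even granting your framework, two quantitative choices would still need repair. First, with per-level windows of width $\asymp r^{s(f_v(k))-1}\Delta(f_v(k))^{-7/8}$, the final precision you can guarantee on $\eta_v$ is of order $r^{s(p(v))}\Delta(p(v))^{-7/8}$, which for $h_v\ge 2$ is far larger than the required $r^{s(p(v))}m_v^{-3/4}$ (recall $m_v$ is the \emph{product} of all degrees along the path). Second, uniform $\Delta^{-7/8}$ probabilities per level, together with sharing the prefix, give $\P(\mathcal E_u\cap\mathcal E_v)\lesssim m_{uv}^{7/8}(m_um_v)^{-7/8}$, not $m_{uv}^{1/8}(m_um_v)^{-7/8}$; this still feeds the percolation lemma, but it is not the bound you stated. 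The paper resolves both issues by a different allocation: each intermediate level carries only a $\ddeg^{-1/8}$ cost while an additional $m_v^{-3/4}$ is concentrated at the \emph{last} level; the intermediate windows are then wide (so that a separate ``noise'' event $\mathcal A_v(k)$ controls the part of the increment outside the fresh direction), an explicit error-amortization scheme via the variables $b_v(k)$ steers the process toward the target using \eqref{eq:MMbelow}, and the window sizes $w_v(k)$ are chosen \emph{randomly} so that the conditional probabilities are exactly $\ddeg^{-1/8}$, which is what makes the second-moment bound come out with the exponent $1/8$ and no stray constants.
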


In Section \ref{sec:percolation},
we will prove that for any events $\{\mc E_v\}_{v \in \mc L}$ satisfying
properties (2) and (3) of Lemma \ref{lem-coupling},
we have
\begin{equation}\label{eq:perc-con}
\P\left(\bigcup_{u\in \mathcal{L}} \mathcal{E}_u\right) \geq
\frac{1}{8}.
\end{equation}
Thus for $t = \frac12 \e^2 \mathfrak{S}^2$, we have
\begin{equation}\label{eq:RHS}
\P\left(\exists v \in V : \frac12 (\eta_v + \sqrt{2t})^2 \leq 50^2
r^{2 s(p(v))} m_v^{-3/2}\right) \geq \frac18\,.
\end{equation}

In light of the discussion surrounding \eqref{eq:outline},
the reader should think of
$$S = \left\{ s \in \mathbb R^V : s_v \leq 50^2 r^{2 s(p(v))} m_v^{-3/2} \textrm{ for some } v \in V\right\},$$
and then \eqref{eq:RHS} gives the desired lower bound on $\P(S_R)$.
We now turn to an upper bound on $\P(S_L \cap \{ \mathfrak L > 0 \})$.
The next lemma is proved in
Section \ref{sec:localtimes}.

\begin{lemma}\label{lem:EL0}
For $t \geq \frac12 \e^2 \mathfrak{S}^2$,
\begin{equation}\label{eq:LHS}
\P\left(\bigcup_{v \in \mc L} \left\{0 < L_{\tau(t)}^v \leq  50^2
\cdot r^{2 s(p(v))} m_v^{-3/2}\right\} \right) \leq \frac{1}{16}.
\end{equation}
\end{lemma}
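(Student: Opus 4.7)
\textbf{Proof plan for Lemma~\ref{lem:EL0}.} I will prove the bound by a union bound combined with a per-leaf estimate obtained from the Ray-Knight isomorphism. The naive estimate $\P(0 < L_{\tau(t)}^v \leq \alpha_v) \leq \alpha_v c_v$ (from the fact that, given one visit, $c_v L_{\tau(t)}^v$ stochastically dominates an Exp$(1)$ holding time) is \emph{not} strong enough, since $c_v$ admits no uniform upper bound in terms of the tree parameters. The main idea is instead to exploit the large value $A \df \sqrt{2t} \geq \e\mathfrak{S}$.

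First, for each leaf $v$, since $L_{\tau(t)}^v \perp \eta_v$ in the left-hand side of \eqref{eq:isomorph}, the marginal law of $L_{\tau(t)}^v$ coincides with that of $\tfrac12(\eta+A)^2 - \tfrac12(\eta')^2$ where $\eta,\eta'$ are i.i.d.\ $N(0,\tau_v^2)$ with $\tau_v^2 = \reff(v,v_0)$. Introducing the independent Gaussians $U = \eta + A - \eta'$ and $V = \eta + A + \eta'$, both $N(A, 2\tau_v^2)$, the event rewrites as $\{UV \in (0, 2\alpha_v]\}$. I will split this into the ``positive branch'' $U,V>0$ and the ``negative branch'' $U,V<0$. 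On the positive branch, conditioning on $V=v>0$ and bounding the Gaussian density of $U$ by its uniform maximum $1/(\sqrt{4\pi}\,\tau_v)$ gives $\P(0<U\leq 2\alpha_v/v \mid V=v) \leq \min(1, 2\alpha_v/(v\sqrt{4\pi}\,\tau_v))$. Integrating over $V$ and splitting at $v=A/2$ yields, after using that $\P(V<A/2) \leq e^{-A^2/(16\tau_v^2)}$ and $1/v \leq 2/A$ on $\{V\geq A/2\}$, the bound $\P(\text{positive branch}) \lesssim \alpha_v/(A\,\tau_v)$. On the negative branch one has $\eta<-A$, a Gaussian tail event, and a short analogous calculation gives a contribution of order $(\alpha_v/\tau_v^2)\, e^{-A^2/(16\tau_v^2)}$ (up to a harmless log factor). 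Altogether,
\[
\P(0 < L_{\tau(t)}^v \leq \alpha_v) \lesssim \frac{\alpha_v}{A\,\tau_v} + \frac{\alpha_v}{\tau_v^2}\,e^{-A^2/(16\tau_v^2)}.
\]

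Next, I lower-bound $\tau_v$ using the $C$-good tree structure. Since $v\in\mc L$, $\mcT_v=\{v\}$ and $v_0=z\in\mcT\setminus\{v\}$, so property (b) of Proposition~\ref{prop-strong-tree-like} (equivalently \eqref{eq:treelike}) gives $\tau_v \geq \sqrt{\reff(v,\mcT\setminus\{v\})} \geq r^{s(p(v))-1}/20$. Substituting $\alpha_v = 50^2\, r^{2s(p(v))} m_v^{-3/2}$ turns the dominant term into $O(r\cdot r^{s(p(v))} m_v^{-3/2}/A)$. To control the sum of these I will use the probability-measure identity $\sum_{v\in\mc L} 1/m_v = 1$ (since the weights $1/m_v$ correspond to the uniform random descent through $\mcT$): writing $\mu(v)=1/m_v$,
\[
\sum_{v\in\mc L} r^{s(p(v))} m_v^{-3/2} = \mathbb E_\mu\!\left[r^{s(p(V))} m_V^{-1/2}\right] \leq r^{s(z)}\cdot \max_v m_v^{-1/2}.
\]
The degree-growth property \eqref{eq-deg} applied along the path gives $m_v \gtrsim e^{C^2 r^2/3}$ for every leaf, whence $\max_v m_v^{-1/2} \lesssim e^{-C^2 r^2/6}$. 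For $r^{s(z)}$, note $\chi_v(0) = r^{s(z)}\sqrt{\log \Delta(z)} \leq \sigma_v$ for every leaf, so that $r^{s(z)} \leq (\inf_v \sigma_v)/\sqrt{\log\Delta(z)}$; by property (a) of Proposition~\ref{prop-strong-tree-like} and \eqref{eq-deg} at the root, $\inf_v \sigma_v \asymp \mathfrak S$ and $\sqrt{\log\Delta(z)} \geq Cr$, giving $r^{s(z)} \lesssim \mathfrak S/(Cr)$. The negative-branch sum is handled uniformly by $\tau_v \leq \mathfrak D \leq \mathfrak S/K$, so $e^{-A^2/(16\tau_v^2)} \leq e^{-\e^2 K^2/8}$, and combined with $\sum_v \alpha_v/\tau_v^2 \lesssim r^2 e^{-C^2 r^2/6}$ (same computation with $\alpha_v/\tau_v^2$ replacing $\alpha_v r^{s(p(v))}/\tau_v$) it gives a factor $e^{-\e^2 K^2/8}e^{-C^2r^2/6}$.

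Assembling everything, with $A = \sqrt{2t} \geq \e\mathfrak S$ and $1/\e = 2^{10}r\theta$,
\[
\sum_{v\in\mc L}\P(0<L_{\tau(t)}^v\leq \alpha_v) \lesssim \frac{r}{A}\cdot\frac{\mathfrak S}{Cr}\,e^{-C^2r^2/6} + (\text{neg.\ branch}) \lesssim \frac{\theta}{C}\,e^{-C^2r^2/6},
\]
which is at most $1/16$ provided $C$ is chosen sufficiently large (as allowed in the setup preceding \eqref{eq:MM}).

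\emph{Main obstacle.} The crux is the per-leaf bound: the Exp$(1)$-based bound $\alpha_v c_v$ is ineffective because $c_v$ is not controlled by the tree geometry, while the Ray-Knight factorization $L_{\tau(t)}^v \stackrel{d}{=} UV/2$ with $U,V\sim N(A,2\tau_v^2)$ introduces precisely the factor $1/A = 1/(\e\mathfrak S)$ that, coupled with the super-exponential smallness $m_v^{-1/2}\lesssim e^{-C^2r^2/6}$ coming from \eqref{eq-deg}, makes the sum arbitrarily small by choosing $C$ large. Verifying carefully that $\E[1/V;V>0]$-type integrals are well-behaved (they are infinite, so one must use the truncation $\min(1,2\alpha_v/(V\sqrt{4\pi}\tau_v))$ in place of a naive density bound) is the most delicate technical point.
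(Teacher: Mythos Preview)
Your argument has a genuine gap at the very first step. From the one-point Ray--Knight identity
\[
L_{\tau(t)}^v + \tfrac12 (\eta_v^L)^2 \stackrel{d}{=} \tfrac12(\eta_v^R + A)^2,
\qquad L_{\tau(t)}^v \perp \eta_v^L,
\]
you \emph{cannot} conclude that $L_{\tau(t)}^v \stackrel{d}{=} \tfrac12(\eta+A)^2 - \tfrac12(\eta')^2$ with $\eta,\eta'$ i.i.d.: a convolution identity $X+Y\stackrel{d}{=}Z$ does not give $X\stackrel{d}{=}Z-Y'$ for an independent copy $Y'$. Indeed your candidate $UV/2$ is negative with probability $2\,\Phi\!\bigl(A/(\sqrt2\,\tau_v)\bigr)\bigl(1-\Phi(A/(\sqrt2\,\tau_v))\bigr)>0$, whereas $L_{\tau(t)}^v \geq 0$ almost surely. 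Everything downstream (the $U,V$ factorization, the positive/negative branch split, and the resulting $\alpha_v/(A\tau_v)$ bound) rests on this false identity.

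The paper's proof is in fact a sharpening of exactly the ``naive'' route you rejected. Given $L_{\tau(t)}^v>0$, the walk completes at least one excursion from $v$ back to $v_0$, so $c_v L_{\tau(t)}^v$ stochastically dominates not merely a single $\mathrm{Exp}(1)$ holding time but the \emph{total} time at $v$ during such an excursion. By the standard escape-probability formula $\P_v(T_v^+>T_{v_0})=1/(c_v\,\reff(v,v_0))$, this total time is exponential with mean $c_v\,\reff(v,v_0)$, so
\[
\P\bigl(0<L_{\tau(t)}^v\le \alpha_v\bigr)\ \le\ \frac{c_v\,\alpha_v}{c_v\,\reff(v,v_0)}\ =\ \frac{\alpha_v}{\reff(v,v_0)}.
\]
The factor $c_v$ cancels, which is precisely the obstruction you were trying to circumvent. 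Using $\reff(v,v_0)\ge \bigl(\tfrac{1}{20}r^{s(p(v))-1}\bigr)^2$ from \eqref{eq:treelike} and $\alpha_v = 50^2 r^{2s(p(v))}m_v^{-3/2}$ gives a bound $\lesssim r^2 m_v^{-3/2}$; since $m_v \ge \exp(C^2 r^2)$ by \eqref{eq-deg}, this is at most $\tfrac{1}{16}m_v^{-1}$ for $C$ large, and the union bound with $\sum_{v\in\mc L}m_v^{-1}=1$ finishes the proof. Note that no lower bound on $t$ is needed.
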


From \eqref{eq:LHS} and \eqref{eq:RHS}, we conclude that
with probability at least $1/16$, we must have
$L_{\tau(t)}^v=0$ for some $v \in V$ and $t= \frac12 \e^2 \mathfrak{S}^2$,
else \eqref{eq:isomorph} is violated.  This implies that
\begin{equation}\label{eq:penult}
\P_{v_0}\left(\tau_{\mathrm{cov}}^{\rc} \geq \tau(\tfrac12 \e^2
\mathfrak{S}^2)\right) \geq \frac{1}{16}\,.
\end{equation}

To finish our proof of \eqref{eq:covLB} and complete the proof
of Theorem \ref{thm:covtime}, we will apply
Lemma~\ref{lem-tau-t} with $\beta = \frac{1}{96}$. In particular, we
may choose $K=96/\e$ in \eqref{eq:diameter}, and then applying
Lemma~\ref{lem-tau-t} yields
$$
\P\left(\tau(\tfrac12 \e^2 \mathfrak{S}^2) \leq \mathcal C \frac{\e^2 \mathfrak{S}^2}{192}\right) \leq \frac{1}{32}\,.
$$
Combining this with \eqref{eq:penult} yields
\[
\P_{v_0} \left(\tau_{\mathrm{cov}}^{\rc} \geq \mathcal C \frac{\e^2 \mathfrak{S}^2}{192}\right) \geq \frac{1}{16}.
\]
In particular, $\tau_{\mathrm{cov}}^{\rc} \gtrsim \mathcal C \e^2 \mathfrak{S^2}$.
This completes the proof of \eqref{eq:covLB}, and hence of Theorem \ref{thm:covtime}.

\subsection{The coupling}
\label{sec:coupling}

\remove{
In light of the previous subsection, we study some separated tree
$\mathcal{T} \subset G$ with the set of leaves $\mathcal{L}$. We may
assume that $z$ is the root of $\mathcal{L}$. For $v\in
\mathcal{L}$, denote by $\mathcal{P}(v) = \{f_v(0), f_v(1), \ldots,
f_{v}(h_v)\}$ the set of nodes on the path from the root to $v$,
where $z = f_v(0)$, $f_v(h_v) = v$ and $f_v(i)$ is the parent of
$f_v(i+1)$ for $0\leq i < h_v$.

Write $\mathfrak{S} = \gamma_2(G)$. By Theorem [Preprocessing], 
we can construct a separated tree $\mathcal{T}$ with $r\geq 10^4$
such that for some constant $\theta \geq 1$ and for all  $v \in
\mathcal L$:
\begin{equation}\label{eq:MM}
\frac{1}{\theta} \mathfrak{S} \leq \sigma_v \leq \theta
\mathfrak{S},
\end{equation}
where
\begin{eqnarray*}
\chi_v(k) &\deq & r^{s(f_v(k))} \sqrt{\log \ddeg(f_v(k))}\,, \\
\sigma_v &\deq& \sum_{k=0}^{h_v-1} \chi_v(k).
\end{eqnarray*}
Furthermore, we can require that the tree $\mathcal{T}$ satisfies
that
\begin{equation}\label{eq:MMbelow}
\sum_{i=j+1}^{h_v-1} \chi_v(i) \geq 10^5 \cdot 2^{8} \cdot 2^j \cdot
r^{s(f_v(j))},
\end{equation}
as well as
\begin{equation}\label{eq-deg}
\ddeg(f_v(k)) \geq \exp(2^{14} \cdot 50^2 rk)\,.
\end{equation}
}

The  present section is devoted to the proof of
Lemma~\ref{lem-coupling}.
Toward this end, we will try to find a leaf $v \in \mathcal L$
for which $\eta_v \approx \varepsilon \mathfrak S$.
As in Lemma \ref{lem-coupling}(1), the level of closeness we desire is gauged according
to a proper scale, $r^{s(p(v))}$, as well as to the number of
other leaves we expect to see at this scale, which is represented roughly by $m_v^{-3/4}$
(the value $3/4$ is not essential here, and any other value in $(1/2, 1)$ would suffice).

Our goal is to find a such a leaf by starting at the root of the tree,
and arguing that some of its children should be somewhat close to
the target $\varepsilon \mathfrak{S}$.
This closeness is achieved using the fact that, by definition of an $r$-separated tree,
the children are separated in the Gaussian distance, and thus
exhibit some level of independence.  We will continue in this
manner inductively, arguing that the children which are somewhat close
to the target have their own children which we could expect to
be even closer, and so on.
We aim to shrink these windows around the target more and more so they are
small enough once we reach the leaves.
There are a number of difficulties involved in executing this scheme.
In particular, conditioning on the exact values of the children
of the root could determine the entire process, making future
levels moot.  Thus we must first select a careful filtering
which allows us to reserve some randomness for later levels.
This is done in Section \ref{sec:restructure}.

Furthermore, the intermediate
targets have to be arranged according to the
variances along the root-leaf paths in our tree.
This corresponds to the fact that,
although we have a uniform lower bound on each $\sigma_v$ (from \eqref{eq:MM}),
the summation defining the $\sigma_v$'s could put different
weights on the various levels (recall \eqref{eq:sigmav}).
The targets also have to take into account random ``noise''
from the filter described above, and thus
the targets themselves must be random.
This ``window analysis'' is performed in Section \ref{sec:window}.

\subsubsection{Restructuring the randomness}
\label{sec:restructure}

We know that $\eta_z =
0$, since $z=v_0$ is the root of $\mathcal{T}$ (and the starting
point of the associated random walk). Fix a depth-first ordering of
$\mcT$ (one starts at the root and explores as far as possible
along each branch before backtracking). Write $u \prec v$ if $u$ is
explored before $v$, and $u \preceq v$ if $u \prec v$ or $u=v$. For
$u \neq z$, we write $u^-$ for the vertex preceding $u$ in the DFS
order. Let $\mc F = \spn\left(\{\eta_x: x\in \mathcal{T}\}\right)$.
For a node $v \in \mcT$, let $\mathcal F_v = \spn(\{\eta_u\}_{u
\preceq v})$ and $\mathcal F^-_v = \spn(\{\eta_u\}_{u \prec v})$. We
next associate a centered Gaussian process $\{\xi_x: x\in
\mathcal{T}\}$ to $ \{\eta_x : x\in \mathcal{T}\}$ in the following
inductive way. Define $\xi_{z} = 0$. Now, assuming we have defined
$\xi_u$ for $u \prec v$, we define $\xi_{v}$ by writing
$$
\eta_{v} = \zeta_{v} + \xi_{v},
$$
where $\zeta_{v} \in \mathcal F_{v^-}$ and $\xi_{v} \perp \mathcal
F_{v^-}$. Observe that, by construction, $\{\xi_u\}_{u \preceq v}$
forms an orthogonal basis in $L^2$ for $\mathcal F_v$.

\medskip

Applying \eqref{eq:treelike}, we have for all $u\in \mathcal{T}$,
\begin{equation}\label{eq-var-xi}
\|\xi_u\|_{2} = \dist_{L^2}\left(\eta_u, \spn\left(\{\eta_w\}_{w
\prec u}\right)\right) \geq \dist_{L^2}\left(\eta_u,
\spn\left(\{\eta_w\}_{w \notin \mcT_u}\right)\right) \geq
\frac{1}{20} r^{s(p(u))-1}\,,\end{equation} where we used the fact
that the span and the affine hull are the same since $\xi_z = 0$.
For $v\in \mathcal{L}$, define the subspaces
\begin{eqnarray*}
\mathcal{F}_{v , k}
&=& \spn\left(\left\{\xi_u:  f_{v}(k) \prec u \preceq f_{v}(k+1)\right\}\right)\,, \\
\mathcal F_{v,k}^{-} &=& \spn\left(\left\{\xi_u:  f_{v}(k) \prec u \prec f_{v}(k+1)\right\}\right).
\end{eqnarray*}

For $0\leq k \leq h_v - 1$, define inductively $\tilde \eta_{v,
0}=0$, and
\begin{equation}\label{eq-def-eta-prime}
\tilde\eta_{v,k+1}  = \tilde\eta_{v,k} + \proj_{\mathcal
F_{v,k}}(\eta_v).\end{equation} Note that the subspaces $\{\mc
F_{v,k}\}_{k=0}^{h_v}$ are mutually orthogonal, and together they
span $\mc F_v$. Thus, \begin{equation}\label{eq-eta-tilde-eta}
\tilde \eta_{v, h_v} = \eta_v\,.\end{equation}

Furthermore, by the definition of the subspace $\mathcal{F}_{v,
k}$, we can decompose
\begin{equation}\label{eq-eta-prime}
\tilde\eta_{v, k+1} - \tilde\eta_{v, k} = \tilde\zeta_{v , k} +
\tilde\xi_{v, k}\,,
\end{equation}
where $\tilde\zeta_{v, k} \in \mathcal F_{v,k}^{-}$, and
$\tilde\xi_{v, k} \perp \mathcal F_{v,k}^{-}$. The next lemma states
that $\tilde{\xi}_{v,k}$ has at least comparable variance to
$\tilde{\zeta}_{v,k}$.

\begin{lemma}\label{lem:variance}
For every $v \in \mathcal L$ and $k=0,1,\ldots,h_v-1$, we have the estimates
\begin{equation}\label{eq:varzeta}
\left\|\tilde\zeta_{v,k}\right\|_2 \leq 8\,r^{s(f_v(k))},
\end{equation}
and,
\begin{equation}
 \tfrac{1}{64} r^{s(f_v(k))-1} \leq \left\|\tilde\xi_{v,k}\right\|_2 \leq 8\,r^{s(f_v(k))}.
\end{equation}
\end{lemma}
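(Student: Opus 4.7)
The plan rests on a single structural observation: $\tilde\xi_{v,k}$ is, up to a scalar, a multiple of $\xi_{f_v(k+1)}$ itself. Indeed, the subspace $\mathcal{F}_{v,k}$ splits orthogonally as $\mathcal{F}_{v,k}^- \oplus \spn(\xi_{f_v(k+1)})$, because $\xi_{f_v(k+1)}$ was defined to be orthogonal to $\spn(\{\eta_w : w \prec f_v(k+1)\})$; meanwhile the recursion \eqref{eq-def-eta-prime} gives $\tilde\eta_{v,k+1} - \tilde\eta_{v,k} = \proj_{\mathcal{F}_{v,k}}(\eta_v)$. Projecting onto the $\xi_{f_v(k+1)}$ summand therefore yields
\[
\tilde\xi_{v,k} \;=\; \frac{\langle \eta_v,\xi_{f_v(k+1)}\rangle}{\|\xi_{f_v(k+1)}\|_2^2}\,\xi_{f_v(k+1)}.
\]

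For the two upper bounds, my plan is to use that $\eta_{f_v(k)}$ lies in $\spn(\xi_u : u \preceq f_v(k))$, which is orthogonal to $\mathcal{F}_{v,k}$. Hence $\tilde\eta_{v,k+1} - \tilde\eta_{v,k} = \proj_{\mathcal{F}_{v,k}}(\eta_v - \eta_{f_v(k)})$, whose $L^2$-norm is at most $d(v,f_v(k)) \leq \diam(\mcT_{f_v(k)}) \leq 4 r^{s(f_v(k))}$ by property (3) of Definition~\ref{def:septree}. Since $\tilde\zeta_{v,k} \perp \tilde\xi_{v,k}$, Pythagoras then bounds each summand individually by $4 r^{s(f_v(k))}$, comfortably within the claimed $8 r^{s(f_v(k))}$.

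For the lower bound on $\|\tilde\xi_{v,k}\|_2$, I would split
\[
\langle \eta_v, \xi_{f_v(k+1)}\rangle \;=\; \langle \eta_{f_v(k+1)}, \xi_{f_v(k+1)}\rangle + \langle \eta_v - \eta_{f_v(k+1)}, \xi_{f_v(k+1)}\rangle.
\]
The first inner product equals $\|\xi_{f_v(k+1)}\|_2^2$, since by construction $\eta_{f_v(k+1)} - \xi_{f_v(k+1)} = \zeta_{f_v(k+1)}$ is orthogonal to $\xi_{f_v(k+1)}$. The second I would control by Cauchy--Schwarz together with the diameter bound $\|\eta_v - \eta_{f_v(k+1)}\|_2 \leq 4 r^{s(f_v(k+1))}$. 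After dividing through by $\|\xi_{f_v(k+1)}\|_2$, the estimate \eqref{eq-var-xi} (giving $\|\xi_{f_v(k+1)}\|_2 \geq \tfrac{1}{20} r^{s(f_v(k))-1}$) together with the scale gap $s(f_v(k+1)) \leq s(f_v(k))-2$ from property (1) of Definition~\ref{def:septree} and the assumption $r \geq 128$ should deliver the desired $\tfrac{1}{64} r^{s(f_v(k))-1}$ with a little slack to spare.

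The main obstacle is not really a mathematical one; it is keeping straight the two layers of orthogonal decomposition -- the Gram--Schmidt that defines the $\xi_u$ from the $\eta_u$ in DFS order, and the secondary one that extracts $\tilde\xi_{v,k}$ from $\tilde\eta_{v,k+1} - \tilde\eta_{v,k}$. Once the identification $\tilde\xi_{v,k} \propto \xi_{f_v(k+1)}$ is made, both claims reduce to Cauchy--Schwarz and the defining axioms of a strongly $r$-separated tree, together with the distance estimate \eqref{eq-var-xi} inherited from property (b) of Proposition~\ref{prop-strong-tree-like}.
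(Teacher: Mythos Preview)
Your proposal is correct and, in fact, slightly more streamlined than the paper's argument. The paper does not make the identification $\tilde\xi_{v,k}\propto\xi_{f_v(k+1)}$ explicit; instead it writes $\tilde\xi_{v,k}=\proj_{\mathcal F_{v,k}}(\eta_v)-\proj_{\mathcal F_{v,k}^-}(\eta_v)$, telescopes $\eta_v=\sum_j(\eta_{f_v(j+1)}-\eta_{f_v(j)})$, isolates the term $\xi_{f_v(k+1)}$, and bounds the remaining tail by the geometric sum $\sum_{j\geq k+1}\|\eta_{f_v(j+1)}-\eta_{f_v(j)}\|_2$. Your observation that $\mathcal F_{v,k}=\mathcal F_{v,k}^-\oplus\spn(\xi_{f_v(k+1)})$ collapses that tail into a single Cauchy--Schwarz bound $|\langle\eta_v-\eta_{f_v(k+1)},\xi_{f_v(k+1)}\rangle|\leq 4r^{s(f_v(k+1))}\|\xi_{f_v(k+1)}\|_2$, and the subtraction of $\eta_{f_v(k)}$ for the upper bound similarly replaces the paper's telescoping sum (yielding $8r^{s(f_v(k))}$) by the direct diameter estimate $4r^{s(f_v(k))}$. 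Both routes use the same ingredients---orthogonality of the Gram--Schmidt vectors, the lower bound \eqref{eq-var-xi}, and the diameter and scale-gap properties of the separated tree---so the difference is one of packaging rather than of substance, but your packaging is cleaner and yields slightly sharper constants.
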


\begin{proof}
Writing the telescoping sum,
$$
\eta_v = \sum_{j=0}^{h_v-1} \eta_{f_v(j+1)} - \eta_{f_v(j)},
$$
we see that
\begin{equation}\label{eq:projF}
\left\|\proj_{\mc F_{v,k}}(\eta_v)\right\|_2 \leq \sum_{j=k}^{h_v-1}
\|\eta_{f_v(j+1)}-\eta_{f_v(j)}\|_2 \leq \sum_{j=k}^{h_v-1} 4
r^{s(f_v(j))} \leq 8 \, r^{s(f_v(k))},
\end{equation}
where we used properties (1) and (3) of the separated tree, and have
assumed $r \geq 2$.

Thus by orthogonality and \eqref{eq-eta-prime},
we have
\begin{equation*}
\left\|\tilde \zeta_{v,k}\right\|_2 \leq \left\|\tilde \eta_{v, k+1}
- \tilde \eta_{v, k}\right\|_2 = \left\|\proj_{\mc
F_{v,k}}(\eta_v)\right\|_2 \leq 8\, r^{s(f_v(k))},
\end{equation*}
and precisely the same conclusion holds for $\tilde\xi_{v,k}$.

Next, we establish a lower bound on $\|\tilde\xi_{v,k}\|_2$. From
\eqref{eq-def-eta-prime} and \eqref{eq-eta-prime},
\begin{eqnarray}
\tilde\xi_{v,k} &=& \proj_{\mc F_{v,k}} (\eta_v) - \proj_{\mc F_{v,k}^-} (\eta_v) \label{eq:auxxi}\\
&=& \sum_{j=k}^{h_v-1}\left( \proj_{\mc F_{v,k}}\nonumber
(\eta_{f_v(j+1)}-\eta_{f_v(j)}) - \proj_{\mc F_{v,k}^-}
(\eta_{f_v(j+1)}-\eta_{f_v(j)})\right)
\\
&= &\nonumber
\left[\proj_{\mc F_{v,k}} (\eta_{f_v(k+1)}-\eta_{f_v(k)}) - \proj_{\mc F_{v,k}^-} (\eta_{f_v(k+1)}-\eta_{f_v(k)})\right] \\
&& + \sum_{j=k+1}^{h_v-1} \left(\proj_{\mc F_{v,k}}\nonumber
(\eta_{f_v(j+1)}-\eta_{f_v(j)}) - \proj_{\mc F_{v,k}^-}
(\eta_{f_v(j+1)}-\eta_{f_v(j)})\right).
\end{eqnarray}
Observe that the term in brackets is precisely
$$
\proj_{\mc F_{v,k}} (\eta_{f_v(k+1)}) - \proj_{\mc F_{v,k}^-}
(\eta_{f_v(k+1)}) = \xi_{f_v(k+1)},
$$
since $\eta_{f_v(k)} \perp \mc F_{v,k}$.  In particular, we arrive at
\begin{eqnarray*}
\left\|\tilde\xi_{v,k}\right\|_2 &\geq & \left\|\xi_{f_v(k+1)}\right\|_2 - \sum_{j=k+1}^{h_v-1} \left\|\eta_{f_v(j+1)}-\eta_{f_v(j)}\right\|_2 \\
&\geq& \tfrac{1}{32} r^{s(f_v(k))-1} - 2\, r^{s(f_v(k+1))} \\
&\geq& \tfrac{1}{32} r^{s(f_v(k))-1} - 2\, r^{s(f_v(k))-2} \\
&\geq& \tfrac{1}{64} r^{s(f_v(k))-1},
\end{eqnarray*}
where in the second line we have used \eqref{eq-var-xi} and
properties (1) and (2) of the separated tree,
and in the final line we have used $r \geq 128$.
\end{proof}

\subsubsection{Defining the events $\mathcal{E}_v$}
\label{sec:window}

Recall that our
goal now is to find many leaves $v \in \mathcal L$ with $\eta_v
\approx \epsilon \mathfrak{S}$.
Now, writing
$$
\eta_v = \sum_{k=0}^{h_v-1} \proj_{\mc F_{v,k}}(\eta_v) =
\sum_{k=0}^{h_v-1}( \tilde\zeta_{v,k} + \tilde\xi_{v,k}),
$$
our ``ideal'' goal would be to hit a window around the target by getting the $k$th term of this sum
close to $$a_v(k) \deq \e \mathfrak{S} \frac{\chi_v(k)}{\sigma_v},$$ for $k=0,1,\ldots,h_v-1$.
We will use the variance of the $\tilde\xi_{v,k}$ variables (recall Lemma \ref{lem:variance}) to lower bound
the probability that
some points get closer to the desired target.  On the other hand,
we will treat the $\tilde\zeta_{v,k}$ variables as noise which has to be bounded
in absolute value.

This noise cannot always be countered in a single level, but it
can be countered on average along the path to the leaf; this is the
content of \eqref{eq:MMbelow}.
We will amortize this cost over future targets as follows.
Let $b_v(0)=0$ and for $k=0,1,\ldots,h_v-2$, define
\begin{eqnarray*}
\rho_v(k) &=& \tilde\zeta_{v,k} + \tilde\xi_{v,k} - a_v(k) + b_v(k)\,, \\
b_v(k+1) &=& \sum_{i=0}^k \frac{\chi_v(k+1)}{\sum_{\ell=i+1}^{h_v-1} \chi_v(\ell)} \rho_v(i).
\end{eqnarray*}

Clearly $\rho_v(0) = \tilde\zeta_{v,0} + \tilde\xi_{v,0} - a_v(0)$
represents how much we miss our first target.  A similar fact holds
for the final target, as the next lemma argues; in between, the
errors are spread out proportional to the contribution to
$\val_r(\mcT, s)$ for each of the the remaining levels (represented
by the $\chi_v(k)$ values). Here $b_v(k)$ represents the error that
is meant to be absorbed in the $k$-th level.

\begin{lemma}\label{lem:rho}
For every $v \in \mathcal L$,
$$
\rho_v(h_v-1) = \eta_v - \e \mathfrak{S}.
$$
\end{lemma}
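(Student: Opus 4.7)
The plan is to verify this by a telescoping/inductive identity on the weighted partial sums. First, I would introduce two pieces of shorthand to declutter the bookkeeping: let
\[
\delta_v(k) = \tilde\zeta_{v,k} + \tilde\xi_{v,k} - a_v(k), \qquad c_v(k) = \sum_{\ell=k}^{h_v-1} \chi_v(\ell), \qquad P_m = \sum_{i=0}^{m} \delta_v(i).
\]
The hoped-for conclusion is then just $\rho_v(h_v-1) = P_{h_v-1}$, since by \eqref{eq-eta-tilde-eta} together with $\sum_{k=0}^{h_v-1} a_v(k) = \e\mathfrak{S} \cdot \sigma_v/\sigma_v = \e \mathfrak{S}$, one immediately has $P_{h_v-1} = \eta_v - \e \mathfrak{S}$. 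In these terms the definition reads $\rho_v(k) = \delta_v(k) + b_v(k)$ and $b_v(k+1) = \chi_v(k+1) \sum_{i=0}^{k} \rho_v(i)/c_v(i+1)$.

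The heart of the argument is the claim
\[
b_v(m+1) = \frac{\chi_v(m+1)}{c_v(m+1)}\, P_m \qquad (0 \le m \le h_v-2),
\]
which I would prove by induction on $m$. The base case $m=0$ is immediate since $\rho_v(0) = \delta_v(0) = P_0$ and $b_v(1) = \chi_v(1) \rho_v(0)/c_v(1)$. For the inductive step, I would split the defining sum for $b_v(m+1)$ into its first $m$ terms plus the new $i=m$ term, recognize the first $m$ terms as $b_v(m)/\chi_v(m)$, substitute the inductive hypothesis $b_v(m) = \chi_v(m) P_{m-1}/c_v(m)$ and then $\rho_v(m) = \delta_v(m) + b_v(m)$, and finally simplify using the one key algebraic identity $c_v(m) = \chi_v(m) + c_v(m+1)$. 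The coefficient of $P_{m-1}$ collapses neatly via
\[
\frac{1}{c_v(m)} + \frac{\chi_v(m)}{c_v(m)\, c_v(m+1)} = \frac{1}{c_v(m+1)},
\]
leaving $b_v(m+1) = \chi_v(m+1)(P_{m-1} + \delta_v(m))/c_v(m+1) = \chi_v(m+1) P_m/c_v(m+1)$, as needed.

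Specializing the claim to $m = h_v-2$ and noting that $c_v(h_v-1) = \chi_v(h_v-1)$ gives $b_v(h_v-1) = P_{h_v-2}$, whence
\[
\rho_v(h_v-1) = \delta_v(h_v-1) + b_v(h_v-1) = P_{h_v-1} = \eta_v - \e \mathfrak{S},
\]
which is the desired identity. There is no real analytic obstacle here; the only mild nuisance is the one-line algebraic simplification in the inductive step, which is exactly the telescoping built into the definition of $b_v(k+1)$—the weights $\chi_v(k+1)/c_v(i+1)$ are designed precisely so that each new error $\delta_v(i)$ gets redistributed over the remaining levels in such a way that the cumulative correction arrives intact at the final level.
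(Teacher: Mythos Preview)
Your proof is correct. Both your argument and the paper's are elementary algebraic verifications, but the mechanics differ. The paper proceeds more directly: it interchanges the order of summation in $\sum_{k=0}^{h_v-2} b_v(k+1) = \sum_{k=0}^{h_v-2}\sum_{i=0}^{k} \frac{\chi_v(k+1)}{c_v(i+1)}\rho_v(i)$, observes that the inner sum $\sum_{k=i}^{h_v-2}\chi_v(k+1)/c_v(i+1)=1$, and hence $\sum_{k=1}^{h_v-1} b_v(k) = \sum_{i=0}^{h_v-2}\rho_v(i)$; subtracting this from $\sum_{k=0}^{h_v-1}\rho_v(k) = \eta_v - \e\mathfrak{S} + \sum_{k=0}^{h_v-1} b_v(k)$ leaves exactly $\rho_v(h_v-1)=\eta_v-\e\mathfrak{S}$. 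Your inductive route instead establishes the stronger intermediate closed form $b_v(m+1) = \chi_v(m+1)P_m/c_v(m+1)$ at every level, which is a bit more informative about how the redistribution works step by step, at the cost of a slightly longer computation. Either way the content is the same telescoping.
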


\begin{proof}
\remove{
Note that $\rho_v(k)$ is the error which is created $k$-th round. So
at the beginning of $(k+1)$-round, the process has been accumulated
errors from the first $k$-rounds and we would like to correct them
in the later steps of the process. In particular, $b_v(k+1)$ counts
the amount of error that is supposed to correct in the $(k+1)$-th
round. In this case, at the end of the process ($(h_v-1)$-round),
all the error created in the first $h_v-2$ steps has been corrected.}
We have,
\begin{align}\label{eq-eta-b}
\sum_{k=0}^{h_v - 2} b_v(k+1) = \sum_{k=0}^{h_v - 2} \sum_{i=0}^k
\frac{\chi_v(k+1)}{\sum_{\ell = i+1}^{h_v-1} \chi_v (\ell)}
\rho_v(i)   = \sum_{i=0}^{h_v -2} \rho_v(i)
\sum_{k=i}^{h_v-2}\frac{\chi_v(k+1)}{\sum_{\ell = i+1}^{h_v-1}
\chi_v (\ell)} =  \sum_{i=0}^{h_v - 2} \rho_v(i)\,.
\end{align}
Also note that
\[\sum_{k=0}^{h_v-1}\rho_v(k) = \sum_{k=0}^{h_v-1}(\tilde\zeta_{v,k} + \tilde\xi_{v,k} - a_v(k) + b_v(k)) = \eta_v - \e\mathfrak{S} + \sum_{k=0}^{h_v-1}b_v(k)\,.\]
Combined with $b_v(0)=0$ and \eqref{eq-eta-b}, it follows that
$\rho_v(h_v-1) = \eta_v - \e \mathfrak{S}$, completing the proof.
\end{proof}

We now define the events
\begin{align*}
\mc A_v(k) &= \{|\tilde\zeta_{v, k}| \leq \epsilon \theta \chi_v(k)\}\,, \\
\mc B_v(k)&= \{|\rho_v(k)| \leq w_v(k)\}\,,
\end{align*}
where, for $0 \leq k \leq h_v-2$, $w_v(k)$ is selected so that
\begin{equation}\label{eq:select1}
\P\left(\mc B_v(k) \mid \tilde{\zeta}_{v, k} + b_v(k)\right) =
\ddeg(f_v(k))^{-1/8}.
\end{equation}
We emphasize that the windown $w_v(k)$ is not deterministic.
And, for $k = h_v-1$, we
select $w_v(k)$ so that
\begin{equation}\label{eq:select2}
\P(\mc B_{v}(k) \mid  \tilde{\zeta}_{v, k} + b_v(k)) =
\ddeg(f_v(k))^{-1/8} m_v^{-3/4},
\end{equation}

\begin{remark}
Here, $w_v(k)$ can be thought to represent the window size around
the random target.  The value of $w_v(k)$ is chosen to make the probabilities
in \eqref{eq:select1} and \eqref{eq:select2} exact, allowing us to
couple seamlessly to the percolation process in Section \ref{sec:percolation}.
The key fact, proved in Lemma \ref{lem:window}, is that
the window sizes actually satisfy a deterministic upper bound, assuming
that all the ``good'' events on the path from the root to $f_v(k)$
occurred.   Thus one should think of the true window size
as the bounds specified in \eqref{eq:window} and \eqref{eq:window2}, while
the random value is for the purpose of the coupling.
\end{remark}

For $0\leq k\leq \ell \leq
h_v-1$, define
\begin{align}
\mathcal{A}_v(k, \ell) \deq \bigcap_{i=k}^\ell \mc A_v(i)\ \mbox{ and }\
\mathcal{B}_v(k, \ell) \deq \bigcap_{i=k}^\ell \mc B_v(i).
\end{align}
 Since $\tilde{\xi}_{v, k} \in \sigma(\mathcal F_{v,k} \setminus \mathcal F_{v,k}^{-})$ (see, e.g. \eqref{eq:auxxi}),
 we see that the event $\mc B_v(k)$ is
conditionally independent of $\sigma(\mathcal{F}^-_{f_v(k+1)})$ given the
value of $\tilde{\zeta}_{v, k} + b_v(k)$. This implies that for all
events $\mathcal{E}_0 \in \sigma(\mathcal{F}^-_{f_v(k+1)})$ such that
$\mathcal{E}_0 \cap \mathcal{A}_v(0, k) \cap \mathcal{B}_v(0, k-1)
\neq \emptyset$,
\begin{equation}\label{eq-B-k}\P\left(\mc B_v(k) \mid \mathcal{A}_v (0, k), \mathcal{B}_v (0, k-1), \mathcal{E}_0\right) =
\begin{cases} \ddeg(f_v(k))^{-1/8}, & \mbox{ if } 0 \leq k < h_v-1,\\
\ddeg(f_v(k))^{-1/8} m_v^{-3/4}, & \mbox{ if } k = h_v -1.
\end{cases}\end{equation}

\medskip

Finally, for $v \in \mathcal L$, we define the event
\begin{equation}\label{eq-def-E}
\mc E_v = \mathcal{A}_v(0, h_v -1) \cap \mathcal{B}_v(0,
h_v-1)\,.\end{equation}

\medskip
\noindent
{\bf Window analysis.}
We will now show that our final window $w_v(h_v-1)$ is small enough.
Observe that our choice of $w_v(k)$ is not deterministic.
Nevertheless, we will give an absolute upper bound.
The bound is essentially the natural one:  For any node $u$
in the tree, and any child $v$ of $u$,
the standard deviation of $\eta_{u}-\eta_{v}$ is $O(r^{s(u)})$.
This follows from property (3) of the $r$-separated tree (recall
Definition \ref{def:septree}).

\begin{lemma}
\label{lem:window} For every $v \in \mathcal L$ and
$k=0,1,\ldots,h_v-2$, if $\mathcal A_v(0,k)$ and $\mathcal B_v(0,k-1)$
hold then,
\begin{equation}\label{eq:window}
w_v(k) \leq 50\, r^{s(f_v(k))}.
\end{equation}
Furthermore, if $\mathcal A_v(0,h_v-1)$ and $\mathcal B_v(0,h_v-2)$ hold, then
\begin{equation}\label{eq:window2}
w_v(h_v-1) \leq 50\, r^{s(f_v(h_v-1))}\, m_v^{-3/4}.
\end{equation}
\end{lemma}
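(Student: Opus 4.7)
The plan is to induct on $k$ and reduce the bound on $w_v(k)$ to a Gaussian small-ball estimate. The key observation is that $\tilde\xi_{v,k}$ is a centered Gaussian of variance $\sigma_k^2 := \|\tilde\xi_{v,k}\|_2^2$ independent of $\tilde\zeta_{v,k} + b_v(k)$; this is because each $\rho_v(i)$ with $i < k$ lies in $\mathcal F_{f_v(k)}$, so $b_v(k)$ is $\sigma(\mathcal F_{f_v(k)})$-measurable, while $\tilde\xi_{v,k}$ is orthogonal to both $\mathcal F_{f_v(k)}$ and $\mathcal F_{v,k}^-$ by the orthogonal decomposition used in \eqref{eq:auxxi}. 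Writing $c_v(k) := a_v(k) - \tilde\zeta_{v,k} - b_v(k)$, the event $\mathcal B_v(k)$ is $\{|\tilde\xi_{v,k} - c_v(k)| \leq w_v(k)\}$, whose conditional probability given $\tilde\zeta_{v,k}+b_v(k)$ is a strictly increasing function of $w_v(k)$. Thus, to prove \eqref{eq:window} or \eqref{eq:window2}, it suffices to exhibit the stated right-hand sides as candidates $w^\ast$ at which this conditional probability already meets the calibration target in \eqref{eq:select1}, \eqref{eq:select2}.

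First I would control $\alpha_k := |c_v(k)|/\sigma_k$. From \eqref{eq:MM} one has $|a_v(k)| \leq \epsilon\theta\chi_v(k)$, and under $\mathcal A_v(0,k)$ one has $|\tilde\zeta_{v,k}| \leq \epsilon\theta\chi_v(k)$. The remaining term is $|b_v(k)|$; this is where the induction on $k$ enters. Assuming inductively that $w_v(i) \leq 50\, r^{s(f_v(i))}$ for all $i < k$, the hypothesis $\mathcal B_v(0,k-1)$ gives $|\rho_v(i)| \leq w_v(i)$, and \eqref{eq:MMbelow} then yields
\[|b_v(k)| \leq \chi_v(k)\sum_{i=0}^{k-1} \frac{50\, r^{s(f_v(i))}}{C \cdot 2^i \cdot r^{s(f_v(i))}} \leq \frac{100\,\chi_v(k)}{C} \leq \epsilon\theta\chi_v(k),\]
the last inequality holding provided the universal constant $C$ is taken large enough (specifically, $C \geq 100/(\epsilon\theta)$). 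Combining this with the bounds $r^{s(f_v(k))-1}/64 \leq \sigma_k \leq 8\, r^{s(f_v(k))}$ from Lemma~\ref{lem:variance}, the identity $\chi_v(k) = r^{s(f_v(k))}\sqrt{\log\Delta(f_v(k))}$, and the choice $\epsilon = 1/(2^{10} r \theta)$, I would conclude $\alpha_k \leq \tfrac{3}{16}\sqrt{\log\Delta(f_v(k))}$.

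Finally, I would choose $w^\ast = 5\sigma_k$ (respectively $5\sigma_k m_v^{-3/4}$ for $k = h_v-1$), so that $w^\ast$ already lies below the right-hand side of \eqref{eq:window} (respectively \eqref{eq:window2}) by the upper bound on $\sigma_k$. A one-sided integration of the Gaussian density over $[c_v(k), c_v(k) + w^\ast]$, using monotonicity of the density away from the origin, gives
\[\P\bigl(|\tilde\xi_{v,k} - c_v(k)| \leq w^\ast\bigr) \geq \frac{w^\ast/\sigma_k}{\sqrt{2\pi}}\exp\!\left(-\tfrac12(\alpha_k + w^\ast/\sigma_k)^2\right).\]
Substituting the bound on $\alpha_k$, noting $w^\ast/\sigma_k \leq 5$, and invoking the degree lower bound $\log\Delta(f_v(k)) \geq C^2 r^2 4^k$ from \eqref{eq-deg}, the right-hand side exceeds the target $\Delta(f_v(k))^{-1/8}$ (respectively $\Delta(f_v(k))^{-1/8} m_v^{-3/4}$) once $C$ and $r$ are universal constants of appropriate size; in the final-level case the extra factor $m_v^{-3/4}$ enters linearly through $w^\ast/\sigma_k$ and matches the corresponding factor in the target probability, while only tightening the quadratic exponent. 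The main obstacle is orchestrating a single universal $C$ that simultaneously (i) absorbs the cumulative noise from $b_v(k)$ so that $\alpha_k$ stays sub-logarithmic in $\Delta(f_v(k))$, and (ii) makes \eqref{eq-deg} strong enough to dominate the exponential factor $e^{(\alpha_k+5)^2/2}$ in the Gaussian small-ball estimate; both reduce to explicit inequalities in $C^2 r^2$ that are satisfied by one universal $C$ depending only on the fixed $r$.
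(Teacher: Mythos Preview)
Your proposal is correct and follows essentially the same approach as the paper: induction on $k$, bounding $|a_v(k)|$, $|\tilde\zeta_{v,k}|$, and $|b_v(k)|$ each by a constant multiple of $\chi_v(k)$ (the last via the inductive hypothesis and \eqref{eq:MMbelow}), and then feeding these into a Gaussian small-ball estimate calibrated against the target in \eqref{eq:select1}--\eqref{eq:select2}. The only cosmetic differences are that the paper separates the base case $k=0$ explicitly and chooses the trial window $w=16\sqrt{2\pi}\,r^{s(f_v(k))}$ rather than your $w^\ast=5\sigma_k$; your normalization via $\alpha_k$ and your observation that the independence of $\tilde\xi_{v,k}$ from $\tilde\zeta_{v,k}+b_v(k)$ makes $w\mapsto\P(\mathcal B_v(k)\mid\tilde\zeta_{v,k}+b_v(k))$ strictly increasing are a clean way to frame the argument.
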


\begin{proof}
For $k=0$, we have
$\rho_v(0)=\tilde\zeta_{v,0} + \tilde\xi_{v,0} - a_v(0)$.
By \eqref{eq:MM}, we have
\begin{equation}\label{eq:SS}
a_v(0) = \e \mathfrak{S} \chi_v(0)/\sigma_v \leq \theta \e \chi_v(0)
= \theta \e r^{s(f_v(0))} \sqrt{\log \Delta(f_v(0))}.
\end{equation}
Furthermore, from Lemma \ref{lem:variance}, we know that
for all $k \geq 0$,
\begin{equation}\label{eq:rememberxi}
\tfrac{1}{64} r^{s(f_v(k))-1} \leq \left\|\tilde\xi_{v,k}\right\|_2
\leq 8\,r^{s(f_v(k))}.
\end{equation}

Now, consider a value $w > 0$ such that
\begin{equation}\label{eq:wcon}
w\leq a_v(0) + \e \theta \chi_v(0) \le 2 \theta \e r^{s(f_v(0))}
\sqrt{\log \Delta(f_v(0))}\,.
\end{equation}
Using \eqref{eq:rememberxi} and recalling the Gaussian density,
we have
\begin{align}
\P\left(|\rho_v(0)| \leq w \mid \mc A_v(0)\vphantom{\bigoplus}\right)&
\geq \P\left(|\rho_v(0)| \leq w \mid
\tilde{\zeta}_{v,0} = -\e \theta \chi_v(0)\vphantom{\bigoplus}\right)\nonumber\\
&=
\P\left(|\tilde\xi_{v,0} - a_v(0) - \e \theta \chi_v(0)| \leq w\right)\nonumber \\
& \geq \frac12 \frac{w}{\sqrt{2\pi} \,8 r^{s(f_v(0))}}
\exp\left(-\tfrac12 (128 \e r \theta)^2 \log \Delta(f_v(0))\right)\nonumber\\
& = \frac{w}{16\sqrt{2\pi} r^{s(f_v(0))}} \Delta(f_v(0))^{-\tfrac12
(128 \e r \theta)^2}\,. \label{eq:tailb}
\end{align}

Recalling the assumption \eqref{eq-deg}, we have $\sqrt{\log
\Delta(f_v(0))} \geq C r \geq 16 \sqrt{2\pi} 2^{10} r$, by choosing
$C$ large enough. In particular, $$\e \theta \chi_v(0) \geq (16
\sqrt{2\pi} 2^{10} \e \theta r) r^{s(f_v(0))} = 16 \sqrt{2\pi}
r^{s(f_v(0))},$$ recalling \eqref{eq:epsilon}. Thus setting $w = 16
\sqrt{2\pi} r^{s(f_v(0))}$ satisfies \eqref{eq:wcon}, and applying
\eqref{eq:tailb} we have
\[\P\left(\vphantom{\bigoplus}|\rho_v(0)| \leq 16 \sqrt{2\pi} r^{s(f_v(0))} \mid \mc A_v(0)\right) \geq \Delta(f_v(0))^{-\tfrac12 (128 \e r \theta)^2}
\geq \ddeg(f_v(0))^{-1/8}\,,\] where we have used $\tfrac12 (128 \e
r\theta)^2 = \frac{1}{128}$, and $\Delta(f_v(0)) \geq 16$ from
\eqref{eq-deg}. Therefore $$w_v(0) \leq 16 \sqrt{2\pi} r^{s(f_v(0))}
\leq 50\, r^{s(f_v(0))},$$ recalling the definition of $w_v(0)$ from
\eqref{eq:select1}.

\medskip

Now suppose that \eqref{eq:window} holds for all $k \leq \ell <
h_v-2$, and consider the case $k=\ell+1$. If the events $\{\mc B_v(j) :
0 \leq j \leq \ell\}$ hold, then
\[
|\rho_v(j)| \leq w_v(j) \leq 50\, r^{s(f_v(j))},
\]
where the first inequality is from the definition of $\mc B_v(j)$, and
the second is from the induction hypothesis.
Using \eqref{eq:MMbelow},
it follows that
\begin{equation}\label{eq:bvk}
|b_v(k)| \leq \sum_{i=0}^{k-1}
\frac{\chi_v(k)}{\sum_{\ell=i+1}^{h_v-1} \chi_v(\ell)} |\rho_v(i)|
\leq \frac{2}{C} \chi_v(k).
\end{equation}

Recall that $\rho_v(k) = \tilde\zeta_{v,k} + \tilde\xi_{v,k} -
a_v(k) + b_v(k)$. Similar to the $k=0$ case, we obtain that for $$0
<w \leq  2 \theta \e r^{s(f_v(k))} \sqrt{\log \Delta(f_v(k))},$$ we
have,
\begin{eqnarray*}
\!\!\!\!\!\!\!\!\!\!\!\!\P && \!\!\!\!\!\!\!\!\!\!\!\!\!\!\!\left(\vphantom{\bigoplus}|\rho_v(k)|  \leq w \mid \mc A_v(i), \mc B_v(i) \mbox{ for all } 0\leq i<
k, \mc A_v(k)\right)
\\
&\geq& \P\left(\vphantom{\bigoplus} \left|\tilde \xi_{v,k} - a_v(k)
- \e \theta \chi_v(k) - \frac{2}{C} \chi_v(k)\right| \leq w \right)
\\
&\geq& \frac12 \frac{w}{\sqrt{2\pi}8\, r^{s(f_v(k))}}
\Delta(f_v(k))^{-\frac12 (128 r)^2 (\e \theta + C^{-1})^2}.
\end{eqnarray*}
Now, by choosing $C \geq 1024 r$, and recalling \eqref{eq:epsilon},
we see that $$\frac12 (128 r)^2 (\e \theta + C^{-1})^2 \leq
\frac{1}{32}\,.$$ Since $\Delta(f_v(k)) \geq 16$ (again, by
\eqref{eq-deg}), we conclude that
\[
\P\left(\vphantom{\bigoplus}|\rho_v(k)|  \leq 16 \sqrt{2\pi}
r^{s(f_v(k))} \mid \mc A_v(i), \mc B_v(i) \mbox{ for all } 0\leq i<
k, \mc A_v(k)\right) \geq \ddeg(f_v(k))^{-1/8}.\] This implies
$w_v(k) \leq 16 \sqrt{2\pi} r^{s(f_v(k))} \leq 50\, r^{s(f_v(k))}$,
where we recall once again the definition of $w_v(k)$ from
\eqref{eq:select1}.

An almost identical argument yields that $w_v(h_v-1) \leq 50\,
r^{s(f_v(h_v-1))} m_v^{-3/4}$.
\end{proof}

The next lemma states that the events
$\mathcal{E}_v$ as defined in \eqref{eq-def-E} satisfy requirement
\eqref{item-coupling-1} of Lemma~\ref{lem-coupling}.

\begin{lemma}\label{lem-coupling-1}
If $\mc E_v$ occurs, then
$$
\left|\eta_v - \e \mathfrak{S}\right| \leq w_v(h_v-1) \leq 50\,
r^{s(f_v(h_v-1))} m_v^{-3/4}.
$$
\end{lemma}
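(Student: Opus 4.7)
The plan is to observe that this lemma is essentially a direct combination of Lemma \ref{lem:rho} and Lemma \ref{lem:window}, unpacking the definition of $\mathcal{E}_v$. Specifically, recall from \eqref{eq-def-E} that
\[
\mathcal{E}_v = \mathcal{A}_v(0,h_v-1) \cap \mathcal{B}_v(0,h_v-1),
\]
so assuming $\mathcal{E}_v$ occurs, in particular the event $\mathcal{B}_v(h_v-1) = \{|\rho_v(h_v-1)| \leq w_v(h_v-1)\}$ holds.

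Next, by Lemma \ref{lem:rho}, we have the key identity $\rho_v(h_v-1) = \eta_v - \e \mathfrak{S}$. Substituting this into the previous bound immediately yields
\[
|\eta_v - \e\mathfrak{S}| = |\rho_v(h_v-1)| \leq w_v(h_v-1).
\]
It remains to bound $w_v(h_v-1)$ deterministically.

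For this, I invoke Lemma \ref{lem:window}, specifically \eqref{eq:window2}. The hypothesis there requires that both $\mathcal{A}_v(0,h_v-1)$ and $\mathcal{B}_v(0,h_v-2)$ hold, and both of these events are contained in $\mathcal{E}_v$. Applying \eqref{eq:window2} therefore gives
\[
w_v(h_v-1) \leq 50\, r^{s(f_v(h_v-1))}\, m_v^{-3/4},
\]
which, combined with the preceding display, completes the proof. There is no real obstacle here; the work was done in setting up $\rho_v$ so that $\rho_v(h_v-1) = \eta_v - \e\mathfrak{S}$ (in Lemma \ref{lem:rho}) and in the inductive window bound of Lemma \ref{lem:window}. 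This short lemma simply records the consequence for $\mathcal{E}_v$.
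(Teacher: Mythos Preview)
Your proof is correct and follows essentially the same approach as the paper's own proof, which simply says it follows from Lemma \ref{lem:rho}, the identity \eqref{eq-eta-tilde-eta}, and the definition of $\mc B_v(k)$. In fact, your write-up is more explicit: you correctly identify that the second inequality requires invoking Lemma \ref{lem:window} (specifically \eqref{eq:window2}) under the hypotheses $\mathcal{A}_v(0,h_v-1)$ and $\mathcal{B}_v(0,h_v-2)$, which the paper's terse proof leaves implicit.
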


\begin{proof}
This follows directly from Lemma \ref{lem:rho}, the identity
\eqref{eq-eta-tilde-eta} and the definition of $\mc B_v(k)$.
\end{proof}

\medskip
\noindent {\bf The first moment.} We now give
lower bounds
on the probability of the event $\mathcal{E}_v$.

 \begin{lemma}\label{lem-coupling-2}
For every $v \in \mathcal L$, $$\P(\mathcal E_v) \geq \frac{1}{2} \,
m_v^{-7/8}.$$
\end{lemma}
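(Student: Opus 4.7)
The plan is to factor $\P(\mathcal E_v) = \P(\mathcal A_v(0,h_v-1)\cap \mathcal B_v(0,h_v-1))$ by chain rule along the root-to-$v$ path, handle the $\mathcal B_v(k)$ factors exactly by invoking \eqref{eq-B-k}, and show that the $\mathcal A_v(k)$ factors together contribute at least the universal constant $1/2$. Specifically, I would write
\[
\P(\mathcal E_v) = \prod_{k=0}^{h_v-1} \P\!\left(\mathcal B_v(k)\,\big|\,\mathcal A_v(0,k)\cap \mathcal B_v(0,k-1)\right)\cdot \P\!\left(\mathcal A_v(k)\,\big|\,\mathcal A_v(0,k-1)\cap \mathcal B_v(0,k-1)\right),
\]
and apply \eqref{eq-B-k} to see that the $\mathcal B_v$-factors telescope to exactly $m_v^{-1/8}\cdot m_v^{-3/4} = m_v^{-7/8}$, with the extra $m_v^{-3/4}$ coming from the final level $k=h_v-1$.

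The heart of the argument is to show $\prod_k \P(\mathcal A_v(k)\mid \text{past}) \geq 1/2$. The key structural observation is that $\tilde\zeta_{v,k}\in\mathcal F_{v,k}^{-}\subseteq\mathcal F_{v,k}$, while every event $\mathcal A_v(j)$ and $\mathcal B_v(j)$ with $j<k$ is measurable in $\sigma(\mathcal F_{v,0},\ldots,\mathcal F_{v,k-1})$. At this point one has to check carefully that the random window $w_v(j)$, defined through \eqref{eq:select1}--\eqref{eq:select2}, is $\sigma(\tilde\zeta_{v,j}+b_v(j))$-measurable, since its defining identity conditions on precisely this variable; in particular $w_v(j)$ also belongs to the past sigma-algebra. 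Because the subspaces $\{\mathcal F_{v,k}\}$ are pairwise orthogonal (they are spans of disjoint subsets of $\{\xi_u\}$), $\tilde\zeta_{v,k}$ is jointly Gaussian independent of the past, and so
\[
\P\!\left(\mathcal A_v(k)\,\big|\,\mathcal A_v(0,k-1)\cap\mathcal B_v(0,k-1)\right) = \P(\mathcal A_v(k)).
\]

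The remaining step is a standard one-sided Gaussian tail bound. Using $\|\tilde\zeta_{v,k}\|_2\leq 8\, r^{s(f_v(k))}$ from Lemma~\ref{lem:variance}, the definition $\chi_v(k)=r^{s(f_v(k))}\sqrt{\log \Delta(f_v(k))}$, the degree lower bound \eqref{eq-deg}, and the identity $\epsilon\theta = 2^{-10}/r$ from \eqref{eq:epsilon}, I would obtain
\[
\P(\mathcal A_v(k)^c)\leq 2\exp\!\left(-\frac{(\epsilon\theta)^2\log\Delta(f_v(k))}{128}\right)\leq 2\exp\!\left(-\frac{C^2\, 4^k}{2^{27}}\right).
\]
Summing this doubly-exponential series and taking $C$ to be a sufficiently large universal constant (as permitted, since $C$ was left free to be fixed at the end of the argument), $\sum_{k\geq 0}\P(\mathcal A_v(k)^c)$ is at most $1/2$, whence $\prod_k \P(\mathcal A_v(k))\geq 1 - \sum_k \P(\mathcal A_v(k)^c)\geq 1/2$. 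Multiplying by the $\mathcal B_v$-contribution yields the claimed bound $\P(\mathcal E_v)\geq \tfrac12\, m_v^{-7/8}$.

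The main obstacle is the conditional-independence step in the second paragraph: one must track the measurability of the random threshold $w_v(j)$ and of each $b_v(j)$ to verify that all past events really do sit in a sigma-algebra orthogonal to the one carrying $\tilde\zeta_{v,k}$. Once this is nailed down, the rest reduces to the one-sided application of \eqref{eq-B-k}, a routine Gaussian tail estimate, and a geometric-series computation whose leading term is absorbed by taking $C$ large.
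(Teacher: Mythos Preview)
Your proposal is correct and follows essentially the same route as the paper: chain-rule factorization, exact evaluation of the $\mathcal B_v$-factors via \eqref{eq-B-k} to produce $m_v^{-7/8}$, independence of $\mathcal A_v(k)$ from the past by orthogonality of the $\mathcal F_{v,k}$'s, and a Gaussian tail bound using Lemma~\ref{lem:variance} and \eqref{eq-deg} to make the product of the $\P(\mathcal A_v(k))$'s at least $1/2$ for $C$ large. If anything, your treatment of the measurability of $w_v(j)$ and $b_v(j)$ is more explicit than the paper's, which simply asserts the independence in one line.
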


\begin{proof}
We
have,
\begin{eqnarray}\label{eq-prob-E-v}
\P(\mathcal E_v) &=& \prod_{k=0}^{h_v-1} \P\left(\mc A_v(k) \mid
\mathcal{A}_v(0, k-1),  \mathcal{B}_v(0, k-1)\right)
\P\left(\mc B_v(k) \mid \mathcal{A}_v(0, k),  \mathcal{B}_v(0, k-1)\right) \nonumber \\
&=& m_v^{-3/4} \prod_{k=0}^{h_v-1} \ddeg(f_v(k))^{-1/8}
\prod_{k=0}^{h_v-1} \P\left(\mc A_v(k) \mid \mathcal{A}_v(0, k-1), \mathcal{B}_v(0, k-1)\right)\nonumber\\
 &=& m_v^{-7/8} \prod_{k=0}^{h_v-1} \P\left(\mc A_v(k)\right)
\label{eq:Ev-two},
\end{eqnarray}
where the second line follows from \eqref{eq-B-k}, and the third
line from the fact that $\mc A_v(k)$ is independent of $\{ \mc A_v(i),
\mc B_v(i) : 0 \leq i < k \}$.

\medskip

Using \eqref{eq:varzeta}, we have
\begin{align*}
\P(\mc A_v(k)) \geq 1 - \frac{2}{\sqrt{2\pi}} \int_{\e \theta
\chi_v(k)}^{\infty} \exp\left(-\frac{x^2}{128 r^{2
s(f_v(k))}}\right)dx &\geq
1 - 2 \Delta(f_v(k))^{-\frac{1}{128} \e^2 \theta^2} \\
& \geq 1 - 2 \exp\left(-\frac{1}{128} 2^{-20} C^2 4^k\right).
\end{align*}
where we have used \eqref{eq-deg}, the definition of $\e$ \eqref{eq:epsilon}, and $\chi_v(k) = r^{s(f_v(k))} \sqrt{\log \Delta(f_v(k))}$.

Clearly by choosing $C$ a large enough constant, we have
$$
\prod_{k=0}^{h_v-1} \P\left(\mc A_v(k)\right) \geq \frac{1}{2},
$$
completing the proof.
\end{proof}
\medskip

\noindent {\bf The second moment.} Finally, we bound the
probability of $\mc E_u \cap \mc E_v$ for $u \neq v$.

\begin{lemma}\label{lem-coupling-3}
For every $u,v \in \mathcal L$,
$$
\P(\mc E_u \cap \mc E_v) \leq m_{uv}^{1/8} (m_u m_v)^{-7/8}.
$$
\end{lemma}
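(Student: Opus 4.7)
My plan is to bound $\P(\mc E_u\cap \mc E_v)$ by a chain-rule calculation in the DFS order on $\mcT$, paralleling the proof of Lemma~\ref{lem-coupling-2} while handling the shared history along the path to the LCA. The first step is to discard redundant constraints: since $u\neq v$ forces $h_{uv}<\min(h_u,h_v)$, at every shared level $k<h_{uv}$ we have $f_u(k+1)=f_v(k+1)$, and hence $\tilde\xi_{u,k}$ and $\tilde\xi_{v,k}$ are deterministic scalar multiples of the \emph{same} Gaussian $\xi_{f_v(k+1)}$; in particular $\mc B_u(k)\cap \mc B_v(k)\subseteq \mc B_v(k)$. Discarding all $\mc A$ events and also the shared-level $\mc B_u(k)$ yields
\[
\P(\mc E_u\cap \mc E_v)\;\le\; \P\!\left(\bigcap_{k=0}^{h_v-1}\mc B_v(k)\,\cap\,\bigcap_{k=h_{uv}}^{h_u-1}\mc B_u(k)\right).
\]

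The second step is to promote \eqref{eq-B-k} to the cleaner unconditional form: for each leaf $w\in\{u,v\}$ and each level $k$,
\[
\P\bigl(\mc B_w(k)\,\big|\,\sigma(\mathcal F^-_{f_w(k+1)})\bigr)\;=\;\begin{cases}\ddeg(f_w(k))^{-1/8}, & k<h_w-1,\\ \ddeg(f_w(h_w-1))^{-1/8}\,m_w^{-3/4}, & k=h_w-1.\end{cases}
\]
This holds because $\tilde\xi_{w,k}$ is a scalar multiple of $\xi_{f_w(k+1)}$, which by orthogonality in the Gaussian space is independent of $\sigma(\mathcal F^-_{f_w(k+1)})$, whereas the window center $a_w(k)-\tilde\zeta_{w,k}-b_w(k)$ and the width $w_w(k)$ are both $\sigma(\mathcal F^-_{f_w(k+1)})$-measurable; the defining equations \eqref{eq:select1}--\eqref{eq:select2} were engineered precisely so that the conditional window probability takes the stated value.

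The third step is to assemble everything via the chain rule. Assume without loss of generality that the DFS exploration of $\mcT$ visits the $u$-branch of the LCA before the $v$-branch, and order the retained events as
\[
\mc B_v(0),\ldots,\mc B_v(h_{uv}-1),\;\mc B_u(h_{uv}),\ldots,\mc B_u(h_u-1),\;\mc B_v(h_{uv}),\ldots,\mc B_v(h_v-1).
\]
Each event $\mc B_w(k)$ in this list is $\sigma(\mathcal F_{f_w(k+1)})$-measurable; meanwhile, using the ordering $f_v(h_{uv})\prec f_u(h_{uv}+1)$ and $u\prec f_v(h_{uv}+1)$, every earlier event in the list lies in $\sigma(\mathcal F^-_{f_w(k+1)})$. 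The tower property then collapses the conditional probability of each event given the preceding ones to the deterministic constant from the previous step. Multiplying these constants together,
\[
m_{uv}^{-1/8}\cdot(m_u/m_{uv})^{-1/8}\,m_u^{-3/4}\cdot(m_v/m_{uv})^{-1/8}\,m_v^{-3/4}\;=\;m_{uv}^{1/8}(m_u m_v)^{-7/8},
\]
which is exactly the claimed bound.

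The main obstacle is step two: the explicit $\mc A$-conditioning built into \eqref{eq-B-k} must be replaced by the cleaner conditioning on the whole past $\sigma(\mathcal F^-_{f_w(k+1)})$, which I plan to carry out via the Gaussian-independence observation above. A secondary point worth noting is that the resulting bound is weaker than the $m_u^{-7/8}m_v^{-7/8}$ that full independence between the two events would yield; this slack of $m_{uv}^{1/8}$ is exactly the price paid at shared levels for not being able to use two independent window constraints, since $\mc B_u(k)$ and $\mc B_v(k)$ there constrain the very same Gaussian $\xi_{f_v(k+1)}$.
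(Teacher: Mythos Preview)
Your proof is correct and follows essentially the same approach as the paper. The paper organizes the computation as $\P(\mc E_u\cap\mc E_v)=\P(\mc E_u)\,\P(\mc E_v\mid\mc E_u)$, bounding the first factor by $m_u^{-7/8}$ via \eqref{eq-prob-E-v} and the second by $\prod_{k\geq h_{uv}}\P(\mc B_v(k)\mid\mc E_u,\mc A_v(0,k),\mc B_v(0,k-1))$ using \eqref{eq-B-k} with $\mc E_0=\mc E_u$; your single chain-rule pass along the DFS ordering is just a re-grouping of the same conditional factors, and your ``step two'' promotion of \eqref{eq-B-k} to conditioning on the full past $\sigma(\mathcal F^-_{f_w(k+1)})$ is exactly the observation the paper makes in the sentence preceding \eqref{eq-B-k}.
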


\begin{proof}
Assume, without loss of generality, that $u \prec v\in \mathcal{L}$.
It is clear from \eqref{eq-prob-E-v} that $\P(\mathcal{E}_u) \leq
m_u^{-7/8}$. Also, we have
\begin{align*}
\P(\mathcal{E}_v \mid \mathcal{E}_u) \leq \P(\mathcal{A}_v(0,
h_v-1), \mathcal{B}_v(0, h_u-1)\mid \mathcal{E}_u) \leq \prod_{k =
h_{uv}}^{h_v - 1} \P (\mathcal B_v(k) \mid \mathcal{E}_u, \mathcal{A}_v(0, k),
\mathcal{B}_v(0, k-1))\,.
\end{align*}
Now recall that $\mathcal{E}_u \in \sigma(\mathcal F^{-}_{f_v(h_{uv}+1)}) \subset
\sigma(\mathcal F^{-}_{f_v(k+1)})$ for all $k\geq h_{uv}$. By \eqref{eq-B-k}, we
obtain,
\[\prod_{k =
h_{uv}}^{h_v - 1} \P (\mathcal B_v(k) \mid \mathcal{E}_u, \mathcal{A}_v(0, k),
\mathcal{B}_v(0, k-1)) = \ddeg (f_v(h_v-1))^{-3/4}\prod_{k=
h_{uv}}^{h_v - 1} \ddeg(f_v(k))^{-1/8} = m_{uv}^{1/8} m_v^{-7/8}\,.\]
Altogether, we conclude that
\[\P(\mathcal{E}_u \cap \mathcal{E}_v)  = \P(\mathcal{E}_u) \P(\mathcal{E}_v \mid \mathcal{E}_u) \leq m_{uv}^{1/8} (m_u m_v)^{-7/8}\,,\]
as required.
\end{proof}

The main coupling lemma, Lemma~\ref{lem-coupling}, is an immediately
corollary of Lemmas~\ref{lem-coupling-1}, \ref{lem-coupling-2} and
\ref{lem-coupling-3}.

\subsection{Tree-like percolation}
\label{sec:percolation}

Lemma \ref{lem-percolation} below yields \eqref{eq:perc-con}. Its
proof is a variant on the well-known second moment method for
percolation in trees (see \cite{Lyons92}). First, we define a
measure $\nu$ on $\mc L$ via $\nu(u) = m_u^{-1}$. Observe that $\nu$
is a probability measure on $\mc L$, i.e.
\begin{equation}\label{eq:flow}
\sum_{u \in \mc L} \nu(u)=1.
\end{equation}
To see this,
construct a unit flow from the root to the leaves, where
each non-leaf node splits its incoming flow equally
among its children.  Clearly the amount that
reaches a leaf $u$ is precisely $\nu(u)$.

\begin{lemma} \label{lem-percolation}
Suppose that to
 each $v\in \mathcal{L}$, we associate an event $\mathcal{E}_v$
 such that the following bounds old.
\begin{enumerate}
\item $\P(\mathcal{E}_v) \geq \frac{1}{2} m_v^{-7/8}$ for all $v\in
\mathcal{L}$.\label{item-percolation-1}
\item $\P(\mathcal{E}_u \cap \mathcal{E}_v) \leq  m_{uv}^{1/8}
(m_u m_v)^{-7/8}$ for all $u, v\in \mathcal{L}$.
\label{item-percolation-2}
\end{enumerate}
Define $Z = \sum_{u\in \mathcal{L}}  m_u^{-1/8}
\one_{\mathcal{E}_u}$. Then,
\[\P\left(Z > 0 \right) \geq \frac{1}{8}\,.\]
\end{lemma}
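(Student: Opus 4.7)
The plan is to apply the Paley--Zygmund inequality $\P(Z > 0) \geq (\E Z)^2 / \E(Z^2)$, and to show that $\E Z \geq \tfrac{1}{2}$ and $\E Z^2 \leq 2$. The first moment is immediate from hypothesis (\ref{item-percolation-1}) together with the flow identity \eqref{eq:flow}:
\[
\E Z \;=\; \sum_{u \in \mathcal{L}} m_u^{-1/8}\,\P(\mathcal{E}_u) \;\geq\; \tfrac{1}{2} \sum_{u \in \mathcal{L}} m_u^{-1} \;=\; \tfrac{1}{2}.
\]

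For the second moment, I use hypothesis (\ref{item-percolation-2}) to write
\[
\E Z^2 \;\leq\; \sum_{u, v \in \mathcal{L}} m_u^{-1/8} m_v^{-1/8} \, m_{uv}^{1/8} (m_u m_v)^{-7/8} \;=\; \sum_{u, v \in \mathcal{L}} m_{uv}^{1/8}\, m_u^{-1} m_v^{-1},
\]
and then group pairs $(u,v)$ by their least common ancestor $w \in \mathcal{T}$, noting that $m_{uv} = m_w$ by the definition \eqref{eq-def-m-u-v}. Applying the flow identity inside each subtree $\mathcal{T}_w$ gives $\sum_{u \in \mathcal{L} \cap \mathcal{T}_w} m_u^{-1} = m_w^{-1}$, hence $\sum_{(u,v):\,\mathrm{LCA}(u,v)=w} m_u^{-1} m_v^{-1} \leq m_w^{-2}$. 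Therefore
\[
\E Z^2 \;\leq\; \sum_{w \in \mathcal{T}} m_w^{-15/8} \;=\; \sum_{k \geq 0} \sum_{w:\, h_w = k} m_w^{-15/8}.
\]
At each level $k$, the flow identity again yields $\sum_{w:\,h_w=k} m_w^{-1} = 1$, and so $\sum_{w:\,h_w=k} m_w^{-15/8} \leq \max_{w:\,h_w=k} m_w^{-7/8}$. The $k = 0$ term equals $1$ since $m_z = 1$. For $k \geq 1$, the degree lower bound \eqref{eq-deg} forces $m_w \geq \prod_{j=0}^{k-1} (\exp(C^2 r^2 4^j) - 1)$, which grows doubly-exponentially in $k$; thus the tail $\sum_{k \geq 1} \max_{w:\, h_w = k} m_w^{-7/8}$ can be made arbitrarily small by enlarging $C$ (consistently with the choice of $C$ already fixed in Section \ref{sec:cover}), in particular at most $1$. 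So $\E Z^2 \leq 2$, and Paley--Zygmund gives $\P(Z > 0) \geq (1/4)/2 = 1/8$.

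The main (modest) obstacle is bookkeeping in the second-moment estimate: one must correctly group pairs $(u,v)$ by their LCA, recognize the flow identity as the right way to collapse the inner sum into $m_w^{-2}$, and then use the doubly-exponential growth in \eqref{eq-deg} to confine all but the root-level contribution. There is no conceptual subtlety beyond the standard second-moment method for tree percolation as in \cite{Lyons92}; the hypotheses of the lemma are tailored precisely so that the exponents $1/8$, $7/8$, and $15/8$ line up to give the desired cancellation.
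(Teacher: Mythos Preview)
Your proof is correct and follows essentially the same second-moment route as the paper: both obtain $\E Z \geq \tfrac12$ from \eqref{eq:flow} and $\E Z^2 \leq 2$ from hypothesis (2) combined with the flow identity and the degree growth \eqref{eq-deg}, then apply Paley--Zygmund. The only difference is organizational---the paper fixes $u$ and sums over $v$ grouped by the height of the LCA, whereas you group pairs directly by the LCA node $w$ and then by level; one small imprecision is that $\sum_{w:\,h_w=k} m_w^{-1} \leq 1$ rather than $=1$ (leaves may sit below level $k$), but only the inequality is used.
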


\begin{proof}
By assumption \eqref{item-percolation-1},
\[\E Z \geq \sum_{u\in \mathcal{L}} \frac{1}{2} m_u^{-1/8} m_u^{-7/8} = \frac{1}{2}\sum_{u\in \mathcal{L}} m_u^{-1} = \frac{1}{2}\,.\]
where the last equality follows from \eqref{eq:flow}.

By assumption \eqref{item-percolation-2}, we have
\begin{align*}
\E Z^2 = \sum_{u, v\in \mathcal{L}} (m_u m_v)^{-1/8}
\P(\mathcal{E}_u \cap \mathcal{E}_v) \leq  \sum_{u, v \in
\mathcal{L}}  m_{u v}^{1/8} (m_u m_v)^{-1}\,.
\end{align*}
In order to estimate the second moment, we first fix $u$ and sum
over $v$. To be more precise, let $$\mathcal{L}_{h}(u) = \{v \in
\mathcal{L}: h_{uv} = h\},$$
where we recall that $h_u$ is the height of a node $u$,
and $h_{uv}$ is the height of the least-common ancestor
of $u$ and $v$.

 We can then partition $\mathcal{L} = \bigcup_{h \geq 0} \mc L_h(u)$
and obtain for every $u \in \mc L$,
\begin{align*}
\sum_{v\in \mathcal{L}} m_{uv}^{1/8} m_v^{-1} =
\sum_{h=0}^{h_u} \sum_{v \in \mathcal{L}_h(u)} m_{uv}^{1/8}
m_v^{-1} &=\sum_{h=0}^{h_u}
 \prod_{i=0}^{h -
1}\ddeg(f_u(i))^{1/8} \sum_{v \in \mathcal{L}_h(u)}m_v^{-1}  \\
& =
\sum_{h=0}^{h_u}
 \prod_{i=0}^{h -
1}\ddeg(f_u(i))^{1/8} \nu(\mathcal{L}_h(u))\,.
\end{align*}
Recalling the flow representation of the measure $\nu$, we see that
\[\nu(\mathcal{L}_h(u)) = \frac{\ddeg(f_u(h)) -
1}{\ddeg(f_u(h))}\prod_{i=0}^{h - 1}\ddeg(f_u(i))\,.\]
Therefore,
\begin{align*} \sum_{v\in \mathcal{L}} m_{uv}^{1/8} m_v^{-1}= \sum_{\ell=0}^{h_u} \frac{\ddeg(f_u(h)) -
1}{\ddeg(f_u(h))}\prod_{i=0}^{h - 1}\ddeg(f_u(i))^{-7/8}
\leq\sum_{\ell=0}^{h_u}\prod_{i=0}^{h - 1}\ddeg(f_u(i))^{-7/8}
\leq 2 \,,
\end{align*}
where the last transition follows from \eqref{eq-deg},
for $C$ chosen sufficiently large.  Applying the
second moment method, we deduce that
\begin{equation*}\P\left(Z >0 \right) \geq \frac{(\E Z)^2}{\E Z^2} \geq \frac{1}{8}\,,
\end{equation*}
completing the proof.
\end{proof}

\subsection{The local times}
\label{sec:localtimes}

We now prove Lemma \ref{lem:EL0}, in order to
the complete the analysis of the left-hand side of
\eqref{eq:isomorph}.

\begin{lemma}\label{lem-left-side}
 Consider the local times $L^v_{\tau(t)}$ as defined in Theorem~\ref{thm:rayknight}. For $v \in
\mathcal{L}$, define
\begin{equation*} \tilde{\mathcal{E}}_v = \left\{ 0 < L^v_\tau(t)  \leq  50^2 \cdot r^{2 s(f_v(h_v -1))}
m_v^{-3/2}\right\}\,.\end{equation*} Then, for any $t>0$
\[\P\left(\bigcup_{v\in \mathcal{L}} \tilde{\mathcal{E}}_v\right) \leq \frac{1}{16}\,.\]
\end{lemma}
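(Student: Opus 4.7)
My plan is a union bound over leaves, so the task reduces to a per-leaf estimate. The crux is the inequality
\begin{equation}\label{eq:leftkey}
\P(\tilde{\mathcal{E}}_v) \lesssim \frac{\delta_v}{\sigma_v^2},\qquad \delta_v := 50^2 r^{2s(p(v))} m_v^{-3/2},\ \ \sigma_v^2 := \reff(v_0,v)\,.
\end{equation}

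To prove \eqref{eq:leftkey}, I would apply the Isomorphism Theorem \ref{thm:rayknight} at the single coordinate $v$. Letting $\tilde{\eta}_v \sim N(0,\sigma_v^2)$ be an independent copy of the free field at $v$, and $Y_v = \eta'_v + \sqrt{2t} \sim N(\sqrt{2t},\sigma_v^2)$ the shifted variable on the right-hand side, the identity $L^v_{\tau(t)} + \tfrac{1}{2}\tilde{\eta}_v^2 \stackrel{d}{=} \tfrac{1}{2}Y_v^2$ rewrites $\tilde{\mathcal{E}}_v$ as $\{\tilde{\eta}_v^2 < Y_v^2 \leq \tilde{\eta}_v^2 + 2\delta_v\}$. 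Conditioning on $Y_v = y$ restricts $|\tilde{\eta}_v|$ to an interval of length at most $\min(|y|,\,2\delta_v/|y|)$, and the uniform density bound $(\sqrt{2\pi}\sigma_v)^{-1}$ for $\tilde{\eta}_v$ yields
\[\P(\tilde{\mathcal{E}}_v) \lesssim \frac{1}{\sigma_v}\cdot\E\!\left[\min\!\left(|Y_v|,\ \tfrac{2\delta_v}{|Y_v|}\right)\right]\,.\]
I would split the expectation according to $|Y_v| \geq \sqrt{2\delta_v}$ versus $|Y_v| < \sqrt{2\delta_v}$. On the first region $Y_v$ concentrates near $\sqrt{2t} \geq \epsilon \mathfrak{S}$, so $\E[1/|Y_v|] \lesssim 1/\sqrt{2t} \lesssim 1/\sigma_v$ (using $t \gtrsim \mathfrak{D}^2 \geq \sigma_v^2$ from \eqref{eq:diameter}, by taking the universal constant $K$ there large enough relative to $1/\epsilon$), giving a contribution $\lesssim \delta_v/\sigma_v^2$. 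On the second region, the density of $Y_v$ satisfies $f_{Y_v}(y) \leq (\sqrt{2\pi}\sigma_v)^{-1} e^{-t/(4\sigma_v^2)}$ since $|y| \leq \sqrt{2\delta_v} \ll \sqrt{2t}/2$, so using $\int_{|y| < \sqrt{2\delta_v}} |y|\,dy = \delta_v$ the contribution is at most $\delta_v/\sigma_v^2 \cdot e^{-t/(4\sigma_v^2)} \lesssim \delta_v/\sigma_v^2$. Combining the two pieces proves \eqref{eq:leftkey}.

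Next, I would lower-bound $\sigma_v^2$ using the strong separation property built into our tree. Since $v_0 = z$ is the root and $v$ is a leaf, $v_0 \in \mathcal{T} \setminus \mathcal{T}_v$, so by \eqref{eq:treelike} and Lemma \ref{lem-identity},
\[\sigma_v^2 = \reff(v_0,v) \geq \reff(v,\mathcal{T}\setminus\mathcal{T}_v) \geq \frac{r^{2s(p(v))-2}}{400}\,,\]
whence $\P(\tilde{\mathcal{E}}_v) \lesssim r^2 m_v^{-3/2}$.

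Finally, I would sum over leaves. The flow representation recorded in Section \ref{sec:percolation} gives $\sum_{v \in \mathcal{L}} m_v^{-1} = 1$, while \eqref{eq-deg} applied at $k = 0$ gives $\Delta(z) \geq \exp(C^2 r^2)$; since every $m_v$ contains $\ddeg(z)$ as a factor, $\min_{v \in \mathcal{L}} m_v \gtrsim \exp(C^2 r^2)$. Therefore
\[\sum_{v \in \mathcal{L}} m_v^{-3/2} \leq \bigl(\min_v m_v\bigr)^{-1/2}\sum_v m_v^{-1} \lesssim \exp(-C^2 r^2/2)\,,\]
so $\sum_{v} \P(\tilde{\mathcal{E}}_v) \lesssim r^2 \exp(-C^2 r^2/2) \leq 1/16$ upon choosing the constant $C$ from Proposition \ref{prop-strong-tree-like} sufficiently large. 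The main technical obstacle is \eqref{eq:leftkey}: the naive bound width $\leq \sqrt{2\delta_v}$ only gives $\P(\tilde{\mathcal{E}}_v) \lesssim \sqrt{\delta_v}/\sigma_v$, which fails to be summable over leaves; the improvement to $\delta_v/\sigma_v^2$ crucially exploits that $Y_v$ is concentrated at $\sqrt{2t}$, well separated from the small values of $|\tilde{\eta}_v|$ being probed by $\tilde{\mathcal{E}}_v$.
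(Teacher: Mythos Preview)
Your overall architecture---per-leaf bound $\P(\tilde{\mathcal{E}}_v)\lesssim \delta_v/\sigma_v^2$, lower bound $\sigma_v^2\geq \tfrac{1}{400}r^{2s(p(v))-2}$ from \eqref{eq:treelike}, then union bound using $\sum_v m_v^{-1}=1$ and $m_v\geq \exp(C^2 r^2)$---matches the paper exactly. The problem is your derivation of the key estimate \eqref{eq:leftkey}.

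The step ``the identity $L^v_{\tau(t)}+\tfrac12\tilde\eta_v^2\stackrel{d}{=}\tfrac12 Y_v^2$ rewrites $\tilde{\mathcal E}_v$ as $\{\tilde\eta_v^2<Y_v^2\leq\tilde\eta_v^2+2\delta_v\}$'' is not valid. The Isomorphism Theorem identifies only the \emph{marginal} law of the sum $L^v_{\tau(t)}+\tfrac12\tilde\eta_v^2$; it does not produce a coupling in which $\tilde\eta_v$ is independent of $Y_v$. Concretely, with $A=L^v_{\tau(t)}$, $B=\tfrac12\tilde\eta_v^2$, $A\perp B$, and $A+B\stackrel{d}{=}C=\tfrac12 Y_v^2$, you want $\P(0<A\le\delta)$, which equals $\P(B<A+B\le B+\delta)$; but $B$ is certainly not independent of $A+B$, so you cannot replace $A+B$ by an independent copy of $C$. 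If $B'\perp C$ with $B'\stackrel{d}{=}B$, a short computation shows $\P(B'<C\le B'+\delta)=\E[F_A(D+\delta)-F_A(D)]$ where $D=B'-B''$ for i.i.d.\ copies $B',B''$ of $B$; this does not equal $F_A(\delta)-F_A(0)$ in general. So the ``conditioning on $Y_v$'' calculation that follows is bounding the wrong quantity.

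The paper obtains \eqref{eq:leftkey} by a direct random-walk argument, bypassing the Isomorphism Theorem entirely. Since the walk is at $v_0$ at time $\tau(t)$, on the event $\{L^v_{\tau(t)}>0\}$ there is at least one excursion from $v$ ending at $v_0$; hence, conditioned on $L^v_{\tau(t)}>0$, the quantity $c_v L^v_{\tau(t)}$ stochastically dominates $\int_0^{T_{v_0}}\mathbf 1_{\{X_s=v\}}\,ds$ for a walk started at $v$. The latter is a geometric sum of i.i.d.\ exponential holding times and is therefore exponential with mean $c_v\reff(v,v_0)$ (via $\P_v(T_v^+>T_{v_0})=(c_v\reff(v,v_0))^{-1}$). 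Thus $\P(0<L^v_{\tau(t)}\le\delta_v)\le \delta_v/\reff(v,v_0)=\delta_v/\sigma_v^2$, which is precisely your \eqref{eq:leftkey}. Once you have this, the rest of your argument goes through and coincides with the paper's.
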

\begin{proof}
Note that the random walk is at vertex $v_0$ at time $\tau(t)$.
Hence, given that $L^v_{\tau(t)}>0$, the random walk contains at
least one excursion which starts at $v$ and ends at $v_0$.
Therefore, given that $L^v_{\tau(t)}>0$, we see $c_v L^v_{\tau(t)}$
stochastically dominates the random variable \[L = \int_0^{T_{v_0}}
\one_{\{X_t = v\}}dt\,,\] where $X_t$ is a random walk on the
network started at $v$ and $T_{v_0}$ is the hitting time to $v_0$.

By definition, every time the random walk hits $v$, it takes an
exponential time for the walk to leave.  Also, the probability that
the random walk would hit $v_0$ before returning to $v$ can be
related to the effective resistance (see, for example, \cite{LP}).
Formally, when the random walk $W_t$ is at vertex $v$, it will wait
until the Poisson clock $\sigma$ with rate $1$ rings and then move
to a neighbor (possibly $v$ itself) selected proportional to the edge
conductance. Define
\[T_v^+ = \min\{t > \sigma: X_t = v\}\,.\] Then we have the
continuous-time version of \eqref{eq-walk-resistance},
\[\P_v(T_v^+ > T_{v_0}) = \frac{1}{c_v R_{\mathrm{eff}} (v, v_0)}\,.\]

By the strong Markov property, $L$ follows the law of the sum of a
geometric number of i.i.d. exponential variables. Thus $L$ follows
the law of an exponential variable with $\E L = c_v
R_{\mathrm{eff}}(v, v_0)$.

Recalling property \eqref{eq:treelike} of our separated tree $\mathcal{T}$, we see that
\[R_{\mathrm{eff}}(v, v_0) = \E(\eta_v - \eta_{v_0})^2 \geq 2^{-10} r^{2 s(f_v(h_v-1))-2}\,.\]
Thus,
\begin{align*}
\P(0<L^v_{\tau(t)} \leq  50^2 \cdot r^{2 s(f_v(h_v -1))} m_v^{-3/2})
& \leq \P(L \leq c_v \cdot  50^2 \cdot r^{2 s(f_v(h_v -1))}
m_v^{-3/2})\\
& \leq  \frac{ 50^2 \cdot r^{2 s(f_v(h_v -1))}m_v^{-3/2}}{
R_{\mathrm{eff}}(v,
v_0) } \\
& \leq 2^{11} \cdot 50^2 \cdot r^2\, m_v^{-3/2} \\
& \leq \frac{1}{16} m_v^{-1}\,,\end{align*}
where the last transition using \eqref{eq-deg} for $C$ chosen large enough,
and $m_v \geq \exp(C^2 r^2).$

Therefore, we
conclude that
\[\P\left(\bigcup_{v\in \mathcal{L}} \tilde{\mathcal{E}}_v\right) \leq \frac{1}{16} \sum_{v \in \mathcal{L}} m_v^{-1} = \frac{1}{16}\,,\]
where we used, from \eqref{eq:flow}, the fact that $\sum_{v \in \mathcal{L}} m_v^{-1} = 1$,
completing the proof.
\end{proof}

\remove{
We are now ready to establish the lower bound on the cover time.

\noindent{\bf Proof of the lower bound.} We denote by $\C$ the total
conductance of the network and by $R$ the diameter in resistance
metric. Let $t = \theta \mathcal{S}^2$ for $\theta = \epsilon^2 /2$,
where $\epsilon>0$ is as selected in Lemma~\ref{lem-coupling}. We
can assume that $\theta \mathcal{S}^2 \geq 96^2 R$, since otherwise
the lower bound on the cover time can be easily deduced from the
maximal hitting time.

We consider the associated Gaussian processes $\{\eta_u^L: u\in
\mathcal{T}\}$ and $\{\eta_u^R: u\in \mathcal{T}\}$ as defined in
Theorem~\ref{thm:rayknight}, where the superscript $L$ ($R$) denote
the process on the left (right) hand side of \eqref{eq:law}. For
$v\in \mathcal{L}$, write $w_v = 500^2 r^{2s(f_v(h_v-1))}
m_v^{-3/2}$. Combining Lemmas~\ref{lem-coupling} and
\ref{lem-percolation}, we obtain that
\[\P\left(\exists u\in \mathcal{L} \mbox{ such that } \frac{1}{2} (\eta_u^R + \sqrt{2t})^2 \leq w_v\right) \geq \frac{1}{8}\,.\]
It is an immediate corollary from Lemma~\ref{lem-left-side} that
\[\P\left( \exists u\in \mathcal{L} \mbox{ such that } L^u_{\tau(t)}>0\,,\, L^u_{\tau(t)} +
\frac{1}{2}(\eta_u^L)^2 \leq w_v \right) \leq \frac{1}{16}\,.\]
Altogether with Theorem~\ref{thm:rayknight}, we obtain that
\[\P(\exists u\in \mathcal{L} \mbox{ such that } L^u_{\tau(t)} = 0) \geq \frac{1}{16}\,,\]
and therefore \[\P(\tau_{\mathrm{cov}} \geq \tau(t)) \geq
\frac{1}{16}\,.\]}

\subsection{Additional applications}
\label{sec:coverapps}

We now prove a generalization of Theorem \ref{thm:pseudo}.
Suppose that $V = \{1,2,\ldots,n\}$,
and let $G(V)$ be a network with conductances $\{c_{ij}\}$.
We define real, symmetric $n \times n$ matrices $D$ and $A$ by
\begin{eqnarray*}
D_{ij} &=& \begin{cases}
c_i & i=j \\
0 & \textrm{otherwise.}
\end{cases} \\
A_{ij} &=& c_{ij}.
\end{eqnarray*}
We write
\begin{equation}\label{eq:laplace}
L_G = \frac{D-A}{\tr(D)}\,,
\end{equation}
and $L_G^+$ for the pseudoinverse of $L_G$.

\begin{theorem}\label{thm:remarkfull}
For any connected network $G(V)$,
$$
t_{\mathrm{cov}}(G) \asymp \E \,\left\|\sqrt{L_G^+}\, g\right\|_{\infty}^2,
$$
where $g=(g_1, \ldots, g_n)$ is a standard $n$-dimensional Gaussian.
\end{theorem}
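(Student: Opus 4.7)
The plan is to reduce Theorem \ref{thm:remarkfull} to the network version of Theorem \ref{thm:GFF} (i.e.\ Theorem \ref{thm:mainthm} combined with Theorem (MM)) by recognizing $\sqrt{L_G^+}\,g$ as essentially a Gaussian free field, up to scaling and translation. I would first unpack the normalization: since $L_G = (D-A)/\mathcal{C}$ with $\mathcal{C} = \tr(D) = \sum_v c_v$, one has $L_G^+ = \mathcal{C}\cdot(D-A)^+$, so $\sqrt{L_G^+}\,g \stackrel{d}{=} \sqrt{\mathcal{C}}\,Y$ where $Y$ is a centered Gaussian vector with covariance $(D-A)^+$. Because $G$ is connected, the kernel of the graph Laplacian $D-A$ is spanned by $\mathbf{1}$, hence $(D-A)^+\mathbf{1}=0$ and $\mathbf{1}^T Y=0$ almost surely.

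Next, fixing a root $v_0$ and setting $\tilde{\eta}_v = Y_v - Y_{v_0}$, I would identify $\{\tilde\eta_v\}$ with the GFF $\{\eta_v\}$. For this it suffices to compute
$$\E(\tilde\eta_u-\tilde\eta_v)^2 = (D-A)^+_{uu} + (D-A)^+_{vv} - 2(D-A)^+_{uv} = R_{\mathrm{eff}}(u,v),$$
recognize the second equality as the standard pseudoinverse formula for effective resistance, and invoke Lemma \ref{lem:resistGFF} together with the fact that a centered Gaussian process pinned at $v_0$ is determined in law by its $L^2$ increment metric.

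The substantive step is to compare $\E\|Y\|_\infty^2$ with $(\E\max_v Y_v)^2$. Writing $M = \max_v Y_v$, the symmetry $Y \stackrel{d}{=} -Y$ gives $\E\max_v(-Y_v)=\E M$ and $\|Y\|_\infty = \max(M,\max_v(-Y_v))$, so $(\E M)^2 \leq \E\|Y\|_\infty^2 \leq 2\E M^2$; the work is to show $\E M^2 \lesssim (\E M)^2$. This is where I expect the main technicality: by Borell--TIS (Lemma \ref{lem-gaussian-concentration}), $\E M^2 \leq (\E M)^2 + \sigma^2$ with $\sigma^2=\max_v \E Y_v^2$. Using $\mathbf{1}^T Y = 0$, I can write $Y_v = n^{-1}\sum_u(Y_v-Y_u)$ and conclude from the $L^2$ triangle inequality that $\sigma \leq \mathfrak{D} := \max_{u,v}\sqrt{R_{\mathrm{eff}}(u,v)}$. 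To close the loop, I would pick $u,v$ realizing $\mathfrak{D}$ in the GFF and compute $\E\max(\eta_u,\eta_v) = \tfrac{1}{2}\E|\eta_u-\eta_v| = \mathfrak{D}/\sqrt{2\pi}$, yielding $\mathfrak{D} \lesssim \E M$.

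Combining everything, $\E M = \E\max_v\tilde\eta_v = \E\max_v\eta_v$, and
$$\E\|\sqrt{L_G^+}\,g\|_\infty^2 = \mathcal{C}\cdot\E\|Y\|_\infty^2 \asymp \mathcal{C}\,(\E\max_v \eta_v)^2,$$
which by Theorem \ref{thm:mainthm} together with Theorem (MM) is $\asymp t_{\mathrm{cov}}(G)$, as desired.
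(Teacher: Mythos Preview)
Your proof is correct and follows essentially the same route as the paper's: both identify the coordinates of $\sqrt{L_G^+}\,g$ as a centered Gaussian process with increment metric $\sqrt{\kappa}$, invoke Theorem~\ref{thm:mainthm} together with Theorem~(MM), and then use Gaussian concentration (Lemma~\ref{lem-gaussian-concentration}) to pass between $(\E\|\cdot\|_\infty)^2$ and $\E\|\cdot\|_\infty^2$. Your treatment is in fact more explicit than the paper's in one place: you spell out the bound $\sigma \le \mathfrak{D} \lesssim \E M$ via the zero-sum constraint $\mathbf{1}^T Y = 0$, whereas the paper simply cites the concentration lemma and leaves this step to the reader.
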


\begin{proof}
If $\kappa$ denotes the commute time in $G$, then the following
formula is well-known (see, e.g. \cite{KR93}),
$$
\kappa(i,j) = \langle e_i - e_j, L_G^+ (e_i-e_j) \rangle,
$$
where $\{e_1, \ldots, e_n\}$ are the standard basis vectors in $\mathbb R^n$.
Using the fact that $L_G^+$ is self-adjoint and positive semi-definite, this yields
$$
\kappa(i,j) = \left\|\sqrt{L_G^+}\, e_i - \sqrt{L_G^+}\, e_j\right\|^2.
$$

Let $g=(g_1, \ldots, g_n) \in \mathbb R^n$ be a standard $n$-dimensional Gaussian,
and consider the Gaussian processes $\{\eta_i : i=1,\ldots,n\}$
where $\eta_i = \left\langle g, \sqrt{L_G^+}\, e_i \right\rangle$.
One verifies that for all $i,j \in V$,
$$
\E\, |\eta_i-\eta_j|^2 = \left\|\sqrt{L_G^+} (e_i-e_j)\right\|^2 = \kappa(i,j),
$$
thus by Theorem (MM),
\begin{equation}\label{eq:infinity}
\gamma_2(V,\sqrt{\kappa}) \asymp \E \max_{i \in V} \eta_i = \E \max_{i \in V} \left\langle g, \sqrt{L_G^+}\, e_i\right\rangle
=
\E \max_{i \in V} \left\langle \sqrt{L_G^+}\, g, e_i\right\rangle \asymp \E\,\left\|\sqrt{L_G^+}\, g\right\|_{\infty}.
\end{equation}
By Theorem \ref{thm:mainthm}, $\left[\gamma_2(V,\sqrt{\kappa})\right]^2 \asymp t_{\mathrm{cov}}(G)$.
Finally, one can use Lemma \ref{lem-gaussian-concentration} to conclude that $$\left(\E\,\left\|\sqrt{L_G^+}\, g\right\|_{\infty}\right)^2 \asymp
\E\,\left\|\sqrt{L_G^+}\, g\right\|^2_{\infty},$$ completing the proof.
\end{proof}

\begin{theorem}
There a randomized algorithm which, given
any connected network $G(V)$, with $m = |\{(x,y) : c_{xy} \neq 0\}|$,
runs in time $O(m (\log m)^{O(1)})$ and outputs a number $A(G)$ such that
$t_{\mathrm{cov}}(G) \asymp \mathbb E\left[A(G)\right] \asymp (\E\left[A(G)^2\right])^{1/2}$.
\end{theorem}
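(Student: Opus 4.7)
The plan is to apply Theorem~\ref{thm:remarkfull}, which reduces the task to producing a single sample of $\|\sqrt{L_G^+}\,g\|_\infty^2$ in near-linear time and to verifying the weak concentration bound $(\E[A(G)^2])^{1/2} \asymp \E[A(G)]$. The crucial observation is that while forming $\sqrt{L_G^+}$ explicitly is prohibitive, sampling a Gaussian with covariance $(D-A)^+$ is not: writing $D-A = B^T W B$, where $B$ is the signed $|E| \times |V|$ edge--vertex incidence matrix and $W$ is the diagonal matrix of edge conductances, the vector $y \df (D-A)^+ B^T W^{1/2} g'$, with $g' \sim N(0, I_{|E|})$, has covariance $(D-A)^+ (B^T W B) (D-A)^+ = (D-A)^+$. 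Because $L_G = (D-A)/\mathcal{C}$, this means that $\sqrt{\mathcal{C}}\,y$ has the same law as $\sqrt{L_G^+}\,g$, and $y$ itself has the law of the Gaussian free field $\eta$ on $G$.

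The algorithm then has three steps. First, draw $g' \sim N(0, I_{|E|})$ and compute $b \df B^T W^{1/2} g'$, each in time $O(m)$ since $B^T W^{1/2}$ has only $O(m)$ nonzero entries. Second, invoke the Spielman--Teng Laplacian solver \cite{ST06} to compute $\tilde y$ approximating the solution of $(D-A)\tilde y = b$ to relative precision $\e = 1/\mathrm{poly}(n)$ in the energy seminorm, in time $O(m(\log m)^{O(1)})$; since this runtime depends only logarithmically on $1/\e$, the polynomial precision is essentially free. Third, output $A(G) \df \mathcal{C} \max_v \tilde y_v^2$. To translate the energy-norm guarantee into an $\ell^\infty$ error, one uses $\|\tilde y - y\|_\infty \leq \|\tilde y - y\|_2 \leq \lambda_2(D-A)^{-1/2} \|\tilde y - y\|_{D-A}$, combined with a crude polynomial lower bound on $\lambda_2(D-A)$, to conclude that $\max_v \tilde y_v^2 = (1 \pm o(1))\,\max_v y_v^2$ with overwhelming probability.

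To verify the concentration requirement, let $\Lambda = \E \max_v \eta_v$ and $\sigma = \max_v \sqrt{\E \eta_v^2}$. The estimate $\sigma \leq \sqrt{2\pi}\,\Lambda$ established in \eqref{eq-Lambda-sigma-relation}, combined with Gaussian concentration (Lemma~\ref{lem-gaussian-concentration}), shows that $\max_v \eta_v$ is sub-Gaussian around $\Lambda$ with deviation parameter $\sigma \lesssim \Lambda$. This yields $\E[(\max_v \eta_v)^{2k}] \lesssim_k \Lambda^{2k}$ for every $k \geq 1$, so both $\E[A(G)]$ and $(\E[A(G)^2])^{1/2}$ are of order $\mathcal{C}\,\Lambda^2 \asymp t_{\mathrm{cov}}(G)$ by the GFF form of Theorem~\ref{thm:mainsimple}. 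The principal obstacle is the conversion from the solver's energy-norm guarantee to an $\ell^\infty$ guarantee on $\tilde y$, which could in principle scale like $1/\sqrt{\lambda_2(D-A)}$; but because the Spielman--Teng runtime depends only on $\log(1/\e)$, taking $\e$ polynomially small in $n$ absorbs this loss without affecting the overall near-linear running time.
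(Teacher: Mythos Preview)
Your approach is correct in spirit but differs from the paper's, and your error analysis has a gap that needs patching.

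\medskip
\textbf{Comparison with the paper.} The paper does not sample the GFF directly. Instead it invokes \cite[\S 4]{SS08} as a black box: that result produces, in time $O(m(\log m)^{O(1)})$, a $k\times n$ matrix $Z$ with $k=O(\log n)$ such that $\kappa(i,j)\le\|Z(e_i-e_j)\|^2\le 2\kappa(i,j)$ for all $i,j$. One then sets $\eta_i=\langle g,Ze_i\rangle$ for a $k$-dimensional standard Gaussian $g$ and outputs $A(G)=\|Zg\|_\infty^2$. The point is that the Johnson--Lindenstrauss step inside \cite{SS08} reduces the Gaussian to dimension $O(\log n)$, and all solver-error issues are absorbed into the multiplicative resistance guarantee. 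Your route is more elementary (no JL, a single solve), and is essentially what underlies \cite{SS08} before the dimension reduction; both are valid ways to reach the same conclusion.

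\medskip
\textbf{The gap.} Your conversion from energy-norm to $\ell^\infty$ error uses ``a crude polynomial lower bound on $\lambda_2(D-A)$.'' For general networks with arbitrary conductances no such bound in $n$ alone exists: $\lambda_2(D-A)$ can be made arbitrarily small by scaling a single edge, so ``$\e=1/\mathrm{poly}(n)$'' need not beat $\lambda_2^{-1/2}$. The fix is to avoid $\lambda_2$ entirely. For any $x\perp\mathbf 1$ and any $u,v$ one has
\[
\bigl|(e_u-e_v)^T x\bigr|^2 \;\le\; \reff(u,v)\,\|x\|_{D-A}^2,
\]
so the solver guarantee $\|\tilde y-y\|_{D-A}\le\e\|y\|_{D-A}$ yields
\[
\bigl|(\tilde y_u-\tilde y_v)-(y_u-y_v)\bigr|\;\le\;\e\,\sqrt{\reff(u,v)}\,\|y\|_{D-A}.
\]
Now $\|y\|_{D-A}^2=(g')^T W^{1/2}B(D-A)^+B^TW^{1/2}g'$ is a $\chi^2_{n-1}$ variable (the middle matrix is a rank-$(n-1)$ projection), hence $\|y\|_{D-A}=O(\sqrt n)$ with overwhelming probability; and since $\sum_v y_v=0$ one has $\|y\|_\infty\asymp\max_{u,v}|y_u-y_v|$, whose expectation is $\gtrsim\sqrt{\max_{u,v}\reff(u,v)}$. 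Thus $\e=1/\mathrm{poly}(n)$ suffices to make $\max_v\tilde y_v^2=(1\pm o(1))\max_v y_v^2$, and the $\log(1/\e)$ overhead stays $O(\log n)$ regardless of the conductances. With this correction your argument goes through; the concentration paragraph already matches the paper's.
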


\begin{proof}
In \cite[\S 4]{SS08}, it is shown how to
compute a $k \times n$ matrix $Z$,
in expected time $O(m (\log m)^{O(1)})$,
with $k = O(\log n)$, and such that
for every $i,j \in V$,
\begin{equation}\label{eq:Zprocess}
\kappa(i,j) \leq \|Z(e_i-e_j)\|^2 \leq 2 \kappa(i,j).
\end{equation}
We can associate the Gaussian processes $\{\eta_i\}_{i \in V}$,
where $\eta_i = \langle g, Z e_i \rangle$, and $g$ is
a standard $k$-dimensional Gaussian.
Letting $d(i,j)=\sqrt{\E\,|\eta_i-\eta_j|^2}$,
we see from \eqref{eq:Zprocess}
that $\sqrt{\kappa} \leq d \leq \sqrt{2\kappa}$,
therefore $\gamma_2(V,\sqrt{\kappa}) \asymp \gamma_2(V, d)$.
It follows (see \eqref{eq:infinity}) that
$$
\E \left\|Z g\right\|_{\infty}^2 \asymp \E\,\left\|\sqrt{L_G^+}\, g\right\|^2_{\infty} \asymp t_{\mathrm{cov}}(G),
$$
where the last equivalence is the content of Theorem \ref{thm:remarkfull}.

The output of our algorithm is thus $A(G) = \|Z g\|^2_{\infty}$,
where $g$ is a standard $k$-dimensional Gaussian vector.
The fact that $\E[A(G)] \asymp (\E[A(G)^2])^{1/2}$
follows from Lemma \ref{lem-gaussian-concentration}.
\end{proof}

\section{Open problems and further discussion}

We now present two open questions that arise naturally
from the present work.
The first question concerns obtaining a better deterministic approximation
to the cover time.
\begin{question}
Is there, for any
$\epsilon > 0$, a deterministic, polynomial-time algorithm that approximates $t_{\mathrm{cov}}(G)$ up to a $(1+\epsilon)$ factor?
\end{question}
Note that the preceding question has been solved by Feige and Zeitouni \cite{FZ09} in the case of trees.

\medskip

The second question involves concentration of $\tau_{\mathrm{cov}}$ around its expected value.
Under the assumption that
$\lim_{n \to \infty} \frac{t_{\mathrm{cov}}(G_n)}{t_{\mathrm{hit}}(G_n)} = \infty$,
where $t_{\mathrm{hit}}$ denotes the maximal hitting time, Aldous
\cite{Aldous91b} proves that
$\frac{\tau_{\mathrm{cov}}(G_n)}{t_{\mathrm{cov}}(G_n)}$ converges
to 1 in probability.   We ask whether it is possible to
obtain sharper concentration.
\begin{question}
Is the standard deviation of $\tau_{\mathrm{cov}}$ bounded by the
maximal hitting time $t_{\mathrm{hit}}$? Furthermore, does
$\frac{\tau_{\mathrm{cov}} - t_{\mathrm{cov}}}{t_{\mathrm{hit}}}$
exhibit an exponential decay with constant rate?
\end{question}

It is interesting to consider the extent to which Theorem \ref{thm:tight}
is sharp.
Consider a family of graphs $\{G_n\}$.
We point out that the asymptotic formula,
\begin{equation}\label{eq-asympt}
 t_{\mathrm{cov}} (G_n) \sim |E(G_n)| \cdot \left(\E \sup_{v\in V} \eta_v\right)^2\,,
 \end{equation}
holds for both the family of complete graphs and
the family of regular trees, where
we write $a_n \sim b_n$ for $\lim a_n/b_n = 1$,
and $E(G_n)$ denotes the set of edges in $G_n$.
Here, $\{\eta_v\}$ is the GFF associated to $G_n$
with $\eta_{v_0}=0$ for some fixed vertex $v_0$.

To see this,
note that the GFF on the $n$-vertex complete graph satisfies $\var \eta_v
= \frac{2}{n}$ and $\E (\eta_v \eta_u) = \frac{1}{n}$ for $v_0 \notin \{u,v\}$.
Therefore, we can write $\eta_v = \xi + \xi_v$ for every $v\neq
v_0$, where $\xi$ and all $\{\xi_v\}_{v \in V}$ are i.i.d.\ Gaussian variables with
variance $\frac{1}{n}$. It is now clear that $\E\sup_v \eta_v \sim
\sqrt{2\log n/n} $.
  Combined with the facts that $t_{\mathrm{cov}}(G_n) \sim n \log n$ and $|E(G_n)| = \frac{n(n-1)}{2}$,
  this confirms \eqref{eq-asympt} for complete graphs.

Fix $b\geq 2$  and consider a regular $b$-ary tree $T_m$ of height
$m$ with $n= \frac{b^{m+1} - 1}{b-1}$ vertices. It is shown in \cite{Aldous91} that $t_{\mathrm{cov}}(T_m)
\sim 2m n \log n  $. On the other hand, Biggins
\cite{Biggins77} proved that the corresponding GFF satisfies $\E
\sup_v \eta_v \sim \sqrt{2 m\log n}\,  $. Since the number of
edges in $T_m$ is $n -1 $, we
infer that \eqref{eq-asympt} holds for regular trees.
It is clearly very interesting to understand
the generality under which \eqref{eq-asympt} holds.


\subsection*{Acknowledgements}

We are grateful to Martin Barlow and Asaf Nachmias for helpful
discussions in the early stages of this work. We thank Jay Rosen and
an anonymous referee for a very thorough reading of the manuscript,
along with numerous insightful comments. We also thank Nike Sun,
Russ Lyons, Saran Ahuja, and Yoshihiro Abe for useful comments.

\bibliography{covertime}
\bibliographystyle{abbrv}

\end{document}